\renewcommand{\ell}{{l}}  %Because jmet was using this when it wasn't used elsewhere.  sorry.
\newcommand{\bX}{\mathbf{X}}
\newcommand{\bZ}{\mathbf{Z}}
\newcommand{\bY}{\mathbf{Y}}
\newcommand{\bH}{\mathbf{H}}
\newcommand{\bh}{\mathbf{h}}
\newcommand{\R}{{\mathbb{R}}}
\newcommand{\Z}{{\mathbb Z}}
\renewcommand{\S}{{\mathbb S}}
\newcommand{\N}{{\mathbb N}}
\renewcommand{\AA}{{\mathcal A}} 
\newcommand{\BB}{{\mathcal B}}
\newcommand{\CC}{{\mathcal C}}
\newcommand{\EE}{{\mathcal E}}
\newcommand{\bEE}{{\bm{ \mathcal E}}}
\newcommand{\MM}{{\mathcal M}}
\newcommand{\FF}{{\mathcal F}}
\newcommand{\GG}{{\mathcal G}}
\newcommand{\HH}{{\mathcal H}}
\newcommand{\PP}{{\mathcal P}}
\newcommand{\QQ}{{\mathcal Q}}
\renewcommand{\SS}{{\mathcal S}}
\newcommand{\VV}{{\mathcal V}}
\newcommand{\tF}{{\tilde{F}}}
\newcommand{\tih}{{\tilde h}}
\newcommand{\ts}{{\tilde s}}
\newcommand{\tR}{{\tilde{R}}}
\newcommand{\tg}{{\tilde g}}
\newcommand{\tr}{\operatorname{tr}}
\renewcommand{\Re}{\mathop{\rm Re}\nolimits}
\renewcommand{\Im}{\mathop{\rm Im}\nolimits}
\DeclareMathOperator{\Ric}{Ric}
\DeclareMathOperator{\tRic}{{\widetilde{Ric}}}
\theoremstyle{plain}
\newtheorem{thm}{Theorem}[section]
\newtheorem{prop}[thm]{Proposition}
\newtheorem{lemma}[thm]{Lemma}
\theoremstyle{definition}
\newtheorem{rem}{Remark}[thm]
\numberwithin{equation}{section}
\newcommand{\secref}[1]{Section~\ref{#1}}
\def\squarebox#1{\hbox to #1{\hfill\vbox to #1{\vfill}}}
\newcommand{\<}{\langle}
\renewcommand{\>}{\rangle}
\renewcommand{\d}{\partial}
\newcommand{\ep}{\epsilon}
\newcommand{\lV}{\lVert}
\newcommand{\rV}{\rVert}
\def\be{{\beta}}
\def\ga{\gamma}
\def\Ga{\Gamma}
\def\de{\delta}
\def\De{\Delta}
\def\ep{\epsilon}
\def\la{\lambda}
\def\La{\Lambda}
\def\si{\sigma}
\def\Si{\Sigma}
\def\om{\omega}
\def\nab{\nabla}
\def\al{\alpha}
\renewcommand{\dh}{\delta h}
\newcommand{\dA}{\delta A}
\newcommand{\dB}{\delta B}
\newcommand{\dV}{\delta V}
\newcommand{\dpsi}{\delta \psi}
\newcommand{\dla}{\delta \lambda}
\newcommand{\dSS}{\delta \SS}
\newcommand{\dK}{\delta \mathcal K}
\newcommand{\dP}{\delta \mathcal P}
\newcommand{\dC}{\delta \mathcal C}
\newcommand{\bmF}{\mathbf{F}}
\newcommand{\tbmF}{\tilde{\mathbf{F}}}
\title[Skew Mean Curvature Flow]
{Local well-posedness of Skew mean curvature flow for small data in $d\geq 4$ dimensions}
\author[J. Huang]
{Jiaxi Huang}
\author[D. Tataru]
{Daniel Tataru}
\address{Beijing International Center for Mathematical Research, Peking University, Beijing 100871, P. R. China}
\email{huangjiaxi@bicmr.pku.edu.cn}
\address{Department of Mathematics, University of California, Berkeley \\
 Berkeley, CA 94720, USA}
\email{tataru@math.berkeley.edu}
\subjclass[2010]{Primary: 35Q55; Secondary: 53E10.}
\keywords{Skew mean curvature flow, local well-posedness, low regularity, small data}
\thanks{J. Huang was partially supported by China Postdoctoral Science Foundation Grant 2021M690223 and the NSFC Grant No. 11771415, and was also sponsored by the China Scholarship Council (No. 201806340044) for one year at University of California, Berkeley. }
\thanks{D. Tataru was supported by the NSF grant DMS-1800294 as well as by a Simons Investigator grant from the Simons Foundation.   }
\begin{document}

\begin{abstract}
The skew mean curvature flow is an evolution equation for $d$ dimensional ma\-nifolds
embedded in $\R^{d+2}$ (or more generally, in a Riemannian manifold). It can be viewed as a Schr\"odinger analogue of the mean curvature 
flow, or alternatively  as a quasilinear version of the Schr\"odinger Map equation.
In this article, we prove small data local well-posedness in low-regularity Sobolev spaces for the skew mean curvature flow in dimension $d\geq 4$. 
\end{abstract}

\date{\today}
\maketitle

%\centerline{\today}

\setcounter{tocdepth}{1}
\pagenumbering{roman} \tableofcontents \newpage \pagenumbering{arabic}

\section{Introduction}
The skew mean curvature flow (SMCF) is a nonlinear Schr\"odinger type flow 
modeling the evolution of a $d$ dimensional oriented manifold  embedded into a fixed oriented $d+2$ dimensional manifold. It can be seen as a Schr\"odinger analogue of the well studied mean curvature flow. In this article, we consider the small data local well-posedness for the skew mean curvature flow in high dimensions $d \geq 4$, for low regularity initial data.

\subsection{The (SMCF) equations}
Let $\Sigma^d$ be a $d$-dimensional oriented manifold, and $(\mathcal{N}^{d+2},g_{\mathcal{N}})$ be a $d+2$-dimensional oriented Riemannian manifold. Let $I=[0,T]$ be an interval and $F:I\times \Sigma^d \rightarrow \mathcal{N}$ be a 
one parameter family of immersions. This induces a time dependent Riemannian structure 
on $\Sigma^d$. For each $t\in I$, we denote the submanifold by $\Sigma_t=F(t,\Sigma)$,  its tangent bundle by $T\Sigma_t$, and its normal bundle by $N\Sigma_t$ respectively. For an arbitrary vector $Z$ at $F$ we denote by $Z^\perp$
its orthogonal projection onto $N\Sigma_t$.
The mean curvature  $\mathbf{H}(F)$ of $\Sigma_t$ can be identified naturally with a section of the normal bundle $N\Sigma_t$.

  The normal bundle $N\Sigma_t$ is a rank two vector bundle with a naturally induced complex structure $J(F)$ which simply rotates a vector in the normal space by $\pi/2$ positively. Namely, for any point $y=F(t,x)\in \Sigma_t$ and any normal vector $\nu\in N_{y}\Sigma_t$, we define $J(F)\in N_{y}\Sigma_t$ as the unique vector with the same length
  so that 
  \[
  J(F)\nu\bot \nu, \qquad \omega(F_{\ast}(e_1), F_{\ast}(e_2),\cdots F_{\ast}(e_d), \nu,  J(F)\nu)>0,
  \]
  where $\om$ is the volume form of $\mathcal{N}$ and $\{e_1,\cdots,e_d\}$ is an oriented basis of $\Sigma^d$. The skew mean curvature flow (SMCF) is defined by the initial value problem
\begin{equation}           \label{Main-Sys}
\left\{\begin{aligned}
&(\d_t F)^{\perp}=J(F)\mathbf{H}(F),\\
&F(\cdot,0)=F_0,
\end{aligned}\right.
\end{equation}
which evolves a codimension two submanifold along its binormal direction with a speed given by its mean curvature.

The (SMCF) was derived both in physics and mathematics. 
The one-dimensional (SMCF) in the Euclidean space $\R^3$ is the well-known vortex filament equation (VFE)
\begin{align*}
\d_t \ga=\d_s \ga\times \d_s^2 \ga,
\end{align*}
where $\ga$ is a time-dependent space curve, $s$ is its arc-length parameter and $\times$ denotes the cross product in $\R^3$. The (VFE) was first discovered by Da Rios \cite{DaRios1906} in 1906 in the study of the free motion of a vortex filament.

The (SMCF) also arises in the study of asymptotic dynamics of vortices in the context of superfluidity and superconductivity. For the Gross-Pitaevskii equation, which models the wave function associated with a Bose-Einstein condensate, physics evidence indicates that the vortices would evolve along the (SMCF). An incomplete  verification was attempted  by T. Lin \cite{LinT00} for the vortex filaments in three space dimensions. For higher dimensions, Jerrard \cite{Je02} proved this conjecture when the initial singular set is a codimension two sphere with multiplicity one.

The other motivation is that the (SMCF) naturally arises in the study of the hydrodynamical Euler equation. A singular vortex in a fluid is called a vortex membrane in higher dimensions if it is supported on a codimension two subset. The law of locally induced motion of a vortex membrane can be deduced from the Euler equation by applying the Biot-Savart formula. Shashikanth \cite{Sh12} first investigated the motion of a vortex membrane in $\R^4$ and showed that it is governed by the two dimensional (SMCF), while Khesin \cite{Kh12} then generalized this conclusion to any dimensional vortex membranes in Euclidean spaces.

From a mathematical standpoint, the (SMCF) equation is a canonical geometric flow for codimension two submanifolds which can be viewed as the Schr\"odinger analogue of the well studied mean curvature flow. In fact, the infinite-dimensional space of codimension two immersions of a Riemannian manifold admits a generalized Marsden-Weinstein sympletic structure, and hence the Hamiltonian flow of the volume functional on this space is verified to be the (SMCF). Haller-Vizman \cite{HaVi04} noted this fact where they studied the nonlinear Grassmannians.
For a detailed mathematical derivation of these equations we refer the reader  to the article \cite[Section 2.1]{SoSun17}. 

The study of higher dimensional (SMCF) is still at its infancy compared with its one-dimensional case. For the $1$-d case, we refer the reader to the survey article of Vega~\cite{Ve14}. For the higher dimensional case, Song-Sun \cite{SoSun17} proved the local existence of (SMCF) with a smooth, compact oriented surface as the initial data in two dimensions, then Song \cite{So19} generalized this result to compact oriented manifolds for all $d\geq 2$ and also 
proved a corresponding uniqueness result. Song \cite{So17} also proved that the Gauss map of a $d$ dimensional (SMCF) in $\R^{d+2}$ satisfies a Schr\"odinger Map type equation but relative to the varying metric. We remark that in one space dimension this is exactly the classical 
Schr\"odinger Map type equation, provided that one chooses suitable coordinates, i.e. the arclength
parametrization.

As written above, the (SMCF) equations 
are independent of the choice of coordinates in $I \times \Sigma$; here we include 
the time interval $I$ to emphasize that coordinates may be chosen in a time dependent fashion. The manifold $\Sigma^d$ simply serves to provide a parametrization for the moving manifold $\Sigma_t$; it  determines the topology of $\Sigma_t$, but nothing else. 
Thus, the (SMCF) system written in the form \eqref{Main-Sys} should be seen as a geometric 
evolution, with a large gauge group, namely the group of time dependent changes 
of coordinates in $I \times \Sigma$.  In particular, interpreting
the equations \eqref{Main-Sys} as a nonlinear Schr\"odinger equation will require 
a good gauge choice. This is further discussed in \secref{gauge}.

In this article we will restrict ourselves to the case when $\Sigma^d = \R^d$, i.e.
where $\Sigma_t$ has a trivial topology. We will further restrict to the case 
when $\mathcal{N}^{d+2}$ is the Euclidean space $\R^{d+2}$. Thus, the reader should visualize $\Sigma_t$ as an asymptotically flat codimension two submanifold 
of $\R^{d+2}$.

\subsection{Scaling and function spaces}
To understand what are the natural thresholds for local well-posedness, it is interesting  
to consider the scaling properties of the solutions. As one might expect, a clean scaling law is obtained when $\Sigma^d = \R^d$ and $\mathcal{N}^{d+2} = \R^{d+2}$. Then we have the following

\begin{prop}[Scale invariance for (SMCF)]
	Assume that $F$ is a solution of (\ref{Main-Sys}) with initial data $F(0)=F_0$. If $\la>0$ then $\tilde{F}(t,x):=\lambda^{-1}F(\lambda^2 t,\lambda x)$ is a solution of (\ref{Main-Sys}) with initial data $\tilde{F}(0)=\lambda^{-1}F_0(\lambda x)$.
\end{prop}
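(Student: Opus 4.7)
The plan is a direct verification: substitute $\tilde F(t,x)=\lambda^{-1}F(\lambda^2 t,\lambda x)$ into \eqref{Main-Sys} and track how each ingredient (tangent frame, normal bundle, induced metric, second fundamental form, mean curvature, complex structure) transforms under the rescaling. Since the equation is geometric, I expect the scaling weights in the second fundamental form and in the time derivative to exactly cancel, leaving the equation invariant.

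First I would apply the chain rule to obtain $\partial_t\tilde F(t,x)=\lambda\,(\partial_t F)(\lambda^2 t,\lambda x)$ and $\partial_i\tilde F(t,x)=(\partial_i F)(\lambda^2 t,\lambda x)$. The second identity shows that the tangent space, and hence the normal space and the normal projection $(\cdot)^\perp$, at the point $\tilde F(t,x)$ coincide with those at $F(\lambda^2 t,\lambda x)$. In particular $(\partial_t\tilde F)^\perp=\lambda\,(\partial_t F)^\perp$ when the latter is evaluated at the rescaled argument.

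Next I would compute the induced metric $\tilde g_{ij}=\langle\partial_i\tilde F,\partial_j\tilde F\rangle=g_{ij}$, so $\tilde g^{ij}=g^{ij}$, while the second fundamental form satisfies $\tilde{\mathbf{h}}_{ij}=(\partial_i\partial_j\tilde F)^\perp=\lambda\,\mathbf{h}_{ij}$; tracing then gives $\mathbf{H}(\tilde F)=\lambda\,\mathbf{H}(F)$. Because the complex structure $J$ is determined purely by the oriented normal bundle, which is unchanged under the rescaling (the orientation condition on $\omega(F_\ast e_1,\dots,F_\ast e_d,\nu,J\nu)$ is preserved since $\lambda>0$), one has $J(\tilde F)=J(F)$. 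Putting these together yields
\begin{equation*}
(\partial_t\tilde F)^\perp=\lambda(\partial_t F)^\perp=\lambda J(F)\mathbf{H}(F)=J(\tilde F)\mathbf{H}(\tilde F),
\end{equation*}
and the initial condition $\tilde F(0,x)=\lambda^{-1}F_0(\lambda x)$ is immediate.

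There is no real obstacle here; the only points requiring mild care are the bookkeeping of which argument each quantity is evaluated at after rescaling, and the verification that the orientation entering the definition of $J$ is preserved by the dilation (which uses $\lambda>0$). Everything else is a straightforward application of the chain rule combined with the homogeneity of the geometric objects.
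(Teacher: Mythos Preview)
Your proof is correct and follows essentially the same direct-verification approach as the paper: both track how the metric, second fundamental form, and mean curvature rescale and then match the two sides of \eqref{Main-Sys}. The only cosmetic difference is that the paper computes $\mathbf{H}$ via the explicit coordinate formula $g^{\alpha\beta}(\partial^2_{\alpha\beta}F-\Gamma_{\alpha\beta}^\gamma\partial_\gamma F)$ and checks the scaling of $\tilde\Gamma_{\alpha\beta}^\gamma$, whereas you work directly with the normal projection $\tilde{\mathbf h}_{ij}=(\partial_i\partial_j\tilde F)^\perp$; these are equivalent since the Christoffel term is precisely the tangential part being discarded.
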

\begin{proof}
	Since $(\d_t F)^{\perp}=Jg^{\al\be}(\d_{\al\be}^2F-\Ga_{\al\be}^{\ga}\d_{\ga}F)$, 
	\begin{align*}
	\tilde{g}_{\al\be}(t,x)=\<\d_{\al} \tilde{F},\d_{\be}\tilde{F}\>=g_{\al\be}(\lambda^2 t,\lambda x),
	\end{align*}
	and 
	\begin{align*}
	    \tilde{\Ga}_{\al\be}^{\ga}(t,x)=\la \Ga_{\al\be}^{\ga}(\la^2t,\la x).
	\end{align*}
	Then 
	\begin{align*}
	(\d_t \tilde{F})^{\perp}=&\lambda (\d_tF)^{\perp}(\lambda^2 t,\lambda x)=\lambda Jg^{\al\be}(\lambda^2 t,\lambda x)[(\d^2_{\al\be}F-\Ga_{\al\be}^{\ga}\d_{\ga}F)(\lambda^2 t,\lambda x)]\\
	=& J \tilde{g}^{\al\be}(\d^2_{\al\be}\tilde{F}-\tilde{\Ga}_{\al\be}^{\ga}\d_{\ga}\tilde{F})(t,x).
	\end{align*}
\end{proof}

The above scaling would suggest the critical Sobolev space for our moving surfaces 
$\Sigma_t$ to be $\dot H^{\frac{d}{2}+1}$. However, instead of working directly with the 
surfaces, it is far more convenient to track the regularity at the level of the curvature
$\mathbf{H}(\Sigma_t)$, which scales at the level of $\dot H^{\frac{d}2-1}$.

\subsection{The main result}
Our objective in this paper is to establish the local well-posedness of skew mean 
curvature flow for small data at low regularity. A key observation is that providing a rigorous description  of fractional Sobolev spaces for functions (tensors) on a rough manifold is a delicate matter, which a-priori requires both a good choice of coordinates on the manifold and a good 
frame on the vector bundle (the normal bundle in our case). This is done in the next section, where we fix the gauge and write the equation as a quasilinear Schr\"odinger evolution in a good 
gauge. At this point, we content ourselves with a less precise formulation of the main result:

\begin{thm}[Small data local well-posedness]   \label{LWP-thm}
	Let $s>\frac{d}{2}$, $d\geq 4$. Then there exists $\ep_0>0$ sufficiently small such that, for all initial data $\Sigma_0$ with metric $\|\partial_x(g_0-I)\|_{H^{s+1}}\leq \ep_0$ and mean curvature $\lV \mathbf{H}_0  \rV_{H^s(\Sigma_0)}\leq \ep_0$, the skew mean curvature flow \eqref{Main-Sys} for maps from $\R^d$ to the Euclidean space $(\R^{d+2},g_0)$ is locally well-posed on the time interval $I=[0,1]$ in a suitable gauge. 
\end{thm}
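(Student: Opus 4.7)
\medskip

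\noindent\textbf{Proof proposal.}
The plan is to reformulate \eqref{Main-Sys} as a quasilinear covariant Schr\"odinger equation in a carefully chosen gauge, and then close a frequency-localized fixed point argument driven by Strichartz and local smoothing estimates for the variable-coefficient Schr\"odinger flow. First I would kill the diffeomorphism freedom by working in coordinates on $\R^d$ in which the induced metric $g_{\al\be}$ is close to $\delta_{\al\be}$, e.g.\ (a rough variant of) harmonic coordinates; since the initial data is small, such a coordinate system exists globally and the map to it is controlled by an elliptic estimate of the form $\|\partial(g-I)\|_{H^{s+1}}\lesssim \|\bH\|_{H^s}+\text{small}$. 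Then I would kill the normal-bundle structure group by choosing an orthonormal frame $\{v_1,v_2=J(F)v_1\}$ on $N\Sigma_t$ in the Coulomb gauge $\partial^\al A_\al=0$, where $A_\al=\<D_\al v_1,v_2\>$ is the connection one-form. In this frame the mean curvature becomes a complex scalar $\la=\bH\cdot v_1+i\bH\cdot v_2$, and the Gauss--Codazzi--Ricci identities express the curvature $\partial_\al A_\be-\partial_\be A_\al$ quadratically in the second fundamental form, so that under the Coulomb condition $A$ is recovered from $\la$ by solving an elliptic div-curl system. Finally, the time gauge is fixed so that $\partial_t F$ has a prescribed tangential component (chosen to preserve harmonic coordinates), which yields a parabolic or elliptic equation for the temporal connection $B$ and for the vector field generating the diffeomorphisms.

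In this gauge the evolution of $\la$, after a computation on the normal bundle, takes the Schr\"odinger form
\[
i(\partial_t-iB)\la+g^{\al\be}(\partial_\al-iA_\al)(\partial_\be-iA_\be)\la=\NN(\la,A,B,g),
\]
where the nonlinearity $\NN$ is at most quadratic in $\la$ with one derivative (from the metric connection and curvature), and all auxiliary quantities $g-I$, $A$, $B$ and the shift are recovered from $\la$ via elliptic/parabolic estimates with a gain of one or two derivatives. The next step is to set up function spaces adapted to the linear magnetic Schr\"odinger operator $i\partial_t+\De_g$: an $\ell^2$ dyadic sum of $\ts\ttt$-type spaces built from $L^\infty_t H^s$, frequency-localized Strichartz norms, and local smoothing norms on cubes of scale $2^{-k}$, together with their dual $N^s$. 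Here one follows the pattern established for Schr\"odinger maps and quasilinear Schr\"odinger equations, but now with a metric that is only $H^{s+2}$ perturbation of flat with $s>d/2$.

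The technical core is then to prove, for the paradifferential linearization, the full package of (i) $L^2$ energy estimates, (ii) Strichartz estimates, and (iii) bilinear/local smoothing estimates, uniformly in the smallness-parameterized metric $g$. For the straightened low-frequency/high-frequency paradifferential operator one first conjugates $\la$ by the magnetic phase to cancel $A$ to leading order, then adapts the wave packet / FBI transform machinery developed by Tataru and collaborators for low-regularity Schr\"odinger problems; the hypothesis $d\geq 4$ enters through the availability of $L^2$-based bilinear Strichartz estimates that provide room between the critical regularity $s=d/2-1$ at the level of $\bH$ and the scale-invariant regularity of the coefficients, so that the nonlinearity $\NN$ closes in the chosen spaces. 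With these linear estimates in hand, the nonlinear estimates on $\NN$, $A$, $B$, $g-I$ are multilinear Sobolev/Moser-type and go through for $s>d/2$.

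With the full set of linear and multilinear estimates established, existence is obtained by a frequency envelope iteration applied to regularized data, uniqueness by comparing two solutions in a weaker topology at the level of $\la$ and invoking the elliptic recovery of the remaining gauge quantities, and continuous dependence by the same frequency envelope argument applied to the difference. A final step translates the result back from the gauge-fixed system to a statement about immersions $F$, i.e.\ existence of an immersion solving \eqref{Main-Sys} with the prescribed initial data modulo the geometric gauge group. The hardest step, I expect, will be item~(iii) above: establishing sharp local smoothing and bilinear $L^2$ estimates for $i\partial_t+\De_g+2iA\cdot\nabla$ when $g-I$ has only $H^{s+2}$ regularity and $A$ has only $H^{s+1}$, since this is where one must absorb the loss coming from the quasilinear nature of the equation and where the dimension restriction $d\geq 4$ becomes essential.
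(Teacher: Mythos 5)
Your high-level architecture matches the paper closely: the reduction to a quasilinear covariant Schr\"odinger equation for the complex scalar mean curvature, the gauge choice of harmonic coordinates plus Coulomb gauge on the normal bundle, elliptic recovery of the auxiliary quantities $(g-I,\lambda,V,A,B)$ from the trace $\psi$, and a frequency-envelope iteration yielding existence, uniqueness, and continuous dependence. The final step of reconstructing the immersion $F$ from the gauged variables is also part of the plan. So the scaffolding is right.

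The key divergence, and where you have a genuine gap, is in the linear theory and the choice of function spaces. You propose $U^p/V^p$-type spaces built from Strichartz norms and a wave-packet/FBI-transform argument after conjugating away the magnetic potential. The paper does none of this: it works entirely in $L^2$-based local-smoothing/local-energy spaces $X_j$, $N_j$, $l^2X^s$, $l^2N^s$ in the style of Marzuola--Metcalfe--Tataru, augmented here by a time-derivative component $\partial_t u\in L^2H^{s-2}$ at low frequencies. No Strichartz estimates are used at all, and no phase conjugation: instead the magnetic term $2iA^\al_{<k-4}\partial_\al\psi_k$ is kept as part of the linear paradifferential operator and absorbed via a positive-commutator argument with a first-order multiplier $\MM$ (Doi-type local energy decay), which only needs the mild bound $\|A\|_{Z^{1,s+1}}\ll 1$. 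This is not merely a stylistic difference: the $L^2$-based spaces are what allow the bilinear/commutator estimates (Lemmas in Section~5 of the paper) to close; setting up a Strichartz-based $U^p/V^p$ framework here would require you to independently prove Strichartz estimates for $i\partial_t+\partial_\al g^{\al\be}\partial_\be$ with $g-I$ of only $H^{s+2}$ regularity, which is not established and is precisely what the local-energy approach circumvents.

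Your explanation of the restriction $d\geq 4$ is also incorrect. It does not come from bilinear Strichartz estimates. It enters through the elliptic estimates for the auxiliary system: for instance, in the low-frequency parts of the $A$- and $h$-equation estimates in Theorem~4.2, one encounters weights of the form $2^{(d-3-\de)j}$, $2^{(d-3-2\de)j}$, $2^{(d-3-3\de)j}$ over $j<0$ whose convergence requires $d>3$, together with $|D|^{-1}:L^1\to L^2$ and related low-frequency Sobolev embeddings applied to quadratic terms $\nabla\la\,\delta\la$, $\la\nabla\delta\la$. These are fixed-time and $L^2L^2$ estimates for the elliptic system, not spacetime bilinear Schr\"odinger estimates.

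Finally, you do not address the constraint propagation, which is a substantial part of the argument. Solving the elliptic system \eqref{ell-syst} abstractly does not automatically produce the geometric identities (symmetry of $\la$, trace $\psi=g^{\al\be}\la_{\al\be}$, Gauss/Ricci equations, Coulomb and harmonic gauge conditions), nor does solving the Schr\"odinger equation guarantee that the time-evolution relations \eqref{g_dt}, \eqref{main-eq-abst}, \eqref{Cpt-A&B} hold. The paper verifies both by showing that the defect tensors $C^1,\ldots,C^7$ (respectively $T^1,T^2,T^3$) satisfy linear homogeneous elliptic systems with small coefficients and hence vanish. Without this step, the reconstruction of the immersion $F$ from the gauge variables --- which requires Frobenius integrability --- is unjustified.
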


\begin{rem}
We remark on the necessity of having a smallness condition on both $g_0-I$ and the mean curvature 
$\mathbf{H}_0$.
The combined efforts of E. De Giorgi \cite{DGi-1961}, F. J. Almgren, Jr. \cite{Al-1966}, and J. Simons \cite{Si-1968}
led to the following theorem (see Theorem 4,2, \cite{CoMi-2006}):

\emph{``If $u:\R^{n-1}\rightarrow \R$ is an entire solution to the minimal surface
equation and $n\leq 8$, then $u$ is an affine function."}

\noindent However, in 1969 E. Bombieri, De Giorgi, and E. Giusti \cite{BDG} constructed entire non-affine solutions to the minimal surface equation in $\R^9$.   Hence the bound $\|\mathbf H_0\|_{H^s(\Si_0)}\leq \ep_0$ on the mean curvature does not necessarily imply that the sub-manifold is almost flat.

Here we only prove the small data local well-posedness, which means that the initial submanifold $\Sigma_0$ should be a perturbation of Euclidean plane $\R^d$.
Hence, the bound on metric $\|\partial_x(g-I)\|_{H^{s+1}}\leq \ep_0$ is also necessary in our main result, at least in very high dimension. This condition on metric will insure the existence of global harmonic coordinates (see Proposition \ref{Global-harmonic}). 
\end{rem}

Unlike any of the prior results, which prove only existence and uniqueness for smooth data,
here we consider rough data and provide a full, Hadamard style well-posedness result based on a more modern, frequency envelope approach and  using a paradifferential form for both the full and the linearized equations. For an overview 
of these ideas we refer the reader to the expository paper \cite{IT-primer}.
While, for technical reasons, this result is limited to dimensions $d \geq 4$, we expect the same strategy to also work in lower dimension; the lower dimensional case will be considered in forthcoming work.

The favourable gauge mentioned in the theorem, defined in the next section, will have two 
components:
\begin{itemize}
    \item The harmonic coordinates on the manifolds $\Sigma_t$.
    \item The Coulomb gauge for the orthonormal frame on the normal bundle.
\end{itemize}
In the next section we reformulate the (SMCF) equations as a quasilinear Schr\"odinger 
evolution for a good scalar complex variable $\psi$, which is exactly the mean curvature 
but represented in the good gauge.  There we provide an alternate formulation of the above result, 
as a well-posedness result for the $\psi$ equation. In the final section of the paper we 
close the circle and show that one can reconstruct the full (SMCF) flow starting from 
the good variable $\psi$.

One may compare our gauge choices with the prior work in \cite{SoSun17} and \cite{So19}.
There the tangential component of $\partial_t F$ in \eqref{Main-Sys} is omitted,
and the coordinates on the manifold $\Sigma_t$ are simply those transported from the initial time.
The difficulty with such a choice is that the regularity of the map $F$ is no longer
determined by the regularity of the second fundamental form, and instead there is a loss 
of derivatives which may only be avoided if the initial data is assumed to have extra regularity.
This loss is what prevents a complete low regularity theory in that approach.

Once our problem is rephrased as a nonlinear Schr\"odinger evolution,  one may compare its 
study with earlier results on general quasilinear Schr\"odinger evolutions. This story begins 
with the classical work of Kenig-Ponce-Vega \cite{KPV2,KPV3,KPV}, where local well-posedness is established for more regular and localized data. Lower regularity results in translation invariant Sobolev spaces were later established by Marzuola-Metcalfe-Tataru~\cite{MMT3,MMT4,MMT5}. 
The local energy decay properties of the Schr\"odinger equation, as developed earlier in \cite{CS,CKS,Doi,Doi1} play a key role in these results. While here we are using some of the ideas in the above papers, the present problem is both more complex and exhibits  additional structure. Because of this, new ideas and more work are required in order to close the estimates required for both the full problem and for its linearization.

\subsection{An overview of the paper}

Our first objective in this article will be to provide a self-contained formulation
of the (SMCF) flow, interpreted as a nonlinear Schr\"odinger equation for a single independent 
variable. This independent variable, denoted by $\psi$, represents the trace of the second fundamental form on $\Sigma_t$, in complex notation. In addition to the independent variables,
we will use several dependent variables, as follows:
\begin{itemize}
    \item The Riemannian metric $g$ on $\Sigma_t$.
    \item The (complex) second fundamental form $\lambda$ for $\Sigma_t$.
    \item The magnetic potential $A$, associated to the natural connection on the 
    normal bundle $N \Sigma_t$, and the corresponding temporal component $B$. 
    \item The advection vector field $V$, associated to the time dependence of our choice of coordinates.
\end{itemize}

These additional variables will be viewed as uniquely determined by our independent variable $\psi$, provided that a suitable gauge choice was made. The gauge choice involves two steps:

\begin{enumerate}
    \item[(i)] The choice of coordinates on $\Sigma_t$; here we use harmonic coordinates, with 
    suitable boundary conditions at infinity.
    \item[(ii)] The choice of the orthonormal frame on $N\Sigma_t$; here we use the Coulomb gauge,
    again assuming flatness at infinity.
\end{enumerate}

To begin this analysis, in the next section we describe the gauge choices, so that 
by the end we obtain 
\begin{enumerate}
    \item[(a)] a nonlinear Schr\"odinger equation for $\psi$, see \eqref{mdf-Shr-sys-2}.  
    \item[(b)] An elliptic fixed time system \eqref{ell-syst} for the dependent variables
    $\SS=(g,\lambda,V,A,B)$, together with suitable compatibility conditions (constraints).
\end{enumerate}
 
Setting the stage to solve these equations, in Section~\ref{Sec3} we describe the function spaces
for both $\psi$ and $\SS$. This is done at two levels, first at fixed time, which is useful
in solving the elliptic system \eqref{ell-syst}, and then using in the space-time setting, which is needed in order to solve the Schr\"odinger evolution. The fixed time spaces are classical Sobolev spaces,
with matched regularities for all the components. The space-time norms are the so called local energy spaces, as developed in \cite{MMT3,MMT4,MMT5}.

Using these spaces, in Section~\ref{Sec-Ell} we consider the solvability of the elliptic system
\eqref{ell-syst}. This is first considered and solved without reference to the constraint equations,
but then we prove that the constraints are indeed satisfied.

Finally, we turn our attention to the Schr\"odinger system \eqref{mdf-Shr-sys-2}, in several stages. 
In Section~\ref{Sec-mutilinear} we establish several multilinear and nonlinear estimates in our space-time
function spaces. These are then used in Section~\ref{Sec-LED} in order to prove local energy decay bounds
first for the linear paradifferential Schr\"odinger flow, and then for a full linear 
Schr\"odinger flow associated to the linearization of our main evolution. 
The analysis is completed in Section~\ref{Sec-LWP}, where we use the linear Schr\"odinger bounds 
in order to (i) construct solutions for the full nonlinear Schr\"odinger flow, and (ii) 
to prove the uniqueness and continuous dependence of the solutions. The analysis here broadly follows the ideas introduced in \cite{MMT3,MMT4,MMT5}, but a number of improvements are needed
which allow us to take better advantage of the structure of the (SMCF) equations.

Last but not least, in the last section we prove that the full set of variables $(g,\lambda,V,A,B)$
suffice in order to uniquely reconstruct the defining function $F$ for the evolving surfaces $\Sigma_t$,
as $H^{s+2}_{loc}$ manifolds. More precisely, with respect to the parametrization provided by  our chosen gauge, $F$ has regularity 
\[
\partial_t F,\ \partial_x^2 F \in C[0,1;H^s]. 
\]

\section{The differentiated equations and the gauge choice}
\label{gauge}

The goal of this section is to introduce our main independent variable $\psi$, which represents 
the trace of the second fundamental form in complex notation, as well as the 
following auxiliary variables: the metric $g$, the second fundamental form $\lambda$,
the connection coefficients $A,B$ for the  normal bundle as well as the advection vector field $V$.
For $\psi$ we start with \eqref{Main-Sys} and derive a nonlinear Sch\"odinger type system \eqref{mdf-Shr-sys-2}, with coefficients depending on  $\SS=(\lambda,h,V,A,B)$, where $h=g-I_d$. Under suitable gauge conditions, the auxiliary variables $\SS$ are  shown to satisfy an elliptic system \eqref{ell-syst}, as well as a natural set of constraints. We conclude the section with a gauge formulation of our main result, see Theorem~\ref{LWP-MSS-thm}.

We remark that H. Gomez (\cite[Chapter 4]{Go}) introduced the language of gauge fields as an appropriate framework for presenting the structural properties of the surface and the evolution equations of its geometric quantities, and showed that the complex mean curvature of the evolving surface satisfies a nonlinear Schr\"{o}dinger-type equation. Here we will further derive the self-contained modified Schr\"{o}dinger system under harmonic coordinate conditions and Coulomb gauge.

\subsection{The Riemannian metric \texorpdfstring{$g$}{}.} Let $(\Sigma^d,g)$ be a $d$-dimensional oriented manifold and let $(\R^{d+2},g_0)$ be $(d+2)$-dimensional Euclidean space. Let $\al,\be,\ga,\cdots \in\{1,2,\cdots,d\}$ and $k\in\{1,2,\cdots,d+2\}$. Considering the immersion $F:\Sigma\rightarrow (\R^{d+2},g_0)$, we obtain the induced metric $g$ in $\Sigma$,
\begin{equation}        \label{g_metric}
	g_{\al\be}=\d_{x_{\al}} F\cdot \d_{x_{\be}} F.
\end{equation}
We denote the inverse of the matrix $g_{\al\be}$ by $g^{\al\be}$, i.e.
\begin{equation*}
g^{\al\be}:=(g_{\al\be})^{-1},\quad g_{\al\ga}g^{\ga\be}=\delta_{\al}^{\be}.
\end{equation*}

Let $\nab$ be the cannonical Levi-Civita connection in $\Si$ associated with the induced metric $g$.
A direct computation shows that on the Riemannian manifold $(\Si,g)$ we have the Christoffel symbols
\begin{align*}
	\Gamma^{\ga}_{\al\be}=\ \frac{1}{2}g^{\ga\si}(\d_{\be}g_{\al\si}+\d_{\al}g_{\be\si}-\d_{\si}g_{\al\be})
	=\ g^{\ga\si}\d^2_{\al\be}F\cdot\d_\si F.
\end{align*}
Hence, the Laplace-Beltrami operator $\Delta_g$ can be written in the form
\begin{align*}
	\Delta_g f=&\ \tr\nab^2 f=g^{\al\be}(\d_{\al\be}^2f-\Gamma^{\ga}_{\al\be}\d_{\ga} f)\\
	=&\ g^{\al\be}[\d_{\al\be}^2f-g^{\ga\si}(\d_{\al\be}^2F\cdot \d_{\si} F)\d_{\ga} f],
\end{align*}
for any twice differentiable function $f:\Sigma\rightarrow \R$. The curvature tensor $R$ on the Riemannian manifold $(\Sigma,g)$ is given by 
\begin{align*}
R(\d_{\al},\d_{\be})\d_{\ga}
=&\ (\d_{\al} \Gamma_{\be\ga}^{\si} +\Gamma_{\be\ga}^m\Gamma_{\al m}^{\si} -\d_{\be} \Gamma_{\al\ga}^{\si} -\Gamma_{\al\ga}^m\Gamma_{\be m}^{\si} )\d_{\si}.
\end{align*}
Hence, we have
\begin{align}             \label{R}
R_{\ga\al\be}^{\si}=\d_{\al} \Gamma_{\be\ga}^{\si}-\d_{\be} \Gamma_{\al\ga}^{\si} +\Gamma_{\be\ga}^m\Gamma_{\al m}^{\si}  -\Gamma_{\al\ga}^m\Gamma_{\be m}^{\si}.
\end{align}
By $R(X,Y,Z,W)=\<R(Z,W)Y,X\>$ and $R_{\al\be\ga\si}=R(\d_{\al},\d_{\be},\d_{\ga},\d_{\si})$, we get
\begin{equation*}
R_{\al\be\ga\si}=\<R(\d_{\ga},\d_{\si})\d_{\be},\d_{\al}\>=\<R_{\be\ga\si}^{\mu} \d_{\mu},\d_{\al}\>=g_{\mu\al}R_{\be\ga\si}^{\mu},
\end{equation*}
We will also use the Ricci curvature
\begin{equation*}
\Ric_{\al\be}=R^{\si}_{\al\si\be}=g^{\si\ga}R_{\ga\al\si\be}.
\end{equation*}

\subsection{The second fundamental form}
Let $\bar{\nab}$ be the Levi-Civita connection in $(\R^{d+2},g_0)$ and let $\bh$ be the second fundamental form for $\Sigma$ as an embedded manifold. For any vector fields $u,v\in T_{\ast}\Sigma$, the Gauss relation is 
\begin{equation*}
	\bar{\nab}_u F_{\ast}v=F_{\ast}(\nab_u v)+ \bh(u,v).
\end{equation*}
Then we have
\begin{align*}
	\bh_{\al\be}=&\bh(\d_{\al},\d_{\be})=\bar{\nab}_{\d_{\al}} \d_{\be} F-F_{\ast}(\nab_{\d_{\al}}\d_{\be})\\
	=&\d_{\al\be}^2 F+\bar{\Gamma}_{kl}\d_{\al} F^k \d_{\be} F^l-\Gamma_{\al\be}^{\ga} \d_{\ga} F.
\end{align*}
By $\bar{\Gamma}_{kl}^j=0$, this gives the mean curvature $\bH$ at $F(x)$,
\begin{equation*}
	\bH=\tr_g \bh=g^{\al\be}\bh_{\al\be}=g^{\al\be}(\d^2_{\al\be}F-\Gamma^{\ga}_{\al\be}\d_{\ga} F)=\Delta_g F.
\end{equation*}
Hence, the $F$-equation in (\ref{Main-Sys}) is rewritten as
\begin{equation*}  %\label{F-eqn}
	(\d_t F)^{\perp}=J(F)\Delta_g F=J(F)g^{\al\be}(\d^2_{\al\be}F-\Gamma^{\ga}_{\al\be}\d_{\ga} F).
\end{equation*}
This equation is still independent of the choice of coordinates 
in $\Sigma^d$, which at this point are allowed to fully depend on $t$.

\subsection{The complex structure equations} 
Here we introduce a complex structure on the normal bundle $N\Sigma_t$. This is achieved 
by choosing $\{\nu_1,\nu_2\}$ to be an orthonormal basis of $N\Si_t$ such that
\[
J\nu_1=\nu_2,\quad J\nu_2=-\nu_1.  
\]
Such a choice is not unique; in making it we introduce a second component to our gauge group,  
namely the group of sections of an $SU(1)$ bundle over $I \times \R^d$.  

The vectors $\{ F_1,\cdots,F_d,\nu_1,\nu_2\}$ form a frame at each point on the manifold $(\Sigma,g)$, where $F_{\al}$ for $\al\in\{1,\cdots,d\}$ are defined as
\[
F_{\al}=\d_{\al}F.
\]  
If we differentiate the frame, we obtain a set of structure equations of the following type 
\begin{equation}          \label{strsys}
\left\{\begin{aligned}
&\d_{\al} F_{\be}=\Gamma^{\ga}_{\al\be}F_{\ga}+\kappa_{\al\be}\nu_1+\tau_{\al\be}\nu_2,\\
&\d_{\al} \nu_1=-\kappa^{\ga}_{\al} F_{\ga}+A_{\al} \nu_2,\\
&\d_{\al} \nu_2=-\tau^{\ga}_{\al} F_{\ga}-A_{\al} \nu_1,
\end{aligned}\right.
\end{equation}
where the tensors $\kappa_{\al\be},\tau_{\al\be}$ and the connection coefficients $A_{\al}$ are defined by
\[
\kappa_{\al\be}:=\d_{\al} F_{\be}\cdot\nu_1,\quad \tau_{\al\be}:=\d_{\al} F_{\be}\cdot\nu_2,\quad A_{\al}=\d_{\al}\nu_1\cdot\nu_2.
\]
The mean curvature $\bH$ can be expressed in term of $\kappa_{\al\be}$ and $\tau_{\al\be}$, i.e.
\[
\bH=g^{\al\be}(\kappa_{\al\be}\nu_1+\tau_{\al\be}\nu_2).
\]

Next, we complexify the structure equations (\ref{strsys}) as follows. We define the complex vector $m$ and the complex second fundamental form tensor $\la_{\al\be}$ to be 
\begin{equation*}
	m=\nu_1+i\nu_2,\quad \lambda_{\al\be}=\kappa_{\al\be}+i\tau_{\al\be}.
\end{equation*}
Then we define the \emph{complex scalar mean curvature} $\psi$ as the trace of the second fundamental form,
\begin{equation}\label{csmc}
\psi:=\tr\la=g^{\al\be}\la_{\al\be}.
\end{equation}
Our objective for the rest of this section will be to interpret the (SMCF) equation as 
a nonlinear Schr\"odinger evolution for $\psi$, by making suitable gauge choices.

We remark that the action of sections of the $SU(1)$ bundle is given by 
\begin{equation}    \label{gauge-A}
\psi \to e^{i \theta }\psi , \quad \lambda \to e^{i \theta }\lambda, \quad m \to e^{i\theta} m,
\quad A_\alpha \to A_\alpha - \partial_\alpha \theta.
\end{equation}
for a real valued function $\theta$.

We use the convention for the inner product of two complex vectors, say $a$ and $b$, as
\begin{equation*}
	\<a,b\>=\sum_{j=1}^{d+2}a_{j} \bar{b}_{j},
\end{equation*}
where $a_{j}$ and $b_{j}$ are the complex components of $a$ and $b$ respectively. Then we get the following relations for the complex vector $m$,
\begin{align*}
	\<m,m\>=|\nu_1|^2+|\nu_2|^2=2,\quad \<m,\bar{m}\>=\<\bar{m},m\>=|\nu_1|^2-|\nu_2|^2=0.
\end{align*}
From these relations we obtain
\begin{align*}
	\kappa_{\al\be}\nu_1+\tau_{\al\be}\nu_2=&\frac{1}{2}(\lambda_{\al\be}+\bar{\lambda}_{\al\be})\frac{1}{2}(m+\bar{m})+\frac{1}{2i}(\lambda_{\al\be}-\bar{\lambda}_{\al\be})\frac{1}{2i}(m-\bar{m})\\
	=&\frac{1}{2}(\lambda_{\al\be}\bar{m}+\bar{\lambda}_{\al\be}m)
	=\Re(\lambda_{\al\be}\bar{m}).
\end{align*}
Then the structure equations (\ref{strsys}) are rewritten as 
\begin{equation}               \label{strsys-cpf}
\left\{\begin{aligned}
&\d_{\al}F_{\be}=\Gamma^{\ga}_{\al\be}F_{\ga}+\Re(\lambda_{\al\be}\bar{m}),\\
&\d_{\al}^A m=-\lambda^{\ga}_{\al} F_{\ga},
\end{aligned}\right.
\end{equation}
where 
\[
\d_{\al}^A=\d_{\al}+iA_{\al}.
\]

\subsection{The Gauss and Codazzi relations} The Gauss and Codazzi equations are derived from the equality of second derivatives $\d_{\al}\d_{\be}F_{\ga}=\d_{\be}\d_{\al}F_{\ga}$ for the tangent vectors on the submanifold $\Sigma$ and for the normal vectors respectively.  Here we use the Gauss and Codazzi relations to derive the Riemannian curvature, the first compatibility condition and a symmetry. 

By the structure equations (\ref{strsys-cpf}), we get
\begin{equation}\label{d^2 F_ga}
\begin{aligned}           
\d^2_{\al\be}F_{\ga}=&\d_{\al}(\Gamma^{\si}_{\be\ga}F_{\si}+\Re(\lambda_{\be\ga}\bar{m}))\\
=&\d_{\al}\Ga_{\be\ga}^{\si} F_{\si}+\Ga_{\be\ga}^{\si} (\Gamma^{\mu}_{\al\si}F_{\mu}+\Re(\lambda_{\al\si}\bar{m}))+\Re(\d_{\al}\la_{\be\ga}\bar{m}+\la_{\be\ga}(iA_{\al}\bar{m}-\bar{\la}_{\al}^{\mu}F_{\mu}))\\
=&(\d_{\al}\Ga_{\be\ga}^{\si}+\Ga_{\be\ga}^{\mu}\Gamma^{\si}_{\al\mu}-\Re(\la_{\be\ga}\bar{\la}_{\al}^{\si}))F_{\si}+\Re[(\d_{\al}^A\la_{\be\ga}+\Ga^{\si}_{\be\ga}\la_{\al\si})\bar{m}].
\end{aligned}
\end{equation}
Then in view of $\d_{\al}\d_{\be} F_{\ga}=\d_{\be}\d_{\al} F_{\ga}$ and equating the coefficients of the tangent vectors, we obtain
\begin{equation*}
\d_{\al}\Ga_{\be\ga}^{\si}+\Ga_{\be\ga}^{\mu}\Gamma^{\si}_{\al\mu}-\d_{\be}\Ga_{\al\ga}^{\si}-\Ga_{\al\ga}^{\mu}\Gamma^{\si}_{\be\mu}=\Re(\la_{\be\ga}\bar{\la}_{\al}^{\si}-\la_{\al\ga}\bar{\la}_{\be}^{\si}).
\end{equation*}
This gives the Riemannian curvature
\begin{align}          \label{R-la}
R_{\si\ga\al\be}=\<R^{\mu}_{\ga\al\be}F_{\mu},F_{\si}\>=\<R(\d_{\al},\d_{\be})F_{\ga},F_{\si}\>=\Re(\lambda_{\be\ga}\bar{\la}_{\al\si}-\la_{\al\ga}\bar{\la}_{\be\si}),
\end{align}
which is a complex formulation of the Gauss equation.
Correspondingly we obtain the the Ricci curvature
\begin{equation}\label{Ric}
\Ric_{\ga\be}=\Re (\la_{\ga\be}\bar{\psi}-\la_{\ga\al}\bar{\la}_{\be}^{\al}).
\end{equation}
After equating the coefficients of the vector $m$ in (\ref{d^2 F_ga}), we obtain 
\begin{equation*}
\d^A_{\al}\la_{\be\ga}+\Ga_{\be\ga}^{\si}\la_{\al\si}=\d^A_{\be}\la_{\al\ga}+\Ga_{\al\ga}^{\si}\la_{\be\si},
\end{equation*}
By the definition of covariant derivatives, i.e.
\begin{equation*}
\nab_{\al} \la_{\be\ga}=\d_{\al}\la_{\be\ga}-\Ga_{\al\be}^{\si} \la_{\si\ga}-\Ga_{\al\ga}^{\si}\la_{\be\si},
\end{equation*}
we obtain 
\begin{equation*}
\d^A_{\al}\la_{\be\ga}-\Ga_{\al\ga}^{\si}\la_{\be\si}-\Ga_{\al\be}^{\si}\la_{\si\ga}=\d^A_{\be}\la_{\al\ga}-\Ga_{\be\ga}^{\si}\la_{\al\si}-\Ga_{\al\be}^{\si}\la_{\si\ga}.
\end{equation*}
This implies the complex formulation of the Codazzi equation, namely
\begin{equation}         \label{la-commu}
\nab^A_{\al} \la_{\be\ga}=\nab^A_{\be}\la_{\al\ga}.
\end{equation}
As a consequence of this equality, we obtain
\begin{lemma}
The second fundamental form $\lambda$ satisfies the Codazzi relations
	\begin{equation}       \label{comm-la}
	\nab^A_{\al} \la^{\ga}_{\be}=\nab^A_{\be} \la^{\ga}_{\al}=\nab^{A,\ga}\la_{\al\be}.
	\end{equation}
\end{lemma}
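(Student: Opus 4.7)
The plan is to deduce (\ref{comm-la}) from the already-established Codazzi identity (\ref{la-commu}) together with two elementary ingredients: the symmetry $\la_{\al\be}=\la_{\be\al}$, and the metric-compatibility $\nab g=0$ of the Levi-Civita connection.

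First, I would verify that $\la_{\al\be}$ is symmetric in its two covariant indices. This is immediate from the defining relations $\ka_{\al\be}=\d_{\al}F_{\be}\cdot\nu_1$ and $\tau_{\al\be}=\d_{\al}F_{\be}\cdot\nu_2$, since $F_{\be}=\d_{\be}F$ gives $\d_{\al}F_{\be}=\d^2_{\al\be}F=\d_{\be}F_{\al}$. Combining this symmetry with (\ref{la-commu}), which provides the exchange $\al\leftrightarrow\be$ for $\nab^A_{\al}\la_{\be\ga}$, I would then show that the rank-three tensor $\nab^A_{\al}\la_{\be\ga}$ is \emph{totally symmetric} in $\al,\be,\ga$: the transposition $\be\leftrightarrow\ga$ comes for free from $\la_{\be\ga}=\la_{\ga\be}$, and the remaining transposition $\al\leftrightarrow\ga$ follows by composing the other two.

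To conclude (\ref{comm-la}), I would raise the $\ga$ index by contracting with $g^{\ga\si}$. Since the Levi-Civita connection satisfies $\nab g=0$, and since the $U(1)$ piece $iA$ in $\nab^A$ acts only on the complex scalar factor and therefore commutes with purely tangential index contractions, the inverse metric passes freely through $\nab^A_{\al}$. Thus
\[
\nab^A_{\al}\la^{\ga}_{\be} \;=\; g^{\ga\si}\,\nab^A_{\al}\la_{\si\be} \;=\; g^{\ga\si}\,\nab^A_{\si}\la_{\al\be} \;=\; \nab^{A,\ga}\la_{\al\be},
\]
where the middle equality invokes the total symmetry established above (after using $\la_{\si\be}=\la_{\be\si}$). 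The first identity $\nab^A_{\al}\la^{\ga}_{\be}=\nab^A_{\be}\la^{\ga}_{\al}$ follows in the same way: raise $\ga$ in (\ref{la-commu}) after writing $\la_{\be\si}=\la_{\si\be}$.

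There is no real obstacle here; the entire argument is a short index manipulation. The only points warranting care are checking the symmetry of $\la$ (which reduces to equality of mixed partials of $F$) and verifying that raising an index commutes with $\nab^A$, which holds because the contraction is with the tangent-bundle metric while $iA$ is a connection on the normal-bundle complex line.
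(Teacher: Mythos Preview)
Your proof is correct and follows essentially the same approach as the paper: both arguments reduce (\ref{comm-la}) to the identity (\ref{la-commu}) together with the symmetry $\la_{\al\be}=\la_{\be\al}$ and the metric compatibility $\nab g=0$, using the latter to pass $g^{\ga\si}$ through $\nab^A$. The paper's proof is slightly terser (it directly computes $\nab^A_{\be}\la^{\ga}_{\al}=g^{\ga\si}\nab^A_{\be}\la_{\si\al}=g^{\ga\si}\nab^A_{\si}\la_{\be\al}=\nab^{A,\ga}\la_{\al\be}$ without first phrasing the intermediate step as ``total symmetry''), but the content is the same.
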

\begin{proof}
	Here we prove the last equality. By $\nab_{\be}g^{\ga\si}=0$ and (\ref{la-commu}) we have
	\begin{align*}
	\nab^A_{\be}\la^{\ga}_{\al}=g^{\ga\si}\nab_{\be}^A \la_{\si\al}=g^{\ga\si}\nab^A_{\si}\la_{\be\al}=\nab^{A,\ga}\la_{\al\be}.
	\end{align*}
	The first equality can be proved similarly.
\end{proof}

Next, we use the relation $\d_{\al}\d_{\be}m=\d_{\be}\d_{\al}m$ in order to derive a compatibility condition between the connection $A$ in the normal bundle and the second fundamental form.
Indeed, from $\d_{\al}\d_{\be}m=\d_{\be}\d_{\al}m$ we obtain the commutation relation
\begin{equation} \label{comp-cond_m-pre}
[\d^A_{\al},\d^A_{\be}]m=i(\d_{\al} A_{\be}-\d_{\be} A_{\al})m.
\end{equation}
By (\ref{strsys-cpf}) we have
\begin{align*}
\d^A_{\al}\d^A_{\be} m=&-\d^A_{\al}(\la^{\ga}_{\be} F_{\ga})=-(\d^A_{\al}\la^{\si}_{\be}+\la^{\ga}_{\be}\Ga^{\si}_{\al\ga})F_{\si}-\la^{\ga}_{\be}\Re(\la_{\al\ga}\bar{m}).
\end{align*}
Then multiplying (\ref{comp-cond_m-pre}) by $m$ yields
\begin{align*}
2i(\d_{\al} A_{\be}-\d_{\be} A_{\al})=&\<[-\la^{\ga}_{\be}\Re(\la_{\al\ga}\bar{m})+\la^{\ga}_{\al}\Re(\la_{\be\ga}\bar{m})],m\>\\
=&-\la^{\ga}_{\be} \bar{\la}_{\al\ga}+\la^{\ga}_{\al}\bar{\la}_{\be\ga}=2i\Im(\la^{\ga}_{\al}\bar{\la}_{\be\ga}).
\end{align*}
This gives the compatibility condition for the curvature of $A$,
\begin{equation}          \label{cpt-AiAj}
\d_{\al} A_{\be}-\d_{\be} A_{\al}=\Im(\la^{\ga}_{\al}\bar{\la}_{\be\ga}).
\end{equation}
Using covariant differentiation, this can be written as
\begin{equation}          \label{cpt-AiAj-2}
\nab_{\al} A_{\be}-\nab_{\be} A_{\al}=\Im(\la^{\ga}_{\al}\bar{\la}_{\be\ga}),
\end{equation}
which can be seen as the complex form of the Ricci equations.

\medskip

We remark that, by equating the coefficients of the tangent vectors in (\ref{comp-cond_m-pre}), we also obtain
\begin{equation*}
\d^A_{\al}\la^{\si}_{\be}+\la^{\ga}_{\be}\Ga^{\si}_{\al\ga}=\d^A_{\be}\la^{\si}_{\al}+\la^{\ga}_{\al}\Ga^{\si}_{\be\ga},
\end{equation*}
and hence 
\begin{equation*}
\nab^A_{\al}\la^{\si}_{\be}=\nab^A_{\be}\la^{\si}_{\al},
\end{equation*}
which is the same as (\ref{comm-la}).

Next, we state an elliptic system for the second fundamental form $\la_{\al\be}$ in terms of $\psi$, using the Codazzi relations (\ref{comm-la}).

\begin{lemma}[Div-curl system for $\la$] The second fundamental form $\lambda$ satisfies
	\begin{equation}\label{la-eq}
	\left\{\begin{aligned}
	     & \nab^A_\al \la_{\be\ga}-\nab^A_\be \la_{\al\ga}=0,\\
	     & \nab^{A,\al}\la_{\al\be}=\nab^A_\be \psi.
	\end{aligned}\right.
	\end{equation}
\end{lemma}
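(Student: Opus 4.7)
The first equation of \eqref{la-eq} is simply a restatement of the Codazzi relation \eqref{la-commu}, which has already been derived in the preceding discussion from the equality of mixed partials $\partial_\alpha \partial_\beta F_\gamma = \partial_\beta \partial_\alpha F_\gamma$. So there is nothing to do for this part.

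For the second equation, the plan is a short direct computation using three facts: (i) $\psi = g^{\alpha\beta}\lambda_{\alpha\beta}$ by the definition \eqref{csmc}; (ii) the metric is covariantly constant, $\nabla_\beta g^{\alpha\gamma} = 0$, and $A$ acts trivially on the (uncomplexified) metric, so $\nabla^A_\beta$ commutes with contraction by $g$; and (iii) $\lambda_{\alpha\beta}$ is symmetric in its two indices, since $\kappa_{\alpha\beta}$ and $\tau_{\alpha\beta}$ arise as $\partial_\alpha F_\beta \cdot \nu_j = \partial^2_{\alpha\beta} F \cdot \nu_j$.

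Starting from the right-hand side,
\[
\nabla^A_\beta \psi \;=\; \nabla^A_\beta\bigl(g^{\alpha\gamma}\lambda_{\alpha\gamma}\bigr) \;=\; g^{\alpha\gamma}\nabla^A_\beta \lambda_{\alpha\gamma}.
\]
Applying the Codazzi identity \eqref{la-commu} to swap the first two lower indices,
\[
g^{\alpha\gamma}\nabla^A_\beta \lambda_{\alpha\gamma} \;=\; g^{\alpha\gamma}\nabla^A_\alpha \lambda_{\beta\gamma} \;=\; \nabla^{A,\alpha}\lambda_{\beta\alpha},
\]
where in the last step we relabeled the contracted index. Finally, invoking the symmetry $\lambda_{\beta\alpha} = \lambda_{\alpha\beta}$ yields $\nabla^{A,\alpha}\lambda_{\alpha\beta}$, which is the desired identity.

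I do not anticipate any substantive obstacle: the lemma is essentially an algebraic consequence of Codazzi plus the symmetry of the second fundamental form, and the argument consists of two lines once these ingredients are in place. The only mild point of care is to ensure that the connection $\nabla^A$ (i.e. $\nabla$ twisted by $iA$ on the normal factor) still commutes with raising/lowering indices by $g$, but this is immediate because $A$ only acts on the normal bundle while $g$ lives on the tangent bundle.
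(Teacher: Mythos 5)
Your proof is correct and follows essentially the same route the paper intends: the first equation is a verbatim restatement of the complex Codazzi relation \eqref{la-commu}, and the second is obtained by contracting it with $g$, using metric compatibility of $\nabla^A$ and the symmetry of $\lambda$, exactly as in the paper's proof of \eqref{comm-la}.
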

We remark that a-priori solutions $\lambda$ to the above system are not guaranteed to be symmetric, so we record this as a separate property:
\begin{equation}\label{lambda-sim}
\la_{\al\be} = \la_{\be\al}.    
\end{equation}

Finally, we turn our attention to the connection $A$, for which we have the curvature relations
\eqref{cpt-AiAj-2} together with the gauge group \eqref{gauge-A}. In order to both fix the 
gauge and obtain an elliptic system for $A$, we impose the Coulomb gauge condition
\begin{equation}\label{Coulomb}
    \nab^\al A_\al=0.
\end{equation}

Next, we derive the elliptic $A$-equations from the Ricci equations (\ref{cpt-AiAj-2}).
\begin{lemma}[Elliptic equations for $A$] Under the Coulomb gauge condition, the connection $A$ solves
	\begin{align} \label{Ellp-A-pre}
	\nab^{\ga}\nab_{\ga} A_{\al}=\Re(\la_\al^\si\bar{\psi}-\la_{\al}^{\ga}\bar{\la}_{\ga}^{\si})A_{\si}+\nab^{\ga}\Im(\la^{\si}_{\ga}\bar{\la}_{\al\si}).
	\end{align}
\end{lemma}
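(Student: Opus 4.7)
The plan is to take the divergence of the Ricci equation (\ref{cpt-AiAj-2}), then use the Coulomb gauge (\ref{Coulomb}) to eliminate one of the resulting second-derivative terms and commute covariant derivatives to convert the remaining term into a Ricci curvature times $A$. The Gauss identity (\ref{Ric}) then exhibits that Ricci curvature as a quadratic expression in $\lambda$, producing the desired right-hand side.

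In detail, I would apply $\nabla^{\beta}$ to both sides of (\ref{cpt-AiAj-2}):
\begin{equation*}
\nabla^{\beta}\nabla_{\alpha}A_{\beta} - \nabla^{\beta}\nabla_{\beta}A_{\alpha}
= \nabla^{\beta}\Im(\lambda_{\alpha}^{\gamma}\bar\lambda_{\beta\gamma}).
\end{equation*}
Now rewrite the first term on the left by swapping the order of covariant derivatives,
\begin{equation*}
\nabla^{\beta}\nabla_{\alpha}A_{\beta}
= \nabla_{\alpha}(\nabla^{\beta}A_{\beta}) + g^{\beta\mu}[\nabla_{\mu},\nabla_{\alpha}]A_{\beta}.
\end{equation*}
The first term here vanishes by the Coulomb gauge condition (\ref{Coulomb}). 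The commutator, applied to a $1$-form, produces a Riemann tensor contraction; using the standard symmetries of $R_{\alpha\beta\gamma\sigma}$ together with the definition $\mathrm{Ric}_{\alpha\beta}=R_{\alpha\sigma\beta}^{\sigma}$ from the paper, this contraction reduces to $\mathrm{Ric}_{\alpha}^{\sigma}A_{\sigma}$. Rearranging then gives
\begin{equation*}
\nabla^{\gamma}\nabla_{\gamma}A_{\alpha}
= \mathrm{Ric}_{\alpha}^{\sigma}A_{\sigma} - \nabla^{\beta}\Im(\lambda_{\alpha}^{\gamma}\bar\lambda_{\beta\gamma}).
\end{equation*}

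To finish, I would plug in the Gauss identity (\ref{Ric}) for $\mathrm{Ric}_{\alpha\beta}$ and use the symmetry $\lambda_{\alpha\beta}=\lambda_{\beta\alpha}$ from (\ref{lambda-sim}) to rewrite
$\mathrm{Ric}_{\alpha}^{\sigma}=\Re(\lambda_{\alpha}^{\sigma}\bar\psi-\lambda_{\alpha}^{\gamma}\bar\lambda_{\gamma}^{\sigma})$. For the last term, note that complex conjugation combined with the index swap $\beta\leftrightarrow\gamma$ (legitimate since $\lambda$ is symmetric) gives
\begin{equation*}
-\Im(\lambda_{\alpha}^{\sigma}\bar\lambda_{\gamma\sigma}) = \Im(\lambda_{\gamma}^{\sigma}\bar\lambda_{\alpha\sigma}),
\end{equation*}
so after relabeling the contracted index $\beta\to\gamma$ we recover exactly the stated right-hand side of (\ref{Ellp-A-pre}).

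The main bookkeeping obstacle will be the commutator step: one must keep track of index positions and sign conventions in the curvature tensor carefully enough to correctly identify the contraction $g^{\beta\mu}R_{\beta\mu\alpha}^{\sigma}$ (with the paper's convention from (\ref{R})) as $+\mathrm{Ric}_{\alpha}^{\sigma}$ rather than $-\mathrm{Ric}_{\alpha}^{\sigma}$; this rests on the symmetry $R_{\rho\beta\mu\alpha}=R_{\mu\alpha\rho\beta}$ and the antisymmetry in the last pair of indices. Everything else amounts to purely algebraic manipulations with $\lambda$ and its conjugate.
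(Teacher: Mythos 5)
Your proof is correct and is exactly the paper's argument: take the divergence of the Ricci equation \eqref{cpt-AiAj-2}, kill $\nabla_\alpha\nabla^\beta A_\beta$ with the Coulomb gauge, commute covariant derivatives to expose $\Ric_\alpha^{\ \sigma}A_\sigma$, and then substitute the Gauss identity \eqref{Ric} for the Ricci tensor. One small slip in your closing remark: the bare contraction $g^{\beta\mu}R^{\sigma}_{\ \beta\mu\alpha}$ actually equals $-\Ric_\alpha^{\ \sigma}$ with the paper's conventions; the plus sign in $g^{\beta\mu}[\nabla_\mu,\nabla_\alpha]A_\beta=+\Ric_\alpha^{\ \sigma}A_\sigma$ arises because the one-form commutator carries its own minus sign, $[\nabla_\mu,\nabla_\alpha]A_\beta=-R^{\sigma}_{\ \beta\mu\alpha}A_\sigma$, and the two cancel. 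Your displayed equations are all correct.
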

\begin{proof}
	Applying $\nab^{\be}$ to \eqref{cpt-AiAj-2}, by curvature and \eqref{Coulomb} we obtain
	\begin{align*}
	\nab^{\be}\nab_{\be} A_{\al}=\Ric _{\al\de}A^{\de}+\nab^{\be}\Im(\la^{\si}_{\be}\bar{\la}_{\al\si}).
	\end{align*}
	Then the equation \eqref{Ellp-A-pre} for $A$ is obtained from \eqref{Ric}.
\end{proof}

\subsection{The elliptic equation for the metric \texorpdfstring{$g$}{} in harmonic coordinates}    % \label{Sec-gauge}
Here we take the next step towards fixing the gauge, by choosing to work in harmonic coordinates. Precisely, we will require the coordinate functions $\{x_{\al},\al=1,\cdots,d\}$ to be globally Lipschitz solutions of the elliptic equations 
\begin{equation} \label{h-gauge}
\Delta_g x_{\al}=0.
\end{equation}
This determines the coordinates uniquely modulo time dependent affine transformations.
This remaining ambiguity will be removed later on by imposing suitable boundary conditions at infinity. After this, the only remaining degrees of freedom in the choice of coordinates will be 
given by \emph{time independent} translations and
rigid rotations. Thus, once a choice is made 
at the initial time, the coordinates will be 
uniquely determined later on (see also Remark \ref{Rmk-coordinate}).

Here we will interpret the above harmonic coordinate condition at fixed time as an elliptic equation for the metric $g$ (see e.g. \cite{Fo}, \cite[P161]{We}).
The equations \eqref{h-gauge} may be expressed in terms of the Christoffel symbols $\Ga$, which  must satisfy the condition
\begin{equation}           \label{hm-coord}
	g^{\al\be}\Ga^{\ga}_{\al\be}=0,\quad {\rm for}\ \ga=1,\cdots,d.
\end{equation}
This implies
\begin{equation}         \label{hm-coord2}
	g^{\al\be}\d_{\al} g_{\be\ga}=\frac{1}{2}g^{\al\be}\d_{\ga} g_{\al\be},\quad \d_{\al} g^{\al\ga}=\frac{1}{2}g_{\al\be}g^{\ga\si}\d_{\si} g^{\al\be}.
\end{equation}
Let
\begin{equation}   \label{Ga_albega}
	\Ga_{\al\be,\ga}=\frac{1}{2}(\d_{\al} g_{\be\ga}+\d_{\be} g_{\al\ga}-\d_{\ga} g_{\al\be})=g_{\ga\si}\Ga^{\si}_{\al\be}.
\end{equation}
Then we also have
\begin{equation*}
	g^{\al\be}\Ga_{\al\be,\ga}=g^{\al\be}g_{\ga\si}\Ga^{\si}_{\al\be}=0,
\end{equation*}
and 
\begin{equation*}
	R_{\al\ga\be\si}=\d_{\be} \Ga_{\ga\si,\al}-\d_{\si}\Ga_{\be\ga,\al}+\Ga_{\si\al,\nu}\Ga^{\nu}_{\be\ga}-\Ga_{\be\al,\nu}\Ga^{\nu}_{\ga\si}.
\end{equation*}
This leads to an equation for the metric $g$:

\begin{lemma} [Elliptic equations of $g$] In harmonic coordinates, the metric $g$ satisfies
	\begin{equation}\label{g-eq-original}
	\begin{aligned}      
		g^{\al\be}\d^2_{\al\be}g_{\ga\si}=&\ [-\d_{\ga} g^{\al\be}\d_{\be} g_{\al\si}-\d_{\si} g^{\al\be}\d_{\be} g_{\al\ga}+\d_{\ga} g_{\al\be}\d_{\si} g^{\al\be}]\\
		&+2g^{\al\be}\Ga_{\si\al,\nu}\Ga^{\nu}_{\be\ga}-2\Re (\la_{\ga\si}\bar{\psi}-\la_{\al\ga}\bar{\la}_{\si}^{\al}).
	\end{aligned}
	\end{equation}
\end{lemma}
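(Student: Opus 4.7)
The target identity has the classical form ``harmonic Laplacian of $g$ = Ricci curvature plus lower order'', so the plan is to compute $\mathrm{Ric}_{\gamma\sigma}$ in two independent ways and equate them. The first computation is intrinsic and extracts a principal part $-\tfrac12 g^{\alpha\beta}\partial^2_{\alpha\beta}g_{\gamma\sigma}$ out of the usual Christoffel formula, using only the harmonic gauge; the second uses the already-derived Gauss relation \eqref{Ric}. Rearranging and multiplying by $-2$ produces \eqref{g-eq-original}.

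\emph{Step 1: Ricci in harmonic coordinates.} Start from the curvature formula already displayed just before \eqref{g-eq-original}, namely
\[
R_{\alpha\gamma\beta\sigma}=\partial_\beta\Gamma_{\gamma\sigma,\alpha}-\partial_\sigma\Gamma_{\beta\gamma,\alpha}+\Gamma_{\sigma\alpha,\nu}\Gamma^\nu_{\beta\gamma}-\Gamma_{\beta\alpha,\nu}\Gamma^\nu_{\gamma\sigma},
\]
and contract with $g^{\alpha\beta}$ to obtain $\mathrm{Ric}_{\gamma\sigma}$. Substitute the expression \eqref{Ga_albega} for each $\Gamma_{\cdot\cdot,\cdot}$. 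The four pure second derivative terms that arise from $g^{\alpha\beta}\partial_\beta\Gamma_{\gamma\sigma,\alpha}$ and $-g^{\alpha\beta}\partial_\sigma\Gamma_{\beta\gamma,\alpha}$ are of three types: $g^{\alpha\beta}\partial_\beta\partial_\gamma g_{\sigma\alpha}$, $g^{\alpha\beta}\partial_\beta\partial_\sigma g_{\gamma\alpha}$, and $g^{\alpha\beta}\partial_\sigma\partial_\gamma g_{\alpha\beta}$, together with $-g^{\alpha\beta}\partial_\alpha\partial_\beta g_{\gamma\sigma}$.

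\emph{Step 2: Apply the harmonic gauge.} The key is to use \eqref{hm-coord2}, differentiated once, to rewrite the ``cross'' second derivatives. Differentiating $g^{\alpha\beta}\partial_\beta g_{\sigma\alpha}=\tfrac12 g^{\alpha\beta}\partial_\sigma g_{\alpha\beta}$ in $\partial_\gamma$ gives
\[
g^{\alpha\beta}\partial_\beta\partial_\gamma g_{\sigma\alpha}=\tfrac12 g^{\alpha\beta}\partial_\gamma\partial_\sigma g_{\alpha\beta}+\tfrac12\partial_\gamma g^{\alpha\beta}\partial_\sigma g_{\alpha\beta}-\partial_\gamma g^{\alpha\beta}\partial_\beta g_{\sigma\alpha},
\]
and symmetrically for the $\gamma\leftrightarrow\sigma$ counterpart. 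Inserting these two identities into the expression from Step~1 cancels the three ``cross'' second derivatives against each other (each is converted into $\tfrac12 g^{\alpha\beta}\partial_\gamma\partial_\sigma g_{\alpha\beta}$, and these combine with the pre-existing $-\tfrac12 g^{\alpha\beta}\partial_\gamma\partial_\sigma g_{\alpha\beta}$ from the second derivative in the $\Gamma_{\beta\gamma,\alpha}$ expansion), and leaves the clean principal part $-\tfrac12 g^{\alpha\beta}\partial^2_{\alpha\beta}g_{\gamma\sigma}$. The quadratic remainders in $\partial g$ produced by \eqref{hm-coord2} collect into exactly $-\tfrac12[-\partial_\gamma g^{\alpha\beta}\partial_\beta g_{\alpha\sigma}-\partial_\sigma g^{\alpha\beta}\partial_\beta g_{\alpha\gamma}+\partial_\gamma g_{\alpha\beta}\partial_\sigma g^{\alpha\beta}]$, and the undifferentiated $\Gamma\Gamma$ terms contract into $-g^{\alpha\beta}\Gamma_{\sigma\alpha,\nu}\Gamma^\nu_{\beta\gamma}$ up to symmetric pieces that vanish by \eqref{hm-coord}.

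\emph{Step 3: Identify Ricci via Gauss.} Invoke \eqref{Ric}, which states $\mathrm{Ric}_{\gamma\sigma}=\Re(\lambda_{\gamma\sigma}\bar\psi-\lambda_{\alpha\gamma}\bar\lambda^\alpha_\sigma)$, set this equal to the expression from Step~2, and multiply the resulting identity by $-2$ to obtain \eqref{g-eq-original}.

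\emph{Main obstacle.} The only non-routine part is Step~2: getting the exact coefficients and signs of the quadratic terms in $\partial g$ that appear once the harmonic gauge condition is differentiated. The bookkeeping is delicate because one must distinguish between differentiation of $g^{\alpha\beta}$ and of $g_{\alpha\beta}$ (these contribute with opposite signs via $\partial g^{\alpha\beta}=-g^{\alpha\mu}g^{\beta\nu}\partial g_{\mu\nu}$), and one must be careful to only use \eqref{hm-coord2} after first producing the pure Laplacian $g^{\alpha\beta}\partial^2_{\alpha\beta}g_{\gamma\sigma}$ by symmetry. Once the bookkeeping is done correctly, the remaining steps are purely algebraic.
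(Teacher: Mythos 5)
Your proposal takes essentially the same route as the paper: compute $\Ric_{\ga\si}$ intrinsically from the contracted curvature formula, use the harmonic coordinate condition \eqref{hm-coord} to drop $g^{\al\be}\Ga_{\be\al,\nu}\Ga^{\nu}_{\ga\si}$, differentiate \eqref{hm-coord2} once to convert the cross second derivatives of $g$ into first-order quadratic terms, extract the principal part $-\frac12 g^{\al\be}\d^2_{\al\be}g_{\ga\si}$, and finally equate with the Gauss relation \eqref{Ric} and clear denominators. All the key ideas match the paper's proof.

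That said, the explicit coefficients you report in Step~2 carry the wrong overall sign, and this is not cosmetic: it would propagate to a final formula contradicting \eqref{g-eq-original}. Carrying out the paper's computation (expand $I=g^{\al\be}(\d_\be\Ga_{\ga\si,\al}-\d_\si\Ga_{\be\ga,\al})$ using \eqref{Ga_albega}, then use the differentiated gauge identity $g^{\al\be}(\d^2_{\ga\be}g_{\al\si}-\tfrac12\d^2_{\ga\si}g_{\al\be})=-\d_\ga g^{\al\be}(\d_\be g_{\al\si}-\tfrac12\d_\si g_{\al\be})$ plus its $\ga\leftrightarrow\si$ counterpart) yields
\[
\Ric_{\ga\si}=-\frac12 g^{\al\be}\d^2_{\al\be}g_{\ga\si}
+\frac12\bigl[-\d_\ga g^{\al\be}\d_\be g_{\al\si}-\d_\si g^{\al\be}\d_\be g_{\al\ga}+\d_\ga g_{\al\be}\d_\si g^{\al\be}\bigr]
+ g^{\al\be}\Ga_{\si\al,\nu}\Ga^\nu_{\be\ga},
\]
with a $+\tfrac12$ on the bracket and a $+$ on the $\Ga\Ga$ term, whereas you wrote $-\tfrac12[\cdots]$ and $-g^{\al\be}\Ga_{\si\al,\nu}\Ga^\nu_{\be\ga}$. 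With your signs, multiplying by $-2$ and isolating $g^{\al\be}\d^2_{\al\be}g_{\ga\si}$ would flip the first two groups of terms relative to \eqref{g-eq-original}. You anticipated in your ``Main obstacle'' paragraph that these signs are the delicate part, and indeed they are the one place where the write-up goes wrong; the strategy itself is sound and identical to the paper's.
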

\begin{proof}
	By the definition of Ricci curvature, (\ref{R}) and (\ref{hm-coord}), we have
	\begin{align*}
		\Ric_{\ga\si}=&g^{\al\be}R_{\al\ga\be\si}=g^{\al\be}(\d_{\be} \Ga_{\ga\si,\al}-\d_{\si}\Ga_{\be\ga,\al})+g^{\al\be}\Ga_{\si\al,\nu}\Ga^{\nu}_{\be\ga}-g^{\al\be}\Ga_{\be\al,\nu}\Ga^{\nu}_{\ga\si}\\
		=&g^{\al\be}(\d_{\be} \Ga_{\ga\si,\al}-\d_{\si}\Ga_{\be\ga,\al})+g^{\al\be}\Ga_{\si\al,\nu}\Ga^{\nu}_{\be\ga}\\
		=&I+II.
	\end{align*}
	
	We compute the first term $I$. By the definition of $\Ga_{\al\be,\ga}$ in (\ref{Ga_albega}), we have
	\begin{align*}
		I=&\frac{1}{2}g^{\al\be}[\d_{\be}(\d_{\ga} g_{\si\al}+\d_{\si} g_{\ga\al}-\d_{\al} g_{\ga\si})-\d_{\si}(\d_{\be} g_{\ga\al}+\d_{\ga}g_{\be\al}-\d_{\al} g_{\be\ga})]\\
		=&-\frac{1}{2}g^{\al\be}\d^2_{\al\be}g_{\ga\si}+\frac{1}{2}g^{\al\be}(\d^2_{\be\ga}g_{\al\si}+\d^2_{\al\si}g_{\be\ga}-\d^2_{\ga\si}g_{\al\be})
	\end{align*}
	Since, by (\ref{hm-coord2}) we have
	\begin{equation*}
		g^{\al\be}(\d^2_{\ga\be}g_{\al\si}-\frac{1}{2}\d^2_{\ga\si}g_{\al\be})=-\d_{\ga} g^{\al\be}(\d_{\be} g_{\al\si}-\frac{1}{2}\d_{\si} g_{\al\be}).
	\end{equation*}
	Then 
	\begin{align*}
		I=&-\frac{1}{2}g^{\al\be}\d^2_{\al\be}g_{\ga\si}+\frac{1}{2}[-\d_{\ga} g^{\al\be}(\d_{\be} g_{\al\si}-\frac{1}{2}\d_{\si} g_{\al\be})-\d_{\si} g^{\al\be}(\d_{\be} g_{\al\ga}-\frac{1}{2}\d_{\ga} g_{\al\be})]\\
		=&-\frac{1}{2}g^{\al\be}\d^2_{\al\be}g_{\ga\si}+\frac{1}{2}[-\d_{\ga} g^{\al\be}\d_{\be} g_{\al\si}-\d_{\si} g^{\al\be}\d_{\be} g_{\al\ga}+\d_{\ga} g_{\al\be}\d_{\si} g^{\al\be}].
	\end{align*}
	Hence, 
	\begin{equation*}
		\Ric_{\ga\si}=-\frac{1}{2}g^{\al\be}\d^2_{\al\be}g_{\ga\si}+\frac{1}{2}[-\d_{\ga} g^{\al\be}\d_{\be} g_{\al\si}-\d_{\si} g^{\al\be}\d_{\be} g_{\al\ga}+\d_{\ga} g_{\al\be}\d_{\si} g^{\al\be}]+g^{\al\be}\Ga_{\si\al,\nu}\Ga^{\nu}_{\be\ga}.
	\end{equation*}
	By (\ref{Ric}) this concludes the proof of the Lemma.
\end{proof}

\subsection{The motion of the frame \texorpdfstring{$\{F_1,\cdots,F_d,m\}$}{} under (SMCF)} Here we derive the equations of motion for the frame, assuming that the immersion $F$ satisfying (\ref{Main-Sys}). 

We begin by rewriting  the SMCF equations in the form 
\begin{equation}    \label{SMCF-general-version}
\d_t F=J(F)\bH(F)+V^{\ga} F_{\ga},
\end{equation}  
where $V^{\ga}$ is a vector field on the manifold $\Sigma$, which 
in general depends on the choice of coordinates.

By the definition of $m$ and $\la_{\al\be}$, we get
\begin{align*}
J(F)\bH(F)=J(F) \Re(\psi \bar{m})=\Re i(\psi\bar{m})=-\Im (\psi\bar{m}).
\end{align*}
Hence, the above $F$-equation (\ref{SMCF-general-version}) is rewritten as
\begin{equation}          \label{sys-cpf}
\d_t F=-\Im (\psi\bar{m})+V^{\ga} F_{\ga}.
\end{equation}

Then we use this to derive the equations of motion for the frame. Applying $\d_{\al}$ to (\ref{sys-cpf}), by the structure equations (\ref{strsys-cpf}) we obtain
\begin{align*}
\d_t F_{\al}=&\ \d_{\al} F_t=\d_{\al}[-\Im (\psi\bar{m})+V^{\ga} F_{\ga}]\\
=&-\Im ((\d_{\al}+iA_{\al})\psi \bar{m}+\psi\overline{(\d_{\al}+iA_{\al})}\bar{m})+\d_{\al} V^{\ga} F_{\ga}+V^{\ga} (\Ga^{\si}_{\al\ga}F_{\si}+\Re(\la_{\al\ga}\bar{m}))\\
=&-\Im (\d^A_{\al} \psi \bar{m}-\psi \bar{\la}^{\ga}_{\al} F_{\ga})+\d_{\al} V^{\ga} F_{\ga}+V^{\ga} (\Ga^{\si}_{\al\ga}F_{\si}+\Re(\la_{\al\ga}\bar{m}))\\
=&-\Im (\d^A_{\al} \psi \bar{m})+\Re(\la_{\al\ga}V^{\ga} \bar{m})+[\Im(\psi\bar{\la}^{\ga}_{\al})+\nab_{\al} V^{\ga}]F_{\ga}\\
=&-\Im (\d^A_{\al} \psi \bar{m}-i\la_{\al\ga}V^{\ga} \bar{m})+[\Im(\psi\bar{\la}^{\ga}_{\al})+\nab_{\al} V^{\ga} ]F_{\ga}.
\end{align*}
By the orthogonality relation  $m\bot F_{\al}=0$, this implies
\begin{align*}
\<\d_t m,F_{\al}\>=&\ \d_t\<m,F_{\al}\>-\<m,\d_t F_{\al}\>\\
=&\ -\<m, -\Im(\d^A_{\al} \psi \bar{m}-i\la_{\al\ga}V^{\ga} \bar{m})\>\\
=&\ \<m,\frac{i}{2}(\overline{\d^A_{\al} \psi -i\la_{\al\ga}V^{\ga}})m\>\\
=&\ -i(\d^A_{\al} \psi -i\la_{\al\ga}V^{\ga} ).
\end{align*}

In order to describe the normal component of the time derivative of $m$, we also need the temporal
component of the connection in the normal bundle. This is defined by
\[
B=\<\d_t \nu_1,\nu_2\>.
\]
We have
\begin{align*}
(\d_t m)^{\perp}=(\d_t(\nu_1+i\nu_2))^{\perp}=B\nu_2-iB\nu_1=-iB(\nu_1+i\nu_2)=-iB m.
\end{align*}
Then we get
\begin{equation*}
\d_t m=-i(\d^{A,\al} \psi -i\la^{\al}_{\ga}V^{\ga} )F_{\al}-iB m,
\end{equation*}
which can be further rewritten as
\begin{equation*}
\d^{B}_t m=-i(\d^{A,\al} \psi -i\la^{\al}_{\ga}V^{\ga} )F_{\al}.
\end{equation*}
Therefore, we obtain the following equations of motion for the frame
\begin{equation}              \label{mo-frame}
\left\{\begin{aligned}
&\d_t F_{\al}=-\Im (\d^A_{\al} \psi \bar{m}-i\la_{\al\ga}V^{\ga} \bar{m})+[\Im(\psi\bar{\la}^{\ga}_{\al})+\nab_{\al} V^{\ga}]F_{\ga},\\
&\d^{B}_t m=-i(\d^{A,\al} \psi -i\la^{\al}_{\ga}V^{\ga} )F_{\al}.
\end{aligned}\right.
\end{equation}

From this we obtain the evolution equation for the metric $g$. By the definition of the induced metric $g$ (\ref{g_metric}) and (\ref{mo-frame}), we have
\begin{align*}
\d_t g_{\al\be}=&\ \d_t\<F_{\al},F_{\be}\>=\<\d_t F_{\al},F_{\be}\>+\<F_{\al},\d_tF_{\be}\>\\
=&\ \<-\Im (\d^A_{\al} \psi \bar{m}-i\la_{\al\ga}V^{\ga} \bar{m})+[\Im(\psi\bar{\la}^{\ga}_{\al})+\nab_{\al} V^{\ga}]F_{\ga},F_{\be}\>\\
&\ +\<F_{\al},-\Im (\d^A_{\be} \psi \bar{m}-i\la_{\be\ga}V^{\ga} \bar{m})+[\Im(\psi\bar{\la}^{\ga}_{\be})+\nab_{\be} V^{\ga}]F_{\ga}\>\\
=&\ g_{\ga\be}(\Im(\psi\bar{\la}^{\ga}_{\al})+\nab_{\al} V^{\ga})+g_{\al\ga}(\Im(\psi\bar{\la}^{\ga}_{\be})+\nab_{\be} V^{\ga})\\
=&\ 2\Im(\psi\bar{\la}_{\al\be})+\nab_{\al}V_{\be}+\nab_{\be}V_{\al},
\end{align*}
and hence,
\begin{align}           \label{g_dt}
\d_t g^{\al\be}=-2\Im(\psi\bar{\la}^{\al\be})-\nab^{\al}V^{\be}-\nab^{\be}V^{\al},\\   \label{Ga_dt}
\d_t \Ga_{\al\be}^\ga=\nab_\al G_\be^\ga+\nab_\be G_\al^\ga-\nab^\ga G_{\al\be},
\end{align}
where $G_{\al\be}$ are defined by
\begin{equation*}
    G_{\al\be}=\Im(\psi\bar{\la}_{\al\be})+\frac{1}{2}(\nab_{\al}V_{\be}+\nab_{\be}V_{\al}).
\end{equation*}

So far, the choice of $V$ has been unspecified; it depends on the choice of coordinates 
on our manifold as the time varies. However, once the latter is fixed via the harmonic coordinate condition (\ref{hm-coord}), we can also derive an elliptic equation for the advection field $V$:

\begin{lemma}[Elliptic equation for the vector field $V$] Under the harmonic coordinate condition~\eqref{hm-coord}, the advection field $V$ solves 
	\begin{equation}\label{Ellp-X}
	\begin{aligned}
	\nab^{\al}\nab_{\al}V^{\ga}=&\ 2\Im (\nab^{A,\ga}\psi\bar{\psi}-\nab^A_{\al}\psi\bar{\la}^{\al\ga})-\Re (\la^{\ga}_{\si}\bar{\psi}-\la_{\al\si}\bar{\la}^{\al\ga})V^{\si}\\
	&\ +2(\Im(\psi\bar{\la}^{\al\be})+\nab^{\al}V^{\be})\Ga^{\ga}_{\al\be}.
	\end{aligned}
	\end{equation}
\end{lemma}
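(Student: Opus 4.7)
The strategy is to read the harmonic coordinate condition \eqref{hm-coord} as a constraint preserved in time, differentiate it, and use the evolution equations \eqref{g_dt} and \eqref{Ga_dt} together with the Codazzi and Ricci identities to extract an elliptic equation for $V$. Concretely, since $g^{\al\be}\Ga^{\ga}_{\al\be}=0$ at every time $t$, applying $\d_t$ yields
\[
(\d_t g^{\al\be})\,\Ga^{\ga}_{\al\be} + g^{\al\be}\,\d_t\Ga^{\ga}_{\al\be}=0.
\]
I would then substitute \eqref{g_dt} and \eqref{Ga_dt} and exploit the symmetry of $\la_{\al\be}$ from \eqref{lambda-sim} and of the symmetrized tensor $G_{\al\be}=\Im(\psi\bar{\la}_{\al\be})+\tfrac{1}{2}(\nab_\al V_\be+\nab_\be V_\al)$. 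The left-hand side reduces to
\[
2\nab^{\al}G_{\al}^{\ \ga}-\nab^{\ga}G^{\al}_{\ \al}=\bigl(2\Im(\psi\bar{\la}^{\al\be})+2\nab^{\al}V^{\be}\bigr)\Ga^{\ga}_{\al\be},
\]
where I have already moved the $\Ga$-contraction to the right-hand side and used the symmetry of $\la$ to combine the $\nab^\al V^\be+\nab^\be V^\al$ contributions. Since $\la^{\al}_{\ \al}=\psi$ by \eqref{csmc} and $\Im(\psi\bar\psi)=0$, the trace $G^{\al}_{\ \al}$ simplifies to $\nab^\al V_\al$.

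Next I would expand $2\nab^{\al}G_{\al}^{\ \ga}$ using the definition of $G$: this produces $2\nab^{\al}\Im(\psi\bar{\la}_{\al}^{\ \ga})$ plus $\nab^\al\nab_\al V^\ga+\nab^\al\nab^\ga V_\al$. The second order term $\nab^\al\nab^\ga V_\al-\nab^\ga\nab^\al V_\al$ is handled by the Ricci commutation identity, giving exactly $\Ric^{\ga}_{\ \si}V^\si$. Substituting the Gauss-type formula \eqref{Ric} for the Ricci curvature converts this into the real part $\Re(\la^{\ga}_{\ \si}\bar{\psi}-\la_{\al\si}\bar{\la}^{\al\ga})V^{\si}$, which yields the second term on the right of \eqref{Ellp-X}.

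To produce the first term of \eqref{Ellp-X}, I would rewrite $2\nab^\al\Im(\psi\bar\la_\al^{\ \ga})$ using that the product $\psi\bar{\la}_{\al}^{\ \ga}$ is gauge-neutral under \eqref{gauge-A}, so the plain divergence equals its gauge-covariant version and obeys a Leibniz rule
\[
\nab^{\al}(\psi\bar{\la}_{\al}^{\ \ga})=(\nab^{A,\al}\psi)\bar{\la}_{\al}^{\ \ga}+\psi\,\overline{\nab^{A,\al}\la_{\al}^{\ \ga}}.
\]
The Codazzi-derived div-curl system \eqref{la-eq} identifies $\nab^{A,\al}\la_{\al}^{\ \ga}=\nab^{A,\ga}\psi$, so taking imaginary parts produces exactly $-2\Im\bigl(\nab^{A,\ga}\psi\bar{\psi}-\nab^{A}_{\al}\psi\bar{\la}^{\al\ga}\bigr)$ on the left, which moves across with the desired sign. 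Rearranging assembles \eqref{Ellp-X}.

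The main obstacle is bookkeeping rather than a single deep step: one must consistently raise and lower indices, track real/imaginary parts, exploit the symmetries \eqref{lambda-sim} and \eqref{comm-la} to pair up the $\la$-dependent terms correctly, and verify that the Ricci identity produces the same structural form as the Gauss equation \eqref{Ric} so that the $\Re(\la^{\ga}_{\ \si}\bar{\psi}-\la_{\al\si}\bar{\la}^{\al\ga})V^{\si}$ term appears with the correct coefficient. The absence of any $\nab A$ term in \eqref{Ellp-X} is ensured precisely because the gauge-neutral Leibniz step above absorbs all $A_\al$ contributions into the definition of $\nab^A$.
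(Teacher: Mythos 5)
Your proof follows essentially the same route as the paper's: differentiate the harmonic gauge condition $g^{\al\be}\Ga^{\ga}_{\al\be}=0$ in time, substitute \eqref{g_dt} and \eqref{Ga_dt} to obtain $2\nab^{\al}G^{\ga}_{\al}-\nab^{\ga}G^{\al}_{\al}=2G^{\al\be}\Ga^{\ga}_{\al\be}$, extract $\De_g V^{\ga}$ plus the commutator $[\nab_{\al},\nab^{\ga}]V^{\al}=\Ric^{\ga}_{\si}V^{\si}$ from the $G$-divergence, then use \eqref{Ric} and the Codazzi relation \eqref{comm-la} (via the gauge-neutral Leibniz expansion of $\nab_\al(\psi\bar\la^{\al\ga})$) to land on \eqref{Ellp-X}. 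You actually spell out two steps the paper compresses into a single sentence at the end, namely the observation that $G^{\al}_{\al}=\nab^{\al}V_{\al}$ (using $\la^{\al}_{\al}=\psi$ and $\Im(\psi\bar\psi)=0$) and the gauge-neutral Leibniz identity that converts $2\nab_\al\Im(\psi\bar\la^{\al\ga})$ into $-2\Im(\nab^{A,\ga}\psi\bar\psi-\nab^{A}_{\al}\psi\bar\la^{\al\ga})$; both are correct and match what the paper implicitly invokes "by \eqref{hm-coord} and \eqref{comm-la}".
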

\begin{proof}
    Applying $\d_t$ to $g^{\al\be}\Ga_{\ga\be}^\ga$, by \eqref{g_dt} and \eqref{Ga_dt} we have
    \begin{align*}
        \d_t (g^{\al\be}\Ga_{\ga\be}^\ga)=&-2G^{\al\be}\Ga_{\al\be}^\ga+g^{\al\be}(2\nab_\al G_\be^\ga-\nab^\ga G_{\al\be})\\
        =& -2G^{\al\be}\Ga_{\al\be}^\ga+2\nab_\al\Im(\psi\bar{\la}^{\al\ga})+\De_g V^\ga+[\nab_\al,\nab^\ga] V^\al.
    \end{align*}
    Since 
    \begin{equation*}
        [\nab_\al,\nab^\ga] V^\al=\Ric^\ga_\si V^\si=\Re(\la^\ga_\si \bar{\psi}-\la_{\al\si}\bar{\la}^{\al\ga})V^\si.
    \end{equation*}
    By the harmonic coordinate condition \eqref{hm-coord} and \eqref{comm-la}, the above two equalities give the $V$-equations \eqref{Ellp-X}. 
\end{proof}

\begin{rem}\label{Rmk-coordinate}
Consider an arbitrary choice of  coordinates
(parametrization) $\{x_1,\cdots,x_d\}$ for the time evolving manifolds
$\Sigma_t$ for $t \in [0,T]$. This yields a representation of $\Sigma_t$ as the 
image of a map
\[
F:\R^d\times[0,T]\rightarrow \R^{d+2},
\]
restricted to time $t$. If $\Sigma_t$ moves along the (SMCF) flow \eqref{SMCF-general-version},
then we have the relation
\begin{equation*}
\d_t (g^{\al\be}\Ga_{\al\be}^\ga)=(V\ equation).
\end{equation*}
Here we uniquely determine the evolution of the coordinates as the time varies by choosing the advection vector field $V$, precisely so that it satisfies the $V$-equation \eqref{Ellp-X}. For this choice we obtain $\d_t(g^{\al\be}\Ga_{\al\be}^\ga)=0$.
This implies that $g^{\al\be}\Ga_{\al\be}^\ga$ is conserved for any $x\in \R^d$, and thus the harmonic gauge condition is propagated in time.
\end{rem}

\subsection{Derivation of the modified Schr\"{o}dinger system from SMCF}
Here we derive the main Schr\"{o}dinger equation and the second compatibility condition. We consider the commutation relation
\begin{equation} \label{com-m}
[\d^{B}_t,\d^A_{\al}]m=i(\d_t A_{\al}-\d_{\al} B)m.
\end{equation}
In order, for the left-hand side, by (\ref{strsys-cpf}) and (\ref{mo-frame}) we have
\begin{align*}
\d^{B}_t\d^A_{\al} m=&-\d^{B}_t(\la^{\ga}_{\al} F_{\ga})=-\d^{B}_t\la^{\ga}_{\al} \cdot F_{\ga}-\la^{\ga}_{\al}\cdot\d_t F_{\ga}\\
=&-[\d^{B}_t\la^{\si}_{\al}+\la^{\ga}_{\al}(\Im(\psi\bar{\la}^{\si}_{\ga})+\nab_{\ga}V^{\si})]F_{\si}+\la^{\ga}_{\al}\Im(\d^A_{\ga}\psi \bar{m}-i\la_{\ga\si}V^{\si}\bar{m}),
\end{align*}
and 
\begin{align*}
\d^A_{\al}\d^{B}_t m=&-i\d^A_{\al}[(\d^{A,\si} \psi -i\la^{\si}_{\ga}V^{\ga} )F_{\si}]\\
=&-i\d^A_{\al}(\d^{A,\si} \psi -i\la^{\si}_{\ga}V^{\ga} )F_{\si}-i(\d^{A,\si} \psi -i\la^{\si}_{\ga}V^{\ga} )[\Ga^{\mu}_{\al\si}F_{\mu}+\Re(\la_{\al\si}\bar{m})]\\
=&-i\nab^A_{\al}(\d^{A,\si} \psi -i\la^{\si}_{\ga}V^{\ga} )F_{\si}-i(\d^{A,\si} \psi -i\la^{\si}_{\ga}V^{\ga} )\Re(\la_{\al\ga}\bar{m}).
\end{align*}
Then by the above three equalities, equating the coefficients of the tangent vectors and the normal vector $m$, we obtain the evolution equation for $\la$
\begin{equation}\label{main-eq-abst}
\d^{B}_t\la^{\si}_{\al}+\la^{\ga}_{\al}(\Im(\psi\bar{\la}^{\si}_{\ga})+\nab_{\ga} V^{\si})=i\nab^A_{\al}(\d^{A,\si} \psi -i\la^{\si}_{\ga}V^{\ga} ),
\end{equation}
as well as the compatibility condition (curvature relation)
\begin{equation*}
\begin{aligned}
\d_t A_{\al}-\d_{\al} B=&\ \frac{1}{2i}\<\la^{\ga}_{\al}\Im(\d^A_{\ga}\psi \bar{m}-i\la_{\ga\si}V^{\si}\bar{m})+i(\d^{A,\si} \psi-i\la^{\si}_{\ga}V^{\ga} )\Re(\la_{\al\si}\bar{m}),\bar{m}\>\\
=&\ \frac{1}{2}\la_{\al}^{\ga}(\bar{\d}^A_{\ga}\bar{\psi}+i\bar{\la}_{\ga\si}V^{\si})+\frac{1}{2}(\d^{A,\si} \psi-i\la^{\si}_{\ga}V^{\ga} )\bar{\la}_{\al\si}\\
=&\ \frac{1}{2}[\la_{\al}^{\ga}(\bar{\d}^A_{\ga}\bar{\psi}+i\bar{\la}_{\ga\si}V^{\si})+\bar{\la}_{\al}^{\ga}(\d^A_{\ga}\psi-i\la_{\ga\si}V^{\si})]\\
=&\ \Re(\la_{\al}^{\ga}\bar{\d}^A_{\ga}\bar{\psi})-\Im (\la^\ga_\al \bar{\la}_{\ga\si})V^\si.
\end{aligned}
\end{equation*}
which we record for later reference:
\begin{equation}\label{Cpt-A&B}
\d_t A_{\al}-\d_{\al} B = \Re(\la_{\al}^{\ga}\bar{\d}^A_{\ga}\bar{\psi})-\Im (\la^\ga_\al \bar{\la}_{\ga\si})V^\si.
\end{equation}

This in turn allows us to  use the Coulomb gauge condition \eqref{Coulomb} in order to obtain 
an elliptic equation for $B$:

\begin{lemma}[Elliptic equation of $B$] The temporal connection coefficient $B$ solves
	\begin{equation}                \label{Ellip-B}
	\nab^{\ga}\nab_{\ga}B=-\nab^{\ga}[\Re(\la_{\ga}^{\si}\bar{\d}^A_{\si}\bar{\psi})-\Im (\la^\si_\ga \bar{\la}_{\si\be})V^\be]+(2\Im(\psi\bar{\la}^{\be\ga})+\nab^{\be}V^{\ga}+\nab^{\ga}V^{\be})\d_{\be}A_{\ga}.
	\end{equation}
\end{lemma}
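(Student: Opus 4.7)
The plan is to derive the equation for $B$ as an immediate consequence of the compatibility condition \eqref{Cpt-A&B} combined with the Coulomb gauge \eqref{Coulomb}, exploiting harmonic coordinates to simplify divergence expressions. The key observation is that differentiating the gauge condition in time produces a wave-type identity whose principal part, after inserting \eqref{Cpt-A&B}, yields $\Delta_g B$.

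First, I would rewrite the Coulomb condition $\nab^\al A_\al = 0$ in harmonic coordinates. Since $g^{\al\be}\Ga^\ga_{\al\be} = 0$, the condition reduces to $g^{\al\be}\pr_\al A_\be = 0$. Differentiating in $t$ and using the metric evolution \eqref{g_dt} gives
\begin{equation*}
g^{\al\be}\pr_\al (\pr_t A_\be) = [2\Im(\psi\bar{\la}^{\al\be}) + \nab^\al V^\be + \nab^\be V^\al]\,\pr_\al A_\be.
\end{equation*}
Next I would substitute the compatibility identity \eqref{Cpt-A&B} for $\pr_t A_\be$:
\begin{equation*}
\pr_t A_\be = \pr_\be B + \Re(\la_\be^\ga \bar{\pr}^A_\ga\bar{\psi}) - \Im(\la_\be^\ga \bar{\la}_{\ga\si})V^\si,
\end{equation*}
so the left-hand side splits as $g^{\al\be}\pr_\al\pr_\be B$ plus a divergence of the RHS of \eqref{Cpt-A&B}.

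The final step is to convert both pieces into covariant form using the harmonic coordinate condition \eqref{hm-coord}: for any scalar $B$, $g^{\al\be}\pr_\al\pr_\be B = \De_g B = \nab^\ga\nab_\ga B$; and for any one-form $\om_\be$, $g^{\al\be}\pr_\al \om_\be = \nab^\ga\om_\ga$, since the Christoffel correction vanishes after contraction with $g^{\al\be}$. Applying these to
\begin{equation*}
g^{\al\be}\pr_\al\bigl[\Re(\la_\be^\ga \bar{\pr}^A_\ga\bar{\psi}) - \Im(\la_\be^\ga \bar{\la}_{\ga\si})V^\si\bigr]
= \nab^\ga\bigl[\Re(\la_\ga^\si \bar{\pr}^A_\si\bar{\psi}) - \Im(\la_\ga^\si \bar{\la}_{\si\be})V^\be\bigr],
\end{equation*}
and relabeling indices in the remaining quadratic term, one obtains \eqref{Ellip-B} after rearrangement.

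The computation is essentially bookkeeping; the only conceptual point, and the one I would treat most carefully, is the commutation of $\pr_t$ with the Coulomb condition. Because the metric is time dependent, the relation $\nab^\al A_\al = 0$ does not directly yield $\nab^\al \pr_t A_\al = 0$; the discrepancy is exactly the $\pr_t g^{\al\be}$ term, which must be tracked and supplies the quadratic $(2\Im(\psi\bar\la) + \nab V)\pr A$ contribution on the right-hand side of \eqref{Ellip-B}. Once this is handled, the use of harmonic coordinates to discard Christoffel symbols in divergences produces \eqref{Ellip-B} in the stated form.
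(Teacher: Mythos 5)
Your argument is correct and takes essentially the same route as the paper. The paper applies $\nabla^{\gamma}$ directly to \eqref{Cpt-A&B} and then computes $\nabla^{\gamma}\partial_t A_{\gamma}$ by using harmonic coordinates to drop the Christoffel contraction, commuting $\partial_t$ past $\partial_{\beta}$ (picking up $\partial_t g^{\beta\gamma}$ via \eqref{g_dt}), and invoking the Coulomb gauge; your version instead starts from the time-differentiated Coulomb condition and then substitutes \eqref{Cpt-A&B}, but this is only a reordering of the identical computation, with the $\partial_t g^{\alpha\beta}$ term playing exactly the same role in supplying the quadratic source.
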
 
\begin{proof}
Applying $\nab^{\al}$ to (\ref{Cpt-A&B}) yields
\begin{align*}
\nab^{\ga}\nab_{\ga} B=\nab^{\ga}\d_tA_{\ga}-\nab^{\ga}\Re [\la_{\ga}^{\si}(\bar{\d}^A_{\si}\bar{\psi}+i\bar{\la}_{\si\be}V^{\be})].
\end{align*}
By the harmonic coordinates condition (\ref{hm-coord}), (\ref{g_dt}) and the Coulomb gauge condition \eqref{Coulomb} the first term in the right hand side is written as
\begin{align*}
\nab^{\ga}\d_t A_{\ga}=&g^{\be\ga}\nab_{\be}\d_t A_{\ga}=g^{\be\ga}(\d_{\be}\d_tA_{\ga}-\Ga^{\si}_{\be\ga}\d_tA_{\si})=g^{\be\ga}\d_{\be}\d_tA_{\ga}\\
=&\d_t(g^{\be\ga}\d_{\be}A_{\ga})-\d_t g^{\be\ga}\cdot\d_{\be}A_{\ga}\\
=&\d_t\nab^{\ga}A_{\ga}+(2\Im(\psi\bar{\la}^{\be\ga})+\nab^{\be}V^{\ga}+\nab^{\ga}V^{\be})\d_{\be}A_{\ga}\\
=&(2\Im(\psi\bar{\la}^{\be\ga})+\nab^{\be}V^{\ga}+\nab^{\ga}V^{\be})\d_{\be}A_{\ga}.
\end{align*}
We then obtain the $B$-equation.
\end{proof}

Next, we use (\ref{main-eq-abst}) to derive the main equation, i.e. the Schr\"odinger equation for $\psi$. By (\ref{la-commu}), the right-hand side of (\ref{main-eq-abst}) is rewritten as
\begin{align*}
\nab^A_{\al}(\d^{A,\si} \psi -i\la^{\si}_{\ga}V^{\ga} )
=\nab^A_{\al} \d^{A,\si}\psi-i\nab^A_{\ga} \la^{\si}_{\al}V^{\ga}-i\la^{\si}_{\ga}\nab_{\al} V^{\ga}.
\end{align*}
Hence, we have
\begin{equation*}
(\d^{B}_t-V^{\ga}\nab^A_{\ga})\la^{\si}_{\al}+\la^{\ga}_{\al}\Im(\psi\bar{\la}^{\si}_{\ga})+(\la^{\ga}_{\al}\nab_{\ga} V^{\si}-\la^{\si}_{\ga} \nab_{\al} V^{\ga})=i\nab^A_{\al}\nab^{A,\si}\psi,
\end{equation*}
and then contracting this yields
\begin{equation*}    
i(\d^{B}_t-V^{\ga}\nab^A_{\ga})\psi+\nab^A_{\al}\nab^{A,\al}\psi=-i\la^{\ga}_{\si}\Im(\psi\bar{\la}^{\si}_{\ga}).
\end{equation*}
This can be further written as 
\begin{align*}
i(\d_t+iB-V^{\ga}\nab^A_{\ga})\psi+(\nab_{\al}+iA_{\al})(\nab^{\al}+iA^{\al})\psi=-i\la_{\si}^{\ga}\Im(\psi\bar{\la}^{\si}_{\ga}).
\end{align*}
Hence, under the harmonic coordinates condition (\ref{hm-coord}) and the Coulomb gauge 
condition \eqref{Coulomb} we obtain the main Schr\"{o}dinger equation
\begin{equation}\label{main-eq-final}
\begin{aligned}
i\d_t\psi+g^{\al\be}\d_{\al}\d_{\be}\psi=&\ iV^{\ga}\nab^A_{\ga}\psi-2iA_{\al}\nab^{\al}\psi+(B+A_{\al}A^{\al}-i\nab_{\al}A^{\al})\psi-i\la_{\si}^{\ga}\Im(\psi\bar{\la}^{\si}_{\ga})\\
=&\ iV^{\ga}\nab^A_{\ga}\psi-2iA_{\al}\nab^{\al}\psi+(B+A_{\al}A^{\al})\psi-i\la_{\si}^{\ga}\Im(\psi\bar{\la}^{\si}_{\ga}).
\end{aligned}
\end{equation}

In conclusion, under the Coulomb gauge condition  $\nab^{\al}A_{\al}=0$ and the harmonic coordinate condition $g^{\al\be}\Ga^{\ga}_{\al\be}=0$, by (\ref{main-eq-final}), (\ref{la-eq}), (\ref{g-eq-original}), (\ref{Ellp-X}), (\ref{Ellp-A-pre}) and (\ref{Ellip-B}), we obtain the Schr\"{o}dinger equation for the complex mean curvature $\psi$
\begin{equation}        \label{mdf-Shr-sys-2}
\left\{
\begin{aligned}
    & i\d_t\psi+g^{\al\be}\d_{\al}\d_{\be}\psi=i(V-2A)_{\al}\nab^{\al}\psi+(B+A_{\al}A^{\al}-V_{\al}A^{\al})\psi-i\la_{\si}^{\ga}\Im(\psi\bar{\la}^{\si}_{\ga}),
    \\
    & \psi(0) = \psi_0,
    \end{aligned}
\right.    
\end{equation}
where the metric $g$, curvature tensor $\la$, the advection field $V$, connection coefficients $A$ and $B$ are determined at fixed time in an elliptic fashion via the following equations
\begin{equation}           \label{ell-syst}
	\left\{\begin{aligned}
	    &\nab^A_\al \la_{\be\ga}-\nab^A_\be \la_{\al\ga}=0,\quad
	      \nab^{A,\al}\la_{\al\be}=\nab^A_\be \psi,\\
		&\begin{aligned}
		g^{\al\be}\d^2_{\al\be}g_{\ga\si}=&\ [-\d_{\ga} g^{\al\be}\d_{\be} g_{\al\si}-\d_{\si} g^{\al\be}\d_{\be} g_{\al\ga}+\d_{\ga} g_{\al\be}\d_{\si} g^{\al\be}]\\
		&+2g^{\al\be}\Ga_{\si\al,\nu}\Ga^{\nu}_{\be\ga}-2\Re (\la_{\ga\si}\bar{\psi}-\la_{\al\ga}\bar{\la}_{\si}^{\al}),
		\end{aligned}\\
		&\begin{aligned}
	    \nab^{\al}\nab_{\al}V^{\ga}=&\ 2\Im (\nab^{A,\ga}\psi\bar{\psi}-\nab^A_{\al}\psi\bar{\la}^{\al\ga})-\Re (\la^{\ga}_{\si}\bar{\psi}-\la_{\al\si}\bar{\la}^{\al\ga})V^{\si}\\
	    &+2(\Im(\psi\bar{\la}^{\al\be})+\nab^{\al}V^{\be})\Ga^{\ga}_{\al\be},
	    \end{aligned}\\
	    &\nab^{\ga}\nab_{\ga} A_{\al}=\Re(\psi\bar{\la}_{\al}^{\si}-\la_{\al}^{\be}\bar{\la}_{\be}^{\si})A_{\si}+\nab^{\ga}\Im(\la^{\si}_{\ga}\bar{\la}_{\al\si}),\\
		&\begin{aligned}\nab^{\ga}\nab_{\ga}B=&-\nab^{\ga}[\Re(\la_{\ga}^{\si}\bar{\d}^A_{\si}\bar{\psi})-\Im (\la^\si_\ga \bar{\la}_{\si\be})V^\be]\\
		&+(2\Im(\psi\bar{\la}^{\be\ga})+\nab^{\be}V^{\ga}+\nab^{\ga}V^{\be})\d_{\be}A_{\ga}.\end{aligned}
	\end{aligned}\right.
\end{equation}
Fixing the remaining degrees of freedom (i.e. the affine group for the choice of the 
coordinates as well as the time dependence of the $SU(1)$ connection) 
 we can assume that the  following conditions hold at infinity in an averaged sense:
 \begin{equation*}  
\la(\infty)=0,\quad g(\infty) = I_d, \quad V(\infty) = 0,\quad A(\infty) = 0, \quad B(\infty) = 0     
 \end{equation*}
These are needed to insure  the unique solvability of the above elliptic equations
in a suitable class of functions. For the metric $g$ it will be useful to use the representation 
\begin{equation*}
g = I_d + h    
\end{equation*}
so that $h$ vanishes at infinity.

Finally, we note that the above elliptic system \eqref{mdf-Shr-sys-2} is accompanied by
a large family of compatibility conditions as follows:
\begin{enumerate}[label=(\roman*)]
    \item The trace relation \eqref{csmc}. 
    \item The Gauss equations \eqref{R-la} connecting the curvature $R$ of $g$ and $\lambda$.
    \item The symmetry property \eqref{lambda-sim}.
    \item The Ricci equations \eqref{cpt-AiAj-2} for the curvature of $A$.
    \item The Coulomb gauge condition \eqref{Coulomb} for $A$.
    \item The harmonic coordinates condition \eqref{hm-coord} for $g$.
\end{enumerate}
These conditions will all be shown to be satisfied for small solutions to the nonlinear elliptic system \eqref{mdf-Shr-sys-2}.

Now we can restate here the small data local well-posedness result for the (SMCF) system in 
Theorem~\ref{LWP-thm} in terms of the above system:

\begin{thm}[Small data local well-posedness in the good gauge]   \label{LWP-MSS-thm}
	Let $s>\frac{d}{2}$, $d\geq 4$. Then there exists $\ep_0>0$ sufficiently small such that, for all initial data $\psi_0$ with 
\begin{equation*} 
	\lV \psi_0\rV_{H^s}\leq \ep_0,
\end{equation*}
the modified Schr\"odinger system (\ref{mdf-Shr-sys-2}), with $(\la,h,V,A,B)$ determined via the 
elliptic system \eqref{ell-syst}, is locally well-posed in $H^s$ on the time interval $I=[0,1]$. Moreover, the mean curvature satisfies
the bounds
	\begin{equation}\label{psi-full-reg}
	\lV \psi\rV_{l^2 \bX^s} +  \lV (\la,h,V,A,B)\rV_{\bEE^s}\lesssim \lV \psi_0\rV_{H^s}.
	\end{equation}
In addition, the auxiliary functions $(\la,h,V,A,B)$ satisfy 
	the constraints  \eqref{csmc}, \eqref{R-la}, \eqref{lambda-sim}, \eqref{cpt-AiAj-2}, \eqref{Coulomb} and \eqref{hm-coord}.
\end{thm}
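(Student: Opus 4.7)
The plan is to treat the system as a quasilinear Schr\"odinger evolution for the scalar unknown $\psi$, with the metric $g = I_d + h$ and the auxiliary fields $(\lambda,V,A,B)$ playing the role of coefficients that are themselves reconstructed from $\psi$ at fixed time via the elliptic system \eqref{ell-syst}. Accordingly, the argument decouples into an elliptic part (solving for $(\lambda,h,V,A,B)$ given $\psi$) and a Schr\"odinger part (solving the evolution for $\psi$ given those coefficients), the two parts being tied together by an iteration argument. I would carry the steps out in the following order.

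First, for the elliptic part, given $\psi\in H^s$ of small norm I would invoke standard elliptic estimates on $\R^d$ with the prescribed decay at infinity. Each of the five equations in \eqref{ell-syst} is, after linearization at the trivial solution, a coercive scalar or vector Laplacian, and all nonlinear source terms are at least quadratic in $(\psi,\lambda,h,V,A,B)$. A contraction mapping in $\bEE^s$ then produces a unique small solution with $\|(\lambda,h,V,A,B)\|_{\bEE^s}\lesssim \|\psi\|_{H^s}$. Next, I would verify that the constraints \eqref{csmc}, \eqref{R-la}, \eqref{lambda-sim}, \eqref{cpt-AiAj-2}, \eqref{Coulomb}, \eqref{hm-coord} are automatically satisfied by this solution. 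Each constraint defect can be shown to solve a homogeneous elliptic problem whose right-hand side is quadratic in the remaining constraint defects themselves; the vanishing boundary data at infinity together with smallness then forces all defects to vanish simultaneously.

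Second, for the linear theory, I would freeze coefficients and prove local energy decay bounds on the $l^2\bX^s$ scale for the paradifferential Schr\"odinger operator $i\partial_t + g^{\alpha\beta}\partial^2_{\alpha\beta}$ together with the magnetic and drift contributions $(V-2A)^\alpha\nabla_\alpha$ and the zeroth order terms in \eqref{mdf-Shr-sys-2}. The natural framework for this is the one developed by Marzuola--Metcalfe--Tataru in \cite{MMT3,MMT4,MMT5}, here adapted to incorporate the additional gauge-related structure. Once the paradifferential bounds are in place, I would bootstrap them to obtain energy and local energy decay bounds for the actual linearization of \eqref{mdf-Shr-sys-2}, taking into account the linearization of the elliptic system, which feeds perturbations of $\psi$ back into perturbations of the coefficients. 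With linear and elliptic estimates in hand, I would close a frequency-envelope iteration scheme in the $l^2\bX^s$ norm to produce $\psi$ on $[0,1]$, and use the linearized bounds to derive uniqueness and continuous dependence on the data in $H^s$. The constraint verification from the first step then extends from fixed time to the full evolution, because both \eqref{ell-syst} and the constraints are reproduced at each time by the construction.

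The main obstacle, in my view, is the linear local energy decay step. The coefficients $g^{\alpha\beta}-\delta^{\alpha\beta}$, $A$ and $V$ are all controlled only at the scaling of $\psi\in H^s$, so the magnetic and drift terms $A\cdot\nabla\psi$ and $V\cdot\nabla\psi$ are scaling-critical relative to the principal part and cannot be absorbed as generic perturbations of the flat Schr\"odinger operator. Closing the estimates requires exploiting the structural cancellations coming from the Coulomb gauge $\nabla^\alpha A_\alpha = 0$ and the harmonic coordinate condition $g^{\alpha\beta}\Gamma^\gamma_{\alpha\beta}=0$, together with the fact that $V$ appears only in the favorable combination $V-2A$ so that its divergence-free part can be handled analogously to $A$. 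The dimensional restriction $d\geq 4$ enters at the level of the multilinear estimates in the space-time norms, where it provides enough Sobolev room to sum the frequency-localized local energy bounds and handle the quadratic nonlinearities coming from the elliptic source terms.
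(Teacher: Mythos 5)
Your proposal follows essentially the same architecture as the paper: solve the elliptic system \eqref{ell-syst} by contraction in $\bEE^s$ and verify the constraints by showing the constraint defects satisfy a homogeneous coupled elliptic system with small coefficients; prove frequency-localized local energy decay for the paradifferential Schr\"odinger flow following the Marzuola--Metcalfe--Tataru framework; bootstrap to the full linearized flow; then iterate (uniform bounds in $l^2\bX^s$, contraction in $l^2\bX^{s-1}$, frequency envelopes for continuous dependence).

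One point in your diagnosis of the ``main obstacle'' is imprecise. The Coulomb gauge and harmonic coordinate conditions are not what make the drift and magnetic terms absorbable in the local energy decay estimate; the paper's paradifferential operator in \eqref{LinSch} does not invoke any divergence-free cancellation for $A$ or $V-2A$. Rather, the key structural choice is to keep the magnetic term $2iA^\alpha_{<k-4}\partial_\alpha$ as part of the linear principal operator, so that in the positive-commutator argument its contribution is controlled directly using $\|\nabla A\|_{L^\infty H^s}$ (Lemma \ref{Comm_g-I}), rather than being moved to the source and bounded by a bilinear $X\times X\to N$ estimate. The gauge conditions play their role upstream, in making \eqref{ell-syst} a coercive elliptic system for $(g,A,V,B)$ with the right regularity and in propagating the gauge forward in time; they do not enter the multiplier computation. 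Similarly, the dimensional restriction $d\geq 4$ bites in the elliptic space-time estimates (e.g.\ handling $\lambda\nabla\delta\lambda$ at low frequencies in Theorem \ref{t:ell-time-dep}), not in a Sobolev-summability issue for the Schr\"odinger local energy bound itself. These are refinements of intuition rather than gaps in your proposed route.
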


Here the solution $\psi$ satisfies in particular the expected bounds
\[
\| \psi \|_{C[0,1;H^s]} \lesssim \|\psi_0\|_{H^s}.
\]
The spaces $l^2 \bX^s$ and  $\bEE^s$, defined in the next section, contain a more complete description of the  full set of variables $\psi,\la,h,V,A,B$, which includes both Sobolev regularity and local energy bounds.

In the above theorem,  by well-posedness we mean a full Hadamard-type well-posedness, including the following properties:
\begin{enumerate}[label=\roman*)]
    \item Existence of solutions $\psi \in C[0,1;H^s]$, with the additional regularity 
    properties \eqref{psi-full-reg}.
    \item Uniqueness in the same class.
    \item Continuous dependence of solutions with respect to the initial data 
    in the strong $H^s$ topology.
    \item Weak Lipschitz dependence  of solutions with respect to the initial data 
    in the weaker $L^2$ topology.
    \item Energy bounds and propagation of higher regularity.
\end{enumerate}

\section{Function spaces and notations}   \label{Sec3}
The goal of this section is to define the function spaces where we aim to solve 
the (SMCF) system  in the good gauge, given by \eqref{mdf-Shr-sys-2}.
Both the spaces and the notation presented in this section are similar to those introduced in \cite{MMT3,MMT4,MMT5}. All the function spaces
described below will be used with respect to harmonic coordinates determined by our gauge choices described in the previous section. We neither attempt nor need to transfer these spaces 
to other coordinate frames.

For a function $u(t,x)$ or $u(x)$, let $\hat{u}=\FF u$ denote the Fourier transform in the spatial variable $x$. Fix a smooth radial function $\varphi:\R^d \rightarrow [0,1] $ supported in $[-2,2]$ and equal to 1 in $[-1,1]$, and for any $i\in \Z$, let
\begin{equation*}
	\varphi_i(x):=\varphi(x/2^i)-\varphi(x/2^{i-1}).
\end{equation*}
We then have the spatial Littlewood-Paley decomposition,
\begin{equation*}
	\sum_{i=-\infty}^{\infty}P_i (D)=1, \quad \sum_{i=0}^{\infty}S_i (D)=1,
\end{equation*}
where $P_i$ localizes to frequency $2^i$ for $i\in \Z$, i.e,
\begin{equation*}
	 \FF(P_iu)=\varphi_i(\xi)\hat{u}(\xi),
\end{equation*}
and 
\[S_0(D)=\sum_{i\leq 0}P_i(D),\quad S_i(D)=P_i(D),\ \text{ for}\ i>0.\]
For simplicity of notation, we set
\[
u_j=S_j u,\quad u_{\leq j}=\sum_{i=0}^j S_i u,\quad u_{\geq j}=\sum_{i=j}^{\infty} S_i u,\quad \text{for }j\geq 0.
\]

For each $j\in\N$, let $\QQ_j$ denote a partition of $\R^d$ into cubes of side length $2^j$, and let $\{\chi_Q\}$ denote an associated partition of unity. For a translation-invariant Sobolev-type space $U$, set $l^p_j U$ to be the Banach space with associated norm 
\begin{equation*}
	\lV u\rV_{l^p_j U}^p=\sum_{Q\in\QQ_j}\lV \chi_Q u\rV_U^p
\end{equation*}
with the obvious modification for $p=\infty$.

Next we define the $l^2\bX^s$ and $l^2N^s$ spaces, which will be used for the primary variable 
$\psi$, respectively for the source term in the Schr\"odinger equation for $\psi$.
Following \cite{MMT3,MMT4,MMT5}, we first define the $X$-norm as
\begin{equation*}
	\lV u\rV_{X}=\sup_{l \in \N} \sup_{Q\in\QQ_l} 2^{-\frac{l}{2}}\lV u\rV_{L^2L^2([0,1]\times Q)}.
\end{equation*}
Here and throughout, $L^pL^q$ represents $L^p_tL^q_x$. To measure the source term, we use an atomic space $N$ satisfying $X=N^{\ast}$. A function $a$ is an atom in $N$ if there is a $j\geq 0$ and a $Q\in \QQ_j$ such that $a$ is supported in $[0,1]\times Q$ and 
\begin{equation*}
	\lV a\rV_{L^2([0,1]\times Q)}\lesssim 2^{-\frac{j}{2}}.
\end{equation*}
Then we define $N$ as linear combinations of the form 
\begin{equation*}
	f=\sum_k c_k a_k,\ \ \sum_k|c_k|<\infty,\ \ a_k\ {\rm atom},
\end{equation*}
with norm 
\begin{equation*}
	\lV f\rV_N=\inf\big\{\sum_k |c_k|: f=\sum_k c_k a_k,\ a_k\ {\rm atoms}\big\}.
\end{equation*}

For solutions which are localized to frequency $2^j$ with $j \geq 0$, we will work in the space
\begin{equation*}
	X_j=2^{-\frac{j}{2}}X\cap L^{\infty}L^2,
\end{equation*} 
with norm 
\begin{equation*}
	\lV u\rV_{X_j}=2^{\frac{j}{2}}\lV u\rV_X+\lV u\rV_{L^{\infty}L^2}.
\end{equation*}
One way to assemble the $X_j$ norms is via the $X^s$ space
\begin{equation*}
\lV u\rV_{X^s}^2=\sum_{j\geq 0} 2^{2js}\lV S_j u\rV_{X_j}^2.
\end{equation*}
But we will also add the $l^p$ spatial summation on the $2^j$ scale to $X_j$,
in order to obtain the space $l^p_j X_j$ with norm
\[
\lV u\rV_{l^p_j X_j} =(\sum_{Q\in \QQ_j} \lV \chi_Q u\rV_{X_j}^p)^{1/p}.
\] 
We then define the space $l^p X^s$ by 
\begin{equation*}
	\lV u\rV_{l^p X^s}^2=\sum_{j\geq 0}2^{2js}\lV S_j u\rV_{l^p_j X_j}^2.
\end{equation*}
For the solutions of Schr\"{o}dinger equation in (\ref{mdf-Shr-sys-2}), we will be working primarily in $l^2 \bX^s$, which is defined by
\[
\lV u\rV_{l^2\bX^s}=\lV u\rV_{l^2X^s}+\lV \d_t u\rV_{L^2H^{s-2}}.
\]
We note that the second component, introduced here for the first time, serves the purpose 
of providing better bounds at low frequencies $j \leq 0$.

We analogously define 
\begin{equation*}
	N_j=2^{\frac{j}{2}}N+L^1L^2,
\end{equation*}
which has norm 
\begin{equation*}
	\lV f\rV_{N_j}=\inf_{f=2^{\frac{j}{2}}f_1+f_2} \big(\lV f_1\rV_N+\lV f_2\rV_{L^1L^2}\big),
\end{equation*}
and 
\begin{equation*}
	\lV f\rV_{l^pN^s}^2=\sum_{j\geq 0}2^{2js}\lV S_j f\rV_{l^p_j N_j}^2.
\end{equation*}
Here we shall be working primarily with $l^2N^s$.

We also note that for any $j\in\N$, we have
\begin{equation*}
	\sup_{Q\in\QQ_j} 2^{-\frac{j}{2}}\lV u\rV_{L^2L^2([0,1]\times Q)}\leq \lV u\rV_{X},
\end{equation*}
hence
\begin{equation*}
	\lV u\rV_N\lesssim 2^{j/2}\lV u\rV_{l^1_jL^2L^2}.
\end{equation*}
This bound will come in handy at several places later on.

For the elliptic system \eqref{ell-syst}, at a fixed time we define the $\mathcal{H}^s$ norm,
\begin{equation*}
    \lV (\la,h,V,A,B)\rV_{\mathcal H^s}=\lV\la\rV_{H^{s}}+\lV|D|h\rV_{H^{s+1}} +\lV|D|V\rV_{H^{s}}+\lV|D|A\rV_{H^{s}}+\lV|D|B\rV_{H^{s-1}}.
\end{equation*}

In addition to the fixed time norms, for the study of the Schr\"odinger
equation for $\psi$ we will also need to bound time dependent norms $\EE^s$ and $\bEE^s$ for the elliptic system \eqref{ell-syst}, in terms of similar norms for $\psi$. 
For simplicity of notation, we define 
\begin{align*}
\lV u\rV_{Z^{\si,s}}=\lV |D|^{\si}S_0 u\rV_{l^2_0L^{\infty}L^2}^2+\sum_{j>0} 2^{2sj} \lV S_ju\rV_{l^2_jL^{\infty}L^2}^2.
\end{align*}
Then the $\bZ^{\si,s}$ spaces are defined by
\begin{equation*}
\lV u\rV_{\bZ^{\si,s}}=\lV u\rV_{Z^{\si,s}}+\lV|D|^{\si}\d_t u\rV_{L^2H^{s-\si-2}}.
\end{equation*}
For the $\lambda$, $V$, $A$ and $B$-equations in (\ref{ell-syst}), we will be working primarily in $\bZ^{0,s}$, $\bZ^{1,s+1}$, $\bZ^{1,s+1}$ and $\bZ^{1,s}$, respectively. 

On the other hand, for the metric component $h=g-I_d$ we need to introduce some additional structure
which is associated to spatial scales larger than the frequency.
Precisely, to measure the portion of $h$ which is localized to frequency $2^j$, $j\in\Z$, we decompose $P_j h$ as an atomic summation of components $h_{j,l}$ associated to spatial scales $2^l$
with $l\geq |j|$, where $h_{j,l}$ still localizes to frequency $2^j$, i.e.,
\begin{equation*}
P_jh=\sum_{l\geq |j|}h_{j,l}.
\end{equation*}  
Then we define the $Y_j$-norm by
\begin{equation*}
\lV P_j h \rV_{Y_j} =\inf_{P_jh=\sum_{l\geq |j|}h_{j,l}} \sum_{l\geq |j|} 2^{l-|j|} \lV h_{j,l}\rV_{l^1_lL^{\infty}L^2}.
\end{equation*}
Assembling together the dyadic pieces in an $l^2$ Besov fashion, we obtain  the $Y^{\si,s}$
space with norm given by
\begin{gather*}
\lV h\rV_{Y^{\si,s}}^2=\sum_{j\leq \Z} 2^{2(\si j^-+sj^+)}\lV P_jh\rV_{Y_j}^2.
\end{gather*}
Then for $h$-equation in (\ref{mdf-Shr-sys-2}), we will be working primarily in $\bY^{s+2}$, whose norm is defined by
\begin{align*}
\lV h\rV_{\bY^{s+2}}=\lV h\rV_{Y^{s+2}}+\lV \nab \d_t h\rV_{L^2H^{s-1}}=&\lV h\rV_{Y^{\frac{d}{2}-1-\de,s+2}}+\lV h\rV_{\bZ^{1,s+2}},
\end{align*}
where the space $Y^s=Y^{\frac{d}{2}-1-\de,s}\cap Z^{1,s}$.
Collecting all the components defined above, for the elliptic system \eqref{ell-syst}, we define the $\EE^s$ norm as
\begin{equation*}
    \lV (\la,h,V,A,B)\rV_{\EE^s}=\lV \la\rV_{Z^{0,s}}+\lV h\rV_{Y^{s+2}} +\lV V\rV_{Z^{1,s+1}}+\lV A\rV_{Z^{1,s+1}}+\lV B\rV_{Z^{1,s}},
\end{equation*}
and the $\bEE^s$ norm as
\begin{equation*}
    \lV (\la,h,V,A,B)\rV_{\bEE^s}=\lV (\la,h,V,A,B)\rV_{\EE^s}+\lV \d_t(\la,h,V,A,B)\rV_{L^2\HH^{s-2}}.
\end{equation*}

Since we often use Littlewood-Paley decompositions, the next lemma is a convenient tool 
to see that our function spaces are invariant under the action of some standard classes of multipliers:

\begin{lemma}
	For any Schwartz function $f\in\SS$, multiplier $m(D)$ with $\lV \FF^{-1}(m(\xi))\rV_{L^1}<\infty$, and translation-invariant Sobolev-type space $U$, we have
	\begin{equation*}
		\lV m(D)f\rV_{U}\lesssim \lV \FF^{-1}(m(\xi))\rV_{L^1}\lV f\rV_U.
	\end{equation*}
\end{lemma}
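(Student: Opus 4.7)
The plan is to realize $m(D)$ as convolution with an $L^1$ kernel and then exploit translation invariance together with Minkowski's inequality. Writing $K = \FF^{-1}(m)$, we have $m(D)f = K \ast f$ by the convolution theorem, and by hypothesis $K \in L^1(\R^d)$. Since $f$ is Schwartz, the representation
\[
(m(D)f)(x) = \int_{\R^d} K(y)\, f(x-y)\, dy
\]
holds pointwise and the integrand is absolutely integrable.

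Next I would invoke the defining property of a translation-invariant space $U$, namely that the shift operator $\tau_y: f \mapsto f(\cdot - y)$ is an isometry on $U$: $\|\tau_y f\|_U = \|f\|_U$ for every $y \in \R^d$. Combining this with Minkowski's integral inequality applied to the Banach-space-valued integrand $y \mapsto K(y) \tau_y f$ yields
\[
\| m(D)f \|_U = \Bigl\| \int_{\R^d} K(y)\, \tau_y f \, dy \Bigr\|_U \leq \int_{\R^d} |K(y)|\, \|\tau_y f\|_U\, dy = \|K\|_{L^1} \|f\|_U,
\]
which is exactly the desired estimate. To make the Minkowski step rigorous for non-separable spaces like $L^\infty L^2$ one can first truncate and mollify $K$ so that the integral becomes a Riemann sum of translates, pass the norm inside via the triangle inequality, and then take limits using the density and the Schwartz hypothesis on $f$.

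The only point requiring a small amount of care is the justification of Minkowski's inequality in each of the concrete spaces appearing in Section~\ref{Sec3}; these are all built from $L^p L^q$ blocks together with Littlewood-Paley decompositions and $l^p_j$ summations over cubes, and each of these operations is manifestly translation-invariant and obeys the Minkowski inequality in the Banach-space-valued form above. No additional ingredient is needed. The main (mild) obstacle is simply to confirm that every norm under consideration in the paper fits into this translation-invariant Banach framework, after which the argument is uniform across all of them.
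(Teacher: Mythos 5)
The paper states this lemma without proof, treating it as a standard fact. Your argument is the canonical proof: write $m(D)f = K * f$ with $K = \FF^{-1}(m) \in L^1$, and then apply Minkowski's integral inequality to the $U$-valued integral $\int K(y)\,\tau_y f\,dy$, using that translation acts isometrically on $U$. This is correct, and your caveat about justifying the vector-valued Minkowski step in spaces like $L^\infty L^2$ (where strong measurability can fail) and your workaround via Riemann-sum approximation of the convolution for Schwartz $f$ are both appropriate. Since the paper supplies no proof, there is nothing to compare against; your proposal fills the gap with the expected argument.
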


We will also need the following Bernstein-type inequality:

\begin{lemma}[Bernstein-type inequality]
	For any $j,k\in\Z$ with $j+k\geq 0$, $1\leq r<\infty$ and $1\leq q\leq p\leq \infty$, we have
	\begin{align}     \label{Bern-ineq}
		&\lV P_k f\rV_{l^r_jL^p}\lesssim 2^{kd(\frac{1}{q}-\frac{1}{p})}\lV P_k f\rV_{l^r_jL^q},\\    \label{Convo-est}
		&\lV \<x\>^{\al-d}\ast f_{\leq 0}\rV_{l_0^pL^{\infty}L^2}\lesssim \lV f_{\leq 0}\rV_{l^1_0L^{\infty}L^2},\ \ {\rm for }\ p>\frac{d}{d-\al}.
	\end{align}
\end{lemma}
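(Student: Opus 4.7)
For the Bernstein-type inequality \eqref{Bern-ineq}, I would use a reproducing kernel identity $P_k f = \tilde K_k \ast P_k f$, where $\tilde K_k$ is a fattened Littlewood--Paley kernel satisfying $|\tilde K_k(x)| \lesssim 2^{kd}(1+2^k|x|)^{-N}$ for any $N\geq 0$. Setting $u = P_k f$ and inserting the partition $u = \sum_{Q'\in\QQ_j}\chi_{Q'}u$ yields the decomposition
\[
\chi_Q u = \sum_{Q'\in\QQ_j} \chi_Q\bigl(\tilde K_k\ast (\chi_{Q'}u)\bigr),
\]
so it suffices to bound each $\lV \chi_Q(\tilde K_k\ast(\chi_{Q'}u))\rV_{L^p}$ by weights $W_{Q,Q'}$ forming an almost-diagonal, $l^1$-summable convolution operator in the cube index. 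In the near regime $d(Q,Q')\lesssim 2^j$, Young's inequality applied with $\lV \tilde K_k\rV_{L^r}\simeq 2^{kd(1/q-1/p)}$ (where $1/r = 1+1/p-1/q$) yields $W_{Q,Q'}\lesssim 2^{kd(1/q-1/p)}$.

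In the far regime $d(Q,Q')\gg 2^j$, the pointwise bound $|\tilde K_k(x-y)| \lesssim 2^{kd}(2^k d(Q,Q'))^{-N}$ combined with H\"older on $Q'$ and the volume factor $2^{jd}$ on $Q$ gives
\[
W_{Q,Q'}\lesssim 2^{kd(1/q-1/p)}(2^{k+j})^{d/r-N}(d(Q,Q')/2^j)^{-N}.
\]
The hypothesis $j+k\geq 0$ enters crucially here: it ensures $(2^{k+j})^{d/r-N}\leq 1$ for $N$ chosen sufficiently large, so the $W_{Q,Q'}$ form a rapidly decaying kernel on the cube index in $\Z^d$. Discrete Young's inequality then closes the estimate in $l^r_j$.

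For \eqref{Convo-est}, I would decompose $f_{\leq 0} = \sum_{Q\in\QQ_0}\chi_Q f_{\leq 0}$, with each summand essentially supported in a unit cube near $x_Q$, and apply Cauchy--Schwarz in $y$ to obtain
\[
\bigl|\<x\>^{\al-d}\ast (\chi_Q f_{\leq 0})\bigr|(t,x)\lesssim \<x-x_Q\>^{\al-d}\lV \chi_Q f_{\leq 0}(t)\rV_{L^2_x}.
\]
Taking the $L^\infty$-norm in time, multiplying by $\chi_{Q^*}$ for $Q^*\in\QQ_0$, and summing in $Q$ reduces the desired inequality to a discrete convolution on $\Z^d$ with kernel $K_n=\<n\>^{\al-d}$. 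Since $\sum_{n\in\Z^d}\<n\>^{(\al-d)p}<\infty$ iff $(d-\al)p>d$, i.e.\ $p>d/(d-\al)$, this kernel lies in $l^p(\Z^d)$ exactly under the stated hypothesis, and Young's inequality $\lV K\ast b\rV_{l^p}\leq \lV K\rV_{l^p}\lV b\rV_{l^1}$ delivers \eqref{Convo-est}. Neither inequality presents a deep obstacle; the only delicate point is verifying in \eqref{Bern-ineq} that $j+k\geq 0$ produces the decay factor $(2^{k+j})^{d/r-N}\leq 1$ needed to make the cube-indexed weights almost diagonal.
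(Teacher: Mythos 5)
Your argument is correct and follows essentially the same route as the paper: for \eqref{Bern-ineq} both proofs split the Littlewood--Paley kernel into a near part handled by Young's inequality (with $j+k\geq 0$ controlling the cube interactions) and a rapidly decaying far part, and for \eqref{Convo-est} both reduce to a discrete convolution on $\Z^d$ whose kernel $\<n\>^{\al-d}$ lies in $l^p$ exactly when $p>\frac{d}{d-\al}$. The only cosmetic differences are that the paper absorbs the far part of \eqref{Bern-ineq} into the left-hand side via a large parameter $M$ rather than summing the off-diagonal weights directly, and that for \eqref{Convo-est} it splits $|y|\leq 1$ from $|y|>1$ and invokes Bernstein for the near part, where your uniform Cauchy--Schwarz bound handles both regimes at once.
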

\begin{proof}
	We begin with the Bernstein-type inequality (\ref{Bern-ineq}). Using the properies of the Fourier transform, $P_kf$ is rewritten as
	\begin{equation*}     
		P_k f=\int_{\R^d} (\FF^{-1}{\varphi}_k) (x-y)P_kf(y)dy=2^{kd}\int_{\R^d} K(2^k(x-y))S_k f(y)dy, 
	\end{equation*}
	where $K(x)=\FF^{-1}\varphi(x)$. Then 
	\begin{align*}
		\lV P_kf\rV_{l^r_jL^p}=& \ 2^{kd}(\sum_{Q\in\QQ_j}\lV \chi_Q(x)\int_{\R^d} K(2^k(x-y))P_k f(y)dy\rV_{L^p}^r)^{1/r}\\
		\leq & \ 2^{kd}(\sum_{Q\in\QQ_j}\lV \chi_Q(x)\int_{\R^d} K(2^k(x-y)) \mathbf{1}_{<M}(2^k(x-y)) P_k f(y)dy\rV_{L^p}^r)^{1/r}\\
		&+2^{kd} \lV K(2^kx)\mathbf{1}_{>M}(2^kx)\ast P_k f\rV_{l^r_j L^p} \\
		:=&\ I+II,
	\end{align*}
	where $d(Q,\tilde{Q})=\inf\{|x-y|:x\in Q,y\in\tilde{Q}\}$ and $M$ is a large constant. Since $j+k\geq 0$, for any fixed $Q\in\QQ_j$ there are only finite many $\tilde{Q}\in\QQ_j$ such that $d(Q,\tilde{Q})\leq 2^{-k} M$. Then from Young's inequality we can bound $I$ by
	\begin{align*}
		I\lesssim 2^{kd}(\sum_{Q\in\QQ_j} \sum_{d(Q,\tilde{Q})\leq 2^{-k}M,\tilde{Q}\in\QQ_j}\lV K(2^kx)\rV_{L^{\tilde{q}}}^r\lV \chi_{\tilde{Q}}P_k f\rV_{L^q}^r)^{1/r}
		\lesssim  2^{kd(\frac{1}{q}-\frac{1}{p})}\lV P_kf\rV_{l^r_jL^q}.
	\end{align*}

	On the other hand, since $|K(x)|\lesssim \<x\>^{-N}$ for any large $N$, for $II$ we have
	\begin{align*}
		II\lesssim & \ 2^{k(d-N)} \lV|2^kx|^{-N}\mathbf{1}_{>M}(2^kx)\rV_{L^1}\lV S_k f\rV_{l^r_j L^p}\\
		\lesssim & \ M^{-N+d} \lV P_k f\rV_{l^r_jL^q},
	\end{align*}
	which can be absorbed by the term on the left. These imply the bound (\ref{Bern-ineq}).
	\medskip
	
	Next, we prove the estimate (\ref{Convo-est}). The left hand side of (\ref{Convo-est}) is decomposed as 
	\begin{align*}
		\lV \<x\>^{\al-d}\ast f_{\leq 0}\rV_{l_0^pL^{\infty}L^2}^p
		\lesssim & \  \sum_{Q\in\QQ_0}\lV \chi_Q(x)\int \<y\>^{\al-d}f_{\leq 0}(x-y)dy\rV_{L^{\infty}L^{\infty}}^p\\
		\lesssim & \sum_{Q\in\QQ_0}\lV \chi_Q(x)\int_{|y|\leq 1} \<y\>^{\al-d}f_{\leq 0}(x-y)dy\rV_{L^{\infty}L^{\infty}}^p\\
		&+\sum_{Q\in\QQ_0}\lV \chi_Q(x)\int_{|y|>1} \<y\>^{\al-d}\sum_{\tilde{Q}\in\QQ_0}\chi_{\tilde{Q}}f_{\leq 0}(x-y)dy\rV_{L^{\infty}L^{\infty}}^p\\
		=&I_1^p +I_2^p.
	\end{align*}
	Then by (\ref{Bern-ineq}) we bound $I_1$ by
	\begin{equation*}
	I_1\lesssim \lV f_{\leq 0}\rV_{l^p_0L^{\infty}L^{\infty}}\lesssim \lV f_{\leq 0}\rV_{l^p_0L^{\infty}L^2}.
	\end{equation*}
    On the other hand, by H\"{o}lder's inequality  and (\ref{Bern-ineq}), we bound $I_2$ by
	\begin{align*}
	I_2\lesssim & \sum_{\tilde{Q}\in\QQ_0}(\sum_{Q\in\QQ_0}\lV \chi_Q(x)\int_{|y|>1} \<y\>^{\al-d}\chi_{\tilde{Q}}f_{\leq 0}(x-y)dy\rV_{L^{\infty}L^{\infty}}^p)^{1/p}\\
	\lesssim &  \sum_{\tilde{Q}\in\QQ_0}(\sum_{Q\in\QQ_0}\int_{|y|>1}\chi_Q\<y\>^{(\al-d)p}dy \lV \chi_{\tilde{Q}}f_{\leq 0}\rV_{L^{\infty}L^q}^p)^{1/p}\\
	\lesssim &  \lV f_{\leq 0}\rV_{l^1_0L^{\infty}L^q}(\int \<y\>^{(\al-d)p}dy)^{1/p}\\
	\lesssim & \lV f_{\leq 0}\rV_{l^1_0L^{\infty}L^2},
	\end{align*}
	which gives the bound (\ref{Convo-est}), and thus completes the proof of the lemma.
\end{proof}

Finally, we define the frequency envelopes as in \cite{MMT3,MMT4,MMT5} which will be used in multilinear estimates. Consider a Sobolev-type space $U$ for which we have
\begin{equation*}
\lV u\rV_U^2=\sum_{k=0}^{\infty} \lV S_k u\rV_U^2.
\end{equation*}
A frequency envelope for a function $u\in U$ is a positive $l^2$-sequence, $\{ a_j\}$, with 
\begin{equation*}
\lV S_j u\rV_U\leq a_j.
\end{equation*}
We shall only permit slowly varying frequency envelopes. Thus, we require $a_0\approx \lV u\rV_U$ and 
\begin{equation*}
a_j\leq 2^{\delta|j-k|} a_k,\quad j,k\geq 0,\ 0<\delta\ll 1.
\end{equation*}
The constant $\delta$ shall be chosen later and only depends on $s$ and the dimension $d$. Such frequency envelopes always exist. For example, one may choose
\begin{equation}         \label{Freq-envelope}
a_j=2^{-\de j}\lV u\rV_U+\max_k 2^{-\de|j-k|} \lV S_k u\rV_U.
\end{equation}

\section{Elliptic estimates} \label{Sec-Ell}\

Here we consider the solvability of the elliptic system \eqref{ell-syst}, together with the constraints  \eqref{lambda-sim}, \eqref{hm-coord} and \eqref{Coulomb}. We will do this in two steps. First  we prove that this system is solvable in Sobolev spaces at fixed time. Then we prove  space-time bounds in local energy spaces; the latter will be needed in the study of the Schr\"odinger evolution \eqref{mdf-Shr-sys-2}. 

For simplicity of notations, we define the set of elliptic variables by
\[
\SS=(\la,h,V,A,B),
\] 
Later when we compare two solutions for \eqref{ell-syst}, we will denote the differences 
of two solutions or the linearized variable by
\[
\dSS=(\dla,\dh,\dV,\dA,\dB).
\]
Our fixed time result is as follows:
\begin{thm}\label{t:ell-fixed-time}
a) Assume that $\psi$ is small in $H^s$ for $s > d/2$ and $d \geq 4$. Then the elliptic system \eqref{ell-syst} admits a unique small solution $\SS= (\la,h,V,A,B)$ in $\mathcal H^s$,
with
\begin{equation}   \label{t:ell-fixed-time: bd1}
\|\SS\|_{\mathcal H^s} \lesssim \| \psi\|_{H^s}.    
\end{equation}
In addition 
this solution has a smooth dependence on $\psi$ in $H^s$ and  satisfies the constraints 
\eqref{csmc}, \eqref{R-la},  \eqref{lambda-sim}, \eqref{cpt-AiAj-2}, \eqref{hm-coord} and \eqref{Coulomb}. 

b) Let $\psi$ and $(\la, h,V,A,B)= \SS(\psi)$ be as above. Then for the linearization of the solution map above we also have the bound:
\begin{equation} \label{t:ell-fixed-time: dlt}
\|D\SS(\delta \psi)\|_{\mathcal H^{\si}} \lesssim \| \dpsi\|_{H^{\si}}, \qquad  \si\in (d/2-3,s]. 
\end{equation}
Moreover, assume that $p_k$ is an admissible frequency envelope for $\dpsi$ in $H^{\si}$. 
Then we have 
\begin{equation}    \label{t:ell-fixed-time: fre-env}
    \| S_k\dSS \|_{\HH^{\si}}\lesssim p_k.
\end{equation}

c) We also have a similar bound for the Hessian of the solution map,
\begin{equation} \label{t:ell-fixed-time: ddlt}
\|D^2\SS(\delta_1 \psi,\delta_2 \psi)\|_{\mathcal H^{\si}} 
\lesssim  \| \delta_1 \psi\|_{H^{\si_1}} \| \delta_2 \psi\|_{H^{\si_2}},  
\end{equation}
with $\si,\si_1,\si_2\in (d/2-3,s], \si_1+\si_2 = \si+s$.
\end{thm}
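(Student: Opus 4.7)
My plan is a single small-data contraction principle applied to the full coupled system. The principal parts in \eqref{ell-syst} are covariant Laplacians, so I would first split off the metric perturbation $h = g-I_d$ and move all the Christoffel corrections to the right-hand side, rewriting the system schematically as $-\De\SS = \bmF(\SS,\psi)$, where $\bmF$ is analytic and at least quadratic near the origin. For $\la$, which is given only by a first-order div--curl system, I would take one more derivative: applying $\nab^{A,\al}$ to the divergence relation and commuting via \eqref{comm-la} produces a second-order elliptic equation for $\la$ with principal source $\nab^A_\be\nab^A_\ga\psi$ plus quadratic curvature corrections, which fits into the same framework.

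\textbf{Part (a).} On a small ball of radius $O(\|\psi\|_{H^s})$ in $\HH^s$ I would iterate the map $T\SS := (-\De)^{-1}\bmF(\SS,\psi)$, where $(-\De)^{-1}$ is the Newtonian potential on $\R^d$ and gains two derivatives. The algebra property $H^s\cdot H^s\hookrightarrow H^s$ for $s>d/2$, together with this two-derivative gain and the matched regularities built into $\HH^s$ (two derivatives above $\la$ for $h$; one above for $V,A$; same level for $B$), makes $T$ a contraction; this yields \eqref{t:ell-fixed-time: bd1} and, since $T$ is analytic above the linear inversion, the smooth dependence on $\psi$. The constraints \eqref{csmc}, \eqref{R-la}, \eqref{lambda-sim}, \eqref{cpt-AiAj-2}, \eqref{hm-coord} and \eqref{Coulomb} are then enforced by self-propagation: each constraint quantity $C$ is shown, by combining the relevant elliptic equations, to satisfy a homogeneous linear elliptic equation $-\De C = L(\SS,\psi)\cdot C$ with small coefficients, and decay at infinity together with small-data uniqueness for the flat Laplacian forces $C\equiv 0$.

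\textbf{Parts (b) and (c).} For (b), I linearize to get $-\De\dSS = D_{\SS}\bmF\cdot\dSS + D_\psi\bmF\cdot\dpsi$, a linear elliptic system with $\HH^s$-coefficients forced by $\dpsi\in H^\si$. Repeating the contraction in $\HH^\si$ yields \eqref{t:ell-fixed-time: dlt} for $\si\in(d/2-3,s]$: the upper bound $\si\le s$ is trivial, while $\si>d/2-3$ comes from the worst product, where a first derivative of a $\dSS$ component paired with an $\HH^s$ coefficient must sit in the correct $H^{\si-2}$ space after collecting the two-derivative gain from $(-\De)^{-1}$ and the shifted regularities of the components of $\SS$. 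The frequency-envelope bound \eqref{t:ell-fixed-time: fre-env} then follows by applying $S_k$ to each equation and running the same contraction dyadically, using paradifferential product estimates together with the slow variation of $p_k$. Part (c) follows by differentiating once more: $D^2\SS(\delta_1\psi,\delta_2\psi)$ solves the same linear system with an additional bilinear source $D^2\bmF(D\SS\delta_1\psi,D\SS\delta_2\psi)$, handled by the bilinear Sobolev rule $H^{\si_1}\cdot H^{\si_2}\hookrightarrow H^{\si_1+\si_2-d/2}$ combined with the two-derivative gain, which is exactly what the constraint $\si_1+\si_2=\si+s$ is designed to accommodate. The main obstacle throughout is tracking, at minimal regularity $\si$ slightly above $d/2-3$, the shifted regularity counts for the five coupled components of $\SS$, and verifying that every cross-coupling term in the linearized system fits into the correct Sobolev space after inverting $-\De$.
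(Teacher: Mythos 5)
Your overall framework---split off the flat Laplacian, set up a small-data fixed point in $\HH^s$, propagate the constraints via a homogeneous linear elliptic system with small coefficients, and then linearize for parts (b) and (c)---is the same as the paper's. But there is one genuine gap and a couple of points where your route is meaningfully different.

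\textbf{The treatment of $\la$ has a gap.} The paper does \emph{not} promote the first-order div--curl system for $\la$ to a second-order Laplacian. It keeps the first-order system \eqref{la-eq}, schematically $\d_\al\la_{\al\be}=\d_\be\psi+H_{1\la}$, $\d_\al\la_{\be\ga}-\d_\be\la_{\al\ga}=H_{2\la}$, and inverts it directly at zeroth order via the Helmholtz decomposition \eqref{decomp-vector}, which expresses $\widehat\la$ in terms of $\widehat\la\cdot\xi$ and the antisymmetric part $\widehat\la\xi^\top-\xi\widehat\la^\top$. This builds the div--curl constraints into the ansatz. If you instead derive $\De^A\la_{\be\ga}=\nab^A_\be\nab^A_\ga\psi+\text{curv}$ and run a contraction on that, the fixed point satisfies the second-order equation but the overdetermined first-order system is a strictly stronger statement: you would need a separate argument showing that the residuals $\nab^{A,\al}\la_{\al\be}-\nab^A_\be\psi$ and $\nab^A_\al\la_{\be\ga}-\nab^A_\be\la_{\al\ga}$ of the constructed $\la$ themselves satisfy a small homogeneous elliptic system and therefore vanish. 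This step is absent from your write-up, and it matters because the constraint propagation you do carry out (for $C^1,\dots,C^7$) is derived \emph{assuming} $\la$ solves the div--curl system.

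\textbf{The constraint system is more intricate than $-\De C=L\cdot C$.} The paper's system for $(C^1,\dots,C^7)$ mixes a first-order relation ($\nab^A_\be C^1=\nab^{A,\al}C^2_{\al\be}$), covariant Laplacian equations for $C^2,C^3,C^4,C^5$, an \emph{algebraic} identity for $C^6$, and a first-order div--curl system for $C^7$ sourced by $C^6$; the variables live at two different regularities ($\dot H^1$ for $C^1,\dots,C^5$, $L^2$ for $C^6,C^7$). Getting this to close perturbatively is a nontrivial algebraic exercise (Bianchi identities, commutators), and describing it as a single Laplacian with a small zeroth-order potential hides the real work.

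\textbf{Part (c): a different but viable route.} You propose a direct bilinear estimate $H^{\si_1}\cdot H^{\si_2}\hookrightarrow H^{\si_1+\si_2-d/2}$ with $\si_1+\si_2=\si+s$, which gives output regularity $\si+s-d/2>\si$. The paper instead proves the two asymmetric bounds $\|D^2\SS\|_{\HH^\si}\lesssim\|\de_1\SS\|_{\HH^\si}\|\de_2\SS\|_{\HH^s}(1+\|\psi\|_{H^s})^N$ and its reflection, then interpolates. The interpolation route is more robust: one only ever has to estimate products where one factor sits at full regularity $s$, which is exactly the situation part (b) already handles, so no new multilinear estimates with two low-regularity factors need to be checked. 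Your direct route likely works for $d\geq 4$ because $s>d/2$ gives positive slack, but it requires re-examining every term in the analytic second variation near the endpoint $\si_1,\si_2\downarrow d/2-3$, whereas the paper gets it essentially for free.

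Finally, your identification of $\si>d/2-3$ as coming from ``the worst product'' is a reasonable heuristic but is not pinned down; in the paper this threshold is what makes the bilinear inequalities of Lemma~\ref{equivalent-fixed-time} close at the low end (in particular the terms $\nab\la\,\dla$ and $\la\,\nab\dla$, which are handled separately in the proof).
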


\begin{rem}
    Here we solve the elliptic system \eqref{ell-syst} in the function space $\HH^s$ for $s>d/2$, which is more suitable for the nonlinear estimates of $\psi$-equation. 
    Nevertheless, this system can be solved in a similar fashion for the full range  
    of indices $s$ above scaling, namely $s>d/2-1$. However, in the additional range  
    $d/2-1 < s \leq d/2$ one needs to replace the  above solution space $\mathcal H^s$ with a slightly larger one,
    \[
    \lV \SS\rV_{\tilde{\mathcal H}^s}=\lV\la\rV_{H^{s}}+\lV|D|h\rV_{H^{\sigma+1}} +\lV|D|V\rV_{H^{\sigma}}+\lV|D|A\rV_{H^{\sigma}}+\lV|D|B\rV_{H^{\sigma-1}},
    \]
    where $\sigma = 2s-d/2+1$. Then the elliptic system \eqref{ell-syst} admits a unique small solution $\SS$ in $\tilde{\mathcal H}^s$ with
    \[
    \lV \SS\rV_{\tilde{\mathcal H}^s}\lesssim \lV \psi\rV_{H^s}.
    \]
\end{rem}

\begin{proof}[Proof of Theorem \ref{t:ell-fixed-time}]

a)  The proof is based on a perturbative argument.	We rewrite the system \eqref{ell-syst} in the form
\begin{equation}    \label{ell-syst_2}
	\left\{\begin{aligned}
	&\d_\al \la_{\al\be}=\d_\be \psi+H_{1\la},\\
	&\d_\al \la_{\be\ga}-\d_\be \la_{\al\ga}=H_{2\la},\\
	&\De g_{\ga\si}=H_g,\\
	&\De V^{\ga}=H_V,\\
	&\De A_{\al}=H_A,\\
	&\De B=H_B,
	\end{aligned}\right.
\end{equation}	
where the nonlinear source terms are given by
	\begin{align*}
	&H_{1\la}= iA_\be \psi-h^{\al\mu}\d_\mu \la_{\al\be}+\Ga_{\al\be,\si}\la^{\al\si},\\ 
	&H_{2\la}=  -iA_\al \la_{\be\ga}+iA_\be \la_{\al\ga}+\Ga_{\al\ga,\si}\la_\be^\si -\Ga_{\be\ga,\si}\la_\al^\si,\\
	&\begin{aligned}
	H_g=&-h^{\al\be}\d^2_{\al\be}g_{\ga\si}-\d_{\ga} g^{\al\be}\d_{\be} g_{\al\si}-\d_{\si} g^{\al\be}\d_{\be} g_{\al\ga}+\d_{\ga} g_{\al\be}\d_{\si} g^{\al\be}\\
	&+2g^{\al\be}\Ga_{\si\al,\nu}\Ga^{\nu}_{\be\ga}-2\Re (\la_{\ga\si}\bar{\psi}-\la_{\al\ga}\bar{\la}_{\si}^{\al}),
	\end{aligned}\\
	&\begin{aligned}
	H_V=&-\nab^{\al}\nab_{\al}V^{\ga}+\De V^{\ga}+2\Im (\nab^{A,\ga}\psi\bar{\psi}-\nab^A_{\al}\psi\bar{\la}^{\al\ga})-\Re (\la^{\ga}_{\si}\bar{\psi}-\la_{\al\si}\bar{\la}^{\al\ga})V^{\si}\\
	&+2(\Im(\psi\bar{\la}^{\al\be})+\nab^{\al}V^{\be})\Ga^{\ga}_{\al\be},
	\end{aligned}\\
	&H_A=-\nab^{\ga}\nab_{\ga} A_{\al}+\De A_{\al}+\Re(\psi\bar{\la}_{\al}^{\si}-\la_{\al}^{\be}\bar{\la}_{\be}^{\si})A_{\si}+\nab^{\ga}\Im(\la^{\si}_{\ga}\bar{\la}_{\al\si}),\\
	&
	\begin{aligned}
	H_B=&-\nab^{\ga}\nab_{\ga}B+\De B-\nab^{\ga}\Re [\la_{\ga}^{\si}(\bar{\d}^A_{\si}\bar{\psi}+i\bar{\la}_{\si\be}V^{\be})]\\
	&+(2\Im(\psi\bar{\la}^{\be\ga})+\nab^{\be}V^{\ga}+\nab^{\ga}V^{\be})\d_{\be}A_{\ga}.
	\end{aligned}
	\end{align*}

	In order to prove the existence of solutions to \eqref{ell-syst_2} at a fixed time for small $\psi\in H^s$, we construct solutions to \eqref{ell-syst_2} iteratively.
	We define the sets of elliptic variables 
	\[
	\SS^{(n)}=(\la^{(n)},h^{(n)},V^{(n)},A^{(n)},B^{(n)}),
	\]
	at each step, based on the scheme
	\begin{equation}        \label{ell-iteration}
	\left\{\begin{aligned}
	&\d_\al \la^{(n+1)}_{\al\be}=\d_\be \psi+H^{(n)}_{1\la},\\
	&\d_\al \la^{(n+1)}_{\be\ga}-\d_\be \la^{(n+1)}_{\al\ga}=H^{(n)}_{2\la},\\
	&\De g^{(n+1)}_{\ga\si}=H^{(n)}_g,\\
	&\De V^{(n+1)\ga}=H^{(n)}_V,\\
	&\De A^{(n+1)}_{\al}=H^{(n)}_A,\\
	&\De B^{(n+1)}=H^{(n)}_B,
	\end{aligned}\right.
	\end{equation}
	with the trivial initialization
	\begin{equation*}
	\SS^{(0)}=(0,0,0,0,0), \quad g^{(0)}=h^{(0)}+I,
	\end{equation*}
	where $H^{(n)}_{1\la}$, $H^{(n)}_{2\la}$, $H^{(n)}_{g}$, $H^{(n)}_{V}$, $H^{(n)}_{A}$ and $H^{(n)}_{B}$ are defined as $H_{1\la}$, $H_{2\la}$, $H_{g}$, $H_{V}$, $H_{A}$ and $H_{B}$ with
	\[
	\SS=\SS^{(n)}.
	\]
	
	We will inductively show that 
    \[
	\lV \SS^{(n)}\rV_{\HH^s}\leq C\lV\psi\rV_{H^s},
    \]
    with a large universal constant $C$. This trivially holds for our initialization. Then using a standard Littlewood-Paley decomposition, Bernstein's inequality and 
    the smallness of our data $\psi\in H^s$ in order to estimate the source terms 
    $H^{(n)}_{1\la}$, $H^{(n)}_{2\la}$, $H^{(n)}_{g}$, $H^{(n)}_{V}$, $H^{(n)}_{A}$ and $ H^{(n)}_{B}$,
    we obtain
	\begin{align*}
	\lV \SS^{(n+1)}\rV_{\HH^s}
	\lesssim \lV\psi\rV_{H^s}+\lV\SS^{(n)}\rV_{\HH^s}^2(1+\lV\SS^{(n)}\rV_{\HH^s})^N
	\lesssim \lV\psi\rV_{H^s}.
	\end{align*}
	From the iterative scheme \eqref{ell-iteration} and $\psi\in H^s$ small,
	we can repeat the same analysis for successive differences in  order to obtain
	a small Lipschitz constant,
	\begin{align*}
	\lV \SS^{(n+1)}-\SS^{(n)}\rV_{\HH^s}\ll\lV\SS^{(n)}-\SS^{(n-1)}\rV_{\HH^s}.
	\end{align*}
	Hence the elliptic system \eqref{ell-syst} admits a small solution 
	\[
	\SS=\lim_{n\rightarrow \infty} \SS^{(n)}\in \HH^s.
	\]
	The uniqueness and the Lipschitz dependence of the 
	solution on $\psi$ are easily obtained by similar elliptic estimates. 
\bigskip 

Next, we prove the solution satisfies the constraints \eqref{csmc}, \eqref{lambda-sim}, \eqref{cpt-AiAj-2}, \eqref{Coulomb}, \eqref{hm-coord} and \eqref{R-la}. To get started, let us summarize the compatibility conditions we need to verify:
\begin{align*}
\psi = g^{\alpha \beta} \lambda_{\alpha\beta},
\\
\lambda_{\al\beta} = \lambda_{\be\alpha }, 
\\
\nabla_\alpha A_\beta - \nabla_\beta A_\alpha = \Im( \lambda_\alpha^{\ \gamma} \bar \lambda_{\ga\beta}),\\
\nabla^\alpha A_\alpha = 0,\\
g^{\alpha\beta} \Gamma_{\alpha\beta,\delta} = 0,
\\
\Ric_{\ga\be}=\Re(\lambda_{\ga\be}\bar{\psi}-\la_{\ga\al}\bar{\la}_{\ \be}^\al),\\
R_{\si\ga\al\be}=
    \Re(\lambda_{\ga\be}\bar{\la}_{\si\al}-\la_{\ga\al}\bar{\la}_{\si\be}).
\end{align*}
We need to show that these constraints are satisfied for solutions to
the elliptic system \eqref{ell-syst}. We can disregard the $B$ and $V$
equations, which are unneeded here.

To shorten the notations, we define 
\begin{equation*}  
\begin{aligned}
    &C^1=\psi- g^{\alpha \beta} \lambda_{\alpha\beta},\\
    &C^2_{\al\be}=\lambda_{\al\beta} -\lambda_{\be\alpha},\\
    &C^3_{\al\be}=\nabla_\alpha A_\beta - \nabla_\beta A_\alpha - \Im( \lambda_{\alpha\gamma} \bar \lambda^\ga_{\ \beta }),\\
    &C^4=\nabla^\alpha A_\alpha,\\
    &C^5_\de=g^{\alpha\beta} \Gamma_{\alpha\beta,\delta},\\
    &C^6_{\ga\be}=\Ric_{\ga\be}-\Re(\lambda_{\ga\be}\bar{\psi}-\la_{\ga\al}{\bar{\la}^\al}_{\ \be}), \\
    &C^7_{\si\ga\al\be}=R_{\si\ga\al\be}-
    \Re(\lambda_{\ga\be}\bar{\la}_{\si\al}-\la_{\ga\al}\bar{\la}_{\si\be}).
\end{aligned}
\end{equation*}
Here $C^2$ and $C^3$ are antisymmetric, $C^6$ is symmetric and $C^7$ inherits all the linear symmetries of the 
curvature tensor.

Our goal is to show that all these functions vanish. We will prove this by showing 
that they solve a coupled linear homogeneous elliptic system of the form
\begin{equation*} 
\begin{aligned}
\nab^A_\be C^1 =&\ \nab^{A,\al}C^2_{\al\be}, \\
\Delta^A_g C^2 =&\ (\la+\psi)(C^3+C^6+C^7)+(\la^2+\la\psi)C^2, \\
\Delta_g C^3 = &\  R C^3+ \nabla(  C^6 A) + \nab( \la \nab C^2+\nab \la C^2)  ,\\
\Delta_g C^4 = &\  \nab( C^6 A) + R C^3 +\nab^2(\la C^2), \\
\Delta_g C^5 = &\ R C^5 + \nabla ( C^1 \psi)+\la \nab C^2+\nab \la C^2  ,\\
C^6_{\ga\si}=&\  \frac{1}{2}(\nab_\ga C^5_\si+\nab_\si C^5_\ga), \\
\nab_\delta C^7_{\si\ga\al\be}& \ + \nab_{\si} C^7_{\ga\de\al\be} 
 + \nab_{\ga} C^7_{\de\si\al\be} =0, \\
\nab^\si C^7_{\si\ga\al\be} =& \  \nabla_{\al} C^6_{\ga\be} - \nabla_{\be} C^6_{\ga\al}
+ \nab(\la C^1+\la C^2).
\end{aligned}
\end{equation*}
Here the covariant Laplace operators $\Delta_g$, respectively $\Delta_g^A$ are 
symmetric and coercive in $\dot H^1$. We consider these equations as a system in the space
\[
(C^1,C^2,C^3,C^4,C^5,C^6,C^7) \in \dot H^1 \times \dot H^1 \times \dot H^1 \times \dot H^1 
\times \dot H^1 \times L^2 \times L^2
\]
using $\dot H^1$ bounds for the Laplace operator in the second to fifth equations, and 
interpreting the last two equations as an elliptic div-curl system in $L^2$, with an $\dot H^{-1}$ source term.
Since the coefficients are all small, the right hand side terms are perturbative
and $0$ is the unique solution for this system. The details are left for the reader, as they  only involve Sobolev embeddings and H\"older's inequality.

To complete the argument, we now successively derive the equations in the above system.
In the computations below, it is convenient to introduce several auxiliary notations.
The curvature of the connection $A$ acting on complex valued functions is denoted by
\[
\bmF_{\alpha \beta} = \partial_\alpha A_\beta - \partial_\beta A_\alpha
\]
so that we have
\[
[\nabla^A_\alpha,\nabla^A_\beta]\psi = i \bmF_{\alpha \beta} \psi.
\]
We also set 
\[
C^7_{\si\ga\al\be}=R_{\si\ga\al\be}-\tR_{\si\ga\al\be},\qquad \tR_{\si\ga\al\be}:= \Re(\lambda_{\ga\be}\bar{\la}_{\si\al}-\la_{\ga\al}\bar{\la}_{\si\be}), 
\]
respectively
\[
C^6_{\ga\be}=\Ric_{\ga\be}-\tRic_{\ga\be},\qquad   \tRic_{\ga\be}:=\Re(\lambda_{\ga\be}\bar{\psi}-\la_{\ga\al}\bar{\la}^\al_{\ \be}),\quad \tR:=g^{\ga\be}\tRic_{\ga\be},
\]
and
\[
C^3_{\alpha\beta} = \bmF_{\alpha\beta} - \tbmF_{\alpha\beta}, \qquad  \tbmF_{\alpha\beta} := \Im( \lambda_{\alpha\gamma} \bar \lambda^\ga_{\ \beta }).
\]

\bigskip

\begin{proof}[The equation for $C^1$] This equation has the exact form
\begin{equation*}   
\nab^A_\be C^1 = \nab^{A,\al}C^2_{\al\be}.
\end{equation*}
This is obtained by \eqref{la-eq} directly.
\end{proof}

\medskip

\begin{proof}[The equation for $C^2$] The full system for $C^2$ has the form
	\begin{equation}       \label{CompCond-2_eq}
	\begin{aligned}
	\Delta^A_g C^2_{\al\be} =&(\la+\psi)(C^3+C^6+C^7)+(\la^2+\la\psi)C^2.
	\end{aligned}
	\end{equation}
	By $\la$-equation \eqref{la-eq} we have
	\begin{align*}
	    \nab^{A,\ga}\nab^A_\ga \la_{\al\be}=& [\nab^{A,\ga},\nab^A_\al]\la_{\ga\be}+\nab^A_\al \nab^A_\be \psi\\
	    =&\Ric_{\al\mu}{\la^\mu}_\be+R_{\si\al\be\mu}\la^{\si\mu}+iC^3_{\ga\al}{\la^\ga}_\be+i\Im({\la_\ga}^\mu\bar{\la}_{\mu\al}){\la^\ga}_\be+\nab^A_\al \nab^A_\be \psi.
	\end{align*}
	Then we use $C^6$, $C^7$ and $C^3$ to give
	\begin{align*}
	    \De_g^A C^2_{\al\be}=&C^6_{\al\mu}{\la^\mu}_\be-C^6_{\be\mu}{\la^\mu}_\al+C^7_{\si\al\be\mu}\la^{\si\mu}-C^7_{\si\be\al\mu}\la^{\si\mu}\\
	    &+iC^3_{\ga\al}{\la^\ga}_\be-iC^3_{\ga\be}{\la^\ga}_\al+iC^3_{\al\be}\psi+C^2(\la^2+\la\psi).
	\end{align*}
	Hence, the $C^2$-equation \eqref{CompCond-2_eq} follows.
\end{proof}

\medskip

\begin{proof}[The equation for $C^3$]  
This has the form 
\begin{equation}   \label{CompCond-3_eq}
    \begin{aligned} 
  \Delta_g C^3_{\al\be} =& \
 \nabla_\beta ( C^6_{\alpha\delta} A^\delta) - \nabla_\alpha (C^6_{\beta\delta}  A^\delta)  
 + R_{\beta\alpha\sigma\delta} C^{3,\sigma\delta} 
 + \Ric_{\alpha \delta} C^{3,\delta\beta} -  \Ric_{\beta \delta} C^{3,\delta \alpha}\\
 &\ + \nabla^\ga \Im(\la_\ga^{\ \si}(\overline{\nab^A_\al C^2_{\si\be}}-\overline{\nab^A_\be C^2_{\si\al}})+\nab^A_\ga \la^\si_{\ \be}\bar{C^2}_{\al\si}).
\end{aligned}
\end{equation}
To prove this, it is convenient to separate the left hand side into two terms,
\[
 \Delta_g C^3_{\al\be} = ( [\Delta_g, \nabla_\alpha ] A_\beta - [\Delta_g, \nabla_\beta ] A_\alpha)
 + (\nabla_\alpha \Delta_g A_\beta - \nabla_\beta \Delta_g A_\alpha - \Delta_g \tbmF_{\alpha\beta}) := I + II .
\]
For the commutator we use the Bianchi identities to compute
\begin{align*}
 I  = &\ [\nabla^\sigma \nabla_\sigma,\nabla_{\alpha}] A_\beta - [ \nabla^\sigma \nabla_\sigma,   \nabla_\beta] A_\alpha  \\
 = & \ \nabla^\sigma (R_{\sigma \alpha \beta\delta} A^\delta - R_{\sigma \beta \alpha\delta} A^\delta)
 + (R_{\sigma \alpha \beta\delta} - R_{\sigma \beta \alpha\delta}) \nabla^\sigma A^\delta +
 {R^\sigma}_{\alpha \sigma \delta} \nabla^\delta A_\beta -{R^\sigma}_{\beta \sigma \delta} \nabla^\delta A_\alpha
 \\
 = & \ \nabla^\sigma R_{\beta\alpha\sigma\delta} A^\delta + 2 R_{\beta\alpha\sigma\delta} \nabla^\sigma A^\delta
 + \Ric_{\alpha \delta} \nabla^{\delta} A_\beta -  \Ric_{\beta \delta} \nabla^{\delta} A_\alpha
 \\
 = & \ (\nabla_\beta \Ric_{\alpha\delta} - \nabla_\alpha R_{\beta\delta} ) A^\delta  
 + R_{\beta\alpha\sigma\delta} \bmF^{\sigma\delta}+ \Ric_{\alpha \delta} ({\bmF^\delta}_\beta+\nabla_{\beta} A^\delta) -  \Ric_{\beta \delta} ({\bmF^\delta}_\alpha+ \nabla_{\alpha} A^\delta)
 \\
 = & \ \nabla_\beta ( \Ric_{\alpha\delta} A^\delta) - \nabla_\alpha (\Ric_{\beta\delta}  A^\delta)  
 + R_{\beta\alpha\sigma\delta} \bmF^{\sigma\delta}+ \Ric_{\alpha \delta} {\bmF^\delta}_\beta-  \Ric_{\beta \delta} {\bmF^\delta}_\alpha.
 \end{align*}
On the other hand for the second term we use the $A$ equation in \eqref{ell-syst} to write
 \begin{align*}
        II=&\  \nab_{\al}[\tRic_{\beta\si}A^{\si}]-\nab_{\be}[\tRic_{\alpha\si} A^{\si}]
        \\ & \
        +\nab_{\al}\nab^{\ga}\Im(\la_{\ga\si}\bar{\la}^\si_{\ \be})-\nab_{\be}\nab^{\ga}\Im(\la_{\ga\si}\bar{\la}^\si_{\ \al})-\nab_{\ga}\nab^{\ga}\Im(\la_{\al\si}\bar{\la}^\si_{\ \be})\\
        :=& II_1+II_2.
\end{align*}
The first term $II_1$ combines directly with the first two terms in $I$. For the second we
commute
\begin{align*}
II_2 =& \  R_{\alpha \gamma \gamma \delta} \tbmF^{\delta}_\beta +
R_{\alpha \gamma \beta \delta} \tbmF^\delta_{\gamma} -  R_{\beta \gamma \gamma \delta} \tbmF^{\delta}_\alpha -
R_{\beta \gamma \alpha \delta} \tbmF^\delta_{\gamma} + \\ & \ + \nab^{\ga}(\nab_{\al}\Im(\la_{\ga\si}\bar{\la}^\si_{\ \be})-\nab_{\be}\Im(\la_{\ga\si}\bar{\la}^\si_{\ \al})-\nab_{\ga}\Im(\la_{\al\si}\bar{\la}^\si_{\ \be}))
\\ 
= & \ - \Ric_{\alpha \delta} {\tbmF^\delta}_{\ \beta} + \Ric_{\beta\delta} {\tbmF^\delta}_{\ \al} - R_{\beta\alpha\sigma\delta} \tbmF^{\sigma\delta}\\
&+ \nabla^\ga \Im(\la_\ga^{\ \si}(\overline{\nab^A_\al C^2_{\si\be}}-\overline{\nab^A_\be C^2_{\si\al}})+\nab^A_\ga \la^\si_{\ \be}\bar{C^2}_{\al\si}).
\end{align*}
Summing up the expressions for $I$ and $II$ we obtain \eqref{CompCond-3_eq}.
\end{proof}

\medskip

\begin{proof}[The equation for $C^4$] This has the form
\begin{equation}   \label{CompCond-4-eq}
\begin{aligned}
     \De_g C^4=&-\nab^{\si}(C^6_{\mu\si}A^{\ga})-\frac{1}{2}[\nab^{\al},\nab^{\ga}]C^3_{\ga\al}-\frac{1}{2}\nab^\ga\nab^\al\Im (C^2_{\si\ga}\bar{\la}_\al^{\ \si}+\la_{\ga}^{\ \si}\bar{C^2}_{\al\si}) .
\end{aligned}
\end{equation}
To prove it we commute $\De_g$ with $\nab^{\al}$
    \begin{equation*}
    \begin{aligned}
        \De_g \nab^{\al} A_{\al}= & \ \nab_{\si}[\nab^{\si},\nab^{\al}]A_{\al}+[\nab_{\si},\nab^{\al}]\nab^{\si}A_{\al}+\nab^{\al}\De_g A_{\al} \\
    = & \ - \nabla^\sigma( \Ric_{\mu\si} A^\mu) + \frac12   [\nab^{\si},\nab^{\al}] \bmF_{\si\al}
    + \nabla^\al (\tRic_{\al\si} A^\si) + \nabla^\al \nabla^\gamma \tbmF_{\al\ga}
\end{aligned}        
    \end{equation*}
In the last term we can symmetrize in $\al$ and $\ga$, and the desired equation \eqref{CompCond-4-eq}
follows.
\end{proof}

\medskip

\begin{proof}[The equation for $C^5$]
Here we compute 
\begin{equation}   \label{CompCond-5-eq}
\begin{aligned}
    \De_g C^5_\be =&-[\nab^\al,\nab_\be]C^5_\al-\Re (\nab_\be (C^1 \bar{\psi})-2\bar{\la}^{\al\si}\nab^A_\al C^2_{\si\be}+\nab_\be (\la^{\al\si}C^2_{\si\al})  ).
\end{aligned}
\end{equation}

We can rewrite the $g$ equation \eqref{g-eq-original} as
\[
\Ric_{\alpha\beta} = \tRic_{\alpha\beta} + \frac{1}{2}(\nabla_\alpha C^5_\beta + \nabla_\beta C^5_\alpha)
\]
which by contraction yields 
\[
R = \tR + \nabla^\alpha C^5_\alpha.
\]
To get to $\Delta_g C^5$, by the above two equalities we write
\[
\begin{aligned}
\frac{1}{2}\Delta_g C^5_\beta = & \ \nabla^\alpha (\Ric_{\alpha\beta} - \tRic_{\alpha\beta})
- \frac12 [\nabla^\alpha,\nabla_\beta] C^5_\alpha - \frac12 \nabla_\beta (R-\tR) 
\\
= & \ (\nabla^\alpha R_{\alpha\beta} - \frac12 \nabla_\beta R) 
- \frac12 [\nabla^\alpha,\nabla_\beta] C^5_\alpha - 
(\nabla^\alpha \tR_{\alpha \beta}
-\frac12 \nabla_\beta \tR) .
\end{aligned}
\] 
The first term drops by twice contracted Bianchi,
\begin{equation*}
    g^{\mu\nu}g^{\ga\al}(\nab_\ga R_{\nu\be\mu\al}+\nab_\nu R_{\be\ga\mu\al}+\nab_\be R_{\ga\nu\mu\al})=0,
\end{equation*}
and the last one is quadratic in $\lambda$
and yields $C^1$ and $C^2$ terms,
\begin{align*}
    (\nabla^\alpha \tR_{\alpha \beta}-\frac12 \nabla_\beta \tR) =&
    \Re (\frac{1}{2}\nab_\be (C^1 \bar{\psi})-\bar{\la}^{\al\si}\nab^A_\al C^2_{\si\be}+\frac{1}{2}\nab_\be (\la^{\al\si}C^2_{\si\al})  ).
\end{align*}
This completes the derivation of \eqref{CompCond-5-eq}.
\end{proof}

\medskip

\begin{proof}[The equation for $C^6$]
This has the form
\begin{equation}    \label{CompCond-6-eq}
    C^6_{\ga\si}=\frac{1}{2}(\nab_\ga C^5_\si+\nab_\si C^5_\ga).
\end{equation}
Indeed, by the $g$-equation in \eqref{ell-syst} and its proof, we recover the Ricci curvature
    \begin{align*}
        \Re(\lambda_{\ga\si}\bar{\psi}-\la_{\ga\al}\bar{\la}^\al_{\ \si})=\Ric_{\ga\si}-\frac{1}{2}(\d_\ga C^5_\si+\d_\si C^5_\ga)+\Ga^\nu_{\ga\si}C^5_\nu.
    \end{align*}
    This implies the relation \eqref{CompCond-6-eq} immediately. 
\end{proof}

\medskip

\begin{proof}[The equation for $C^7$]
By the second Bianchi identities of Riemannian curvature and the following equality
\begin{align*}    
    &\nab_\de \Re(\la_{\ga\be}\bar{\la}_{\si\al}-\la_{\ga\al}\bar{\la}_{\si\be})+\nab_\si \Re(\la_{\de\be}\bar{\la}_{\ga\al}-\la_{\de\al}\bar{\la}_{\ga\be})+\nab_\ga \Re(\la_{\si\be}\bar{\la}_{\de\al}-\la_{\si\al}\bar{\la}_{\de\be})=0,
\end{align*}
we have the counterpart of the second Bianchi identities 
\begin{equation*}
 \nab_\delta C^7_{\si\ga\al\be} + \nab_{\si} C^7_{\ga\de\al\be} 
 + \nab_{\ga} C^7_{\de\si\al\be} =0,
\end{equation*}
which combine with the algebraic symmetries of the same tensor to yield 
an elliptic system for $C^7$. Precisely, 
using the above relation we have
\begin{equation*}
\nab^{\si}C^7_{\si\ga\al\be} = \nabla_{\al} C^6_{\ga\be} - \nabla_{\be} C^6_{\ga\al}
+  \nab(\la C^1+\la C^2),
\end{equation*}
which combined with the previous one yields the desired elliptic system, 
with $C^6$ viewed as a source term.
\end{proof}

\bigskip

 b) Assume that $\ts_k$ is an admissible frequency envelope for $\dSS$ in  $\HH^{\si}$. In view of the bound \eqref{t:ell-fixed-time: bd1} and of the smallness of $\|\psi\|_{H^s}$, it suffices to prove the difference or linearized estimate
     \begin{equation}   \label{t:ell-fixed-time: dlt-key}
	 \| S_k\dSS\|_{\mathcal H^{\si}} \lesssim \| S_k\dpsi\|_{H^{\si}}+\ts_k\|\psi\|_{H^s}  (1+\|\psi\|_{H^s})^N.
	 \end{equation}
	 If this is true, then the bound \eqref{t:ell-fixed-time: dlt} follows. Thus, by the definition of frequency envelope \eqref{Freq-envelope}, \eqref{t:ell-fixed-time: dlt} and the smallness of $\psi\in H^s$, we also obtain the bound \eqref{t:ell-fixed-time: fre-env}.
	
	As an intermediate step in the proof of \eqref{t:ell-fixed-time: dlt}, we collect in the next Lemma several bilinear estimates. The proof of this Lemma is standard by Littlewood-Paley decompositions and Bernstein inequality.

	\begin{lemma}    \label{equivalent-fixed-time}
	    Let $d/2-3<\si\leq s$, $d\geq 3$, then we have
	    \begin{gather*}       \label{equivalent_h_fixed-time}
	    \lV\nab\de(\tilde{h}h)\rV_{H^{\si}}\lesssim \lV\nab\de\tilde{h}\rV_{H^{\si}}\lV \nab h\rV_{H^{s}}+\lV\nab\tilde{h}\rV_{H^{s}}\lV\nab \dh\rV_{H^{\si}},\\\label{equivalent_lambda_fixed-time}
	    \lV \de(\lambda h)\rV_{H^{\si}}\lesssim \lV \de\lambda\rV_{H^{\si}}\lV \nab h\rV_{H^{s}}+\lV \lambda\rV_{H^{s}}\lV \nab \dh\rV_{H^{\si}},\\\label{equivalent_A_fixed-time}
	    \lV\nab \de(Ah)\rV_{H^{\si-1}}\lesssim \lV  \nab\dA\rV_{H^{\si-1}}\lV \nab h\rV_{H^{s}}+\lV \nab A\rV_{H^{s}}\lV \nab \dh\rV_{H^{\si}}.
	    \end{gather*}
	\end{lemma}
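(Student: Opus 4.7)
The plan is to reduce each of the three bilinear bounds to standard Sobolev product estimates, and then to verify those via the Littlewood--Paley decomposition. First I would interpret $\delta$ as a difference/linearization operator and apply the Leibniz rule, writing for instance $\delta(\tilde h h) = \delta \tilde h \cdot h + \tilde h \cdot \delta h$, and likewise for $\delta(\lambda h)$ and $\delta(Ah)$. Each outer $\nabla$ then further distributes by Leibniz, so every estimate becomes a sum of product bounds $\|u\cdot v\|_{H^{\sigma}}$ (resp.\ $H^{\sigma-1}$) with $u,v$ drawn from $\nabla \tilde h$, $\nabla \delta \tilde h$, $\nabla h$, $\nabla \delta h$, $\lambda$, $\delta\lambda$, $\nabla A$, $\nabla \delta A$ in the $H^{s}$ or $H^{\sigma}$ ranges, together with the primitives $h,\delta h,A,\delta A$, each of which is controlled through its gradient via Bernstein.

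Next I would apply a Littlewood--Paley decomposition to each factor and split every product $uv$ into the three standard paraproduct pieces $T_u v + T_v u + \Pi(u,v)$. In the two low-high pieces the Sobolev embedding $H^{s}\hookrightarrow L^{\infty}$ (valid because $s>d/2$) puts the low-frequency factor in $L^{\infty}$ and the high-frequency factor in $H^{\sigma}$, which closes the estimate with the desired pairing of norms. For the high-high piece $\Pi(u,v)$ at output frequency $2^{k}$, I would sum over input frequencies $2^{j}\gtrsim 2^{k}$ using Bernstein (\ref{Bern-ineq}) to cover the $L^{2}$-to-$L^{\infty}$ gap on one factor; convergence of this sum forces a lower bound on $s+\sigma$, and the hypothesis $\sigma>d/2-3$ with $d\geq 3$ is precisely designed to produce the required surplus of derivatives. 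For the primitives $h,A$ etc.\ that appear without a gradient, I would decompose them dyadically as $P_j h$ and use Bernstein to exchange $\|P_j h\|_{L^\infty}$ for $2^{-j}\|P_j \nabla h\|_{L^{\infty}}$, the factor $2^{-j}$ being recovered from the paraproduct bookkeeping.

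The main obstacle will be the low-frequency regime. Because $h,\delta h,A,\delta A$ are controlled only through their gradients and $\sigma$ may be negative (down to $d/2-3$), the high-high interaction producing low-frequency output is the delicate case: Bernstein losses there are borderline summable, and the numerology works out exactly at the threshold $\sigma>d/2-3$. Once this boundary case is verified the rest is routine Leibniz-type bookkeeping, and the three inequalities follow by identical schemes; the only cosmetic difference for the $A$-estimate is that the outer norm sits at one derivative lower ($H^{\sigma-1}$ instead of $H^\sigma$), which is matched by the fact that $\nabla \delta A$ is controlled in $H^{\sigma-1}$ on the right-hand side.
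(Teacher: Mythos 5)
The paper does not actually spell out a proof of Lemma~\ref{equivalent-fixed-time}; it only records that the bounds follow "by Littlewood--Paley decompositions and Bernstein inequality," so the comparison is with the standard argument you are reconstructing. Your overall plan (Leibniz expansion of $\delta$ and $\nabla$, paraproduct trichotomy, $H^s\hookrightarrow L^\infty$ for the two para-type pieces, control of primitives $h,\delta h,A,\delta A$ by their gradients) is the right one and agrees with what the paper uses elsewhere (e.g.\ in the proof of Lemma~\ref{equivalent_lemma}).

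There is, however, one genuine problem in the way you treat the high-high interaction, and it matters precisely in the range $\sigma$ near $d/2-3$ that the lemma claims. You propose to estimate $\|P_k(u_j v_j)\|_{L^2}$ for $j>k$ by putting one factor in $L^\infty$ through Bernstein at the \emph{input} frequency $2^j$, i.e.\ $\|u_j\|_{L^\infty}\|v_j\|_{L^2}\lesssim 2^{jd/2}\|u_j\|_{L^2}\|v_j\|_{L^2}$. This loses a factor $2^{(j-k)d/2}$ relative to the correct estimate, which is to keep both factors in $L^2$, put the product in $L^1$, and apply Bernstein at the \emph{output} frequency $2^k$:
\[
\|P_k(u_j v_j)\|_{L^2}\lesssim 2^{kd/2}\|u_j v_j\|_{L^1}\lesssim 2^{kd/2}\|u_j\|_{L^2}\|v_j\|_{L^2}.
\]
To see that this matters, take the worst term in the $A$-estimate, namely $\nabla\delta A\cdot h$ measured in $H^{\sigma-1}$, with output frequency $k=0$ and inputs $\|(\nabla\delta A)_j\|_{L^2}\sim 2^{-(\sigma-1)j}$, $\|h_j\|_{L^2}\sim 2^{-(s+1)j}$. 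Your version needs $\sum_{j>0}2^{jd/2-(\sigma+s)j}<\infty$, i.e.\ $\sigma+s>d/2$, which for $s$ near $d/2$ requires $\sigma>0$; the output-Bernstein version needs only $\sum_{j>0}2^{-(\sigma+s)j}<\infty$, i.e.\ $\sigma+s>0$, which is exactly $\sigma>-d/2$ and coincides with $\sigma>d/2-3$ at $d=3$. So with the argument as you wrote it you do not reach the stated threshold; replacing input-Bernstein by output-Bernstein in the high-high, low-output case fixes it and is the step the paper's own paraproduct calculations (cf.\ the proof of Lemma~\ref{equivalent_lemma}) actually use. Apart from this the expansion, the low-high and high-low cases, and the handling of primitives via $\|P_j h\|_{L^\infty}\lesssim 2^{-j}\|P_j\nabla h\|_{L^\infty}$ are all sound.
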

	
	Now we turn our attention to the proof of \eqref{t:ell-fixed-time: dlt-key}. Here we first prove the estimates for $\dla$. By $\la$-equations in \eqref{ell-syst_2} it suffices to consider the following form
	\begin{align*}
	    &\d_\al \dla_{\al\be}=\d_\be \dpsi+ \dA \psi+A\dpsi+\dh\nab \la+h \nab \dla+\nab \dh \la+\nab h\dla,\\ 
	    &\d_\al \dla_{\be\ga}-\d_\be \dla_{\al\ga}=  \dA \la+A \dla+\nab \dh \la+\nab h\dla.
	\end{align*}
	By the relation
	\begin{equation}       \label{decomp-vector}
	\widehat{\la}(\xi)=|\xi|^{-2}(\widehat{\la}\cdot \xi)\xi+|\xi|^{-2}(\widehat{\la}\xi^{\top}-\xi \widehat{\la}^{\top})\cdot \xi,
	\end{equation}
	we obtain
	\begin{align*}
	    \| S_k\dla\|_{H^\si}\lesssim &\| S_k\dpsi\|_{H^\si}+\| |D|^{-1}S_k[\dA (\la+\psi)+A(\dla+\dpsi)+\dh\nab \la+h \nab \dla\\
	    &\qquad\qquad\qquad\qquad+\nab \dh \la+\nab h\dla]\|_{H^\si}\\
	    \lesssim & \|S_k\dpsi\|_{H^\si}+\ts_k\| \psi \|_{H^s}.
	\end{align*}

	Next we provide the estimate for $\dA$; the other estimates can be proved similarly. By $A$-equation in \eqref{ell-syst_2} and Lemma \ref{equivalent-fixed-time}, it suffices to consider the following form
	\begin{align*}
	    \De \dA=&\dh\nab^2 A+h\nab^2\dA+\nab \dh\nab A+\nab h\nab \dA+\nab \dh\nab h A+(\nab h)^2\dA\\
	    &+\dla \la (A+\nab h)+\la^2 (\dA+\nab\dh)+\nab\la\dla+\la\nab\dla.
	\end{align*}
	
	Using Littlewood-Paley trichotomy and Bernstein inequality, we bound all the nonlinearities except $\nab\la \dla$ and $\la\nab\dla$ by 
	\begin{align*}
	    &\| |D|^{-1}S_k (\dh\nab^2 A+h\nab^2\dA+\nab \dh\nab A+\nab h\nab \dA+\nab \dh\nab h A+(\nab h)^2\dA) \|_{H^\si}\\
	    &+\| |D|^{-1}S_k (\dla \la (A+\nab h)+\la^2 (\dA+\nab\dh)) \|_{H^\si}\\
	    \lesssim & \ts_k \| \psi\|_{H^s}(1+\| \psi\|_{H^s}).
	\end{align*}
	For the remainder terms, we also have
	\begin{align*}
	    \| |D|^{-1}S_0(\nab \la\dla+\la\nab\dla)\|_{L^2}
	    \lesssim & \| S_0(\nab \la\dla+\la\nab\dla)\|_{L^1}
	    \lesssim  \ts_0 \| \la\|_{H^s}
	\end{align*}
	and for $k>0$
	\begin{align*}
	    \| |D|^{-1}S_k(\nab \la\dla+\la\nab\dla)\|_{H^\si}
	    \lesssim  \ts_k \| \la\|_{H^s}.
	\end{align*}
	This completes the proof of \eqref{t:ell-fixed-time: dlt}.

\bigskip

	c) Using the similar argument to \emph{b)}, we have
	\[
	\| D^2\SS(\de_1\psi,\de_2\psi)\|_{\mathcal H^{\si}} \lesssim \|\de_1\SS\|_{\HH^{\si}}\|\de_2\SS\|_{\HH^{s}}   (1+\|\psi\|_{H^s})^N,
	\]
	and
	\[
	\| D^2\SS(\de_1\psi,\de_2\psi)\|_{\mathcal H^{\si}} \lesssim \|\de_1\SS\|_{\HH^{s}}\|\de_2\SS\|_{\HH^{\si}}   (1+\|\psi\|_{H^s})^N.
	\]
	Then by the smallness of $\psi\in H^s$, \eqref{t:ell-fixed-time: dlt} and interpolation, the above two bounds imply
	\[
	\| D^2\SS(\de_1\psi,\de_2\psi)\|_{\mathcal H^{\si}} \lesssim \|\de_1\psi\|_{H^{\si_1}}\|\de_2\psi\|_{H^{\si_2}}.
	\]
    This completes the proof of \eqref{t:ell-fixed-time: ddlt}.
\end{proof}

Next we establish bounds for the above solutions in space-time local energy spaces:
\begin{thm}\label{t:ell-time-dep}
a) Assume that $\psi$ is small in $l^2\bX^s$ for $s>d/2$, $d\geq 4$. Then the solution $(\la,h,A,V,B)$ 
for the elliptic system \eqref{ell-syst} given by Theorem~\ref{t:ell-fixed-time} belongs to $\EE^s$ and satisfies the bounds 
\begin{equation}  \label{t:ell-time-dep_bd1}
\|   \SS \|_{\bEE^s} \lesssim  \|\psi\|_{l^2 \bX^s},
\end{equation}
with Lipschitz dependence on the initial data in these topologies. Moreover, assume that $p_k$ is an admissible frequency envelope for $\psi\in l^2\bX^{s}$, we have the frequency envelope version 
\begin{equation}   \label{t:ell-time-dep_freq-envelope}
	  \| \SS_k \|_{\bEE^s} \lesssim p_k   .
\end{equation}

b) In addition, for the linearization of the  elliptic system \eqref{ell-syst} we have 
the bounds
\begin{equation}         \label{t:ell-time-dep_delta}
\|   \dSS \|_{\bEE^\sigma} \lesssim  \|\dpsi\|_{l^2 \bX^\sigma},
\end{equation}
for $\sigma \in (d/2-1,s]$.

\end{thm}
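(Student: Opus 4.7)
My plan is to bootstrap the fixed-time bounds of Theorem~\ref{t:ell-fixed-time} to space-time local energy bounds, exploiting the fact that the elliptic operators appearing in \eqref{ell-syst_2} (the Laplacian and the div-curl system for $\lambda$) are essentially local on the scale of the inverse frequency. Concretely, I would rerun the iteration \eqref{ell-iteration} in the space-time setting. After applying $S_j$ to each equation, the inverse elliptic operators have kernels concentrated on spatial scale $2^{-j}$ with rapidly decaying tails, so the $l^2_j L^\infty L^2$ structure of $\psi$ transfers directly to $\SS$, with tails summable in $j$. The multilinear source terms are handled by a space-time analogue of Lemma~\ref{equivalent-fixed-time}: one factor is placed in $L^\infty_t L^\infty_x$ via Sobolev embedding applied pointwise in $t$ to the $\mathcal H^s$ bound from part (a) of Theorem~\ref{t:ell-fixed-time}, while the other retains the $l^2_j$ summation coming from $\psi$. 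Frequency envelopes propagate from \eqref{t:ell-fixed-time: fre-env}, yielding both \eqref{t:ell-time-dep_bd1} and \eqref{t:ell-time-dep_freq-envelope}. For the $\|\partial_t \SS\|_{L^2\mathcal H^{s-2}}$ component, I would differentiate the elliptic system \eqref{ell-syst} in $t$; this produces an elliptic system for $\partial_t \SS$ whose right hand side is linear in $\partial_t \psi$ with smooth coefficients, plus small-coefficient contributions in $\partial_t \SS$. Since $\partial_t \psi \in L^2 H^{s-2}$ is encoded in the $\bX^s$ norm, elliptic regularity and absorption close the estimate.

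The main obstacle is the $Y^{s+2}$ norm for $h$, which is strictly stronger than the $Z^{1,s+2}$ part and demands $l^1$ (rather than $l^2$) spatial summation at scales $2^l$ with $l \geq |j|$, at the cost of a geometric weight $2^{l-|j|}$. To produce this, I would mirror the spatial scale structure of the source of the $g$-equation \eqref{ell-syst_2}: localize to frequency $2^j$, then invert $\Delta$ using its Newtonian kernel. Since the kernel decays like $\langle x\rangle^{2-d}$ and $d\geq 4$, the Bernstein-type bound \eqref{Convo-est} yields precisely the required $l^1$ summation with the correct weight, once the source has been split by spatial scale. The quadratic structure $\lambda^2+\lambda\psi$ (and lower-order terms) in the $g$-source is what makes this feasible, as each factor lives in a $Z$-type space and can be bilinearly estimated on each spatial scale; the cubic and higher terms are perturbative by the smallness assumption.

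For part (b), the same method applies to the linearized elliptic system, now placing the linearized factor in the weaker $\bEE^\sigma$ (or $l^2\bX^\sigma$) norm and the background factor in the full $\bEE^s$ norm built in part (a). The restriction $\sigma > d/2-1$, stricter than the fixed-time range $\sigma > d/2-3$ of \eqref{t:ell-fixed-time: dlt}, reflects the need to place the background factor in $L^\infty_x$ via Sobolev embedding within the space-time norms: since the $\mathcal H^\sigma$ norm contains $|D|V$, $|D|A$, etc.\ rather than $V$, $A$ directly, Sobolev embedding of the relevant derivative quantity into $L^\infty$ requires $\sigma+1 > d/2$. All compatibility constraints \eqref{csmc}--\eqref{hm-coord} hold at every time by part (a) of Theorem~\ref{t:ell-fixed-time}, so they require no separate propagation argument.
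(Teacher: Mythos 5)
Your overall architecture — bootstrap the fixed-time estimates of Theorem~\ref{t:ell-fixed-time} to space-time, run a difference/linearization estimate to get both (a) and (b), and treat $\|\partial_t\SS\|_{L^2\HH^{s-2}}$ by differentiating \eqref{ell-syst} in $t$ — matches the paper's plan. However, two of your key mechanisms do not work as stated.

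First, the claim that ``the $l^2_j L^\infty L^2$ structure of $\psi$ transfers directly to $\SS$, with tails summable in $j$'' fails precisely where it matters most: the low-frequency component $\| |D|^\sigma S_0 \SS \|_{l^2_0 L^\infty L^2}$ of the $Z^{\sigma,s}$ norms. At output frequency $\lesssim 1$, the inverse elliptic kernels live at spatial scale $\gtrsim 1$ and are not localized on unit cubes, so the $l^2_0$ summation on the target cannot be inherited from the $l^2$ summation on $\psi$. The paper's device here is to write $f(t) = f(0) + \int_0^t \partial_s f\, ds$, giving $\|f\|_{l^2_0 L^\infty L^2} \lesssim \|f(0)\|_{L^2} + \|\partial_t f\|_{L^2L^2}$; the fixed-time bound controls $f(0)$ and the $\partial_t\SS \in L^2\HH^{s-2}$ bound controls the second term. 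In your proposal, the $\partial_t$ estimate is presented as a separate, self-contained component, whereas in fact it is an essential ingredient inside the low-frequency $L^\infty L^2$ estimate; without this time-integration reduction the $l^2_0 L^\infty L^2$ part of $\EE^s$ is simply not reachable.

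Second, your route to the $Y^{s+2}$ norm of $h$ via Newtonian-kernel decay and \eqref{Convo-est} is misdirected. The bound \eqref{Convo-est} maps $l^1_0 L^\infty L^2 \to l^p_0 L^\infty L^2$ with $p > d/(d-\alpha) > 2$; that is, it \emph{weakens} spatial summability, while the $Y_j$ norm requires you to \emph{produce} $l^1_l L^\infty L^2$ control of the pieces $h_{j,l}$. The mechanism that actually yields the $l^1$ summation in the paper is bilinear: the quadratic source terms $\lambda^2$, $\lambda\psi$, $\nabla h\nabla h$, etc.\ are estimated scale by scale, with each factor placed in $l^2_l L^\infty L^2$ and the product landing in $l^1_l L^\infty L^1 \hookrightarrow l^1_l L^\infty L^2$ after Bernstein, followed by a careful case split over frequency ranges ($j \lessgtr 0$, $j_1 \lessgtr \pm j$) and a judicious choice of scale $l$ in the infimum defining $Y_j$. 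The $d \geq 4$ restriction enters through the $\nabla\lambda\cdot\dla$-type terms at low output frequency (to make $L^2 \times L^2 \to L^1$ Bernstein steps close), not through kernel decay.

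The remainder — bilinear estimates of Lemma~\ref{equivalent_lemma} type for placing one factor in $L^\infty$ and the other in the envelope-carrying space, the reduction of (b) to the linearized system with background in $\bEE^s$ — is consistent with the paper's Step 1--3 and with the stated range $\sigma \in (d/2-1,s]$. With the two corrections above your outline becomes a valid account of the argument.
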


\begin{proof}[Proof of Theorem \ref{t:ell-time-dep}]
    For the elliptic system \eqref{ell-syst_2}, we will prove the bound for differences $\dSS$
    \begin{equation}  \label{t:ell-time-dep_key2}
         \|  \dSS  \|_{\bEE^{\si}} 
         \lesssim  \|\dpsi\|_{l^2 \bX^{\si}}+\|\dSS \|_{\bEE^{\si}}\|   \SS \|_{\bEE^{s}}\big(1+\|   \SS \|_{\bEE^{s}}\big)^N.
    \end{equation}
    If this is true, by a continuity argument the bounds \eqref{t:ell-time-dep_bd1} and \eqref{t:ell-time-dep_delta} follow.

Assume that $s_k$ is an admissible frequency envelope for $\dSS\in \bEE^{\si}$. We can separate the bound \eqref{t:ell-time-dep_key2} into two parts, 
namely 
\begin{equation*}
    \|  \d_t\dSS  \|_{L^2\HH^{\si-2}} 
         \lesssim  \|\dpsi\|_{l^2 \bX^{\si}}(1+\| \d_t\psi\|_{L^2 H^{s-2}})
\end{equation*}
respectively
\begin{equation}\label{dS-nodt}
    \|  S_k\dSS  \|_{\EE^{\si}} 
         \lesssim  p_k+s_k\|   \SS \|_{\bEE^{s}}\big(1+\|   \SS \|_{\bEE^{s}}\big)^N. 
\end{equation}
Here one can think of the first bound as a fixed time bound for the linearization of the elliptic system \eqref{ell-syst}, square integrated in time. As such, this is a direct consequence
of the bound \eqref{t:ell-fixed-time: dlt} with argument $\d_t\de\psi$ and regularity index $\si-2$, and the bound \eqref{t:ell-fixed-time: ddlt} with $\de_1=\d_t,\de_2=\de,\si_1=s-2,\si_2=\si$ in Theorem~\ref{t:ell-fixed-time}. So it remains to prove
\eqref{dS-nodt}.

If the bound \eqref{dS-nodt} holds, then by the bound \eqref{t:ell-fixed-time: fre-env} with $\de=\d_t,\si=s-2$ and \eqref{dS-nodt} with $\de=Id,\si=s$, the bound \eqref{t:ell-time-dep_freq-envelope} follows.

As an intermediate step in the proof of \eqref{dS-nodt},
we  collect in the next Lemma several bilinear estimates and equivalent relations.

\begin{lemma}[Bilinear estimates]\label{equivalent_lemma}
	Let $s>d/2$, $0< \si\leq s$, $d\geq 4$, assume that $ h\in \bY^{s}$, then we have
	\begin{gather}       \label{equivalent_h_keybd}
	\lV\tilde{h}h\rV_{Y^{\si}}\lesssim \lV\tilde{h}\rV_{\bY^{\si}}\lV h\rV_{\bY^{s}},\\\label{equivalent_lambda_keybd}
	\lV \lambda h\rV_{Z^{0,\si}}\lesssim \lV \lambda\rV_{\bZ^{0,\si}}\lV h\rV_{\bY^{s}},\\\label{equivalent_A_keybd}
	\lV(Ah)\rV_{Z^{1,\si}}\lesssim \lV  A\rV_{\bZ^{1,\si}}\lV h\rV_{\bY^{s}}.
	\end{gather}
	As consequences of these bounds, for $h^{\al\be}=g^{\al\be}-I,h_{\al\be}=g_{\al\be}-I$, $\lambda^{\al\be}=g^{\al\ga}\lambda_{\ga}^{\be}, \lambda^{\be}_{\ga}=g^{\be\nu}\lambda_{\gamma\nu}$, $V^{\al}=g^{\al\be}V_{\be}$ and $A^{\al}=g^{\al\be}A_{\be}$, assume that $\lV  h_{\al\be}\rV_{\bY^{\si+1}}\ll 1$, we have
	\begin{gather*}   \label{eqivalent_h_ab&h^ab}
	\lV h_{\al\be}\rV_{\bY^{\si+1}}\approx \lV  h^{\al\be}\rV_{\bY^{\si+1}},\\      \label{eqivalent_lambda}
	\lV \lambda^{\al\be}\rV_{\bZ^{0,\si}}\approx \lV \lambda^{\be}_{\al}\rV_{\bZ^{0,\si}}\approx \lV \lambda_{\al\be}\rV_{\bZ^{0,\si}},\\
	\label{equivalent_V}
	\lV  V_{\al}\rV_{\bZ^{1,\si}}\approx \lV  V^{\al}\rV_{\bZ^{1,\si}},\\\label{equivalent_A}
	\lV  A_{\al}\rV_{\bZ^{1,\si}}\approx \lV  A^{\al}\rV_{\bZ^{1,\si}}.
	\end{gather*}
\end{lemma}
\begin{proof}[Proof of Lemma \ref{eqivalent_lambda}]
We do this in several steps:

\medskip
	\emph{Proof of the bound (\ref{equivalent_h_keybd}).} First, we consider the $Y$-norm estimates. For the high-low interaction, for any decomposition $P_j\tih=\sum_{l\geq |j|} \tih_{j,l}$, we have
	\begin{align*}
	\lV \sum_{l\geq |j|}( \tih_{j,l} h_{\leq j})\rV_{Y_j}\lesssim \sum_{l\geq |j|} 2^{l-|j|}\lV( \tih_{j,l} h_{\leq j})\rV_{l^1_lL^{\infty}L^2}
	\lesssim  \sum_{l\geq |j|} 2^{l-|j|}\lV \tih_{j,l}\rV_{l^1_lL^{\infty}L^2}\lV h_{\leq j}\rV_{L^{\infty}L^{\infty}}.
	\end{align*}
	Taking the infimum over the decomposition of $\tih_j$ yields
	\begin{equation*}
	\lV \sum_{l\geq |j|}( \tih_{j,l} h_{\leq j})\rV_{Y_j}\lesssim \lV P_j \tih\rV_{Y_j} \lV  h\rV_{Z^{1,s}},
	\end{equation*}
	which is acceptable. Similarly, for the low-high interaction, we have
	\begin{align*}
	\lV \sum_{l\geq |j|}( P_{\leq j}\tih  h_{j,l})\rV_{Y^{\frac{d}{2}-1-\de,\si}}\lesssim & 2^{(\frac{d}{2}-1-\de)j^-+\si j^+} \sum_{k\leq j}2^{dl/2}\lV P_{k} \tih\rV_{L^{\infty}L^2} \lV P_j h\rV_{Y_j}\\
	\lesssim &\lV \nab \tih\rV_{L^{\infty}H^{\si-1}}\lV P_j h\rV_{Y^{d/2-\de,s}},
	\end{align*}
	which is acceptable.
	
	Next, for the high-high interaction, when $j<0$ we rewrite it as
	\[
	\sum_{j<j_1<-j} P_j (P_{j_1}\tih P_{j_1}h)+\sum_{-j\leq j_1} P_j (P_{j_1}\tih P_{j_1}h).
	\]
	Then we bound the first term by
	\begin{align*}
	&2^{(d/2-1-\delta)j}\lV\sum_{j<j_1<-j}P_j(P_{j_1} \tih  P_{j_1}h)\rV_{Y_j}\\
	\lesssim & 2^{(d-1-\delta)j}\sum_{j<j_1<-j}\lV P_{j_1} \tih  P_{j_1}h\rV_{l^1_{|j|}L^{\infty}L^1}\\
	\lesssim & 2^{(d-1-\delta)j}\sum_{j<j_1<-j}\lV P_{j_1}\tih\rV_{l^2_{|j|}L^{\infty}L^2}\lV P_{j_1}h\rV_{l^2_{|j|}L^{\infty}L^2}\\
	\lesssim & 2^{(d-3-2\delta)j}\lV \nab \tih_{\leq 0}\rV_{l^2_0L^{\infty}L^2}\lV \nab h_{\leq 0}\rV_{l^2_0L^{\infty}L^2}+2^{(d-1-\de)j}\lV \tih\rV_{Z^{1,0}}\lV h\rV_{Z^{1,0}}.
	\end{align*}
	We bound the second term by
	\begin{align*}
	2^{(d/2-1-\delta)j}\lV\sum_{-j\leq j_1}P_j(P_{j_1} h P_{j_1} h)\rV_{Y_j}
	\lesssim & \sum_{-j\leq j_1}2^{(d-\delta)j+j_1}\lV( P_{j_1}h P_{j_1}h)\rV_{l^1_{j_1}L^{\infty}L^1}\\
	\lesssim & \sum_{-j\leq j_1}2^{(d-\delta)j+j_1}\lV P_{j_1}h\rV_{l^2_{j_1}L^{\infty}L^2}^2\\
	\lesssim & 2^{(d-\delta)j}\lV  h\rV_{Z^{1,0}}\lV  h\rV_{Z^{1,1}}.
	\end{align*}
	When $j\geq 0$, we have
	\begin{align*}
	&2^{\si j}\lV\sum_{j_1>j}P_j(P_{j_1} h P_{j_1} h)\rV_{Y_j}\\
	\lesssim & \sum_{j_1>j}2^{(\si-1+d/2)j+j_1}\lV(P_{j_1} h P_{j_1} h)_j\rV_{l^1_{j_1}L^{\infty}L^1}\\
	\lesssim & \sum_{j_1>j}2^{(\si-1+d/2)(j-j_1)}\lV P_{j_1}h\rV_{Z^{1,\si}}\lV P_{j_1}h\rV_{Z^{1,s}},
	\end{align*}
	which is acceptable.
	
	Secondly, we consider the $Z^{1,\si+1}$-norm estimates. For the low-frequency part, we have
	\begin{align*}
	\lV \nab(h^2)_{\leq 0}\rV_{l^2_0L^{\infty}L^2}
	\lesssim & \lV \nab h_{\leq 0}\rV_{l^2_0L^{\infty}L^2}\lV h_{\leq 0}\rV_{L^{\infty}L^{\infty}}+\sum_{j>0}\lV (h_j h_j)_{\leq 0}\rV_{l^2_0L^{\infty}L^1}\\
	\lesssim & \lV h\rV_{Z^{1,0}}\lV h\rV_{Z^{1,s}}.
	\end{align*}
    For the high frequency part, by Littlewood-Paley dichotomy, we have
	\begin{align*}
	&2^{\si j}\lV (\tih h)_j\rV_{l^2_jL^{\infty}L^2}\\
	\lesssim & 2^{\si j}\lV h_j\rV_{l^2_jL^{\infty}L^2}\lV h_{\leq j}\rV_{L^{\infty}L^{\infty}}+\sum_{l\geq j}2^{(\si+d/2) j}\lV (h_l h_l)_j\rV_{l^2_jL^{\infty}L^1}\\
	\lesssim & 2^{\si j}\lV h_j\rV_{l^2_jL^{\infty}L^2}\lV\nab h\rV_{L^{\infty}H^{s-1}}+\sum_{l\geq j}2^{\si(j-l)}2^{(\si+d/2)l}\lV h_l \rV_{l^2_lL^{\infty}L^2}\lV h_l\rV_{L^{\infty}L^2},
	\end{align*}
	which is acceptable. This completes the proof of (\ref{equivalent_h_keybd}).
	\medskip
	
	\emph{Proof of the bound (\ref{equivalent_lambda_keybd}).} First we consider the $Z^{\de,\si}$-norm estimates. For the low-frequency part we have
	\begin{align*}
	\lV (h\lambda)_{\leq 0}\rV_{l^2_0L^{\infty}L^2}\lesssim & \lV h_{\leq 0}\|_{L^\infty L^\infty}\|\lambda_{\leq 0}\rV_{l^2_0L^{\infty}L^2}+\sum_{j>0} 2^{dj/2}\| h_j\|_{L^\infty L^2}\lV \la_j\rV_{l^2_jL^{\infty}L^2}\\
	\lesssim & \lV h\rV_{Z^{1,s}}\lV \lambda\rV_{Z^{0,\si}}.
	\end{align*}
	For the high-frequency part, by the Littlewood-Paley dichotomy, we have
	\begin{align*}
	&2^{\si j}\lV (\lambda h)_j\rV_{l^2_jL^{\infty}L^2}\\
	\lesssim & \sum_{l<j}2^{\si j+dl/2}\lV \lambda\rV_{L^{\infty}L^2}\lV h_j\rV_{l^2_jL^{\infty}L^2}+2^{\si j}\lV \lambda_j\rV_{l^2_jL^{\infty}L^2}\lV h\rV_{Z^{1,s}}\\
	&+\sum_{l>j}2^{\si(j-l)}2^{(\si+d/2)l}\lV h_l\rV_{l^2_lL^{\infty}L^2}\lV \lambda_l\rV_{L^{\infty}L^2},
	\end{align*}
	which implies
	\begin{align*}
	(\sum_{j>0}2^{2\si j}\lV (h\lambda)_j\rV_{l^2_jL^{\infty}L^2}^2)^{1/2}\lesssim \lV h\rV_{Z^{1,s}}\lV \lambda\rV_{Z^{\de,\si}}.
	\end{align*}
	This completes the proof of (\ref{equivalent_lambda_keybd}).

	\medskip

	\emph{Proof of the bound (\ref{equivalent_A_keybd}).}
	For the low-frequency part, by Bernstein's inequality we have
	\begin{align*}
	\lV \nab (Ah)_{\leq 0}\rV_{l^2_0L^{\infty}L^2}\lesssim &\lV \nab (A_{\leq 0}h_{\leq 0})\rV_{l^2_0L^{\infty}L^2}+\sum_{j>0}\lV \nab (A_jh_j)_{\leq 0}\rV_{l^2_0L^{\infty}L^2}\\
	\lesssim & \lV \nab A_{\leq 0} \rV_{l^2_0L^{\infty}L^2}\lV \nab h_{\leq 0}\rV_{L^{\infty}L^2}+\lV \nab A_{\leq 0}\rV_{L^{\infty}L^2}\lV \nab h_{\leq 0}\rV_{l^2_0L^{\infty}L^2}\\
	&+\sum_{j>0} 2^{dj/2}\lV A_j\rV_{l^2_jL^{\infty}L^2} \lV h_j\rV_{L^{\infty}L^2}\\
	\lesssim & \lV  A \rV_{Z^{1,0}}\lV  h\rV_{Z^{1,s}}.
	\end{align*}
	For the high-frequency part, by Littlewood-Paley dichotomy we bound the high-low and low-high interactions by
	\begin{align*}
	&2^{\si k}\lV  S_k(A_k h_{<k}+A_{<k}h_k)\rV_{l^2_kL^{\infty}L^2}\\
	\lesssim & 2^{\si k}(\lV A_k\rV_{l^2_kL^{\infty}L^2}\lV h_{<k}\rV_{L^{\infty}L^{\infty}}+\lV A_{<k}\rV_{L^{\infty}L^{\infty}}\lV h_k\rV_{l^2_kL^{\infty}L^2})\\
	\lesssim & \lV  A_k\rV_{Z^{1,\si}}\lV  h\rV_{Z^{1,s}}+\lV  A\rV_{Z^{1,\si}}\lV  h_k\rV_{Z^{1,s}},
	\end{align*}
	which is acceptable. We bound the high-high interaction by 
	\begin{align*}
	&2^{\si k}\sum_{j>k}\lV  S_k(A_j h_{j})\rV_{l^2_kL^{\infty}L^2}\\
	\lesssim & \sum_{j>k} 2^{(\si+d/2)k}\lV A_j h_{j}\rV_{l^2_kL^{\infty}L^1}\\
	\lesssim & \sum_{j>k} 2^{\si(k-j)}2^{(\si+d/2)j}\lV A_j \rV_{l^2_jL^{\infty}L^2}\lV h_j \rV_{L^{\infty}L^2},
	\end{align*}
	which is also acceptable. 
	 Hence, we conclude the proof of the bound (\ref{equivalent_A_keybd}).
\end{proof}

    We now turn our attention to the proof of \eqref{dS-nodt}.
   
   \medskip
   
   \emph{Step 1. Proof of the elliptic estimates for $\la$ equations.} By the $\la$-equations and Proposition \ref{equivalent_lemma}, it suffices to consider the following simplified form
   of the equations:
	\begin{align*}
	&\d_\al \dla_{\al\be}= \d_\be \dpsi+ \dA \psi+A \dpsi+\dh\nab\la+h\nab\dla+\nab \dh\la+\nab h\dla,\\ 
	&\d_\al \dla_{\be\ga}-\d_\be \dla_{\al\ga}=  \dA\la+A\dla+\nab \dh \la+\nab h \dla.
	\end{align*}
	By the relation \eqref{decomp-vector} we have for any $k>0$
	\begin{align*}
	    \| S_k\dla\|_{Z^{0,\si}}\lesssim & \ \| S_k\mathcal{R}\dpsi\|_{Z^{0,\si}}+\| S_k|D|^{-1}[\dA (\psi+\la)+A (\dpsi+\dla)\\
	    &+\dh\nab\la+h\nab\dla+\nab \dh\la+\nab h\dla] \|_{Z^{0,\si}}\\
	    \lesssim & \  p_k+s_k \|\SS\|_{\EE^s}.
	\end{align*}
	In order to bound the low frequency part $k=0$, we use the relation
	\begin{equation}\label{f(t)f(0)relation}
	f(t)=f(0)+\int_0^t \d_s f(s)ds.
	\end{equation}
	Then we have 
	\[
	\lV f\rV_{l_0^2 L^{\infty}L^2}\lesssim \lV f(0)\rV_{L^2}+\lV \d_t f\rV_{L^2L^2}.
	\]
	Using this idea, by Sobolev embeddings we have
	\begin{align*}
	    \| S_0 \dla\|_{l^2_0L^{\infty}L^2}\lesssim & \ \| S_0\mathcal{R}\dpsi\|_{l^2_0L^{\infty}L^2}
	    +\| S_0|D|^{-1}[\dA (\psi+\la)+A (\dpsi+\dla)\\
	    &\qquad\qquad\qquad\qquad+\dh\nab\la+h\nab\dla+\nab \dh\la+\nab h\dla] \|_{l^2_0L^{\infty}L^2}\\
	    \lesssim & \  \| S_0\dpsi\|_{\bZ^{0,\si}}\\ & \ +\| S_0|D|^{-1}[\dA (\psi+\la)+A (\dpsi+\dla) +\dh\nab\la\\
	    &\qquad+h\nab\dla+\nab \dh\la+\nab h\dla](0) \|_{L^2}\\
	    &\ +\| S_0|D|^{-1}\partial_t [\dA (\psi+\la)+A (\dpsi+\dla)+\dh\nab\la\\
	    &\qquad+h\nab\dla+\nab \dh\la+\nab h\dla] \|_{L^2L^2}\\
	    \lesssim & \ p_0+s_0\| \SS\|_{\EE^s}.
	\end{align*}
	The high frequency part is obtained by a standard Littlewood-Paley decomposition and Bernstein inequality.	This gives the elliptic estimate for the $\dla$-equation.
    \medskip
    
     \emph{Step 2. Proof of the elliptic estimates for $V$, $A$ and $B$ equations.} By the $V, A,B$-equations and Proposition \ref{equivalent_lemma}, it suffices to consider the following form
	\begin{align*}
	\De V=&\ h\nab^2 V+\nab h\nab V+\nab h\nab h V+\la^2 (A+V+\nab h)+\la\nab \la,\\
	\De A=&\ h\nab^2 A+\nab h\nab A+\nab h\nab h A+\la^2( A+\nab h)+\nab (\la^2),\\
	\Delta B=&\ h\nab^2B+\nab(\lambda \nab \la+(V+A)\la^2)+\la^2\nab A+\nab h (\la\nab\la+(V+A)\la^2)\\
	&+\nab V\nab A+\nab h V\nab A.
	\end{align*}
	The proofs of the four elliptic estimates for the above equations are similar, so we only prove the elliptic estimate for the linearization of $A$-equation in detail, i.e.
	\begin{align*}
	    \De \dA=&\dh\nab^2 A+h\nab^2\dA+\nab \dh\nab A+\nab h\nab \dA+\nab \dh\nab h A+(\nab h)^2\dA\\
	    &+\dla \la (A+\nab h)+\la^2 (\dA+\nab\dh)+\nab\la\dla+\la\nab\dla.
	\end{align*}
	
	We bound all the nonlinearities except $\nab\la\dla $ and $\la\nab\dla$ by
	\begin{align*}
	    &\| |D|^{-2}S_k(\dh\nab^2 A+h\nab^2\dA+\nab \dh\nab A+\nab h\nab \dA+\nab \dh\nab h A+(\nab h)^2\dA)\|_{Z^{1,\si+1}}\\
	    &+\| |D|^{-2}S_k(\dla \la (A+\nab h)+\la^2 (\dA+\nab\dh))\|_{Z^{1,\si+1}}\\
	    \lesssim &\  s_k\| \dSS\|_{\bEE^{\si}}  \| \SS\|_{\bEE^{s}}(1+\| \SS\|_{\bEE^{s}})^N,
	\end{align*}
	for $\si\in(d/2-1,s]$. All terms are estimated in a similar fashion, so we only bound the first term $\dh \nab^2 A$.
	
	For the low-frequency part we use the relation \eqref{f(t)f(0)relation} to
	bound the second term $\dh\nab^2A$ by
	\begin{align*}
	&\lV \nab^{-1}(\dh\nab^2A)_{\leq 0}\rV_{l_0^2L^{\infty}L^2}\\
	\lesssim & \  \lV \nab^{-1}(\dh\nab^2 A)_{\leq 0}(0)\rV_{L^2}+\lV \nab^{-1}\d_t(\dh\nab^2 A)_{\leq 0}\rV_{L^2L^2}\\
	\lesssim & \  \lV (\dh\nab^2 A)_{\leq 0}(0)\rV_{L^{2d/(d+2)}}
	+\lV \d_t(\dh\nab^2 A)_{\leq 0}\rV_{L^2L^{2d/(d+2)}}\\
	\lesssim & \  \lV  \dh\rV_{Z^{1,1}}\lV A\rV_{Z^{1,1}}+\lV \nab\d_t \dh\rV_{L^2H^{\si-1}}\lV A\rV_{Z^{1,s+1}}+\lV  \dh\rV_{Z^{1,\si+2}}\lV \nab\d_t A\rV_{L^2H^{s-3}}\\
	\lesssim & \  \lV \dh\rV_{\bZ^{1,\si+2}}\lV A\rV_{\bZ^{1,s+1}}.
	\end{align*}	
	A minor modification of this argument also yields
	\begin{equation*}
	\lV \nab^{-1}(\dh\nab^2 A)_{\leq 0}\rV_{l_0^2L^{\infty}L^2}
	\lesssim s_0\lV \SS\rV_{\bEE^{s}}.
	\end{equation*}
	For the high-frequency part, by Littlewood-Paley dichotomy and Bernstein's inequality \eqref{Bern-ineq}, we have
	\begin{align*}
	&2^{\si j}\lV |D|^{-1}(\dh\nab^2A)_j\rV_{l^2_jL^{\infty}L^2}\\
	\lesssim &\ 2^{(\si -1)j}(\lV \dh_{<j}\nab^2 A_j\rV_{l^2_jL^{\infty}L^2}+\lV \dh_{j}\nab^2A_{<j}\rV_{l^2_jL^{\infty}L^2}+\sum_{l>j}\lV \dh_{l}\nab^2A_l\rV_{l^2_jL^{\infty}L^2})\\
	\lesssim &\  \lV \dh\rV_{L^{\infty}L^{\infty}}2^{(\si+1) j}\lV A_j\rV_{l^2_jL^{\infty}L^2}+\sum_{l<j} 2^{(l-j)+\si j}\lV  \dh_j\rV_{l^2_jL^{\infty}L^2}\lV \nab^{d/2+1}A_l\rV_{L^{\infty}L^2}\\
	&\ +\sum_{l>j}2^{(\si-1)j}2^{(d/2+2)l}\lV  \dh_l\rV_{L^{\infty}L^2}\lV A_l\rV_{l^2_lL^{\infty}L^2}\\
	\lesssim & s_j\lV \SS\rV_{\bEE^{s}}.
	\end{align*}

	Finally, we bound the last two terms $\nab\la\dla$ and $\la\nab\dla$. For low-frequency part, using $d\geq 4$ we have
	\begin{align*}
	&\lV |D|^{-1}(\nab\la\dla)_{\leq 0}\rV_{l^2_0L^{\infty}L^2}\\
	\lesssim & \  \lV |D|^{-1}(\nab\la\dla)_{\leq 0}(0)\rV_{L^2}+\lV |D|^{-1}\d_t(\nab\la\dla)_{\leq 0}\rV_{L^2L^2}\\
	\lesssim & \ \lV (\nab\la\dla)_{\leq 0}(0)\rV_{L^1}+\lV \d_t(\nab\la\dla)_{\leq 0}\rV_{L^2L^1}\\
	\lesssim & \  \lV\dla\rV_{\bZ^{0,\si}}\lV\la\rV_{\bZ^{0,s}}.
	\end{align*}
	We also obtain
	\begin{equation*}
	\lV |D|^{-1}(\nab\la\dla)_{\leq 0}\rV_{l^2_0L^{\infty}L^2}
	\lesssim s_0 \lV\SS\rV_{\bEE^{s}}.
	\end{equation*}
	For the high-frequency part, we have
	\begin{align*}
	\lV\De^{-1}(\nab\la\dla)_j\rV_{Z^{1,\si+1}}\lesssim s_j \lV\SS\rV_{\bEE^{s}}.
	\end{align*}
	We can also bound the term $\la\nab\dla$ similarly. 
	This gives the elliptic estimate for $\dA$-equation.

    \medskip

\emph{Step 3. Proof of the elliptic estimate for $h$-equation.}
	By $h$-equation in (\ref{ell-syst_2}) and Proposition \ref{equivalent_lemma}, it suffices to consider a more general equation of the form		
	\begin{equation*}
	\Delta \dh=\dh\nab^2 h+h\nab^2 \dh+\nab \dh\nab h+\dh\nab h\nab h+h\nab h\nab \dh+\dla\lambda.
	\end{equation*}
	The proof of the $Z^{1,\si+2}$ bound is similar to the estimates for $\la,V,A,B$ equations in \emph{Step 1}, hence we only bound of the $Y^{d/2-1-\de,\si+2}$-norm. We prove that
    the following frequency envelope version holds:
	\begin{equation*}     
	\lV S_j \dh\rV_{Y^{d/2-1-\de,\si+2}}\lesssim s_j \lV\SS\rV_{\bEE^{s}}(1+\lV \SS\rV_{\bEE^{s}})^N.
	\end{equation*}
	 
	\medskip
	
	\emph{Case 1. The contribution of $\dla\lambda$.} 
	By the Littlewood-Paley dichotomy, it suffices to consider the high-low, low-high and high-high case
	\begin{equation*}
	\sum_{j_2<j+O(1)}(\dla_{j} \lambda_{j_2})_j,\ \ \ \sum_{j_1<j+O(1)}(\dla_{j_1} \lambda_{j})_j,\quad \sum_{j_1>j+O(1)}(\dla_{j_1} \lambda_{j_1})_j,
	\end{equation*}
	for any $j\in\Z$.
	
	\smallskip
	\emph{Case 1(a). The contribution of high-low and low-high interaction.} The two cases are proved similarly, so we only consider the worst case, namely the low-high interaction. When $j\leq 0$, by the definition of the $Y_j$-norm we have
	\begin{align*}
	2^{(d/2-1-\delta)j}\lV \Delta^{-1}\sum_{j_1<j}(\dla_{j_1} \lambda_{j})_j\rV_{Y_j}\lesssim &\ 2^{(d/2-\delta)j}\sum_{j_1<j} 2^{-j_1}\lV 2^{-2j}(\dla_{j_1} \lambda_{j})\rV_{l^1_{|j_1|}L^{\infty}L^2} \\
	\lesssim & \  2^{(d/2-2-\delta)j}\sum_{j_1<j} 2^{dj_1/2-j_1}\lV \dla_{j_1} \rV_{l^2_{|j_1|}L^{\infty}L^2} \lV\lambda_{j}\rV_{l^2_{|j_1|}L^{\infty}L^2}\\
	\lesssim & \  2^{(d-3-3\delta)j} \lV |D|^{\delta}\dla_{\leq 0}\rV_{l^2_0L^{\infty}L^2}^2\lV |D|^{\delta}\la_{\leq 0}\rV_{l^2_0L^{\infty}L^2}^2\\
	\lesssim & \  2^{(d-3-3\delta)j} s_0\lV \SS\rV_{\bEE^{s}}.
	\end{align*}

	When $j>0$, we further divide the high-low interaction into
	\begin{equation*}
	\sum_{j_1<j}(\dla_{j_1} \lambda_{j})_j=\sum_{-j\leq j_1<j}(\dla_{j_1} \lambda_{j})_j+\sum_{j_1<-j}(\dla_{j_1} \lambda_{j})_j.
	\end{equation*}
	For the first term, by Bernstein's inequality we have 
	\begin{align*}
	2^{(\si+2)j}\lV \Delta^{-1}\sum_{-j\leq j_1<j}(\dla_{j_1} \lambda_{j})_j\rV_{Y_j}\lesssim & \  2^{\si j}\sum_{-j\leq j_1\leq j} \lV (\dla_{j_1} \lambda_{j})_j\rV_{l^1_j L^{\infty}L^2}\\
	\lesssim & \  2^{\si j}\sum_{-j\leq j_1\leq j} 2^{dj_1/2}\lV \dla_{j_1} \rV_{l^2_j L^{\infty}L^2}\lV \lambda_{j}\rV_{l^2_j L^{\infty}L^2} \\
	\lesssim &\ s_j \lV \SS\rV_{\bEE^{s}}.
	\end{align*}
	For the second term we have
	\begin{align*}
	2^{(\si+2)j}\lV \Delta^{-1}\sum_{j_1<-j}(\dla_{j_1} \lambda_{j})_j\rV_{Y_j}\lesssim &\ 2^{(\si-1)j}\sum_{j_1<-j} 2^{-j_1}\lV (\dla_{j_1} \lambda_{j})\rV_{l^1_{|j_1|}L^{\infty}L^2} \\
	\lesssim &\ 2^{(\si-1)j}\sum_{j_1<-j} 2^{(d/2-1)j_1}\lV \dla_{j_1} \rV_{l^2_{|j_1|}L^{\infty}L^2} \lV\lambda_{j}\rV_{l^2_{|j_1|}L^{\infty}L^2}\\
	\lesssim &\  2^{(\si-1)j} \lV |D|^{\de}\dla_{\leq 0} \rV_{l^2_0L^{\infty}L^2}\lV \lambda_{j}\rV_{l^2_jL^{\infty}L^2}\\
	\lesssim &\  s_j \lV \SS\rV_{\bEE^{s}}.
	\end{align*}
	
	\smallskip
	
	\emph{Case 1(b). The contribution of high-high interactions.} When $j<0$, we divide this into 
	\begin{equation*}
	\sum_{j_1>j} (\dla_{j_1}\lambda_{j_1})_j=\sum_{-j\geq j_1>j} (\dla_{j_1}\lambda_{j_1})_j+\sum_{j_1>-j} (\dla_{j_1}\lambda_{j_1})_j.
	\end{equation*}
	Then we bound the first term by
	\begin{align*}
	&2^{(d/2-1-\delta)j}\lV\Delta^{-1}\sum_{-j\geq j_1>j}(\dla_{j_1}\lambda_{j_1})_j\rV_{Y_j}\\
	\lesssim & \  2^{(d-3-\delta)j}\sum_{-j\geq j_1>j}\lV(\dla_{j_1}\lambda_{j_1})_j\rV_{l^1_{|j|}L^{\infty}L^1}\\
	\lesssim & \  2^{(d-3-4\delta)j}(\sum_{0\geq j_1>j}2^{\delta j_1}\lV\nab^{\delta}\dla_{\leq 0}\rV_{l^2_0L^{\infty}L^2}\lV\nab^{\delta}\lambda_{\leq 0}\rV_{l^2_0L^{\infty}L^2}+\sum_{j_1>0}\lV \dla_{j_1}\rV_{l^2_{j_1}L^{\infty}L^2}\lV \lambda_{j_1}\rV_{l^2_{j_1}L^{\infty}L^2})\\
	\lesssim &\ 2^{(d-3-4\delta)j} s_0\lV \SS\rV_{\bEE^{s}}.
	\end{align*}
	Using the $Y_j$ norm we can also bound the second term by
	\begin{align*}
	2^{(d/2-1-\delta)j}\lV\Delta^{-1}\sum_{j_1>-j}(\dla_{j_1}\lambda_{j_1})_j\rV_{Y_j}
	\lesssim &\  2^{(d-2-\delta)j}\sum_{j_1>-j}2^{j_1}\lV(\dla_{j_1}\lambda_{j_1})_j\rV_{l^1_{j_1}L^{\infty}L^1}\\
	\lesssim & \  2^{(d-2-\delta)j}\sum_{j_1>-j}2^{j_1}\lV\dla_{j_1}\rV_{l^2_{j_1}L^{\infty}L^2}\lV\lambda_{j_1}\rV_{l^2_{j_1}L^{\infty}L^2}\\
	\lesssim &\ 2^{(d-2-4\delta)j} s_0\lV \SS\rV_{\bEE^{s}}.
	\end{align*}
	Finally, when $j>0$, using again the $Y_j$ norm we  have
	\begin{align*}
	2^{(\si+2)j}\lV\Delta^{-1}\sum_{j_1>j}(\dla_{j_1}\lambda_{j_1})_j\rV_{Y_j}
	\lesssim & \ 2^{(\si+d/2-1)j}\sum_{j_1>j}2^{j_1}\lV(\dla_{j_1}\lambda_{j_1})_j\rV_{l^1_{j_1}L^{\infty}L^1}\\
	\lesssim & \  \sum_{j_1>j}2^{(\si+d/2-1)(j-j_1)}2^{(\si+d/2)j_1}\lV\dla_{j_1}\rV_{l^2_{j_1}L^{\infty}L^2}\lV\lambda_{j_1}\rV_{l^2_{j_1}L^{\infty}L^2}\\
	\lesssim & \ s_j \lV \SS\rV_{\bEE^{s}}.
	\end{align*}

	\medskip
	
	\emph{Case 2. The contribution of $\dh\nab^2 h$, $h\nab^2 \dh$ and $\nab \dh \nab h$.} It suffices to prove that
	\begin{equation*}
	\lV \De^{-1}S_j(\dh\nab^2 h+\nab^2\dh\cdot h+\nab \dh\nab h)\rV_{Y^{d/2-1-\de,\si+2}}\lesssim s_j \lV \SS\rV_{\bEE^{s}}.
	\end{equation*}
	
	For the high-low interactions, it suffices to consider the worst case	$\nab^2 P_j\dh\cdot P_{\leq j} h$. For any decomposition $P_j\dh=\sum_{l\geq |j|} \dh_{j,l}$, we have
	\begin{align*}
	\lV \Delta^{-1}\sum_{l\geq |j|}(\nab^2  \dh_{j,l} P_{\leq j}h )\rV_{Y_j}\lesssim &\sum_{l\geq |j|} 2^{l-|j|-2j}\lV(\nab^2 \dh_{j,l} P_{\leq j}h )\rV_{l^1_lL^{\infty}L^2}\\
	\lesssim & \sum_{l\geq |j|} 2^{l-|j|}\lV \dh_{j,l}\rV_{l^1_lL^{\infty}L^2}\lV P_{\leq j}h\rV_{L^{\infty}L^{\infty}}
	\end{align*}
	Taking the infimum over the decomposition of $P_jh$ yields
	\begin{equation*}
	\lV \Delta^{-1}(\nab^2P_j \dh P_{\leq j}h)\rV_{Y_j}\lesssim \lV P_j\dh\rV_{Y_j}  \lV P_{\leq j}h\rV_{L^{\infty}L^{\infty}},
	\end{equation*}
	which is acceptable. The low-high interactions is similar and omitted.
	
	For the high-high interaction, it suffices to estimate $\sum_{j_1>j}(\nab \dh_{j_1}\nab h_{j_1})_j$. By Bernstein's inequality we have
	\begin{align*}
	&2^{(d/2-1-\delta)j^-+(\si+2)j^+}\lV\Delta^{-1}\sum_{j_1>j}(\nab \dh_{j_1}\nab h_{j_1})_j\rV_{Y_j}\\
	\lesssim & 2^{(d-3-\delta)j^-+(\si+d/2)j^+}\sum_{j_1>j}\lV(\nab \dh_{j_1}\nab h_{j_1})_j\rV_{l^1_{|j|}L^{\infty}L^1}\\
	\lesssim & 2^{(d-3-\delta)j^-+(\si+d/2)j^+}\sum_{j_1>j}\lV\nab \dh_{j_1}\rV_{l^2_{|j|}L^{\infty}L^2}\lV\nab h_{j_1}\rV_{l^2_{|j|}L^{\infty}L^2}\\
	\lesssim & 2^{(d-3-2\delta)j^-}(\lV \nab \dh_{\leq 0}\rV_{l^2_0L^{\infty}L^2}\lV \nab h_{\leq 0}\rV_{l^2_0L^{\infty}L^2}+\sum_{j_1>0}2^{dj_1}\lV\nab \dh_{j_1}\rV_{l^2_{j_1}L^{\infty}L^2}\lV\nab h_{j_1}\rV_{l^2_{j_1}L^{\infty}L^2})\\
	&+\sum_{j_1>j,j>0}2^{(\si-d/2)(j-j_1)}2^{(\si+d/2)j_1}\lV\nab \dh_{j_1}\rV_{l^2_{j_1}L^{\infty}L^2}\lV\nab h_{j_1}\rV_{l^2_{j_1}L^{\infty}L^2}\\
	\lesssim & 2^{(d-3-2\delta)j^-} s_0 \lV \SS\rV_{\bEE^{s}}+s_j \lV \SS\rV_{\bEE^{s}},
	\end{align*}
	which is acceptable.
	
	\medskip
	
	\emph{Case 3. The contribution of $\dh\nab h\nab h$ and $h\nab h\nab \dh$.} It suffices to prove that
	\begin{equation*}
	\lV \De^{-1}S_j(\dh\nab h\nab h+h\nab h\nab \dh)\rV_{Y^{d/2-1-\de,\si+2}}\lesssim s_j\lV \SS\rV_{\bEE^{s}}^2.
	\end{equation*}
	For the low-frequency part, By Bernstein's inequality and $d\geq 4$ we have
	\begin{align*}
	&\lV\De^{-1}(\dh\nab h\nab h)_{\leq 0}\rV_{Y^{d/2-1-\de,\si+2}}\\
	\lesssim &\lV (\dh\nab h\nab h)_{\leq 0}\rV_{l^1_0L^{\infty}L^2}\\
	\lesssim & \lV \dh_{\leq 0}\rV_{L^{\infty}L^{\infty}}\lV (\nab h\nab h)_{\leq 0}\rV_{l^1_0L^{\infty}L^2}+\sum_{j>0}\lV \dh_j\rV_{l^2_0L^{\infty}L^2}\lV (\nab h\nab h)_j\rV_{l^2_0L^{\infty}L^2}\\
	\lesssim & s_0\lV \SS\rV_{\bEE^{s}}^2.
	\end{align*}
	For the high-frequency part, by Bernstein's inequality we also have
	\begin{align*}
	2^{(\si+2)j}\lV\De^{-1}(\dh\nab h\nab h)_j\rV_{Y_j}
	\lesssim 2^{\si j}\lV (\dh\nab h\nab h)_j\rV_{l^1_jL^{\infty}L^2}
	\lesssim  s_j\lV \SS\rV_{\bEE^{s}}^2.
	\end{align*}
	Thus this completes the proof of $Y^{d/2-1-\de,\si+2}$ bound.
\end{proof}

\section{Multilinear and nonlinear estimates} \label{Sec-mutilinear}
This section contains our main multilinear estimates which are needed for the analysis  of the Schr\"{o}dinger equation in (\ref{mdf-Shr-sys-2}). We begin with the following low-high bilinear estimates of $\nab h\nab \psi$. 

\begin{lemma}   \label{bilinear-est}
	Let $s>\frac{d}{2}$, $d\geq 2$ and $k\in \N$. Suppose that $\nab a(x)\lesssim \<x\>^{-1}$, $ h\in  \bY^{\si+2}$ and $\psi_k\in l^2X^s$. Then for $-s\leq  \si\leq s$ we have
	\begin{align}          \label{qdt_d h<k dpsi}
	&\lV \nab h_{\leq k}\cdot\nab\psi_k\rV_{l^2N^{\si}}\lesssim \min\{ \lV  h\rV_{\bY^{\si+2}}\lV \psi_k\rV_{l^2X^s},\lV h\rV_{\bY^{s+2}}\lV\psi_k\rV_{l^2X^{\si}}\},\\\label{h<0-naba}
	&\lV h_{\leq k}\nab a\nab\psi_k\rV_{l^2N^{\si}}\lesssim \min\{ \lV  h\rV_{\bY^{\si+2}}\lV \psi_k\rV_{l^2X^s},\lV h\rV_{\bY^{s+2}}\lV\psi_k\rV_{l^2X^{\si}}\}. 
	\end{align}
	In addition, if $-s\leq \si\leq s-1$ then we have
	\begin{align}\label{qdt_h-d2psi}
	\lV h_{\leq k}\nab^2\psi_k\rV_{l^2N^{\si}}\lesssim \lV  h\rV_{\bY^{\si+2}}\lV \psi_k\rV_{l^2X^{s}}.
	\end{align}
\end{lemma}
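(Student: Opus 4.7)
The plan is to prove the three bounds by a common dyadic scheme; I will focus on the first estimate and indicate the modifications for the second and third at the end. Since $\psi_k$ is supported at frequency $\sim 2^k$, the product $\nab h_{\leq k}\cdot\nab\psi_k$ has output frequency $\lesssim 2^k$, and the main contribution to the $l^2N^\si$-norm comes from output frequency $\sim 2^k$ (lower output frequencies coming from a high-high interaction of $h_{\sim k}$ with $\psi_k$ are controlled by a similar but easier argument). Thus the key estimate is of the form $2^{\si k}\lV\nab h_{\leq k}\nab\psi_k\rV_{l^2_k N_k}$. The atomic characterisation of $N$ yields the cube-wise bound $\lV u\rV_{N_k}\lesssim \lV u\rV_{L^1L^2([0,1]\times Q)}$ for $Q\in\QQ_k$, which combined with H\"older gives
\[
\lV\nab h_{\leq k}\nab\psi_k\rV_{l^2_k N_k}\lesssim \sup_{Q\in\QQ_k}\lV \chi_Q\nab h_{\leq k}\rV_{L^\infty L^\infty}\,\lV \nab\psi_k\rV_{l^2_k L^2L^2}.
\]
The second factor is controlled by $2^k\lV\psi_k\rV_{l^2_k X_k}$ via the embedding $L^\infty L^2\hookrightarrow L^2L^2$ on $[0,1]$ together with the definition of $X_k$, so matters reduce to estimating $\lV\chi_Q\nab h_{\leq k}\rV_{L^\infty L^\infty}$ by the relevant $\bY$-norm with the correct $k$-dependence.

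For this, one uses the defining decomposition $h_{\leq k}=\sum_{j\le k}\sum_{l\ge|j|}h_{j,l}$ of the $Y$-norm and applies Bernstein at frequency $2^j$ on each cube $\tilde Q\in\QQ_l$:
\[
\lV \chi_{\tilde Q}\nab h_{j,l}\rV_{L^\infty L^\infty}\lesssim 2^{j(1+d/2)}\lV \chi_{\tilde Q} h_{j,l}\rV_{L^\infty L^2}.
\]
When $l\ge k$, each $\tilde Q\in\QQ_l$ already contains the small cube $Q\in\QQ_k$ on which one is taking the supremum, so the $l^1_l L^\infty L^2$ structure of $Y_j$ together with the weight $2^{l-|j|}$ sums the contributions in $l$; when $l<k$ (which forces $j>0$), several $\tilde Q$'s lie in each $Q\in\QQ_k$ and a direct Bernstein argument applies. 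The two branches of the $\min$ correspond to the two natural balancings of the Sobolev weights: placing $2^{(s+2)j^+}$ on the $h$-side and $2^{\si k}$ on the $\psi_k$-side yields $\lV h\rV_{\bY^{s+2}}\lV\psi_k\rV_{l^2X^\si}$, while the symmetric choice with $2^{(\si+2)j^+}$ on $h$ and $2^{sk}$ on $\psi_k$ gives the first branch. The $\bZ^{1,\cdot}$ component of $\bY$ is invoked only in the high-frequency regime $j>0$.

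The second bound is proved by the same argument after absorbing $\nab a$ as an $L^\infty$ weight of size $\<x\>^{-1}\lesssim 2^{-l^+}$ on cubes of $\QQ_l$; this provides exactly the same gain as the derivative that previously fell on $h_{j,l}$, so the estimate is identical. The third bound is also identical up to the extra $2^k$ produced by the second derivative on $\psi_k$, which is paid for by the one-derivative loss in the restriction $\si\le s-1$; only the single branch involving $\lV\psi_k\rV_{l^2X^s}$ is needed. The main technical obstacle throughout is the very low-frequency range $j\le 0$ with $l\gg|j|$, where no frequency gain is available from derivatives on $h_{j,l}$ and one must rely on the $l^1_l$ (rather than $l^2_l$) spatial summation built into $Y_j$ to control the supremum over the many cubes of $\QQ_k$ that $h_{j,l}$ may touch; this is precisely the reason the weight $2^{(d/2-1-\de)j^-}$ enters the definition of $\bY$.
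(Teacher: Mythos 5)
Your plan replaces the paper's duality argument with a direct estimate in $l^2_k N_k$ via H\"older on cubes of scale $2^k$, and controls $\|\nab\psi_k\|_{l^2_k L^2L^2}$ by $2^k\|\psi_k\|_{l^2_k X_k}$ through the embedding $L^\infty L^2\hookrightarrow L^2L^2$.  This discards the local-smoothing part of the $X_k$ norm, and that loss is fatal at the top of the claimed range $\si\leq s$.  Concretely, rearranging your ``key estimate'' shows that the second branch of the $\min$ would require
\[
\sup_{Q\in\QQ_k}\|\chi_Q\nab h_{\leq k}\|_{L^\infty L^\infty}\ \lesssim\ 2^{-k}\|h\|_{\bY^{s+2}},
\]
which cannot hold: the left side does not decay as $k\to\infty$.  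The first branch fares no better at $\si=s$ for the same reason.  What you actually prove is the estimate only in the reduced range $\si\leq s-1$, whereas the lemma is invoked repeatedly at $\si=s$ in the iteration scheme (Section~7) and in the local energy bounds.

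The mechanism you are missing is the one the paper's proof is built around.  By dualizing against $z_k\in l^2_k X_k$ and then applying H\"older on cubes $\tilde Q\in\QQ_l$ \emph{at the scale $l$ carried by the piece $h_{j,l}$} (not the fixed scale $k$), the $X$-component of $X_k$ contributes a factor $2^{l/2}\cdot 2^{-k/2}$ for each of $\psi_k$ and $z_k$, so the pairing gains $2^{l-k}$ compared to the crude $L^\infty L^2$ estimate.  This extra $2^{-k}$ is exactly the derivative your argument loses, and the $2^l$ that remains is precisely what is balanced against the weight $2^{l-|j|}$ built into $\|P_jh\|_{Y_j}$.  Your scheme pays the $Y_j$ weight by the trivial inequality $1\leq 2^{l-|j|}$, which forfeits the $2^{|j|}$ factor and leaves the $2^{-k}$ unaccounted for.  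There is also a secondary error in the treatment of the second bound: on a cube $Q\in\QQ_l$ near the origin, $\<x\>^{-1}$ is $\approx 1$, not $\lesssim 2^{-l^+}$, so $\nab a$ is not an $L^\infty$ weight of size $2^{-l^+}$.  The paper instead splits at $|x|\sim 2^{l/2}$ and uses cubes at the intermediate scale $2^{l/2}$ inside, again exploiting the $l^\infty_{l/2}L^2L^2$ bounds coming from the $X$-norm; your proposed shortcut does not replicate this.
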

\begin{proof}
	\emph{a) The estimates (\ref{qdt_d h<k dpsi}) and (\ref{qdt_h-d2psi}).} The proof of second bound (\ref{qdt_h-d2psi}) is similar to the first, so we only prove the first bound in detail. By duality, it suffices to estimate 
	\begin{align*}       
	I_j:=\<\nab P_j h\nab \psi_k,z_k\>,\quad j\leq k,\ j\in\Z,\ k\in \mathbb{N},
	\end{align*}
	for any $z_k:=S_kz\in l_k^2X_k$ with
	$\lV z_k\rV_{l^2_kX_k}\leq 1$.	For $I_j$ and any decomposition $P_jh=\sum_{l\geq |j|}h_{j,l}$, by duality and Bernstein inequality, we have
	\begin{align*}
	I_j\lesssim & \sum_{l\geq |j|} \sup_{\lV z_k\rV_{l^2_k X_k}\leq 1}\< \nab h_{j,l}\nab\psi_k,z_k\>\\
	\lesssim & \sum_{l\geq |j|} \sup_{\lV z_k\rV_{l^2_k X_k}\leq 1}\lV \nab h_{j,l}\rV_{l^1_l L^{\infty}L^{\infty}} \lV \nab \psi_k\rV_{l^{\infty}_lL^2L^2}\lV z_k\rV_{l^{\infty}_lL^2L^2}\\
	\lesssim &\sum_{l\geq |j|} 2^l\lV \nab h_{j,l}\rV_{l^1_l L^{\infty}L^{\infty}} \lV  \psi_k\rV_{X_k}\\
	\lesssim & 2^{(\frac{d}{2}+1)j+|j|}\sum_{l\geq |j|}2^{l-|j|}\lV  h_{j,l}\rV_{l^1_lL^{\infty}L^2}\lV \psi_k\rV_{X_k}.
	\end{align*}
	Then taking the infimum over the decomposition of $P_jh$ and incorporating the summation over $j$ yield
	\begin{align*}
	\sum_{j\leq k} 2^{\si k}I_j\lesssim & \lV h\rV_{\bY^{d/2+2+\ep}}\lV\psi_k\rV_{X^{\si}},
	\end{align*}
	for any $\ep>0$. If $-s\leq \si\leq d/2$, we also have
	\begin{align*}
	\sum_{j\leq k} 2^{\si k}I_j\lesssim & \sum_{j\leq 0}2^{dj/2}\lV P_j h\rV_{Y_j}\lV\psi_k\rV_{X^{\si}}+\sum_{j> 0}2^{(d/2+\ep-\si)(j-k)}2^{(\si+2)j}\lV P_j h\rV_{Y_j}\lV\psi_k\rV_{X^{s}}\\
	\lesssim &\lV h\rV_{\bY^{\si+2}}\lV\psi_k\rV_{X^{s}}.
	\end{align*}
	Thus the bound (\ref{qdt_d h<k dpsi}) follows.

	\emph{Estimate (\ref{h<0-naba}).} By duality, it suffices to bound
	\begin{equation*}
	II_j= \< P_jh\nab a\nab\psi_k,z_k\>,\ \ j\leq k,\ j\in\Z,
	\end{equation*}
	for any $z_k\in l^2_k X_k$ with $\lV z_k\rV_{l^2_k X_k}\leq 1$. For any decomposition $P_jh=\sum_{l\geq |j|}h_{j,l}$, by $|\nab a|(x)\lesssim \<x\>^{-1}$, we consider the two cases $|x|\geq 2^{j/2}$ and $|x|<2^{j/2}$ respectively and then obtain
	\begin{align*}
	II_j\lesssim & \sum_{l\geq |j|} \sup_{\lV z_k\rV_{l^2_k X_k}\leq 1}\<  h_{j,l}\<x\>^{-1}\mathbf{1}_{\leq 2^{l/2}}(x)\nab\psi_k,z_k\>+\sum_{l\geq |j|} \sup_{\lV z_k\rV_{l^2_k X_k}\leq 1}\<  h_{j,l}\<x\>^{-1}\mathbf{1}_{>2^{l/2}}(x)\nab\psi_k,z_k\>\\
	=& II_{j1}+II_{j2}.
	\end{align*}
	The first term is bounded by
	\begin{align*}
	II_{j1}\lesssim & \sum_{l\geq |j|} \sup_{\lV z_k\rV_{l^2_k X_k}\leq 1}\lV  h_{j,l}\rV_{l^1_l L^{\infty}L^{\infty}} \lV \nab \psi_k\rV_{l^{\infty}_{l/2}L^2L^2}\lV z_k\rV_{l^{\infty}_{l/2}L^2L^2}\\
	\lesssim &\sum_{l\geq |j|} 2^{l/2}\lV  h_{j,l}\rV_{l^1_l L^{\infty}L^{\infty}} \lV  \psi_k\rV_{X_k}\\
	\lesssim & 2^{dj/2+|j|/2}\sum_{l\geq |j|} 2^{l-|j|}\lV  h_{j,l}\rV_{l^1_l L^{\infty}L^2} \lV  \psi_k\rV_{X_k}
	\end{align*}
	The second term is bounded by
	\begin{align*}
	II_{j2}\lesssim & \sum_{l\geq |j|} 2^{-l/2}\sup_{\lV z_k\rV_{l^2_k X_k}\leq 1}\lV  h_{j,l}\rV_{l^1_l L^{\infty}L^{\infty}} \lV \nab \psi_k\rV_{l^{\infty}_{l}L^2L^2}\lV z_k\rV_{l^{\infty}_{l}L^2L^2}\\
	\lesssim &\sum_{l\geq |j|} 2^{l/2}\lV  h_{j,l}\rV_{l^1_l L^{\infty}L^{\infty}} \lV  \psi_k\rV_{X_k}\\
	\lesssim & 2^{dj/2+|j|/2}\sum_{l\geq |j|} 2^{l-|j|}\lV  h_{j,l}\rV_{l^1_l L^{\infty}L^2} \lV  \psi_k\rV_{X_k}.
	\end{align*}
	Then we obtain
	\begin{align*}
	\sum_{j\leq k} 2^{\si k}II_j\lesssim & (\sum_{j\leq 0}2^{(d-1)j/2}\lV P_j h\rV_{Y_j}+\sum_{j>0}2^{(d+1)j/2}\lV h_j\rV_{Y_j})\lV\psi_k\rV_{X^{\si}}\\
	\lesssim &\min\{ \lV  h\rV_{\bY^{\si+2}}\lV \psi_k\rV_{l^2X^s},\lV h\rV_{\bY^{s+2}}\lV\psi_k\rV_{l^2X^{\si}}\}.
	\end{align*}
	Thus the bound (\ref{h<0-naba}) follows. 
\end{proof}

We  next prove the remaining  bilinear estimates and trilinear estimates.
\begin{prop}[Nonlinear estimates]      \label{Non-Est}
	Let $ s>\frac{d}{2}$, $-s\leq \si\leq s$ and $d\geq 2$, assume that $p_k$ and $s_k$ are admissible frequency envelopes for $\psi\in l^2X^{\si}$, $\SS\in \EE^{\si}$ respectively. Then we have
	\begin{gather}    		\label{qdt-gdpsi_hl&hh}
		\lV S_k\nab(h_{\geq k-4}\nab\psi)\rV_{l^2N^{\si}}\lesssim  \min\{s_k\lV \psi\rV_{l^2X^{s}},p_k\lV  h\rV_{Z^{1,s+2}}\},\\ \label{qdt-Ag-dpsi}
		\lV S_k(A_{\geq k-4}\nab\psi)\rV_{l^2N^{\si}}\lesssim  \min\{s_k\lV \psi\rV_{l^2X^{s}},p_k\lV  A\rV_{Z^{1,s+1}}\},\\\label{cb-B}
		\lV S_k(B\psi)\rV_{l^2N^{\si}}\lesssim \min\{s_k\lV \psi\rV_{l^2 X^{s}},p_k\lV B\rV_{Z^{1,s}} \},\\\label{cb-AA}
		\lV S_k(A^2\psi)\rV_{l^2N^{\si}}\lesssim \min\{s_k\lV  A\rV_{Z^{1,s}}\lV\psi\rV_{l^2X^{s}}, p_k\lV  A\rV_{Z^{1,s}}^2\},\\
		\label{cb_lam2-psi}
		\lV S_k(\lambda^3)\rV_{l^2N^{\si}}\lesssim s_k \lV  \lambda\rV_{Z^{0,s}}^2.
	\end{gather}
	If $-s\leq \si\leq s-1$, then
	\begin{gather}\label{qdt-Ag-dpsi_lh}
	\lV S_k(A_{< k-4}\nab\psi)\rV_{l^2N^{\si}}\lesssim s_k\lV \psi\rV_{l^2X^{s}}.
	\end{gather}
\end{prop}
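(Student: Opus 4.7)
The plan is to prove each of the six estimates via a standard Littlewood--Paley decomposition of every factor, reducing the matter to dyadic bilinear or trilinear pieces and analyzing them in the three standard regimes: high--low, low--high, and high--high interactions. Throughout I would lean on (i) the duality $N = X^*$ together with the atomic-type embedding $\lV u\rV_N \lesssim 2^{j/2}\lV u\rV_{l^1_j L^2 L^2}$ recorded in Section~\ref{Sec3}; (ii) the Bernstein-type inequalities from Section~\ref{Sec3}, which convert frequency localization to spatial integrability on cubes; (iii) the spatial-scale atomic decomposition built into the $Y$-norm for $h$; and (iv) the slowly-varying character of the frequency envelopes $p_k$ and $s_k$, which absorbs the small geometric losses arising from frequency sums.

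For the low-high piece $S_k(A_{<k-4}\nab\psi)$, I would write $A_{<k-4}\nab\psi = \sum_{j<k-4} A_j \nab\psi$, place each $A_j$ in $l^\infty L^\infty L^\infty$ using Bernstein from $Z^{1,s+1}$ (with a gain of $2^{(d/2+1)(j-k)}$ against the envelope $s_k$), and pair with $\nab\psi_k$ via the same duality-plus-atomic argument used in the proof of the first bound of Lemma~\ref{bilinear-est}. For the two estimates with $h$ or $A$ at frequency $\geq k-4$, I decompose $\psi$ dyadically and treat three cases: $\psi$ at frequency $\ll j$ (low--high), $\psi$ at frequency $\approx j \geq k$ (balanced high--high), and $\psi$ at frequency $\approx k \leq j$ (the main case). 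In each case I apply the $Y_j$ atomic decomposition $P_jh = \sum_{l\geq|j|} h_{j,l}$ for $h$ (respectively the standard dyadic decomposition of $A_j$), place the gauge factor in $l^1_l L^\infty L^\infty$ picking up the spatial-scale factor $2^l$, and pair against $\nab\psi$ in $X$ using duality and the atomic embedding above. The $B$ estimate is essentially identical, since $B \in Z^{1,s}$ differs from $A \in Z^{1,s+1}$ only by one derivative of weight, which is absorbed by Bernstein. The trilinear estimate for $A^2\psi$ reduces to the $A\psi$ bilinear case after placing one copy of $A$ in $l^2_j L^\infty L^\infty$ via Bernstein from $Z^{1,s}$, using $s > d/2$. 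For the cubic $\lambda^3$ bound, two copies of $\lambda$ go into $L^\infty L^\infty$ by Bernstein from $Z^{0,s}$ (again $s > d/2$ is used), and the remaining factor carries the envelope $s_k$ in $l^2_k L^\infty L^2$.

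The main obstacle I anticipate is the balanced high--high--low contribution to the $S_k\nab(h_{\geq k-4}\nab\psi)$ estimate, namely pieces $S_k \nab(h_j \nab\psi_j)$ with $j \geq k-4$ and output frequency $k$ possibly much smaller than $j$ (or even negative). In this regime a direct Bernstein estimate on the output cube bleeds too much low-frequency weight to close against the envelope, so I would exploit the full strength of the $Y$-norm by splitting $P_j h$ at all spatial scales $2^l \geq 2^{|j|}$, using the local-energy $X$-control of $\psi$ on cubes of side $2^l$ to recover a form of square summability at that scale, and distributing the outer derivative onto either factor according to where the frequency is highest. The geometric decay of the frequency envelopes at scales $j > k$, combined with the $2^k$ gain produced by the outer $\nab$, will then make the sum over $j \geq k-4$ converge and yield either the $s_k \lV\psi\rV_{l^2X^s}$ or the $p_k\lV h\rV_{Z^{1,s+2}}$ form of the bound, depending on which envelope one chooses to track.
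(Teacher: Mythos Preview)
Your overall plan is sound and would close, but it is considerably more elaborate than what the paper actually does, and the difference is instructive.  The paper does not invoke the $Y$-norm atomic decomposition or the $X$--$N$ duality pairing anywhere in this proposition.  Instead, for each estimate it simply uses the crude embedding
\[
\lV f_k\rV_{l^2 N^{\sigma}}\lesssim 2^{\sigma k}\lV f_k\rV_{L^2L^2},
\]
which follows from $L^2_t\hookrightarrow L^1_t$ on $[0,1]$ together with $L^1L^2\subset N_k$, and then bounds the $L^2L^2$ norm of each dyadic piece by H\"older plus Bernstein.  For instance, the balanced high--high--low contribution you flag as the main obstacle is dispatched in two lines: for $j_1\approx j_2>k$,
\[
\lV S_k\nabla(h_{j_1}\nabla\psi_{j_2})\rV_{l^2N^{\sigma}}
\lesssim 2^{(\sigma+1)k+dk/2}\lV h_{j_1}\nabla\psi_{j_2}\rV_{L^2L^1}
\lesssim 2^{(\sigma+1+d/2)(k-j_1)}2^{(\sigma+2+d/2)j_1}\lV h_{j_1}\rV_{L^2L^2}\lV\psi_{j_2}\rV_{L^\infty L^2},
\]
and the sum over $j_1>k$ converges because $\sigma+d/2+1>0$ (with a separate but equally short argument when $\sigma+d/2+1\leq 0$).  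The same scheme handles \eqref{cb-B}--\eqref{cb_lam2-psi} and \eqref{qdt-Ag-dpsi_lh}.

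The reason the simpler route suffices is exactly that in \eqref{qdt-gdpsi_hl&hh}--\eqref{qdt-Ag-dpsi} the coefficient $h$ (or $A$) sits at frequency $\geq k-4$: one factor is already at high frequency comparable to the output, so there is no need for the spatial-scale localization built into the $Y$-norm.  That machinery is essential in Lemma~\ref{bilinear-est}, where $h$ is at \emph{low} frequency and one must track how its large-scale variations interact with the local energy of $\psi_k$; here it is overkill.  Your approach would work, but the paper's argument is both shorter and makes transparent why the right-hand sides involve only the weaker $Z$-norms of $h,A,B,\lambda$, not the full $\bY^{s+2}$ structure.
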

\begin{proof}
	
	We first prove (\ref{qdt-gdpsi_hl&hh} and (\ref{qdt-Ag-dpsi}). These two bounds are proved similarly, here we only prove the first bound in detail. For the high-low case, by (\ref{Bern-ineq}) we have
	\begin{align*}
		\sum_{j_2\leq k+C}\lV S_k \nab( h_{j_1}\nab \psi_{j_2})\rV_{l^2N^{\si}}\lesssim &\sum_{j_1=k+O(1),j_2\leq k+C} 2^{(\si+1) k}\lV  h_{j_1}\rV_{l_k^2L^2L^2}\lV \nab\psi_{j_2}\rV_{l^2_kL^{\infty}L^{\infty}}\\
		\lesssim &\sum_{j_2\leq k+C} 2^{\si k+(d/2+1)j_2}\lV \nab h_k\rV_{L^2L^2}\lV \psi_{j_2}\rV_{l_{j_2}^2L^{\infty}L^2}\\
		\lesssim &\min\{s_k\lV \psi\rV_{l^2X^{s}},p_k\lV  h\rV_{Z^{1,s+2}}\}.
	\end{align*}
	For the high-high case, when $\si+d/2+1>0$ we have
	\begin{align*}
		&\sum_{j_1= j_2+O(1),j_1>k}\lV S_k \nab( h_{j_1}\nab \psi_{j_2})\rV_{l^2N^{\si}}\\
		\lesssim &\sum_{j_1= j_2+O(1),j_1>k}2^{(\si+1) k+dk/2}\lV S_k ( h_{j_1}\nab \psi_{j_2})\rV_{L^2L^1}\\
		\lesssim & \sum_{j_1= j_2+O(1),j_1>k}2^{(\si+1+d/2)(k-j_1)+(\si+2+d/2)j_1}\lV  h_{j_1}\rV_{L^2L^2}\lV \psi_{j_2}\rV_{L^{\infty}L^2}\\
		\lesssim & \min\{s_k\lV \psi\rV_{l^2X^{s}},p_k\lV  h\rV_{Z^{1,s+2}}\},
	\end{align*}
	and when $\si+d/2+1\leq 0$ we have
	\begin{align*}
		&\sum_{j_1= j_2+O(1),j_1>k}\lV S_k \nab( h_{j_1}\nab \psi_{j_2})\rV_{l^2N^{\si}}\\
		\lesssim & \sum_{j_1= j_2+O(1),j_1>k}2^{(\si+1+d/2-\ep)k+(\ep+1) j_1}\lV  h_{j_1}\rV_{L^2L^2}\lV \psi_{j_2}\rV_{L^{\infty}L^2}\\
		\lesssim & \min\{s_k\lV \psi\rV_{l^2X^{s}},p_k\lV  h\rV_{Z^{1,s+2}}\},
	\end{align*}
	
	Next, we prove the bounds (\ref{cb-B})-(\ref{cb_lam2-psi}). These bounds can be estimated similarly, we only prove (\ref{cb-B}) in detail. Indeed, for (\ref{cb-B}), by duality we have
	\begin{align*}
	\lV S_k(B\psi)\rV_{l^2N^{\si}}\lesssim 2^{\si k}\lV S_k(B\psi)\rV_{L^2L^2}.
	\end{align*}
	Then using Littlewood-Paley dichotomy to divide this into low-high, high-low and high-high cases. For the low-high case, by Sobolev embedding we have
	\begin{align*}
	2^{\si k}\lV S_k(B_{<k}\psi_k)\rV_{L^2L^2}\lesssim & \lV B_{<k}\rV_{L^{\infty}L^{\infty}}2^{\si k}\lV \psi_k\rV_{L^2L^2}\lesssim p_k\lV B\rV_{Z^{1,d/2+\ep}} ,
	\end{align*}
	for any $\ep>0$. We also have for $\si\leq d/2$
	\begin{align*}
	2^{\si k}\lV S_k(B_{<k}\psi_k)\rV_{L^2L^2}\lesssim & \sum_{0\leq l<k}2^{(d/2+\delta-\si)(l-k)}\lV \nab^{\si}B_l\rV_{L^{\infty}L^2}2^{(d/2+\delta) k}\lV \psi_k\rV_{L^2L^2}\\
	\lesssim &s_k \lV\psi\rV_{l^2X^{s}}.
	\end{align*}
	The high-low case can be estimated similarly. For the high-high case, by Sobolev embedding when $\si+d/2\geq 0$ we have
	\begin{align*}
	2^{\si k}\lV S_k(B_l\psi_l)\rV_{L^2L^2}\lesssim &\sum_{l>k} 2^{(\si+d/2+\ep)(k-l)} 2^{(\si+d/2+\ep)l} \lV B_l\rV_{L^{\infty}L^2}\lV \psi_l\rV_{L^2L^2}\\
	\lesssim & \min\{s_k\lV \psi\rV_{l^2 X^{s}},p_k\lV B\rV_{Z^{1,s}} \},
	\end{align*}
	and when $\si+d/2<0$ we have
	\begin{align*}
	2^{\si k}\lV S_k(B_l\psi_l)\rV_{L^2L^2}\lesssim &\sum_{l>k} 2^{(\si+d/2)k}  \lV B_l\rV_{L^{\infty}L^2}\lV \psi_l\rV_{L^2L^2}\\
	\lesssim & \min\{s_k\lV \psi\rV_{l^2 X^{s}},p_k\lV B\rV_{Z^{1,s}} \},
	\end{align*}
	These imply the bound (\ref{cb-B}). 
	
	Finally, we prove the bound (\ref{qdt-Ag-dpsi_lh}). If $\si>d/2-1$, by duality and Sobolev embedding, we have
	\begin{align*}
	2^{\si k}\lV A_{<k}\nab\psi_k\rV_{L^2L^2}\lesssim & s_k\lV \psi\rV_{l^2X^{s}}.
	\end{align*}
	If $\si\leq d/2-1$, we have
	\begin{align*}
	2^{\si k}\lV A_{<k}\nab\psi_k\rV_{L^2L^2}\lesssim &\sum_{0\leq l<k} 2^{(d/2-1-\si+\delta)(l-k)}\lV \nab^{\si+1}A_l\rV_{L^{\infty}L^2} 2^{(d/2+\delta)k} \lV \psi_k\rV_{L^2L^2}\\
	\lesssim & s_k\lV \psi\rV_{l^2X^{s}}.
	\end{align*}
	Then the bound (\ref{qdt-Ag-dpsi_lh}) follows. Hence this completes the proof of the lemma.
\end{proof}

We shall also require the following bounds on commutators.

\begin{prop}[Commutator bounds]\label{Comm-est-Lemma} Let $s>\frac{d}{2},d\geq 2$. Let $m(D)$
be a multiplier with symbol $m\in S^0$. Assume $ h\in \bY^{s+2}$, $ A\in Z^{1,s+1}$ and $\psi_k \in l^2X^s$, frequency localized at frequency $2^k$. If $-s\leq \si \leq s$ we have
	\begin{gather}          \label{Comm-bd}
	\lV \nab[S_{<k-4}h,m(D)]\nab\psi_k\rV_{l^2N^{\si}}\lesssim \min\{ \lV  h\rV_{\bY^{\si+2}}\lV \psi_k\rV_{l^2X^s},\lV h\rV_{\bY^{s+2}}\lV\psi_k\rV_{l^2X^{\si}}\},\\
	\label{Com_Ag-dpsi}
	\lV [S_k,A_{<k-4}]\nab\psi_k\rV_{l^2N^{\si}}\lesssim \min\{\lV  A\rV_{Z^{1,s+1}}\lV \psi_k\rV_{l^2X^{\si}},\lV  A\rV_{Z^{1,\si+1}}\lV \psi_k\rV_{l^2X^{s}}\}.
	\end{gather}
\end{prop}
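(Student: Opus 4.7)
The two commutator bounds share a common strategy, namely a Taylor expansion in the frequency variable to exhibit the derivative gain intrinsic to a commutator with a low-frequency multiplication. Formally, for any multiplier $n(D)$ and a smooth low-frequency function $a$, one has the expansion
\[
[n(D), a]f = \sum_{|\al| \geq 1} \frac{(-i)^{|\al|}}{\al!} (\pr^\al n)(D)\bigl( \pr^\al a \cdot f\bigr),
\]
truncated at any fixed order with a controlled remainder. When $f$ is localized at frequency $2^k$ and $a$ is supported at frequency $\ll 2^k$, $\pr^\al n$ has order $-|\al|$ at that scale, so the term of order $|\al|$ comes with a gain of $2^{-k|\al|}$ on the outer multiplier, paired with $|\al|$ extra derivatives on the low-frequency factor $a$.

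For (\ref{Comm-bd}), applying this expansion to $[S_{<k-4}h, m(D)]\nab \psi_k$ and then prepending the outer $\nab$, the leading ($|\al|=1$) term is
\[
-i\, \nab (\pr_{\xi} m)(D)\bigl(\nab S_{<k-4}h \cdot \nab \psi_k\bigr).
\]
The operator $\nab (\pr_{\xi} m)(D)$ has symbol $i\xi \cdot \pr_\xi m(\xi) \in S^0$, hence acts as a bounded Fourier multiplier at frequency $2^k$, and $l^2 N^{\si}$ is invariant under such multipliers. The estimate therefore reduces to bounding $\|\nab S_{<k-4}h \cdot \nab \psi_k\|_{l^2 N^{\si}}$, which is exactly the content of (\ref{qdt_d h<k dpsi}); the two equivalent right-hand sides in (\ref{Comm-bd}) mirror those furnished by Lemma~\ref{bilinear-est}. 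Terms with $|\al| \geq 2$ carry at most $(2^{k-4})^{|\al|-1}$ extra derivatives on $S_{<k-4}h$, which cancel exactly against the $2^{-k(|\al|-1)}$ surplus gain on the outer multiplier; rewriting $\pr^\al S_{<k-4}h$ as $\nab h$ convolved with a bounded low-frequency kernel reduces each such term once more to (\ref{qdt_d h<k dpsi}).

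For (\ref{Com_Ag-dpsi}), the same procedure applied to $[S_k, A_{<k-4}]\nab \psi_k$ produces as leading term
\[
-i\, (\pr_\xi \vphi_k)(D)\bigl(\nab A_{<k-4} \cdot \nab \psi_k\bigr),
\]
where $(\pr_\xi \vphi_k)(D)$ has order $-1$ at frequency $2^k$, producing a genuine $2^{-k}$ gain (there is no outer derivative to consume it). When $\si \leq s-1$ one need not even invoke the commutator structure: expanding $[S_k, A_{<k-4}]\nab\psi_k = S_k(A_{<k-4}\nab\psi_k) - A_{<k-4}\nab\psi_k$ and bounding each piece via (\ref{qdt-Ag-dpsi_lh}) suffices. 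In the finer range $\si \in (s-1,s]$, the $2^{-k}$ gain is essential; one pairs it with $\nab A_{<k-4}$ and estimates by duality against $z_k$ with $\|z_k\|_{l^2_k X_k} \leq 1$, using the Sobolev embedding $\|\nab A_{<k-4}\|_{L^\infty L^\infty} \lesssim \|A\|_{Z^{1,s+1}}$ (valid since $s > d/2$) together with the local energy control intrinsic to $\|\psi_k\|_{X_k}$. The two halves of the $\min$ are obtained by assigning the higher regularity index either to $A$ or to $\psi_k$.

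The main obstacle lies in the bookkeeping for the higher-order Taylor terms, and for (\ref{Com_Ag-dpsi}) in the borderline regime $\si \in (s-1, s]$, where one must simultaneously exploit the $2^{-k}$ commutator gain, the Sobolev embedding for $A$, and the dyadic local energy structure carried by $\|\psi_k\|_{X_k}$ in order to match the regularity trade-off present in the bilinear bounds of Lemma~\ref{bilinear-est} and Proposition~\ref{Non-Est}.
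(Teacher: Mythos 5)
Your proposal is correct and follows essentially the same route as the paper: both proofs exploit the frequency separation to rewrite the commutator as a bilinear form $L(\nab(\text{low}),\nab(\text{high}))$ with an $L^1$ kernel, thereby reducing \eqref{Comm-bd} to the bilinear estimate \eqref{qdt_d h<k dpsi} and \eqref{Com_Ag-dpsi} to a bilinear bound of the form $\nab A_{<k-4}\cdot 2^{-k}\nab\psi_k$, estimated as in \eqref{cb-B}. The only difference is presentational: the paper invokes an exact kernel representation (citing MMT3, Proposition 3.2, for the first bound, and writing the integral formula
$\int_0^1\!\int 2^{kd}\check{\varphi}(2^ky)2^ky\,\nab A_{<k}(x-sy)\,2^{-k}\nab\psi_{[k-3,k+3]}(x-y)\,dy\,ds$ for the second), whereas you use a symbolic-calculus Taylor expansion truncated with remainder. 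Your version needs a touch more care because a general $m\in S^0$ does not admit a convergent infinite Taylor series, so one must stop at a finite order and bound the remainder kernel directly; but since you say this explicitly, the gap is cosmetic, and the geometric gain $2^{-4}$ per order makes the truncation harmless.

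One small imprecision: your aside that \eqref{qdt-Ag-dpsi_lh} alone suffices for $\si\leq s-1$ is only half-right. That bound produces a frequency-envelope estimate of the form $s_k\|\psi\|_{l^2X^s}$, which reproduces the $\|A\|_{Z^{1,\si+1}}\|\psi_k\|_{l^2X^s}$ side of the $\min$ but not the $\|A\|_{Z^{1,s+1}}\|\psi_k\|_{l^2X^\si}$ side, so one still needs the commutator structure (or a separate argument) to land both branches even in that range. Since your main line of argument does use the $2^{-k}$ commutator gain throughout, this aside does not damage the proof, but it should not be presented as a genuine shortcut.
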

\begin{proof}
	First we estimate (\ref{Comm-bd}). In \cite[Proposition 3.2]{MMT3}, it was shown that 
	\begin{equation*}
	\nab[S_{<k-4}g,m(D)]\nab S_k \psi=L(\nab S_{<k-4}g,\nab S_k \psi),
	\end{equation*}
	where $L$ is a translation invariant operator satisfying
	\begin{equation*}
	L(f,g)(x)=\int f(x+y)g(x+z)\tilde{m}(y+z)dydz,\ \ \ \tilde{m}\in L^1.
	\end{equation*}
	Given this representation, as we are working in translation-invariant spaces, by (\ref{qdt_d h<k dpsi}) the bound (\ref{Comm-bd}) follows.

	Next, for the bound (\ref{Com_Ag-dpsi}). Since 
	\begin{equation*}
	[S_k,A_{<k}]\nab\psi=\int_0^1 \int 2^{kd}\check{\varphi}(2^k y)2^k y \nab A_{<k}(x-sy) 2^{-k}\nab\psi_{[k-3,k+3]}(x-y)dyds,
	\end{equation*}
	By translation-invariance and the similar argument to (\ref{cb-B}), the bound (\ref{Com_Ag-dpsi}) follows. This completes the proof of the lemma.
\end{proof}

\section{Local energy decay and the linearized problem}\label{Sec-LED}

In this section, we consider a linear  Schr\"odinger equation 
\begin{equation}\label{Lin-eq0}
\left\{
\begin{aligned}      
&i\d_t \psi+\d_{\al}g^{\al\be}\d_{\be}\psi+2i A^{\al}\d_{\al}\psi
 + B\psi=F,\\
&\psi(0)=\psi_0,
\end{aligned}\right.
\end{equation}
and, under suitable assumptions on the coefficients,
we prove that the solution satisfies suitable energy and local energy bounds.

\subsection{The linear paradifferential Schr\"odinger flow}
As an intermediate step, here we  prove energy and local energy bounds for a frequency localized linear paradifferential Schr\"odinger equation
\begin{equation}        \label{LinSch}
	i\d_t\psi_k+\d_{\al}(g^{\al\be}_{<k-4}\d_{\be}\psi_k)+2iA^{\al}_{<k-4}\d_{\al}\psi_k=f_k.
\end{equation}
We begin with the energy estimates, which are fairly standard:

\begin{lemma}[Energy-type estimate]
	Let $d\geq 2$, $\psi_k$ solves the equation (\ref{LinSch}) with initial data $\psi_k(0)$ in the time interval $[0,1]$. For a fixed $s>\frac{d}{2}$, assume that $A\in Z^{1,s+1}$, $\psi_k\in l^2_kX_k$, $f_{1k}\in N$ and $f_{2k}\in L^1L^2$, where $f_k=f_{1k}+f_{2k}$. Then we have
	\begin{equation}       \label{Eng-Estm}
		\begin{aligned}
			\lV \psi_k\rV_{L^{\infty}_tL^2_x}^2\lesssim & \lV\psi_k(0)\rV_{L^2}^2+\lV  A\rV_{Z^{1,s+1}}\lV \psi_k\rV_{X_k}^2	+\lV\psi_k\rV_{X_k}\lV f_{1k}\rV_{N_k}\\
			&+\lV \psi_k\rV_{L^{\infty}L^2}\lV f_{2k}\rV_{L^1L^2}.
		\end{aligned}
	\end{equation}
\end{lemma}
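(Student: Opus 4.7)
The bound \eqref{Eng-Estm} should follow from a standard $L^2$ energy identity obtained by pairing \eqref{LinSch} with $\bar\psi_k$. I would multiply \eqref{LinSch} by $\bar\psi_k$, integrate over $\R^d$, and take imaginary parts. The time derivative contributes $\tfrac12\partial_t\|\psi_k\|_{L^2}^2$. After integrating by parts (justified since $\psi_k$ is frequency localized, hence spatially smooth in $x$), the principal part becomes $-\int g^{\alpha\beta}_{<k-4}\partial_\beta\psi_k\,\partial_\alpha\bar\psi_k\,dx$, whose imaginary part vanishes because $g^{\alpha\beta}_{<k-4}$ is real and symmetric in $\alpha,\beta$ (so the quadratic form is real). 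The drift contributes
$$
\Im\!\int 2iA^\alpha_{<k-4}\partial_\alpha\psi_k\,\bar\psi_k\,dx = \int A^\alpha_{<k-4}\partial_\alpha|\psi_k|^2\,dx,
$$
using that $A^\alpha$ is real. Integrating in $t$ over $[0,1]$ and taking $\sup_t$, this reduces \eqref{Eng-Estm} to estimating the spacetime integrals of $A^\alpha_{<k-4}\partial_\alpha|\psi_k|^2$ and of $f_k\bar\psi_k$.

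For the drift term I would \emph{not} integrate by parts, but instead rewrite
$$
A^\alpha_{<k-4}\partial_\alpha|\psi_k|^2 = 2\,\Re\bigl(A^\alpha_{<k-4}\partial_\alpha\psi_k\cdot\bar\psi_k\bigr)
$$
and dualize with $\bar\psi_k$ through the $(X_k, N_k)$ pairing to bound the spacetime integral by $\|A_{<k-4}\nabla\psi_k\|_{N_k}\|\psi_k\|_{X_k}$. The paradifferential bilinear bound \eqref{qdt-Ag-dpsi_lh} of Proposition~\ref{Non-Est}, applied with $\sigma=0$ (admissible since $s>d/2\geq 1$, so $0\leq s-1$), then gives $\|A_{<k-4}\nabla\psi_k\|_{N_k}\lesssim \|A\|_{Z^{1,s+1}}\|\psi_k\|_{X_k}$, producing the desired $\|A\|_{Z^{1,s+1}}\|\psi_k\|_{X_k}^2$ contribution.

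For the source I would split $f_k=f_{1k}+f_{2k}$ and treat each piece separately: the duality $X=N^\ast$ gives $|\int f_{1k}\bar\psi_k|\lesssim \|f_{1k}\|_{N_k}\|\psi_k\|_{X_k}$, while Cauchy--Schwarz in time gives $|\int f_{2k}\bar\psi_k|\leq \|f_{2k}\|_{L^1 L^2}\|\psi_k\|_{L^\infty L^2}$.

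The main subtlety is the drift term. Integrating by parts in $\alpha$ would instead produce $-\int\partial_\alpha A^\alpha_{<k-4}|\psi_k|^2\,dx\,ds$, which is harder to dualize cleanly: one loses a spatial derivative on $A$ without a matching gain from the frequency localization of $\psi_k$. Keeping $\nabla\psi_k$ adjacent to $A_{<k-4}$ so that the already-established paradifferential estimate \eqref{qdt-Ag-dpsi_lh} applies directly is therefore the crucial organisational point of the argument.
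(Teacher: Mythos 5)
Your treatment of the principal term and the source $f_k$ matches the paper's. The genuine gap is in the drift term, where you explicitly decline to integrate by parts and claim that \eqref{qdt-Ag-dpsi_lh} with $\sigma=0$ gives $\|A_{<k-4}\nabla\psi_k\|_{N_k}\lesssim\|A\|_{Z^{1,s+1}}\|\psi_k\|_{X_k}$. It does not. The estimate \eqref{qdt-Ag-dpsi_lh} is restricted to $\sigma\leq s-1$ precisely because it carries an intrinsic derivative loss: $A_{<k-4}\nabla\psi_k$ costs a factor $2^k$ relative to $\psi_k$, and the statement absorbs this by comparing $N^\sigma$ against $X^s$ one order apart (and by packaging $A$ through the frequency envelope $s_k$ for $\SS$ at regularity $\sigma$, not through $\|A\|_{Z^{1,s+1}}$). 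Unwinding \eqref{qdt-Ag-dpsi_lh} for a single dyadic block $\psi_k$ yields something of the shape $\|A_{<k-4}\nabla\psi_k\|_{l^2_kN_k}\lesssim 2^k\|A\|_{Z^{1,s+1}}\|\psi_k\|_{l^2_kX_k}$, and the extra factor $2^k$ destroys the energy estimate once you pair back against $\|\psi_k\|_{X_k}$. No choice of $(X_k,N_k)$ pairing will recover it, because the derivative on $\psi_k$ simply has not been moved.

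Your stated reason for avoiding integration by parts is also backwards. Since $2\Re(A^\alpha_{<k-4}\partial_\alpha\psi_k\,\bar\psi_k)=A^\alpha_{<k-4}\partial_\alpha|\psi_k|^2$, integrating by parts shifts the derivative from $\psi_k$ (where it costs $2^k$) onto $A$, where it is free: $Z^{1,s+1}$ controls one full derivative of $A$, and $\|\nabla A\|_{L^\infty L^\infty}\lesssim\|\nabla A\|_{L^\infty H^{s}}\lesssim\|A\|_{Z^{1,s+1}}$ by Sobolev embedding since $s>d/2$. The integrated-by-parts drift is then bounded crudely by $\|\nabla A\|_{L^\infty L^\infty}\|\psi_k\|_{L^2L^2}^2\lesssim\|A\|_{Z^{1,s+1}}\|\psi_k\|_{X_k}^2$, with no local-smoothing duality or bilinear estimate from Section~\ref{Sec-mutilinear} needed at all. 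That is exactly what the paper does; the ``crucial organisational point'' you identify is in fact the step you should reverse.
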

\begin{proof}
	By (\ref{LinSch}), we have
	\begin{align*}
		\frac{1}{2}\frac{d}{dt}\lV\psi_k\rV_{L^2}^2=&\Re\<\psi_k,\d_t \psi_k\>\\
		=&\Re \<\psi_k,i\d_{\al}g^{\al\be}_{<k-4}\d_{\be}\psi_k-2A^{\al}_{<k-4}\d_{\al}\psi_k-if_k\>\\
		=& -\Re \<\d_{\al}\psi_k,ig^{\al\be}_{<k-4}\d_{\be}\psi_k\>
		-\Re\int_{\R^d} A^{\al}_{<k-4}\d_{\al}|\psi_k|^2 dx-\Re\<\psi_k,if_k\>\\
		=&\Re \int_{\R^d} \d_{\al}A^{\al}_{<k-4}|\psi_k|^2 dx-\Re \<\psi_k,if_k\>,
	\end{align*}
	and notice that for each $t\in[0,1]$ we have by duality and Sobolev embedding
	\begin{align*}
		\lV\psi_k(t)\rV_{L^2}^2\lesssim & \lV\psi_k(0)\rV_{L^2}^2+\int_0^1\int_{\R^d} |\d_{\al}A^{\al}_{<k-4}||\psi_k|^2 dxdt
		+\lV\psi_k\rV_{X_k}\lV f_{1k}\rV_{N_k}\\
		&+\lV \psi_k\rV_{L^{\infty}L^2}\lV f_{2k}\rV_{L^1L^2}\\
		\lesssim & \lV\psi_k(0)\rV_{L^2}^2+\lV A\rV_{Z^{1,s+1}}\lV \psi_k\rV_{X_k}^2\\
		&+\lV\psi_k\rV_{X_k}\lV f_{1k}\rV_{N_k}+\lV \psi_k\rV_{L^{\infty}L^2}\lV f_{2k}\rV_{L^1L^2}
	\end{align*}
	We take the supremum over $t$ on the left hand side and the conclusion follows.
\end{proof}

Next, we prove the  main result of this section, namely the local energy estimates
for solutions to \eqref{LinSch}:

\begin{prop} [Local energy decay]     \label{Local-Energy-Decay}
	Let $d\geq 3$, assume that the coefficients $g^{\al\be}=\de^{\al\be}+h^{\al\be}$ and $A^{\al}$ in (\ref{LinSch}) satisfy
	\begin{equation}\label{small-ass}
	\lV h\rV_{\bY^{s+2}},\ \lV A\rV_{Z^{1,s+1}}\ll 1
	\end{equation}
	for some $s>\frac{d}{2}$. Let $\psi_k$ be a solution to (\ref{LinSch}) which is localized at frequency $2^k$. Then the following estimate holds:
	\begin{equation}      \label{energy-decay}
		\lV\psi_k\rV_{l^2_k X_k}\lesssim \lV\psi_{0k}\rV_{L^2}+\lV f_k\rV_{l^2_kN_k}
	\end{equation}
\end{prop}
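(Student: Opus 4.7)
The plan is to establish \eqref{energy-decay} via a positive-commutator argument of Doi/Morawetz type, in the framework developed by Marzuola-Metcalfe-Tataru in \cite{MMT3,MMT4,MMT5}. Under the smallness assumption \eqref{small-ass}, the principal symbol $p(x,\xi) = g^{\alpha\beta}_{<k-4}(x)\xi_\alpha \xi_\beta$ is a slowly varying perturbation of the flat symbol $|\xi|^2$, and consequently the associated bicharacteristic flow remains non-trapping in a quantitative sense. This is precisely the geometric input that drives local energy decay.

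First I would rescale the problem to frequency scale $1$: setting $\tilde\psi(t,x) = \psi_k(2^{-2k}t,\,2^{-k}x)$ converts \eqref{LinSch} into a unit-frequency Schr\"odinger equation whose coefficients are rescaled versions of $h^{\alpha\beta}_{<k-4}$ and $A^\alpha_{<k-4}$, now varying on the unit spatial scale and still inheriting the smallness from \eqref{small-ass}. Under this scaling the target norm $l^2_k X_k$ becomes $l^2_0 X_0$, the $L^2$ datum norm is preserved, and $l^2_k N_k$ becomes $l^2_0 N_0$. Hence it suffices to establish
\begin{equation*}
\lV \tilde\psi\rV_{l^2_0 X_0} \lesssim \lV \tilde\psi(0)\rV_{L^2} + \lV \tilde f\rV_{l^2_0 N_0}
\end{equation*}
for the rescaled equation.

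The core step is the construction of a self-adjoint pseudodifferential multiplier $Q = q^w(x,D)$ of order zero whose Weyl symbol $q$ satisfies a Doi-type coercivity condition of the form
\begin{equation*}
\{p,q\}(x,\xi) \gtrsim \sum_{j\geq 0} 2^{-j} \chi_j^2(x) |\xi|^2
\end{equation*}
on the unit frequency shell, where $\{\chi_j\}$ is a partition of unity adapted to the dyadic spatial annuli $\{|x|\sim 2^j\}$. A canonical choice is $q(x,\xi) = a(x) \cdot \xi/\<\xi\>$ with $a$ radial and monotonically bounded, adjusted so that $q$ increases along the bicharacteristics of $p$; the existence of such $q$ is guaranteed by the non-trapping geometry. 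Pairing the rescaled equation with $Q\tilde\psi$ in $L^2$ and integrating in time produces, schematically,
\begin{equation*}
\< Q\tilde\psi,\tilde\psi\>\big|_0^T
= \int_0^T \< i[\mathcal L,Q]\tilde\psi,\tilde\psi\>\, dt
\;+\; 2\Re\int_0^T \< Q\tilde\psi,\tilde f\>\, dt,
\end{equation*}
where $\mathcal L$ is the rescaled Schr\"odinger operator. The principal part of the commutator supplies the positive term controlling $\lV\tilde\psi\rV_{l^2_0 X_0}^2$ after summing over the dyadic spatial scales $j$; the boundary term is dominated by $\lV\tilde\psi\rV_{L^\infty L^2}^2$ and handled via the energy bound \eqref{Eng-Estm}; and the source term is estimated by the $X_0$–$N_0$ duality.

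The main obstacle, and the place that requires the most care, is controlling the error terms generated by the variable coefficients. The first-order magnetic contribution $2iA^\alpha_{<k-4}\partial_\alpha$ is formally of the same order as $\{p,q\}$: I would handle it either by a gauge conjugation $\tilde\psi = e^{i\phi}w$ that removes its leading part, or directly by absorbing it into the left-hand side using the smallness $\lV A\rV_{Z^{1,s+1}} \ll 1$. The metric perturbations $h^{\alpha\beta}_{<k-4}$ produce further commutator errors, which are controlled by the commutator bound \eqref{Comm-bd} and the bilinear estimate \eqref{qdt_d h<k dpsi}; these are linear in $\lV h\rV_{\bY^{s+2}}$ and hence absorbable under \eqref{small-ass}. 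Once these errors are tamed at unit frequency, undoing the rescaling immediately yields \eqref{energy-decay}.
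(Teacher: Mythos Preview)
Your positive-commutator strategy is the correct framework, and it is what the paper employs. However, your sketch has a substantive gap and differs from the paper's implementation in an important way.

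The gap concerns the passage from the commutator output to the $l^2_k X_k$ norm. A single global Doi-type multiplier with radial symbol $a(x)\cdot\xi/\langle\xi\rangle$ yields, after the commutator computation, a quantity of the form $\sum_{j\geq 0} 2^{-j}\lV\tilde\psi\rV^2_{L^2_{t,x}(\{|x|\sim 2^j\})}$. This is a weighted $L^2$ estimate centered at the origin; it neither controls the translation-invariant $X$ norm (which is a \emph{supremum} over all cubes at all scales) nor the $l^2_k X_k$ norm (which additionally carries an $l^2$ sum over cubes at scale $2^k$). Your claim that ``summing over the dyadic spatial scales $j$'' yields $\lV\tilde\psi\rV_{l^2_0 X_0}^2$ conflates a sum over dyadic annuli with a sum over unit cubes; a single annulus at radius $2^j$ contains $\sim 2^{dj}$ unit cubes, and the multiplier only sees their aggregate.

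The paper resolves this in two stages. First, rather than a global pseudodifferential multiplier, it uses a first-order \emph{differential} multiplier $\MM$ adapted to a single fixed cube $Q\in\QQ_l$, after an angular decomposition of $\psi_k$ that reduces matters to a one-dimensional construction $\MM=(i2^k)^{-1}(m_l(x_1)\partial_1+\partial_1 m_l(x_1))$. This yields the single-cube bound, and taking the supremum over $Q$ and $l$ gives the $X_k$ estimate. Second, the upgrade from $X_k$ to $l^2_k X_k$ is carried out by a separate partition-of-unity argument at scale $M2^k$: one applies the $X_k$ bound to each $\chi_Q\psi_k$, sums in $Q$, and absorbs the commutator errors $[\partial_\al g^{\al\be}_{<k-4}\partial_\be,\chi_Q]\psi_k$ and $A^\al_{<k-4}\partial_\al\chi_Q\cdot\psi_k$, which are $O(M^{-2})$, for $M$ large. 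This second step is entirely absent from your outline and is not a consequence of the commutator positivity alone.

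A minor additional point: your rescaling $\tilde\psi(t,x)=\psi_k(2^{-2k}t,2^{-k}x)$ sends the unit time interval to $[0,2^{2k}]$, so the $X$, $N$, and $l^2_0 X_0$ norms on the rescaled side are over a long interval and must be redefined accordingly. The paper avoids this by working directly at frequency $2^k$ and building the factor $2^{-k}$ into the multiplier $\MM$.
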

\begin{proof}
	The proof is closely related to that given in \cite{MMT3,MMT4}. However, here we are able to 
	relax the assumptions both on the metric $g$ and on the magnetic potential $A$. In the latter 
	case,  unlike in \cite{MMT3,MMT4},  we treat the magnetic term $2i A^{\al}_{<k-4}\d_{\al}\psi_k$ as a part of the linear equation, which allows us to avoid bilinear estimates for this term and use only the bound for $A$ in $\bZ^{1,s+1}$. 	
	
	As an intermediate step in the proof, we will establish a local energy decay bound 
in a cube  $Q\in \QQ_l$ with $0\leq l\leq k$:	
\begin{equation}   \label{LED-pre0}
	\begin{aligned}      
	2^{k-l}\lV \psi_k\rV_{L^2L^2([0,1]\times Q)}^2\lesssim &\ \lV \psi_k\rV_{L^{\infty}L^2}^2+
	\|f_k\|_{N_k} \|\psi_k \|_{X_k}  \\ 
	&+(2^{-k}+\lV A\rV_{Z^{1,s+1}}+\lV  h\rV_{\bY^{s+2}})\lV \psi_k\rV_{l^2_kX_k}^2.
	\end{aligned}
	\end{equation}
	
The proof of this bound is based on a positive commutator argument using a well chosen 
multiplier $\MM$. This will be first-order differential operator with smooth coefficients which are localized at frequency $\lesssim 1$. Precisely, we will use a multiplier $\MM$ which is 
a  sef-adjoint differential operator having the form 
\begin{equation}           \label{M-def}
		i2^k\MM=a^{\al}(x)\d_{\al}+\d_{\al}a^{\al}(x)
\end{equation}
with uniform bounds on $a$ and its derivatives.

	Before proving \ref{energy-decay}, we need the following lemma which is used to dismiss the $(g-I)$ and the $A$ contributions to the commutator $[\d_{\al}g^{\al\be}\d_{\be},\MM]$. 
	
	\
	
	\begin{lemma}\label{Comm_g-I}
		Let $s>\frac{d}{2}$ and $d\geq 3$, assume that $ h\in \bY^{s+2}$, $ A\in Z^{1,s+1}$ and $\psi\in l^2_kX_k$, let $\MM$ be as (\ref{M-def}). Then we have
		\begin{gather}           \label{Comm-M}
		\int_0^1\< [\d_{\al}h^{\al\be}_{\leq k}\d_{\be},\MM]\psi_k,\psi_k\>ds\lesssim \lV  h\rV_{\bY^{s+2}}\lV \psi_k\rV_{l^2_kX_k}^2,\\\label{non-(Ag)dpsi,Mpsi}
		\int_0^1\Re\<A^{\al}_{<k-4}\d_{\al}\psi_k,\MM\psi_k \>ds\lesssim \lV A\rV_{Z^{1,s+1}}\lV\psi_k\rV_{X_k}^2.
		\end{gather}
	\end{lemma}
	\begin{proof}[Proof of Lemma \ref{Comm_g-I}]
		By (\ref{M-def}) and directly computations, we get
		\begin{align*}
		[\d_{\al}h^{\al\be}\d_{\be},\MM]\approx 2^{-k}[\nab(h\nab a+a\nab h)\nab+\nab h\nab^2 a+h\nab^3 a].
		\end{align*} 
		Then it suffices to estimate
		\begin{align*}
		2^{-k}\int_0^1 \<(h_{\leq k}\nab a+a\nab h_{\leq k})\nab\psi_k,\nab\psi_k\>ds+2^{-k}\int_0^1\< (\nab h_{\leq k}\nab^2 a+h_{\leq k}\nab^3 a)\psi_k,\psi_k\>ds
		\end{align*}
		The first integral is estimated by (\ref{qdt_d h<k dpsi}) and (\ref{h<0-naba}). Using Sobolev embedding, the second integral is bounded by
		\begin{align*}
		2^{-k}\int_0^1 \<(\nab h_{\leq k}+h_{\leq k})\psi_k,\psi_k\>ds
		\lesssim \lV\<\nab\> h_{\leq k}\rV_{L^{\infty}}2^{-k}\lV \psi_k\rV_{L^2L^2}^2
		\lesssim \lV \nab h\rV_{L^{\infty}H^s}\lV \psi_k\rV_{l^2_kX_k}^2.
		\end{align*}
		Hence, the bound (\ref{Comm-M}) follows.
		
		For the second bound (\ref{non-(Ag)dpsi,Mpsi}), by (\ref{M-def}) and integration by parts we rewrite the following term as
		\begin{align*}
		&\Re \< A^{\al}\d_{\al}\psi,i\sum_{\be=1}^d(a_{\be}\d_{\be}+\d_{\be}a_{\be})\psi\>\\
		=& \Re \sum_{\be=1}^d\int_{\R^d} \Big[ i\d_{\al} (\bar{\psi}A^{\al}a_{\be}\d_{\be}\psi)-i\bar{\psi}\d_{\al}A^{\al}a_{\be}\d_{\be}\psi-i\bar{\psi}A^{\al}\d_{\al}a_{\be} \d_{\be}\psi-i\bar{\psi}A^{\al}a_{\be}\d_{\al\be}^2\psi\\
		&+i\d_{\be}(A^{\al}\d_{\al}\bar{\psi}a_{\be}\psi)-i\d_{\be}A^{\al}\d_{\al}\bar{\psi}a_{\be}\psi-iA^{\al}\d_{\al\be}^2\bar{\psi}a_{\be}\psi\Big] dx\\
		\approx& \int_{\R^d} \<\nab\>A\psi\nab\psi dx.
		\end{align*}
		Then we bound the left-hand side of (\ref{non-(Ag)dpsi,Mpsi}) by
		\begin{align*}
		\int_0^1\Re\<A^{\al}_{<k-4}\d_{\al}\psi_k,\MM\psi_k \>ds\lesssim &2^{-k} \int_0^1\int_{\R^d} |\<\nab\> A_{<k} \psi_k \nab \psi_k |dxds\\
		\lesssim &\lV \nab A\rV_{L^{\infty}H^s}\lV\psi_k\rV_{L^2L^2}^2.
		\end{align*}
		This implies the bound (\ref{non-(Ag)dpsi,Mpsi}), and hence completes the proof of the lemma.
	\end{proof}
	
	\
	
	Returning to the proof of \eqref{LED-pre0}, for the  self-adjoint multiplier $\MM$ we compute
	\begin{align*}
		\frac{d}{dt}\<\psi_k,\MM\psi_k\>=&2\Re\<\d_t\psi_k,\MM\psi_k\>\\
		=&2\Re\< i\d_{\al}(g^{\al\be}_{<k-4}\d_{\be}\psi_k)-2A^{\al}_{<k-4}\d_{\al}\psi_k-if_k,\MM\psi_k \>\\
		=&i\< [-\d_{\al}g^{\al\be}_{<k-4}\d_{\be},\MM]\psi_k,\psi_k\>
		+2\Re\<-2A^{\al}_{<k-4}\d_{\al}\psi_k-if_k,\MM\psi_k \>
	\end{align*}
	We then use the multiplier $\MM$ as in \cite{MMT3,MMT4} so that the following three properties hold:
	\begin{enumerate}
	\item[(1)] Boundedness on frequency $2^k$ localized functions,
	\[
	\lV\MM u\rV_{L^2_x}\lesssim \lV u\rV_{L^2_x}.
	\]
	
	\item[(2)] Boundedness in $X$, 
	\[
	\lV\MM u\rV_{X}\lesssim \lV u\rV_{X}.
	\]
	
	\item[(3)] Positive commutator,
	\[
	i\< [-\d_{\al}g^{\al\be}_{<k-4}\d_{\be},\MM]u,u\>\gtrsim 2^{k-l}\lV u\rV^2_{L^2_{t,x}([0,1]\times Q)}-O(2^{-k}+\lV h\rV_{\bY^{s+2}})\lV u\rV_{l^2_kX_k}^2.
	\]
\end{enumerate}
	
	\noindent If these three properties hold for $u=\psi_k$, then by (\ref{non-(Ag)dpsi,Mpsi}) and (\ref{small-ass}) the bound (\ref{LED-pre0}) follows.

	We first do this when the Fourier transform of the solution $\psi_k$ is restricted to a small angle
	\begin{equation}       \label{xi1-near}
		\text{supp} \widehat{\psi}_k\subset \{|\xi|\lesssim \xi_1\}.
	\end{equation}
	Without loss of generality due to translation invariance, $Q=\{|x_j|\leq 2^l:j=1,\ldots,d\}$, and we set $m$ to be a smooth, bounded, increasing function such that $m'(s)=\varphi^2(s)$ where $\varphi$ is a Schwartz function localized at frequencies $\lesssim 1$, and $\varphi\approx 1$ for $|s|\leq 1$. We rescale $m$ and set $m_l(s)=m(2^{-l}s)$. Then, we fix
	\[
	\MM=\frac{1}{i2^k} (m_l(x_1)\d_1+\d_1 m_l(x_1)).
	\]
	
	The properties $(1)$ and $(2)$ are immediate due to the frequency localization of $u=\psi_k$ and $m_l$ as well as the boundedness of $m_l$. By (\ref{Comm-M}) it suffices to consider the property $(3)$ for the operator 
	\[
	-\De=-\d_{\al}g^{\al\be}_{<k-4}\d_{\be}+\d_{\al}h^{\al\be}_{<k-4}\d_{\be}.
	\]
	This yields
	\begin{align*}
		i2^k[-\Delta,\MM]=-2^{-l+2}\d_1 \varphi^2(2^{-l}x_1)\d_1+O(1),
	\end{align*}
	and hence
	\begin{equation*}
		i2^k\<[-\Delta,\MM]\psi_k,\psi_k\>=2^{-l+2}\lV \varphi(2^{-l}x_1)\d_1\psi_k\rV_{L^2L^2}^2+O(\lV\psi_k\rV_{L^2L^2}^2)
	\end{equation*}
	Utilizing our assumption (\ref{xi1-near}), it follows that
	\begin{equation*}
		2^{k-l}\lV \varphi(2^{-l}x_1)\psi_k\rV_{L^2L^2}^2\lesssim i\< [-\Delta,\MM]\psi_k,\psi_k\>+2^{-k}O(\lV\psi_k\rV_{L^2L^2}^2)
	\end{equation*}
	which yields $(3)$ when combined with (\ref{Comm-M}).
	
	We proceed to reduce the problem to the case when (\ref{xi1-near}) holds. We let $\{ \theta_j (\om) \}_{j=1}^d$ be a partition of unity,
	\begin{equation*}
		\sum_{j}\theta_j(\om)=1,\ \ \ \ \om\in\S^{d-1},
	\end{equation*}
	where $\theta_j(\om)$ is supported in a small angle about the $j$-th coordinate axis. Then, we can set $\psi_{k,j}=\Theta_{k,j}\psi_k$ where
	\begin{equation*}
		\FF\Theta_{k,j}\psi=\theta_j(\frac{\xi}{|\xi|})\sum_{k-1\leq l\leq k+1}\varphi_l(\xi)\widehat{\psi}(t,\xi).
	\end{equation*}
	We see that 
	\begin{align*}
		&(i\d_t+\d_{\al}g^{\al\be}_{<k-4}\d_{\be})\psi_{k,j}+2iA^{\al}_{<k-4}\d_{\al}\psi_{k,j}\\
		=&\Theta_{k,j}f_k-\d_{\al}[\Theta_{k,j},g^{\al\be}_{<k-4}]\d_{\be}\psi_k-2i[\Theta_{k,j},A^{\al}_{\leq k-4}]\d_{\al}\psi_k.
	\end{align*}
	
	By applying $\MM$, suitably adapted to the correct coordinate axis, to $\psi_{k,j}$ and summing over $j$, we obtain
	\begin{align*}
		&2^{k-l}\lV \psi_k\rV_{L^2L^2([0,1]\times Q)}^2\\
		\lesssim &\ \lV\psi_k\rV_{L^{\infty}L^2}^2+\sum_{j=1}^d\int_0^1\<-\Theta_{k,j}f_k,\MM\psi_{k,j}\>ds\\
		&\ +\sum_{j=1}^d\int\< [\Theta_{k,j},\d_{\al}g^{\al\be}_{<k-4}\d_{\be}]\psi_k+[\Theta_{k,j},2iA^{\al}_{<k-4}]\d_{\al}\psi_k,\MM\psi_{k,j}\>ds\\
		&\ +(2^{-k}+\lV A\rV_{Z^{1,s+1}}+\lV  h\rV_{\bY^{s+2}})\lV \psi_k\rV_{l^2_kX_k}^2\\
		\lesssim &\  \lV\psi_k\rV_{L^{\infty}L^2}^2+\lV f_k\rV_{N_k}\lV \psi_k\rV_{X_k} 
		+(2^{-k}+\lV A\rV_{Z^{1,s+1}}+\lV  h\rV_{\bY^{s+2}})\lV \psi_k\rV_{l^2_kX_k}^2.
	\end{align*}
	The commutator is done via (\ref{Comm-bd}) and (\ref{Com_Ag-dpsi}). Then (\ref{LED-pre0}) follows.

	Next we use the bound \eqref{LED-pre0} to complete the proof of Proposition \ref{Local-Energy-Decay}. 	Taking the supremum in \eqref{LED-pre0} 
	over $Q\in\QQ_l$ and over $l$, we obtain
	\begin{equation*}
	\begin{aligned}
	2^k\lV \psi_k\rV_{X}^2\lesssim &\lV \psi_k\rV_{L^{\infty}L^2}^2+\lV f_{1k}\rV_{N_k}\lV \psi_k\rV_{X_k}+\lV f_{2k}\rV_{L^1L^2}\lV \psi_k\rV_{L^{\infty}L^2}\\
	&+(2^{-k}+\lV A\rV_{Z^{1,s+1}}+\lV  h\rV_{\bY^{s+2}})\lV \psi_k\rV_{l^2_kX_k}^2\\
	\lesssim &\lV \psi_k\rV_{L^{\infty}L^2}^2+\lV f_{1k}\rV_{N_k}\lV \psi_k\rV_{X_k}+\lV f_{2k}\rV_{L^1L^2}^2\\
	&+(2^{-k}+\lV A\rV_{Z^{1,s+1}}+\lV  h\rV_{\bY^{s+2}})\lV \psi_k\rV_{l^2_kX_k}^2.
	\end{aligned}
	\end{equation*}
	Combined with (\ref{Eng-Estm}), we get
	\begin{equation}          \label{psiXk}
	\begin{aligned}
	\lV \psi_k\rV_{X_k}^2\lesssim & \lV\psi_k(0)\rV_{L^2}^2
	+\lV f_{1k}\rV_{N_k}^2+\lV f_{2k}\rV_{L^1L^2}^2\\
	&+(2^{-k}+\lV A\rV_{Z^{1,s+1}}+\lV  h\rV_{\bY^{s+2}})\lV \psi_k\rV_{l^2_kX_k}^2.
	\end{aligned}
	\end{equation}

We now finish the proof by incorporating the summation over cubes. We let $\{\chi_Q\}$ denote a partition via functions which are localized to frequencies $\lesssim 1$ which are associated to cubes $Q$ of scale $M2^k$. We also assume that $|\nab^l\chi_Q|\lesssim (2^k M)^{-l}$, $l=1,2$. Thus,
	\begin{align*}
	&(i\d_t+\d_{\al}g^{\al\be}_{<k-4}\d_{\be})\chi_Q \psi_k+2iA^{\al}_{<k-4}\d_{\al}\chi_Q \psi_k\\
	=&\chi_Q f_k+[\d_{\al}g^{\al\be}_{<k-4}\d_{\be},\chi_Q]\psi_k+2iA^{\al}_{<k-4}\d_{\al}\chi_Q\cdot \psi_k
	\end{align*}
	Applying (\ref{Eng-Estm}) to $\chi_Q\psi_k$, we obtain
	\begin{align*}
	&\sum_Q \lV \chi_Q\psi_k\rV_{L^{\infty}L^2}^2\\
	\lesssim & \sum_Q \lV \chi_Q\psi_k(0)\rV_{L^2}^2
	+\lV A\rV_{Z^{1,s+1}}\sum_Q\lV \chi_Q\psi_k\rV_{X_k}^2\\
	&+(\sum_Q\lV \chi_Qf_k\rV_{N_k}^2)^{1/2}(\sum_Q\lV \chi_Q\psi_k\rV_{X_k}^2)^{1/2}\\
	&+\sum_Q\lV[\d_{\al}g^{\al\be}_{<k-4}\d_{\be},\chi_Q]\psi_k+2iA^{\al}_{<k-4}\d_{\al}\chi_Q\cdot \psi_k\rV_{L^1L^2}^2.
	\end{align*}
	But by (\ref{small-ass}) we have
	\begin{equation}\label{chiQ-comm-1}
	\begin{aligned}
	\sum_Q\lV[\nab g\nab,\chi_Q]\psi_k \rV_{L^1L^2}^2 \lesssim &\sum_Q\lV \nab g\cdot\nab \chi_Q\cdot\psi_k+g\nab(\nab\chi_Q\cdot\psi_k)\rV_{L^1L^2}^2\\
	\lesssim & (1+\lV h\rV_{Z^{1,s+2}}) M^{-2}\sum_Q \lV\chi_Q\psi_k\rV_{L^{\infty}L^2}^2,
	\end{aligned}
	\end{equation}
	and also
	\begin{align}\label{chiQ-comm-2}
	\sum_Q\lV2iA^{\al}_{<k-4}\d_{\al}\chi_Q\cdot \psi_k\rV_{L^1L^2}^2\lesssim (1+\lV A\rV_{Z^{1,s}}) M^{-2}\sum_Q \lV\chi_Q\psi_k\rV_{L^{\infty}L^2}^2.
	\end{align}
	For $M$ sufficiently large, we can bootstrap the commutator terms, and, after a straightforward transition to cubes of scale $2^k$ rather than $M2^k$, we observe that
	\begin{equation}      \label{energy-l^2L^infL^2}
	\begin{aligned}
	\lV \psi_k\rV_{l^2_kL^{\infty}L^2}^2
	\lesssim &  \lV \psi_k(0)\rV_{L^2}^2
	+\lV A\rV_{Z^{1,s+1}}\lV \psi_k\rV_{l_k^2X_k}^2
	+\lV f_k\rV_{l^2_kN_k}\lV \psi_k\rV_{l^2_kX_k}.
	\end{aligned}  
	\end{equation}
	
	We now apply (\ref{psiXk}) to $\chi_Q\psi_k$, and then by (\ref{chiQ-comm-1}) and (\ref{chiQ-comm-2}) we see that
	\begin{align*}
	\sum_Q \lV\chi_Q \psi_k\rV_{X_k}^2\lesssim & \lV\psi_k(0)\rV_{L^2}^2+\sum_Q\lV\chi_Q f_k\rV_{N_k}^2+M^{-2}\sum_Q\lV \chi_Q\psi_k\rV_{X_k}^2\\
	&+(2^{-k}+\lV h\rV_{\bY^{s+2}}+\lV  A\rV_{Z^{1,s+1}})\sum_Q\lV \chi_Q\psi_k\rV_{l^2_kX_k}^2.
	\end{align*}
	For $M\gg 1$, we have
	\begin{align*}
	M^{-d}\lV \psi_k\rV_{l^2_kX_k}^2\lesssim & \lV\psi_k(0)\rV_{L^2}^2+\lV f_k\rV_{l^2_kN_k}^2
	+(2^{-k}+\lV h\rV_{\bY^{s+2}}+\lV A\rV_{Z^{1,s+1}})\lV \psi_k\rV_{l^2_kX_k}^2.
	\end{align*}
	By (\ref{small-ass}), for $k$ sufficiently large (depending on $M$), we may absorb the second and the last terms in the right-hand side into the left, i.e
    \begin{equation*}
    \lV \psi_k\rV_{l^2_kX_k}^2\lesssim \lV\psi_k(0)\rV_{L^2}^2+\lV f_k\rV_{l^2_kN_k}^2.
    \end{equation*} 
	On the other hand, for the remaining bounded range of $k$, we have
	\begin{equation*}
	\lV \psi\rV_{X_k}\lesssim \lV \psi\rV_{L^{\infty}L^2},
	\end{equation*}
	and then (\ref{energy-l^2L^infL^2}) and (\ref{small-ass}) gives 
	\begin{align*}
	\lV\psi_k\rV_{l^2_kX_k}^2\lesssim &\lV \psi_k(0)\rV_{L^2}^2
	+\lV A\rV_{Z^{1,s+1}}\lV \psi_k\rV_{l_k^2X_k}^2
	+\lV f_k\rV_{l^2_kN_k}\lV \psi_k\rV_{l^2_kX_k}\\
	\lesssim & \lV \psi_k(0)\rV_{L^2}^2
	+\lV f_k\rV_{l^2_kN_k}^2,
	\end{align*}
	which finishes the proof of (\ref{energy-decay}). 
\end{proof}

\subsection{ The full linear problem}
Here we use the bounds for the paradifferential equation in the previous subsection 
in order to prove similar bounds for the full equation \eqref{Lin-eq0}:

\begin{prop}[Well-posedness]   \label{p:well-posed}
	Let $s>\frac{d}{2}$, $d\geq 3$ and $h=g-I\in \bY^{s+2}$, assume that the metric $g$, and the magnetic potential $A$  satisfy
	\begin{equation*}
	\lV h\rV_{\bY^{s+2}},\ \lV  A\rV_{Z^{1,s+1}},\ \lV  B\rV_{Z^{1,s}}\ll 1.
	\end{equation*}
Then the equation (\ref{Lin-eq0}) is well-posed for initial data $\psi_0\in H^{\si}$ with $-s\leq \si\leq s$, and we have the estimate 
	\begin{equation}\label{well-posed-s0}
	\lV \psi\rV_{l^2X^{\si}}\lesssim \lV \psi_0\rV_{H^{\si}}+\lV F\rV_{l^2N^{\si}}.
	\end{equation}
	Moreover, for $0\leq \si \leq s$ we have the estimate
	\begin{equation}\label{well-posed-s00}
	\lV \psi\rV_{l^2\bX^{\si}}\lesssim \lV \psi_0\rV_{H^{\si}}+\lV F\rV_{l^2N^{\si}\cap L^2H^{\si-2}}.
	\end{equation}
\end{prop}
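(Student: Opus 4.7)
The plan is to reduce \eqref{Lin-eq0} to a frequency-localized paradifferential Schr\"odinger equation and invoke Proposition \ref{Local-Energy-Decay}. Applying $S_k$ to \eqref{Lin-eq0} and separating the paradifferential part yields
\begin{equation*}
i\d_t \psi_k + \d_{\al} g^{\al\be}_{<k-4}\d_{\be}\psi_k + 2iA^{\al}_{<k-4}\d_{\al}\psi_k = f_k,
\end{equation*}
where the effective source $f_k$ consists of $S_k F$, the high-frequency remainders $S_k\d_{\al}(h^{\al\be}_{\geq k-4}\d_{\be}\psi)$ and $2iS_k(A^{\al}_{\geq k-4}\d_{\al}\psi)$, the commutators $[\d_{\al} h^{\al\be}_{<k-4}\d_{\be},S_k]\psi$ and $2i[S_k,A^{\al}_{<k-4}]\d_{\al}\psi$, and the lower-order term $S_k(B\psi)$. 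Proposition \ref{Local-Energy-Decay} then provides the single-frequency bound $\lV\psi_k\rV_{l^2_k X_k}\lesssim \lV S_k\psi_0\rV_{L^2}+\lV f_k\rV_{l^2_k N_k}$, and the work reduces to controlling each contribution to $f_k$.

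To convert this to the $l^2 X^{\si}$ estimate, I would let $p_k$ be an admissible frequency envelope for $\psi$ in $l^2 X^{\si}$ and bound each term in $f_k$ by $p_k$ times a small perturbative factor. The high-frequency $h$ and $A$ contributions are handled by the $p_k$ form of \eqref{qdt-gdpsi_hl&hh} and \eqref{qdt-Ag-dpsi}, the two commutators by Proposition \ref{Comm-est-Lemma}, and the $B\psi$ term by \eqref{cb-B}; note that these estimates are valid over the full range $-s\leq \si\leq s$ as required. Squaring, weighting by $2^{2\si k}$ and summing in $k$ produces
\begin{equation*}
\lV \psi\rV_{l^2 X^{\si}}\lesssim \lV \psi_0\rV_{H^{\si}} + \lV F\rV_{l^2 N^{\si}} + \bigl(\lV h\rV_{\bY^{s+2}} + \lV A\rV_{Z^{1,s+1}} + \lV B\rV_{Z^{1,s}}\bigr)\lV \psi\rV_{l^2 X^{\si}},
\end{equation*}
from which \eqref{well-posed-s0} follows by absorbing the last term via the smallness hypothesis. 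Existence is then obtained by a standard approximation scheme: regularize the coefficients, source and data to smooth objects with uniformly small norms, solve the resulting smooth linear equations classically, and pass to the limit using the uniform a priori bound; uniqueness is immediate by applying the same estimate to the difference of two solutions.

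For the strengthened bound \eqref{well-posed-s00} in the range $0\leq \si\leq s$, I would read \eqref{Lin-eq0} as expressing $\d_t \psi$ through the spatial terms and estimate it directly in $L^2 H^{\si-2}$. The flat part $\De\psi$ is controlled in $L^2 H^{\si-2}$ by $\lV \psi\rV_{L^2 H^{\si}}$, which on $[0,1]$ is dominated by the embedding $l^2 X^{\si}\hookrightarrow L^\infty H^{\si}$; the perturbative remainder $\d_{\al}(h^{\al\be}\d_{\be}\psi)$ together with $A^{\al}\d_{\al}\psi$ and $B\psi$ are handled as small perturbations via Proposition \ref{Non-Est}, and $F$ sits in $L^2 H^{\si-2}$ by assumption. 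The main obstacle throughout is keeping the multilinear source-term bounds in a form that can be summed uniformly in $\si$ after weighting by $2^{2\si k}$; this is precisely the role played by the dual $\min$-form of the estimates in Proposition \ref{Non-Est}, which selects whether to place the high-regularity norm on $\psi$ or on the coefficients depending on the sign of $\si$, so the bookkeeping closes the argument over the full range $-s\leq \si\leq s$.
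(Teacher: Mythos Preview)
Your proposal is correct and follows essentially the same route as the paper for the a priori estimate \eqref{well-posed-s0}: the same paradifferential decomposition, the same invocation of Proposition~\ref{Local-Energy-Decay}, and the same collection of multilinear/commutator bounds \eqref{qdt-gdpsi_hl&hh}, \eqref{qdt-Ag-dpsi}, \eqref{cb-B}, and Proposition~\ref{Comm-est-Lemma} to close the loop by smallness; likewise your derivation of \eqref{well-posed-s00} by reading off $\partial_t\psi$ matches the paper. The one genuine difference is how existence is obtained: you propose a regularization/limit scheme for the coefficients and data, whereas the paper argues by duality, observing that the adjoint equation has the same structure so the estimate \eqref{well-posed-s0} applied to it yields existence for the original problem. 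Both are standard and valid; the duality route is slightly cleaner here since it avoids having to verify that the regularized coefficients remain uniformly small in the somewhat nonstandard spaces $\bY^{s+2}$ and $Z^{1,s+1}$, while your approximation approach is more self-contained.
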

\begin{proof}
    The well-posedness follows in a standard fashion from a similar energy estimate for the adjoint equation. Since the adjoint equation has a similar form, with similar bounds on the coefficients, such an estimate follows directly from \eqref{well-posed-s0}. Thus, we now focus on the proof of the bound \eqref{well-posed-s0}. For $\Psi$ solving (\ref{Lin-eq0}), we see that $\Psi_k$ solves
	\begin{equation*}
	\left\{
	\begin{aligned}      
	&i\d_t \Psi_k+\d_{\al}g^{\al\be}_{<k-4}\d_{\be}\Psi_k+2iA^{\al}_{<k-4}\d_{\al}\Psi_k=F_k+H_k,\\
	&\Psi_k(0)=\Psi_{0k},
	\end{aligned}\right.
	\end{equation*}
	where
	\begin{align*}
	H_k=&-S_k\d_{\al}g^{\al\be}_{\geq k-4}\d_{\be}\Psi-[S_k,\d_{\al}g^{\al\be}_{< k-4}\d_{\be}]\Psi-2i[S_k,A^{\al}_{<k-4}]\d_{\al}\Psi\\
	&-2iS_k[ A^{\al}_{\geq k-4}\d_{\al}\Psi_k]-S_k(B\Psi).
	\end{align*}
	If we apply Proposition \ref{Local-Energy-Decay} to each of these equations, we see that
	\begin{equation*}
	\lV\Psi_k\rV_{l^2X^{\si}}^2\lesssim \lV\Psi_{0k}\rV_{H^{\si}}^2+\lV F_k\rV_{l^2N^{\si}}^2+\lV H_k\rV_{l^2N^{\si}}^2.
	\end{equation*}
	
	We claim that 
	\begin{gather*}
	\sum_{k}\lV H_k\rV_{l^2N^{\si}}^2\lesssim (\lV  h\rV_{\bY^{s+2}}+\lV  A\rV_{Z^{1,s+1}}+\lV  B\rV_{Z^{1,s}})^2\lV \Psi\rV_{l^2X^{\si}}^2,\ \text{for }-s\leq \si\leq s.
	\end{gather*}
	Indeed, the bound for the terms in $H_k$ follows from (\ref{qdt-gdpsi_hl&hh}), (\ref{Comm-bd}), (\ref{Com_Ag-dpsi}), (\ref{qdt-Ag-dpsi}) and \eqref{cb-B}, respectively. Then by the above two bounds, we obtain the estimate (\ref{well-posed-s0}).
	
	Finally, by the $\psi$-equation (\ref{Lin-eq0}), for time derivative bound it suffices to consider the form
	\begin{equation*}
	\d_t\psi=\De\psi+\nab(h\nab\psi)+A\nab\psi+B\psi+F.
	\end{equation*}
	Then by the standard Littlewood-Paley dichotomy and Bernstein's inequality, for $0\leq \si\leq s$ we have the following estimates
	\begin{align*}\label{dtpsi-L2}
	\lV \d_t \psi\rV_{L^2H^{\si-2}}\lesssim \lV\psi\rV_{L^{\infty}H^{\si}}+\lV F\rV_{L^2H^{\si-2}},
	\end{align*}
	This, combined with \eqref{well-posed-s0}, yields the bound \eqref{well-posed-s00}, and then completes the proof of the Lemma.
\end{proof}

\subsection{The linearized problem.} Here we consider the linearized equation:
\begin{equation}\label{Lin-eq}
\left\{
\begin{aligned}      
&i\d_t \Psi+\d_{\al}g^{\al\be}\d_{\be}\Psi+2iA^{\al}\d_{\al}\Psi=F+G,\\
&\Psi(0)=\Psi_0,
\end{aligned}\right.
\end{equation}
where
\begin{equation*}
G=-\mathcal{G}\nab^2\psi-2i\mathcal{A}^{\al}\d_{\al} \psi,
\end{equation*}
and we prove the following.
\begin{prop}    \label{well-posedness-lemma}
	Let $s>\frac{d}{2}$, $0\leq \si\leq s-1$, $d\geq 3$ and $h=g-I\in \bY^{s+2}$, assume that $\Psi$ is a solution of \eqref{Lin-eq}, the metric $g$, $A$ and $V$ satisfy
	\begin{equation*}
	\lV h\rV_{\bY^{s+2}},\ \lV  A\rV_{Z^{1,s+1}}\ll 1.
	\end{equation*}
	Then we have the estimate
	\begin{equation}         \label{well-posed-s-1}
	\lV \Psi\rV_{l^2\bX^{\si}}\lesssim \lV \Psi_0\rV_{H^{\si}}+\lV F\rV_{l^2N^{\si}\cap L^2H^{\si-2}}+(\lV\mathcal{G}\rV_{\bY^{\si}}+\lV\mathcal{A}\rV_{Z^{1,\si+1}})\lV\psi\rV_{l^2X^{s}}.
	\end{equation}
\end{prop}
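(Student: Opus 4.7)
The plan is to view \eqref{Lin-eq} as a linear Schr\"odinger equation in the sense of Proposition~\ref{p:well-posed}, where the coefficients $(g,A,B)$ satisfy the required smallness assumptions and the source is the combination $F + G$. Since the hypothesis $0 \leq \sigma \leq s-1$ implies $0 \leq \sigma \leq s$, a direct application of Proposition~\ref{p:well-posed} (specifically the bound \eqref{well-posed-s00}) yields
\begin{equation*}
\lV \Psi\rV_{l^2\bX^{\si}} \lesssim \lV \Psi_0\rV_{H^{\si}} + \lV F\rV_{l^2N^{\si}\cap L^2H^{\si-2}} + \lV G\rV_{l^2N^{\si}\cap L^2H^{\si-2}}.
\end{equation*}
Thus the proposition reduces entirely to the bilinear source bound
\begin{equation}\label{e:prop_GGG}
\lV G\rV_{l^2N^{\si}\cap L^2H^{\si-2}} \lesssim \big(\lV\mathcal{G}\rV_{\bY^{\si}} + \lV\mathcal{A}\rV_{Z^{1,\si+1}}\big) \lV\psi\rV_{l^2X^{s}}.
\end{equation}

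For the $l^2 N^\sigma$ component I would decompose $G$ via a standard paraproduct splitting for each of the two pieces. For $\mathcal{G}\nabla^2\psi$, the low-high part $\mathcal{G}_{<k-4}\nabla^2\psi_k$ is treated by a direct analogue of \eqref{qdt_h-d2psi}, which is applicable precisely because $\sigma \leq s-1$; the high-low and balanced (high-high) interactions fall under the type of estimate in \eqref{qdt-gdpsi_hl&hh}, except with the regularity roles swapped, so that the coefficient $\mathcal{G}$ carries the lower index $\sigma$ while $\psi$ is taken at full regularity $s$. Similarly, the term $\mathcal{A}^\alpha \partial_\alpha \psi$ is handled by \eqref{qdt-Ag-dpsi_lh} in the paraproduct low-high region (which again requires $\sigma \leq s-1$) and by the analogue of \eqref{qdt-Ag-dpsi} in the other two regions.

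The $L^2 H^{\sigma-2}$ component is softer: after a Littlewood-Paley dichotomy, Bernstein's inequality together with the Sobolev embeddings implicit in the $\bY^{\sigma}$ and $Z^{1,\sigma+1}$ norms reduces the matter to bounding $\|\psi\|_{L^\infty H^s}$, which is already controlled by $\|\psi\|_{l^2 X^s}$ via the energy component of $\bX^s$. Combining these two estimates gives \eqref{e:prop_GGG} and hence \eqref{well-posed-s-1}.

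The main technical obstacle is the \emph{role reversal} needed in the bilinear bounds: Proposition~\ref{Non-Est} and Lemma~\ref{bilinear-est} are all stated under the implicit convention that the coefficient variable carries the full regularity $s$ while the $\psi$-variable carries the (possibly lower) index $\sigma$. In the linearized setting we need the opposite arrangement, with $\mathcal{G}, \mathcal{A}$ at the lower regularity $\sigma$ and $\psi$ at full regularity $s$. The high-high interaction in the local energy norm $l^2 N^\sigma$ is the most delicate case, and closing the dyadic summation there requires exactly one derivative of slack, which is provided by the strict gap $s - \sigma \geq 1$ in the hypothesis. Once this role-swapped version of the multilinear estimates is verified using the same Littlewood-Paley and Bernstein arguments as in Proposition~\ref{Non-Est}, the assembly of \eqref{e:prop_GGG} and the final estimate \eqref{well-posed-s-1} follow routinely.
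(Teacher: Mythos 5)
Your overall strategy matches the paper's: view \eqref{Lin-eq} as a linear Schr\"odinger flow with source $F+G$, invoke the linear local energy theory (either by directly quoting Proposition~\ref{p:well-posed} with $B=0$, or, as the paper does, by repeating its paradifferential decomposition $\Psi_k$ with an extra term $G_k$), and reduce everything to the bilinear bound on $G$ in $l^2N^{\si}\cap L^2H^{\si-2}$ proved from the estimates of Section~\ref{Sec-mutilinear}. So the approach is correct and essentially identical to the paper's.

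Where you go astray is in identifying a ``main technical obstacle'' that is not actually there. You assert that Proposition~\ref{Non-Est} and Lemma~\ref{bilinear-est} implicitly put the coefficient at regularity $s$ and $\psi$ at $\si$, so that a ``role reversal'' must be re-derived. But look at how those estimates are stated: the envelope $s_k$ in Proposition~\ref{Non-Est} is the envelope of $\SS$ in $\bEE^{\si}$ (i.e., the coefficient at the \emph{lower} index), and the bounds \eqref{qdt-gdpsi_hl&hh}, \eqref{qdt-Ag-dpsi} are given as a $\min$ of two expressions, one of which is precisely $s_k\lV\psi\rV_{l^2 X^s}$, which after summing in $k$ gives $\lV\SS\rV_{\bEE^{\si}}\lV\psi\rV_{l^2X^s}$. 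Likewise \eqref{qdt_d h<k dpsi} and \eqref{h<0-naba} are stated as a $\min$ of the two arrangements, and \eqref{qdt_h-d2psi}, \eqref{qdt-Ag-dpsi_lh} are stated exactly in the role-swapped form $\lV h\rV_{\bY^{\si+2}}\lV\psi_k\rV_{l^2X^s}$, $s_k\lV\psi\rV_{l^2X^s}$. Nothing needs to be re-proved; you can simply cite these as they stand. Relatedly, your claim that the derivative slack $s-\si\geq 1$ is what ``closes the dyadic summation'' in the high-high case is misplaced: \eqref{qdt-gdpsi_hl&hh} and \eqref{qdt-Ag-dpsi} hold for the full range $-s\leq\si\leq s$, so the high-high and high-low interactions do not need the gap. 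The constraint $\si\leq s-1$ is consumed only by the low-high interactions \eqref{qdt_h-d2psi}, \eqref{qdt-Ag-dpsi_lh}, where both spatial derivatives (respectively, the one derivative plus the missing $\nab$) necessarily fall on the high-frequency $\psi_k$ and cannot be transferred to the low-frequency coefficient. Correct these two points and the rest of your write-up is a faithful account of the paper's proof.
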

\begin{proof}
	For $\Psi$ solving (\ref{Lin-eq}), we see that $\Psi_k$ solves
	\begin{equation*}
	\left\{
	\begin{aligned}      
	&i\d_t \Psi_k+\d_{\al}g^{\al\be}_{<k-4}\d_{\be}\Psi_k+2iA^{\al}_{<k-4}\d_{\al}\Psi_k=F_k+G_k+H_k,\\
	&\Psi_k(0)=\Psi_{0k},
	\end{aligned}\right.
	\end{equation*}
	where
	\begin{gather*}
	G_k=-S_k(\mathcal{G}\nab^2\psi-2i\mathcal{A}^{\al}\d_{\al} \psi),
	\end{gather*}
	\begin{align*}
	H_k=&-S_k\d_{\al}g^{\al\be}_{\geq k-4}\d_{\be}\Psi-[S_k,\d_{\al}g^{\al\be}_{< k-4}\d_{\be}]\Psi-2i[S_k,A^{\al}_{<k-4}]\d_{\al}\Psi\\
	&-2iS_k[ A^{\al}_{\geq k-4}\d_{\al}\Psi_k].
	\end{align*}
	The proof of \eqref{well-posed-s-1} is similar to that of \eqref{well-posed-s00}. Here it suffices to prove
	\begin{gather*}
	\sum_{k}\lV G_k\rV_{l^2N^{\si}}^2\lesssim \lV \mathcal{G}\rV_{\bY^{\si+2}}^2\lV \psi\rV_{l^2X^{s}}^2+\lV  \mathcal{A}\rV_{Z^{1,\si+1}}^2\lV \psi\rV_{l^2X^{s}}^2,\\
	\lV G\rV_{L^2H^{\si-2}}\lesssim (\lV \GG\rV_{\bY^{\si+2}}+\lV  \AA\rV_{Z^{1,\si+1}})\lV \psi\rV_{l^2X^{s}}.
	\end{gather*}
	Indeed, the bound for the terms in $G_k$ follows from (\ref{qdt-gdpsi_hl&hh}), (\ref{qdt_h-d2psi}), (\ref{qdt-Ag-dpsi}) and (\ref{qdt-Ag-dpsi_lh}). The second bound follows from a standard Littlewood-Paley decomposition and Bernstein's inequality. This completes the proof of the Lemma.
\end{proof}

\section{Well-posedness in the good gauge}
\label{Sec-LWP}

In this section we use the elliptic results in Section~\ref{Sec-Ell}, the multilinear estimates in Section~\ref{Sec-mutilinear} and the linear local energy decay bounds in Section~\ref{Sec-LED}
in order to prove the good gauge formulation of our main result,  namely Theorem \ref{LWP-MSS-thm}.

\subsection{The iteration scheme: uniform bounds}
Here we seek to construct solutions to (\ref{mdf-Shr-sys-2}) iteratively, based on the scheme
\begin{equation}         \label{system-iteration}
	\left\{\begin{aligned}	
		&(i\d_t+\d_{\al}g^{(n)\al\be}\d_{\be})\psi^{(n+1)}+2i(A^{(n)\al}-\frac{1}{2}V^{(n)\al})\d_{\al}\psi^{(n+1)}=F^{(n)},\\
		&\psi(0)=\psi_0,
	\end{aligned}\right.	
\end{equation}
with the trivial initialization
\begin{equation*}
	\psi^{(0)}=0,
\end{equation*}
where the nonlinearities are
\begin{equation}\label{Nonlinearities-iteration}
\begin{aligned}
F^{(n)}=&\d_{\al}g^{(n)\al\be}\cdot\d_{\be}\psi^{(n)}+(B^{(n)}+A_{\al}^{(n)}A^{(n)\al}-V^{(n)\al}A^{(n)}_{\al})\psi^{(n)}-i\la_{\si}^{(n)\ga}\Im(\psi^{(n)}\bar{\la}^{(n)\si}_{\ga}),\\
\end{aligned}
\end{equation}
and $\SS^{(n)}=(\la^{(n)},h^{(n)},V^{(n)},A^{(n)},B^{(n)})$ are the solutions of elliptic equations \eqref{ell-syst} with $\psi=\psi^{(n)}$.

We assume that $\psi_0$ is small in $H^s$. Due to the above trivial initialization, we also 
inductively assume that
\begin{equation*}  
\lV \psi^{(n)}\rV_{l^2\bX^s}\leq C\lV\psi_0\rV_{H^s},
\end{equation*}
where $C$ is a big constant.

Applying the elliptic estimate (\ref{t:ell-time-dep_bd1}) to \eqref{ell-syst} with $\psi=\psi^{(n)}$ at each step, we obtain
\begin{align*}
\lV \SS^{(n)}\rV_{\bEE^{s}}\lesssim  \lV \psi^{(n)}\rV_{H^s},
\end{align*}
Applying at each step the local energy bound (\ref{well-posed-s00}) with $\si=s$ we obtain the estimate
\begin{align*}
\lV\psi^{(n+1)}\rV_{l^2\bX^s}\lesssim & \lV\psi_0\rV_{H^s}+\lV F^{(n)}\rV_{l^2N^s\cap L^2H^{s-2}}\\
\lesssim &\lV\psi_0\rV_{H^s}+\lV \SS^{(n)}\rV_{\bEE^{s}}(1+\lV \SS^{(n)}\rV_{\bEE^{s}})\lV\psi^{(n)}\rV_{l^2X^s}\\
\lesssim & \lV\psi_0\rV_{H^s}+(C\lV\psi_0\rV_{H^s})^2(1+C\lV\psi_0\rV_{H^s})\\
\leq &C\lV\psi_0\rV_{H^s}.
\end{align*}
Here the nonlinear terms in $F^{(n)}$ are estimated using (\ref{qdt_d h<k dpsi}), (\ref{cb-B}), (\ref{cb-AA}) and (\ref{cb_lam2-psi}) with $\si=s$.
Since $\psi_0$ is small in $H^s$, the above two bounds give
\begin{equation}  \label{Unif-bound}
\begin{aligned}
\lV \psi^{(n+1)}\rV_{l^2\bX^s}\leq C\lV\psi_0\rV_{H^s},
\end{aligned}
\end{equation}
which closes our induction.
\subsection{The iteration scheme: weak convergence.}
Here we prove that our iteration scheme converges in the weaker $H^{s-1}$ topology. We denote the differences by
\begin{gather*}
\Psi^{(n+1)}=\psi^{(n+1)}-\psi^{(n)},\\ \dSS^{(n+1)}=(\La^{(n+1)},\GG^{(n+1)},\VV^{(n+1)},\AA^{(n+1)},\BB^{(n+1)})=\SS^{(n+1)}-\SS^{(n)}
\end{gather*}
Then from (\ref{system-iteration}) we obtain the system
\begin{equation*}              
\left\{\begin{aligned}
&i\d_t \Psi^{(n+1)}+g^{(n)\al\be}\d_{\al\be}^2\Psi^{(n+1)}+2i(A^{(n)\al}-\frac{1}{2}V^{(n)\al})\d_{\al}\Psi^{(n+1)}=F^{(n)}-F^{(n-1)}+G^{(n)},\\
&\Psi^{(n+1)}(0,x)=0,
\end{aligned}\right.
\end{equation*}
where the nonlinearities $G^{(n)}$ have the form
\begin{align*}
G^{(n)}=&-\GG^{(n)}\d^2_{\al\be}\psi^{(n)}-2i(\AA^{(n)\al}-\frac{1}{2}\VV^{(n)\al})\d_{\al}\psi^{(n)},
\end{align*}

By \eqref{t:ell-time-dep_delta} we obtain 
\begin{equation*}
    \lV \dSS^{(n)}\rV_{\bEE^{s-1}}\lesssim \lV \Psi^{(n)}\rV_{l^2\bX^{s-1}}.
\end{equation*}
Applying \eqref{well-posed-s-1} with $\si=s-1$ for the $\Psi^{(n+1)}$ equation we have
\begin{align*}
\lV \Psi^{(n+1)}\rV_{l^2\bX^{s-1}}\lesssim & \lV F^{(n)}-F^{(n-1)}\rV_{l^2N^{s-1}\cap L^2H^{s-3}}+[\lV\GG^{(n)}\rV_{\bY^{s+1}}+\lV(\VV^{(n)},\AA^{(n)})\rV_{\bZ^{1,s}}]\lV\psi^{(n)}\rV_{l^2X^{s}}.
\end{align*}
Then by (\ref{qdt_d h<k dpsi}), (\ref{cb-B}), (\ref{cb-AA}) and (\ref{cb_lam2-psi}) with $\si=s-1$ we bound the right hand side above by
\begin{align*}
\lV \Psi^{(n+1)}\rV_{l^2\bX^{s-1}}\lesssim & C\lV\psi_0\rV_{H^s}\lV (\Psi^{(n)},\dSS^{(n)})\rV_{l^2\bX^{s-1}\times \bEE^{s-1}}\ll \lV \Psi^{(n)}\rV_{l^2\bX^{s-1}}.
\end{align*}
This implies that our iterations $\psi^{(n)}$ converge in $l^2\bX^{s-1}$ to some function $\psi$.
Furthermore, by the uniform bound (\ref{Unif-bound}) it follows that
\begin{equation}\label{Energy-bound}
\lV\psi\rV_{l^2\bX^s}\lesssim \lV\psi_0\rV_{H^s}.
\end{equation}
Interpolating, it follows that $\psi^{(n)}$ converges to $\psi$ in $l^2\bX^{s-\epsilon}$ 
for all $\epsilon > 0$. This allows us to conclude that the auxiliary functions 
$\SS^{(n)}$ associated to $\psi^{(n)}$ converge to the functions $\SS$ associated to $\psi$,
and also to pass to the limit and conclude that $\psi$ solves the (SMCF) equation \eqref{mdf-Shr-sys-2}. Thus we have established the existence part of our main theorem.

\subsection{Uniqueness via weak Lipschitz dependence.}
Consider the difference of two solutions 
\[
(\Psi,\dSS)=(\psi^{(1)}-\psi^{(2)},\SS^{(1)}-\SS^{(2)}).
\]
The $\Psi$ solves an equation of this form
\begin{equation*}              
\left\{\begin{aligned}
&i\d_t \Psi+\d_{\al}g^{(1)\al\be}\d_{\be}\Psi+2i(A^{(1)\al}-\frac{1}{2}V^{(1)\al})\d_{\al}\Psi=F^{(1)}-F^{(2)}+G,\\
&\Psi(0,x)=\psi^{(1)}_0(x)-\psi^{(2)}_0(x),
\end{aligned}\right.
\end{equation*}
where the nonlinearity $G$ is 
\begin{align*}
G=&-\d_{\al}(\GG\d_{\be}\psi^{(2)})-2i(\AA^{\al}-\frac{1}{2}\VV^{\al})\d_{\al}\psi^{(2)}.
\end{align*}

By \eqref{t:ell-time-dep_delta}, we have
\begin{equation*}
    \lV \dSS\rV_{\bEE^{s-1}}\lesssim \lV \Psi\rV_{l^2\bX^{s-1}}.
\end{equation*}
Applying (\ref{well-posed-s-1}) with $\si=s-1$ to the $\Psi$ equation, we obtain the estimate
\begin{align*}
\lV \Psi\rV_{l^2\bX^{s-1}}\lesssim &\ \lV\Psi_0\rV_{H^{s-1}}+\lV F^{(1)}-F^{(2)}\rV_{l^2N^{s-1}\cap L^2H^{s-3}}+(\lV\GG\rV_{\bY^{s+1}}+\lV(\VV,\AA)\rV_{\bZ^{1,s}})\lV\psi^{(2)}\rV_{l^2X^{s}}\\
\lesssim &\  \lV\Psi_0\rV_{H^{s-1}}+C\lV(\psi^{(1)}_0,\psi^{(2)}_0)\rV_{H^{s}}\lV (\Psi,\dSS)\rV_{l^2\bX^{s-1}\times\bEE^{s-1}}.
\end{align*}
Then, by \eqref{t:ell-time-dep_delta}, we further have
\begin{align*}
\lV \Psi\rV_{l^2\bX^{s-1}}\lesssim \lV\Psi_0\rV_{H^{s-1}}+C\lV(\psi^{(1)}_0,\psi^{(2)}_0)\rV_{H^{s}}\lV\Psi\rV_{l^2\bX^{s-1}}
\end{align*}
Since the initial data $\psi^{(1)}_0$ and $\psi^{(2)}_0$ are sufficiently small, we obtain 
\begin{equation}      \label{Uniq-bound}
\lV \Psi\rV_{l^2\bX^{s-1}}\lesssim \lV\Psi_0\rV_{H^{s-1}}.
\end{equation}
This gives the weak Lipschitz dependence, as well as the uniqueness of solutions for \eqref{mdf-Shr-sys-2}.

\subsection{Frequency envelope bounds}\label{Freq-envelope-section}
Here we prove a stronger frequency envelope version of estimate (\ref{Energy-bound}).

\begin{prop}\label{Freq-envelope-bounds}
	Let $\psi\in l^2\bX^s$ be a small data solution to (\ref{mdf-Shr-sys-2}), which satisfies (\ref{Energy-bound}). Let $\{p_{0k}\}$ be an admissible frequency envelope for the initial data $\psi_0\in H^s$. Then $\{p_{0k}\}$ is also frequency envelope for $\psi$ in $l^2\bX^s$.
\end{prop}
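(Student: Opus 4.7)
The plan is to run the paradifferential/linear analysis from Section~\ref{Sec-LED} one frequency at a time, feeding in the frequency envelope bounds for the elliptic variables provided by Theorem~\ref{t:ell-time-dep}. Let $\{p_k\}$ denote the smallest admissible (slowly varying) frequency envelope for $\psi$ in $l^2\bX^s$, so that automatically $\|S_k\psi\|_{l^2\bX^s}\le p_k$ and, by \eqref{Energy-bound}, $\|p_k\|_{\ell^2}\lesssim \|\psi_0\|_{H^s}\ll 1$. Our goal is to prove
\[
p_k \;\lesssim\; p_{0k} + \epsilon\, p_k,
\]
with $\epsilon = C\|\psi_0\|_{H^s}\ll 1$, which then yields $p_k\lesssim p_{0k}$ and proves the proposition.

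The first step is to localize: apply $S_k$ to \eqref{mdf-Shr-sys-2}, rewritten in the paradifferential form used in the proof of Proposition~\ref{p:well-posed}, so that $\psi_k=S_k\psi$ solves
\[
i\partial_t \psi_k + \partial_\al g^{\al\be}_{<k-4}\partial_\be \psi_k + 2iA^{\al}_{<k-4}\partial_\al \psi_k \;=\; F_k + H_k,
\]
where $F_k$ collects the localized genuine nonlinearities (the $B\psi$, $A^2\psi$, $V\cdot A\psi$, $\lambda^3$ and high-frequency $(h,A,V)$-with-$\nabla\psi$ terms from the right-hand side of \eqref{mdf-Shr-sys-2}), and $H_k$ collects the paradifferential commutator errors exactly as in the proof of Proposition~\ref{p:well-posed}. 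The linear estimate \eqref{well-posed-s00} applied to this equation gives
\[
p_k \;\lesssim\; p_{0k} + \|F_k\|_{l^2_k N_k\cap L^2 H^{s-2}} + \|H_k\|_{l^2_k N_k\cap L^2 H^{s-2}}.
\]

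The second step is to bound the source $F_k+H_k$ by $\epsilon p_k$ using the multilinear machinery. By the frequency envelope version \eqref{t:ell-time-dep_freq-envelope} of Theorem~\ref{t:ell-time-dep}, applied to the admissible envelope $p_k$ for $\psi$ in $l^2\bX^s$, the elliptic variables satisfy
\[
\|S_k\SS\|_{\bEE^s}\lesssim p_k, \qquad \SS = (\la,h,V,A,B).
\]
Now invoke the bilinear/trilinear estimates of Proposition~\ref{Non-Est} together with the commutator bounds of Proposition~\ref{Comm-est-Lemma}, in each case in the frequency envelope form (this is exactly how those propositions are stated): the high-low and high-high pieces contribute factors of $s_k$, where $s_k$ is the envelope for the elliptic variables in $\bEE^s$; the low-high pieces absorbed into the paradifferential operator produce commutators bounded by \eqref{Comm-bd}, \eqref{Com_Ag-dpsi}; and finally the $L^2H^{s-2}$ component of the time derivative is controlled directly via the nonlinear form of \eqref{mdf-Shr-sys-2}. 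Since $\|\SS\|_{\bEE^s}\lesssim \|\psi_0\|_{H^s}\ll 1$ by Theorem~\ref{t:ell-time-dep} and \eqref{Energy-bound}, each factor of $\|\SS\|_{\bEE^s}$ or $\|\psi\|_{l^2\bX^s}$ that is \emph{not} localized to frequency $2^k$ produces the small constant $\epsilon$, while the localized factor gives $p_k$ (via $s_k\lesssim p_k$). This yields the desired
\[
\|F_k\|_{l^2_k N_k\cap L^2 H^{s-2}} + \|H_k\|_{l^2_k N_k\cap L^2 H^{s-2}} \;\lesssim\; \epsilon\, p_k.
\]

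The main obstacle is purely bookkeeping rather than analytical: one must verify that all the multilinear and commutator estimates of Propositions~\ref{Non-Est} and \ref{Comm-est-Lemma}, together with the elliptic frequency envelope bound \eqref{t:ell-time-dep_freq-envelope}, compose in a way that produces a slowly varying envelope on the right-hand side, so that $p_k$ (rather than an arbitrary $\ell^2$ sequence) appears as the final bound. This is where the slowly varying property of admissible envelopes is essential: the $2^{-\delta|j-k|}$ off-diagonal decay inherent in \eqref{Freq-envelope} survives each of the Littlewood-Paley trichotomies in the proofs of Propositions~\ref{Non-Est} and \ref{Comm-est-Lemma} (precisely as stated there, with $s_k$ and $p_k$ as outputs), and this is what allows us to replace the $\ell^2$ summed estimate of the prior subsections by the frequency-localized one. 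Once the envelope bound $p_k\lesssim p_{0k}+\epsilon p_k$ is established, absorbing the small term on the right completes the proof.
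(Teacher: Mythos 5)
Your proof follows essentially the same route as the paper's: apply $S_k$ to the equation in paradifferential form, invoke the linear local energy bounds (Propositions~\ref{p:well-posed}/\ref{well-posedness-lemma}), use the frequency envelope bound \eqref{t:ell-time-dep_freq-envelope} for the elliptic variables, bound the source terms via Propositions~\ref{Non-Est} and \ref{Comm-est-Lemma} and Lemma~\ref{bilinear-est} to obtain $p_k\lesssim p_{0k}+\epsilon p_k$, and absorb using smallness. The only differences are notational (your $H_k$ is the paper's $J_k$, your ``smallest admissible envelope'' is the paper's explicit construction \eqref{Freq-envelope}).
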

\begin{proof}
	Let $p_k$ and $s_k$ be the admissible frequency envelopes for solution $(\psi,\SS)\in l^2\bX^s\times \bEE^s$. Applying $S_k$ to the modified Schr\"{o}dinger equation in (\ref{mdf-Shr-sys-2}), we obtain the paradifferential equation
	\begin{equation*}         
	\left\{\begin{aligned}
	&(i\d_t+\d_{\al}g_{<k-4}^{\al\be}\d_{\be})\psi_k+2i(A-\frac{1}{2}V)_{<k-4}^{\al}\d_{\al}\psi_k=F_k+J_k,\\
	&\psi(0,x)=\psi_0(x),
	\end{aligned}\right.
	\end{equation*}
	where 
	\begin{align*}
	J_k=&-S_k\d_{\al}g^{\al\be}_{\geq k-4}\d_{\be}\psi-[S_k,\d_{\al}g^{\al\be}_{< k-4}\d_{\be}]\psi\\
	&-2i[S_k,(A-\frac{1}{2}V)^{\al}_{<k-4}]\d_{\al}\psi
	-2iS_k[ (A-\frac{1}{2}V)^{\al}_{\geq k-4}\d_{\al}\psi_k],
	\end{align*}
	and $\SS=(\la,h,V,A,B)$ is the solution to the elliptic system \eqref{ell-syst}.
	We estimate $\psi_k=S_k\psi$ using Proposition \ref{well-posedness-lemma}. By Proposition \ref{Non-Est}, Lemma \ref{bilinear-est} and Lemma \ref{Comm-est-Lemma} we obtain
	\begin{equation*}    \label{freq-envelope-psi}
	    \begin{aligned}
	\lV \psi_k\rV_{l^2\bX^s}\lesssim& \ p_{0k}+(p_k+s_k)
	\lV\psi \rV_{l^2\bX^s}.
	\end{aligned}
	\end{equation*}
	Then by \eqref{t:ell-time-dep_freq-envelope}, the definition of frequency envelope \eqref{Freq-envelope}  and \eqref{Energy-bound}, this implies
	\begin{align*}
	    p_k\lesssim p_{0k}+p_k\lV\psi \rV_{l^2\bX^s}.
	\end{align*}
	By the smallness of $\psi\in l^2\bX^s$, this further gives $p_k\lesssim p_{0k}$, and concludes the proof.
\end{proof}

\subsection{Continuous dependence on the initial data}
Here we show that the map $\psi_0\rightarrow(\psi,\SS)$ is continuous from $H^s$ into $l^2\bX^s\times\bEE^s$. By \eqref{t:ell-time-dep_delta}, it suffices to prove $\psi_0\rightarrow \psi$ is continuous from $H^s$ to $l^2\bX^s$.

Suppose that $\psi_0^{(n)}\rightarrow \psi_0$ in $H^s$. Denote by $p_{0k}^{(n)}$, respectively $p_{0k}$ the frequency envelopes associated to $\psi_0^{(n)}$, respectively $\psi_0$, given by (\ref{Freq-envelope}). If $\psi_0^{(n)}\rightarrow\psi_0$ in $H^s$ then $p_{0k}^{(n)}\rightarrow p_{0k}$ in $l^2$. Then for each $\ep>0$ we can find some $N_{\ep}$ so that
\[
\lV p_{0,>N_{\ep}}^{(n)}\rV_{l^2}\leq \ep,\ \text{for all }n.
\]
By Proposition \ref{Freq-envelope-bounds} we obtain that
\begin{equation}  \label{high-freq-small}
\lV \psi_{>N_{\ep}}^{(n)}\rV_{l^2\bX^s}\leq \ep,\ \text{for all }n.
\end{equation}
To compare $\psi^{(n)}$ with $\psi$ we use (\ref{Uniq-bound}) for low frequencies and (\ref{high-freq-small}) for the high frequencies,
\begin{align*}
\lV \psi^{(n)}-\psi\rV_{l^2\bX^s}\lesssim& \lV S_{<N_{\ep}}(\psi^{(n)}-\psi)\rV_{l^2\bX^s}+\lV S_{>N_{\ep}}\psi^{(n)}\rV_{l^2X^s}+\lV S_{>N_{\ep}}\psi\rV_{l^2X^s}\\
\lesssim & 2^{N_{\ep}}\lV S_{<N_{\ep}}(\psi^{(n)}-\psi)\rV_{l^2\bX^{s-1}}+2\ep\\
\lesssim &2^{N_{\ep}}\lV S_{<N_{\ep}}(\psi^{(n)}_0-\psi_0)\rV_{H^{s-1}}+2\ep.
\end{align*}	
Letting $n\rightarrow\infty$ we obtain
\[
\limsup_{n\rightarrow\infty}\lV \psi^{(n)}-\psi\rV_{l^2\bX^s}\lesssim \ep.
\]	
Letting $\ep\rightarrow 0$ we obtain
\begin{equation*}      \label{Con-depen-psi}
\lim_{n\rightarrow 0}\lV \psi^{(n)}-\psi\rV_{l^2\bX^s}=0,
\end{equation*}
which completes the desired result.

\subsection{Higher regularity}
Here we prove that the solution $(\psi,\SS)$ satisfies the bound
\begin{equation*}
\lV (\psi,\SS)\rV_{l^2\bX^{\si}\times \bEE^{\si}}\lesssim \lV\psi_0\rV_{H^{\si}},\quad\si\geq s,
\end{equation*}
whenever the right hand side is finite.

Differentiating the original Schr\"odinger equation \eqref{mdf-Shr-sys-2} yields
\begin{equation*}           \label{Diff-Shr-sys}
(i\d_t+\d_{\al}g^{\al\be}\d_{\be})\nab\psi+2i(A-\frac{V}{2})^{\al}\d_{\al}\nab\psi=-\d_{\al}(\nab g^{\al\be}\d_{\be}\psi)-2i\nab A^{\al}\d_{\al}\psi+\nab F,
\end{equation*}
where $F$ is defined as in (\ref{Nonlinearities-iteration}) without superscript $(n)$. Using Proposition \ref{well-posedness-lemma}(b) we obtain
\begin{align*}
\lV \nab\psi\rV_{l^2\bX^s}\lesssim \lV \nab\psi_0\rV_{H^s}+\lV (\nab\psi,\nab\SS)\rV_{l^2\bX^s\times\bEE^s}\lV (\psi,\SS)\rV_{l^2\bX^s\times\bEE^s}(1+\lV (\psi,\SS)\rV_{l^2\bX^s\times\bEE^s})^N.
\end{align*}
For elliptic equations, by \eqref{t:ell-time-dep_delta} we obtain
\begin{align*}
\lV \nab \SS\rV_{\bEE^s}\lesssim  \lV \nab\psi\rV_{l^2\bX^s}.
\end{align*}
Hence, by \eqref{Energy-bound}, these imply
\begin{align*}
\lV (\nab\psi,\nab\SS)\rV_{l^2\bX^{s}\times \bEE^s}\lesssim  \lV \nab\psi_0\rV_{H^{s}}.
\end{align*}
Inductively, we can obtain the system for $(\nab^n\psi,\nab^n\SS)$. This leads to 
\begin{align*}
\lV (\nab^n\psi,\nab^n\SS)\rV_{l^2\bX^s\times\bEE^s}\lesssim  \lV \psi_0\rV_{H^{s+n}}+\|\psi\|_{l^2\bX^{s+n-1}}^N,
\end{align*}
which shows that
\begin{align*}
\lV (\psi,\SS)\rV_{l^2\bX^{s+n}\times\bEE^{s+n}}\lesssim  \lV \psi_0\rV_{H^{s+n}}+\|\psi\|_{l^2\bX^{s+n-1}}^N.
\end{align*}

\section{The reconstruction of the flow}

In this last section we close the circle of ideas in this paper, and prove that 
one can start from the good gauge solution given by Theorem~\ref{LWP-MSS-thm}, and 
reconstruct the flow at the level of $d$-dimensional embedded submanifolds.
We do this in several steps:

\bigskip

\subsection{ The starting point} Our evolution begins at time $t=0$, where we need to represent
the initial submanifold as parametrized with global harmonic coordinates, represented via the map 
$F: \R^d \to \R^{d+2}$. This is the goal of this 
subsection, which is carried out in Proposition \ref{Global-harmonic}.

Once this is done, we have the frame $F_\alpha$ in the tangent space and the frame $m$ in the normal bundle. In turn, as described in Section~\ref{gauge}, these generate the metric $g$, the second fundamental form $\lambda$ with trace $\psi$ and the connection $A$, all at the initial time $t=0$.

Moving forward in time, Theorem~\ref{LWP-MSS-thm} provides us with the time evolution 
of $\psi$ via the Sch\"odinger flow \eqref{mdf-Shr-sys-2}, as well as the 
functions $(\lambda, g, V, A,B)$ satisfying the elliptic system \eqref{ell-syst}
together with the constraints  \eqref{csmc}, \eqref{R-la}, \eqref{lambda-sim}, \eqref{cpt-AiAj-2}, \eqref{Coulomb} and \eqref{hm-coord}.  The objective of the rest of this section is then to use these functions in order  to reconstruct the map $F$ which describes the manifold $F$ at later times.

We now return to the question of constructing the 
harmonic coordinates at the initial time.
In order to state the following proposition, we define some notations. Let $F:\R^d_x \rightarrow(\R^{d+2},g_0)$ be an immersion with induced metric $g(x)$. For any change of coordinate $y=x+\phi(x)$, we denote 
\begin{equation*}
    \tilde{F}(y)=F(x(y)),
\end{equation*}
and its induced metric $\tilde g_{\al\be}(y)=\<\d_{y_\al}\tF,\d_{y_\be}\tF\>$. We also denote its Christoffel symbol as $\tilde \Ga$ and $\tilde h(y)=\tilde g(y)-I_d$.

\begin{prop}[Global harmonic coordinates]\label{Global-harmonic}
Let $d\geq 3$, $s>\frac{d}{2}$, and 
\[
F:(\R^d_x,g)\rightarrow (\R^{d+2},g_0)
\]
be an immersion with induced metric $g=I_d+h$. Assume that $\nab h(x)$ is small in $H^{s}(dx)$. Then there exists a unique change of coordinates 
$y=x+\phi(x)$ with $\lim_{x\rightarrow\infty}\phi(x)=0$ and $\nab\phi$ uniformly small,
such that the new coordinates $\{y_1,\cdots,y_d\}$ are global harmonic coordinates, namely,
\begin{equation*}
    \tilde g^{\al\be}(y)\tilde{\Ga}_{\al\be}^\ga(y)=0,\quad \text{for any }y\in\R^d.
\end{equation*}
Moreover,
\begin{equation}   \label{bound-phi}
   \|\nab^{2}\phi(x)\|_{H^{s}(dx)}\lesssim \|\nab h(x)\|_{H^{s}(dx)},
\end{equation}
and, in the new coordinates $\{y_1,\cdots,y_d\}$,
\begin{equation}   \label{bound-tildeg}
    \| \d_y \tilde h\|_{H^{s}(dy)}\lesssim \|\d_x h\|_{H^s(dx)}.
\end{equation}
\end{prop}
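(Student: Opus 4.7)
\smallskip

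The plan is to recast the harmonic coordinate condition as an elliptic fixed point problem for $\phi$. Writing $y_\gamma = x_\gamma + \phi^\gamma(x)$, the condition $\tilde g^{\alpha\beta}\tilde\Gamma^\gamma_{\alpha\beta} = 0$ is equivalent to $\Delta_g y^\gamma = 0$ in the original $x$-coordinates, which unpacks to
\[
\Delta \phi^\gamma \;=\; -\,h^{\alpha\beta}\partial^2_{\alpha\beta}\phi^\gamma \;+\; g^{\alpha\beta}\Gamma^\delta_{\alpha\beta}\partial_\delta \phi^\gamma \;+\; g^{\alpha\beta}\Gamma^\gamma_{\alpha\beta},
\]
where $g = I_d + h$ and $\Gamma$ are the Christoffel symbols of $g$ in $x$-coordinates, which are linear in $\partial h$. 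The final source term is thus of the form $\partial h$ plus quadratic-and-higher corrections in $h$, and the $\phi$-dependent terms on the right have a small factor $h$ or $\partial h$ in front of $\nabla^2\phi$ or $\nabla\phi$.

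First I would set up a contraction argument in the space $X = \{\phi : \nabla^2 \phi \in H^{s}(\R^d),\ \phi(x)\to 0 \text{ as } |x|\to\infty\}$, equipped with the norm $\|\nabla^2\phi\|_{H^s}$. Define $T: X \to X$ by letting $T\phi$ be the unique solution vanishing at infinity of $\Delta(T\phi) = (\text{RHS above, with } \phi)$. To invert the Laplacian in the class of functions decaying at infinity, I will use $d \geq 3$: the source term $\partial h$ lies in $H^{s}\cap \dot H^{-1}$ (since $\nabla h\in H^s\hookrightarrow L^2$, and $h\in L^{2d/(d-2)}$ by Sobolev embedding), so $\Delta^{-1}(\partial h)$ makes sense via convolution with the Newtonian potential, yields a function vanishing at infinity, and satisfies $\|\nabla^2 \Delta^{-1}(\partial h)\|_{H^s}\lesssim \|\nabla h\|_{H^s}$. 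The nonlinear terms $h\,\nabla^2\phi$, $\partial h\,\nabla\phi$ are estimated by Moser-type bilinear estimates in $H^s$ using $s>d/2$, with small factors $\|\nabla h\|_{H^s}$; this yields boundedness $\|T\phi\|_X \lesssim \|\nabla h\|_{H^s}(1+\|\phi\|_X)(1+\|\phi\|_X)^N$, and contraction on a small ball once $\|\nabla h\|_{H^s}$ is sufficiently small. A fixed point gives a small solution $\phi\in X$ with the bound \eqref{bound-phi}. Uniqueness within this class follows both from the contraction and from the Liouville-type principle that harmonic functions vanishing at infinity on $\R^d$ are zero.

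The main obstacle will be the low-frequency/decay analysis required to legitimately invert the Laplacian so that $\phi \to 0$ at infinity: unlike the higher-frequency estimate, which is immediate from $|D|^{-2}\partial: H^s\to H^{s+1}$, the low-frequency decay of $\phi$ must be established via a Riesz-potential representation and Hardy–Littlewood–Sobolev, and it is here that the assumption $d\geq 3$ is essential. One should verify that all of the source terms, including the nonlinear ones (which, after the first iteration, have extra smallness/decay), lie in suitable $L^p$ spaces with $1<p<d/2$ so that $\Delta^{-1}$ is a bona fide operator and produces a decaying $\phi$. With this in hand, the change-of-variable map $y = x + \phi(x)$ is a global diffeomorphism (since $\nabla \phi$ is uniformly small), and any ambiguity in the choice of coordinates is fixed by the requirement $\phi(\infty)=0$.

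Finally, for the bound \eqref{bound-tildeg} on $\tilde h$, I would use the change-of-variables formula $\tilde g_{\alpha\beta}(y) = \partial_{y_\alpha}x^\mu\,\partial_{y_\beta}x^\nu\, g_{\mu\nu}(x(y))$ with $\partial x/\partial y = (I+\partial\phi)^{-1}$. Expanding, one gets schematically $\tilde h = h(x(y)) - \nabla\phi - (\nabla\phi)^T + \text{(higher order in } \nabla\phi \text{ and } h)$, and differentiating once in $y$ produces an expression that is estimated in $H^s$ by $\|\nabla h\|_{H^s} + \|\nabla^2 \phi\|_{H^s} + \text{quadratic corrections}$, which combined with \eqref{bound-phi} and the standard composition estimate $\|h\circ(\mathrm{Id}+\phi)\|_{H^s}\lesssim \|h\|_{H^s}$ (valid since $\nabla\phi$ is uniformly small) yields the claimed bound.
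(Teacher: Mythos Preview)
Your approach is correct and follows the same overall architecture as the paper---set up an elliptic equation for $\phi$, solve it by a contraction argument in the space $\{\nabla^2\phi\in H^s\}$, then transfer the metric bound via a composition estimate---but your derivation of the $\phi$-equation is substantially cleaner. The paper computes the transformed Christoffel symbols $\tilde\Gamma^\gamma_{\alpha\beta}(y)$ by brute force from the tensor transformation law for $\tilde g_{\alpha\beta}$, obtaining a genuinely nonlinear equation $\Delta\phi_\gamma = \mathbf{Non}_\gamma(g,\phi)$ whose right-hand side involves auxiliary quantities $\mathcal P,\mathcal C,\mathcal K$ built from $(I+\partial\phi)^{-1}$; the contraction then requires a separate lemma estimating all of these. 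You instead exploit the coordinate invariance of the scalar Laplace--Beltrami operator to rewrite $\tilde g^{\alpha\beta}\tilde\Gamma^\gamma_{\alpha\beta}=0$ as $\Delta_g y^\gamma = 0$ in the original $x$-coordinates, which is \emph{linear} in $\phi$ and immediately yields a perturbative equation with only the terms $h\nabla^2\phi$, $\Gamma\nabla\phi$, $g\Gamma$ on the right. This buys you a much shorter contraction step and avoids the bookkeeping of the $\mathcal K$-terms entirely; the paper's route, on the other hand, makes the dependence of $\tilde g$ on $\phi$ completely explicit, which it then reuses when proving the $\tilde h$ bound.

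For Step~3 the paper does exactly what you outline: it first bounds $\|(\partial_y\tilde h)(y(x))\|_{H^s(dx)}$ from the explicit formula for $\tilde g_{\alpha\beta}$, and then proves a change-of-variables lemma (their Lemma on $T(F)(y)=F(x(y))$) showing $\|T(F)\|_{H^\sigma(dy)}\lesssim\|F\|_{H^\sigma(dx)}$ for $0\le\sigma\le[s]+1$, which is precisely the composition estimate you invoke. Your identification of the low-frequency invertibility of $\Delta$ as the place where $d\ge 3$ enters is also on target.
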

\begin{proof}
    \emph{Step 1: Derivation of the $\phi$-equations.} 
    
    We make the following change of coordinates such that the $\{y_1,\cdots,y_d\}$ is a global harmonic coordinate
    \begin{equation*}
    \begin{matrix}
        \R^d &\longrightarrow & \R^d &\longrightarrow &\R^{d+2}\\
        y&\longmapsto & x&\longmapsto & F(x(y))=\tilde{F}(y)
    \end{matrix}
    \end{equation*}
    where $x+\phi(x)=y$ with $\lim_{x\rightarrow\infty}\phi(x)=0$ and $\nab\phi$ small.
    
    To determine the function $\phi$, we perfor a few computations.    For any vector $f=(f_1,\cdots,f_d)$, we denote 
    \begin{equation*}
        \frac{\d f}{\d x}=
        \left( \begin{matrix}
            \frac{\d f_1}{\d x_1}& \cdots &\frac{\d f_1}{\d x_d}\\
            \vdots & \ddots &\vdots\\
            \frac{\d f_d}{\d x_1}& \cdots &\frac{\d f_d}{\d x_d}
        \end{matrix} \right).
    \end{equation*}
    Then we have
    \begin{equation*}
        \frac{\d x}{\d y}+\frac{\d \phi}{\d x}\frac{\d x}{\d y}=I_d.
    \end{equation*}
    This implies that
    \begin{equation*}
        \frac{\d x}{\d y}=I_d-\frac{\d \phi}{\d x}+\CC(x),
    \end{equation*}
    where the matrix $\mathcal C(x)$ is a higher order term which satisfies
    \begin{equation*}
        \mathcal C(x)=(\frac{\d \phi}{\d x})^2-\mathcal C(x) \frac{\d \phi}{\d x},
    \end{equation*}
    or, equivalently,  it is given by 
    \[
    \mathcal C(x)=(\frac{\d \phi}{\d x})^2(I-\frac{\d \phi}{\d x})^{-1}.
    \]
    We denote
    \begin{equation}   \label{exp-P}
        \PP^\mu_\al:= -\frac{\d \phi_\mu}{\d x_\al}+\mathcal C_{\mu\al}(x).
    \end{equation}
    
    Since $\tilde{F}(y)=F(x(y))$, then we have
    \begin{equation}         \label{exp-tilde g_albe}
        \begin{aligned}
        \tilde{g}_{\al\be}(y)=&\<\frac{\d \tilde F}{\d y_\al},\frac{\d \tilde F}{\d y_\be}\>
        =\<\frac{\d  F}{\d x_\mu}\frac{\d  x_\mu}{\d y_\al},\frac{\d  F}{\d x_\nu}\frac{\d x_\nu}{\d y_\be}\>\\
        =& g_{\mu\nu}(x)(\de^\mu_\al-\d_\al \phi_\mu+\mathcal{C}_{\mu\al})(\de^\nu_\be-\d_\be \phi_\nu+\mathcal{C}_{\nu\be})
    \end{aligned}
    \end{equation}
    and 
    \begin{align}  \label{exp-tilde g}
        \tilde{g}^{\al\be}(y)=&g^{\mu\nu}\frac{\d y_\al}{\d x_\mu}\frac{\d y_\be}{\d x_\nu}=g^{\mu\nu}(\de^\al_\mu+\d_\mu \phi_\al)(\de^\be_\nu+\d_\nu \phi_\be).
    \end{align}
    We also have 
    \begin{align}  \label{exp-til dg}
        \frac{\d\tilde{g}_{\al\be}(y)}{\d y_\ga}=\frac{\d\tilde{g}_{\al\be}(y)}{\d x_m}\frac{\d x_m}{\d y_\ga}
        = -g_{\mu\be}\d^2_{\al\ga}\phi_\mu-g_{\al\nu}\d^2_{\be\ga}\phi_\nu+\d_\ga g_{\al\be}+\mathcal K_{\al\be,\ga},
    \end{align}
    where the higher order terms $\mathcal K_{\al\be,\ga}$ are defined as
    \begin{align*}
        \mathcal K_{\al\be,\ga}:=&-g_{\mu\nu}\d^2_{\al\ga}\phi_\mu \mathcal{P}^\nu_\be+g_{\mu\nu}\d_\ga \mathcal{C}_{\mu\al}\frac{\d x_\nu}{\d y_\be}
        -g_{\mu\nu}\PP^\mu_\al\d^2_{\be\ga}\phi_\nu +g_{\mu\nu}\frac{\d x_\mu}{\d y_\al} \d_\ga \mathcal{C}_{\nu\be}\\
        &+\d_\ga g_{\al\nu}\PP^\nu_\be+\d_\ga g_{\mu\nu}\PP^\mu_\al \frac{\d x_\nu}{\d y_\be} 
        +\d_{x_m}[g_{\mu\nu}(x) \frac{\d x_\mu}{\d y_\al}\frac{\d x_\nu}{\d y_\be}]\PP^m_\ga.
    \end{align*}
    The relation $\tilde{g}^{\al\be}\tilde{\Ga}_{\al\be,\ga}=0$ combined with \eqref{exp-tilde g} and \eqref{exp-til dg} implies that
    \begin{align*}
        0=&g^{mn}(\de^\al_m+\d_m \phi_\al)(\de^\be_n+\d_n \phi_\be)\big[-g_{\mu\be}\d^2_{\al\ga}\phi_\mu-g_{\ga\nu}\d^2_{\be\al}\phi_\nu+\d_\al g_{\ga\be}+\mathcal K_{\ga\be,\al}\\
        &+\frac{1}{2} g_{\mu\be}\d^2_{\al\ga}\phi_\mu+\frac{1}{2}g_{\al\nu}\d^2_{\be\ga}\phi_\nu-\frac{1}{2}\d_\ga g_{\al\be}-\frac{1}{2}\mathcal K_{\al\be,\ga}\big].
    \end{align*}
    This gives the elliptic equations of $\phi$,
    \begin{equation}\label{varphi-eq}
        \begin{aligned}
        \De \phi_\ga=&\mathbf{Non}_\ga(g,\phi),
    \end{aligned}
    \end{equation}
    with the boundary condition $\lim_{x\rightarrow\infty}\phi(x)=0$,
    where the nonlinearities $\mathbf{Non}_\ga(g,\phi)$ are given by
    \begin{align*}
        {\bf Non}_\ga(g,\phi):=&-h_{\ga\nu}\De \phi_\nu-h^{\al\be}g_{\ga\nu}\d^2_{\al\be}\phi_\nu+ g^{\al\be}(\Ga_{\al\be,\ga}+\mathcal K_{\ga\be,\al}-\frac{1}{2}\mathcal K_{\al\be,\ga})\\
        &+g^{mn}(\de^\al_m \d_n \phi_\be+\d_m \phi_\al \de^\be_n+\d_m \phi_\al\d_n \phi_\be)\big[-g_{\mu\be}\d^2_{\al\ga}\phi_\mu-g_{\ga\nu}\d^2_{\be\al}\phi_\nu\\
        &+\frac{1}{2} g_{\mu\be}\d^2_{\al\ga}\phi_\mu+\frac{1}{2}g_{\al\nu}\d^2_{\be\ga}\phi_\nu+\Ga_{\al\be,\ga}+\mathcal K_{\ga\be,\al}-\frac{1}{2}\mathcal K_{\al\be,\ga}\big].
    \end{align*}
    
    \medskip
    
    \emph{Step 2: Solve the $\phi$-equations \eqref{varphi-eq}.}
    By the contraction principle, the existence and uniqueness of solution of \eqref{varphi-eq} and the bound \eqref{bound-phi} are obtained by the following Lemma.
    \begin{lemma}    \label{phi-Lipschitz-Lem}
    Let $g$ be as in Proposition~\ref{Global-harmonic}. Then the  map $\phi \to {\bf Non}_\ga(g,\phi)$
    is Lipschitz from 
    \[
    H^{s+2} + \dot H^2 \to H^s
    \]
    with Lipschitz constant $\epsilon$
    for $\| \nabla^2 \phi \|_{H^s} \lesssim \epsilon$.
    \end{lemma}
    \begin{proof}[Proof of Lemma \ref{phi-Lipschitz-Lem}] In order to prove Lemma \ref{phi-Lipschitz-Lem}, 
    we consider the following simplified linearization for 
    ${\bf Non}_\ga(g,\phi)$ as a function of $\phi$:
     \begin{equation}\label{Linear-varphi-eq}
        \begin{aligned}
        \mathcal T(g,\phi,\Phi)=&h(1+h)\nab^2\Phi+g(\nab h+\dK)\\
        &+g(\nab \Phi+\nab \phi\nab \Phi)\big[g\nab^2\phi+\nab h+\mathcal K\big]\\
        &+g(\nab\phi+\nab\phi\nab\phi)\big[g\nab^2\Phi+\dK]
    \end{aligned}
    \end{equation}
    where $\Phi$ is the linearized variable associated to $\phi$, $\mathcal K$ has the form
    \begin{align*}
        \mathcal K:=&g\nab^2\phi \PP+g\nab \CC(1+\PP)+\nab h \PP(1+\PP)
        +\nab[g(1+\PP)^2]\PP,
    \end{align*}
    and $\dK$ is
    \begin{align*}
        \dK:=&g\nab^2\Phi \PP+g\nab^2\phi \delta \PP+g\nab\dC(1+\PP)+g\nab \CC\dP+\nab h \dP(1+\PP)\\
        &+\nab[g\dP(1+\PP)]\PP+\nab[g(1+\PP)^2]\dP.
    \end{align*}
    Here $\CC$ and $\delta \CC$ satisfy
    \begin{gather*}
        \CC=\nab\phi\nab\phi+\CC\nab\phi,\quad 
        \delta \CC=\nab\phi\nab\Phi+\delta \CC \nab\phi+\CC\nab\Phi,
    \end{gather*}
    and $\PP$ and $\delta \PP$ are 
    \begin{equation*}
        \PP=\nab \phi+\CC,\quad \delta \PP=\nab \Phi+\delta \CC.
    \end{equation*}
    
    Then for the equation \eqref{Linear-varphi-eq} we have estimates as follows:
    \begin{lemma}[Elliptic estimates for \eqref{Linear-varphi-eq}]\label{Ellp_estimate-varphi_Lemma}
    Let $d\geq 3$ and $s>d/2$. Assume that $\|\nab h\|_{H^s}\lesssim \ep$ and $\| \nab^2\phi\|_{H^s}\lesssim \ep$, then for the linearized expression \eqref{Linear-varphi-eq} we have the following estimate
    \begin{equation}   \label{estimate-varphi-eq}
        \|\mathcal T(g,\phi,\Phi)\|_{H^s}\lesssim \|\nab h\|_{H^s}+\ep \|\nab^2 \Phi\|_{H^s}.
    \end{equation}
    \end{lemma}
    \begin{proof}[Proof of Lemma \ref{Ellp_estimate-varphi_Lemma}]
    
    First, we bound $\CC$, $\dC$, $\PP$ and $\dP$. By Sobolev embeddings, using also the smallness
    condition $\|\nab^2\phi\|_{H^s}\lesssim \ep$, we have
    \begin{align*}
        \|\nab \CC\|_{H^s}\lesssim \|\nab^2\phi\|_{H^s}^2+\|\nab \CC\|_{H^s}\|\nab^2\phi\|_{H^s}\lesssim \ep^2+\ep \|\nab \CC\|_{H^s},
    \end{align*}
    and
    \begin{align*}
        \|\nab\dC\|_{H^s}\lesssim & \|\nab^2\phi\|_{H^s}\|\nab^2\Phi\|_{H^s}+\|\nab\dC\|_{H^s}\|\nab^2\phi\|_{H^s}+\|\nab \CC\|_{H^s}\|\nab^2\Phi\|_{H^s}\\
        \lesssim & \ep \|\nab^2\Phi\|_{H^s}+\ep\|\nab\dC\|_{H^s}+\|\nab \CC\|_{H^s}\|\nab^2\Phi\|_{H^s}.
    \end{align*}
    These imply
    \begin{equation}   \label{estimate-C}
        \|\nab \CC\|_{H^s}\lesssim  \ep^2,\quad \|\nab\dC\|_{H^s}\lesssim  \ep \|\nab^2\Phi\|_{H^s}.
    \end{equation}
    Similarly we have
    \begin{equation}   \label{estimate-P}
         \|\nab \PP\|_{H^s}\lesssim  \ep,\quad \|\nab\dP\|_{H^s}\lesssim   \|\nab^2\Phi\|_{H^s}.
    \end{equation}
    
    By Sobolev embedding we bound $\dK$ by
    \begin{align*}
        \|\dK\|_{H^s}\lesssim &(1+\|\nab h\|_{H^s})[\|\nab^2 \Phi\|_{H^s}\|\nab \PP\|_{H^s}+\|\nab^2 \phi\|_{H^s}\|\nab \dP\|_{H^s}\\
        &+\|\nab \dC\|_{H^s}(1+\|\nab \PP\|_{H^s})+\|\nab \CC\|_{H^s}\|\nab \dP\|_{H^s}\\
        &+\|\nab h\|_{H^s}\|\nab \dP\|_{H^s}(1+\|\nab \PP\|_{H^s})^2\\
        &+(1+\|\nab h\|_{H^s})\|\nab\dP\|_{H^s}\|\nab \PP\|_{H^s}(1+\|\nab \PP\|_{H^s})].
    \end{align*}
    This combined with \eqref{estimate-C} and \eqref{estimate-P} implies
    \begin{align} \label{estimate-dK}
        \|\dK\|_{H^s}\lesssim \ep \|\nab^2\Phi\|_{H^s}.
    \end{align}
    Similarly, we also have
    \begin{align}   \label{estimate-K}
        \|\mathcal K\|_{H^s}\lesssim \ep^2.
    \end{align}
    
    Now by Sobolev embedding we bound $\mathcal T(g,\phi,\Phi)$ by
    \begin{align*}
        \|\mathcal T\|_{H^s}\lesssim & \|\nab h\|_{H^s}(1+\|\nab h\|_{H^s})(1+\|\nab^2 \Phi\|_{H^s})+(1+\|\nab h\|_{H^s})\|\dK\|_{H^s}\\
        &+(1+\|\nab h\|_{H^s})\|\nab^2 \Phi\|_{H^s}(1+\|\nab^2 \phi\|_{H^s})\\
        &\quad \cdot[(1+\|\nab h\|_{H^s})\|\nab^2 \phi\|_{H^s}+\|\nab h\|_{H^s}+\|\mathcal K\|_{H^s}]\\
        &+(1+\|\nab h\|_{H^s})\|\nab^2 \phi\|_{H^s}(1+\|\nab^2 \phi\|_{H^s})[(1+\|\nab h\|_{H^s})\|\nab^2 \Phi\|_{H^s}+\|\dK\|_{H^s}].
    \end{align*}
    By the assumptions, \eqref{estimate-K} and \eqref{estimate-dK}, this gives
    \begin{align*}
        \|\mathcal T(g,\phi,\Phi)\|_{H^s}\lesssim \|\nab h\|_{H^s}+\ep \|\nab^2\Phi\|_{H^s}.
    \end{align*}
    We conclude the proof of the lemma.
    \end{proof}
    
    We continue to prove Lemma \ref{phi-Lipschitz-Lem}. With small Lipschitz constant $\ep$ for $\|\nab^2\phi\|_{H^s}\lesssim \ep$, by \eqref{estimate-varphi-eq} we have
    \begin{equation*}
        \|\mathbf{Non}_\ga (g,\phi)\|_{H^s}\lesssim \|\nab h\|_{H^s}+\ep^2,
    \end{equation*}
    and 
    \begin{equation*}
        \|\mathbf{Non}_\ga (g,\phi)-\mathbf{Non}_\ga (g,\tilde\phi)\|_{H^s}\lesssim \ep \|\nab^2(\phi-\tilde\phi)\|_{H^s}.
    \end{equation*}
    These give the Lipschitz continuity, completing the proof of Lemma \ref{phi-Lipschitz-Lem}.
    \end{proof}

    \medskip 
    
    \emph{Step 3: Prove the bound \eqref{bound-tildeg}.}
    First we prove the following bound
    \begin{equation}    \label{bound-tildeg-key1}
    \|(\d_y \tih)(y(x))\|_{H^s(dx)}\lesssim \|\d_x h\|_{H^s(dx)}.
    \end{equation}
    By \eqref{exp-tilde g_albe}, it suffices to bound
    \begin{align*}
    \|(1+\PP)\d_x[g(1+\PP)^2]\|_{H^s}\lesssim & \|\d_x[g(1+\PP)^2]\|_{H^s}(1+\|\nab\PP\|_{H^s})\\
    \lesssim & \|\d_x g\|_{H^s}(1+\|\nab\PP\|_{H^s})^3\\
    &+\|\d_x \PP\|_{H^s}(1+\|\d_x h\|_{H^s})(1+\|\nab\PP\|_{H^s})^2\\
    \lesssim & (\|\d_x g\|_{H^s}+\|\d_x \PP\|_{H^s})(1+\ep)^3\lesssim \|\d_x h\|_{H^s}.
    \end{align*}
    This gives the bound \eqref{bound-tildeg-key1}.
    
    In order to complete the proof, we also need the following lemma:
    \begin{lemma}\label{Equ-f(y)&F(x)-Lemma}
    	Let the change of coordinates $x+\phi(x)=y$ be as in Proposition \ref{Global-harmonic}. 
    Define the linear operator $T$ as $T(F)(y)=F(x(y))$	
for any function $F\in L^2(dx)$. Then we have
    	\begin{equation}\label{Equ-f(y)&F(x)}
    	\| T(F)(y)\|_{H^\sigma(dy)}\lesssim \|F(x)\|_{H^\sigma(dx)}, \qquad
    	\sigma \in [0,[s]+1].
    	\end{equation}
    \end{lemma} 
    Given this lemma, the bound \eqref{bound-tildeg} is obtained by \eqref{bound-tildeg-key1} and \eqref{Equ-f(y)&F(x)} with $\sigma = s$, and the proof of Proposition~\ref{Global-harmonic} is concluded. It remains to prove the Lemma.
    
    \begin{proof}[Proof of Lemma \ref{Equ-f(y)&F(x)-Lemma}]
    	Let $k$ be an integer $k\in[0,[s]+1]$, where $[s]$ is the integer part of $s$. By the change of coordinates $x+\phi(x)=y$, we have
    	\begin{align*}
    	\d_y^k T(F)(y)=[\frac{\d x}{\d y}\frac{\d}{\d x}]^k F(x)\approx [(1+\PP)\d_x]^k F(x).
    	\end{align*}
    	It suffices to consider the following forms
    	\begin{align*}
    	\sum_{\substack{
    			1\leq i\leq k-1,\ l+l_1+\cdots+l_{i}= k,\\
    			l\geq 1,\ l_1\geq \cdots \geq l_{i}\geq 1
    	}}\d_x^{l}F\d_x^{l_1}\PP\cdots \d_x^{l_{i}}\PP(1+\PP)^{k-i}.
    	\end{align*}
    	By Sobolev embedding, we bound each terms by
    	\begin{align*}
    	 \|\d_x^{l}F\d_x^{l_1}\PP\cdots \d_x^{l_{i}}\PP(1+\PP)^{k-i}\|_{L^2(dy)}
    	\lesssim &\|\d_x^{l}F\d_x^{l_1}\PP\cdots \d_x^{l_{i}}\PP(1+\PP)^{k-i} \sqrt{\det (I+\d_x \phi)}\|_{L^2(dx)}\\
    	\lesssim &  \|\d_x^{l}F\d_x^{l_1}\PP\cdots \d_x^{l_{i}}\PP\|_{L^2}(1+\|\nab\PP\|_{H^s})^{k-i} \|1+\nab\phi\|_{L^\infty}^d\\
    	\lesssim &\|F\|_{H^k}\|\nab\PP\|_{H^s}^i (1+\|\nab h\|_{H^s})^{k-i} (1+\|\nab h\|_{H^s})^d\\
    	\lesssim & \ep^i\|F\|_{H^k}.
    	\end{align*}
    	Then we have
    	\begin{equation*}
    	\|\d_y^k T(F)(y)\|_{L^2(dy)}\lesssim \sum_{i=0}^{k-1}\ep^i\|F(x)\|_{H^k(dx)}\lesssim \|F(x)\|_{H^k(dx)}.
    	\end{equation*}
    	This implies 
    	\begin{align}     \label{bound-T-k}
    	    \| T(F)(y)\|_{H^k(dy)}\lesssim \|F(x)\|_{H^k(dx)},\quad \text{for any }k\in[0,[s]+1].
    	\end{align}
    	Thus the bound \eqref{Equ-f(y)&F(x)} is obtained if $\sigma \in [0,[s]+1]$ is an integer. The similar bound for noninteger $\sigma$ follows by interpolation. 
    	\end{proof}
\end{proof}

\subsection{The time evolution of \texorpdfstring{$(\lambda,g,A)$}{}}
As part of our derivation of the (SMCF) equations \eqref{mdf-Shr-sys-2} for the mean curvature $\psi$ in  the good gauge, coupled with the elliptic system \eqref{ell-syst},
we have seen that the time evolution of $(\lambda,g,A)$ is described by the equations 
\eqref{main-eq-abst}, \eqref{g_dt} and \eqref{Cpt-A&B}.  However, our proof of the well-posedness
result for the Schr\"odinger evolution \eqref{mdf-Shr-sys-2} does not apriori guarantee
that \eqref{main-eq-abst}, \eqref{g_dt} and \eqref{Cpt-A&B}  hold. Here we rectify this 
omission:

\begin{lemma}
Assume that $\psi \in C[0,T;H^s]$ solves the SMCF equation \eqref{mdf-Shr-sys-2}
coupled with the elliptic system \eqref{ell-syst}. Then the relations \eqref{g_dt}, \eqref{main-eq-abst} and \eqref{Cpt-A&B} hold.
\end{lemma}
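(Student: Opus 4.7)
The plan is to reduce the claim to the uniqueness of the linearization of the elliptic system \eqref{ell-syst} established in Theorem~\ref{t:ell-fixed-time}(b). The map $\psi\mapsto \SS(\psi)$ furnished by Theorem~\ref{t:ell-fixed-time}(a) is differentiable (with derivative controlled by parts (b)-(c) of that theorem), so the chain rule gives $\partial_t\SS = D\SS(\psi)[\partial_t\psi]$, where $D\SS(\psi)[\partial_t\psi]$ is characterized as the unique small solution in $\HH^{s-2}$ of the linearization of \eqref{ell-syst} at $(\psi,\SS)$ driven by the source $\partial_t\psi$. Thus it suffices to exhibit a second small solution of the same linearized system whose $\lambda$, $g$ and $A$ components agree with the ones dictated by \eqref{main-eq-abst}, \eqref{g_dt}, \eqref{Cpt-A&B}.

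To construct such a candidate I would take $\dot g_{\alpha\beta}$, $\dot\lambda_\alpha^\sigma + iB\lambda_\alpha^\sigma$ and $\dot A_\alpha$ to be the right-hand sides of \eqref{g_dt}, \eqref{main-eq-abst} and \eqref{Cpt-A&B} respectively, substituting $\partial_t\psi$ throughout from \eqref{mdf-Shr-sys-2}, and then determine $\dot V$, $\dot B$ by solving the corresponding linearized $V$- and $B$-equations of \eqref{ell-syst}. Verifying that $(\dot\lambda,\dot g,\dot V,\dot A,\dot B)$ satisfies the remaining linearized equations --- the linearized div-curl system for $\lambda$, the linearized $g$-equation and the linearized $A$-equation --- reverses the derivations of Section~\ref{gauge}: one applies $\nabla^{A,\alpha}$ to $\dot\lambda$, $\Delta$ to $\dot g$ and $\Delta$ to $\dot A$, and manipulates the output using the constraints \eqref{csmc}, \eqref{R-la}, \eqref{lambda-sim}, \eqref{cpt-AiAj-2}, \eqref{Coulomb}, \eqref{hm-coord} (all guaranteed by Theorem~\ref{t:ell-fixed-time}(a)) until the expression matches the linearization of the corresponding source in \eqref{ell-syst}. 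The uniqueness statement of Theorem~\ref{t:ell-fixed-time}(b) then forces $\partial_t\SS = \dot\SS$, which is precisely the content of \eqref{g_dt}, \eqref{main-eq-abst}, \eqref{Cpt-A&B}.

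The principal obstacle is the computation that the candidate $\dot\SS$ solves the linearized elliptic system. The bookkeeping is intricate because every constraint intervenes simultaneously: the Codazzi identity \eqref{comm-la} is required to commute $\nabla^A$ past the divergence in the $\lambda$-equation, the Ricci-type identity \eqref{cpt-AiAj-2} together with the Coulomb condition \eqref{Coulomb} enter when commuting covariant derivatives in the $A$-equation, the Gauss formula \eqref{R-la} converts genuine Riemann curvature terms into $\lambda$-polynomials in the $g$-equation, and the harmonic coordinates identity \eqref{hm-coord} is used at nearly every step. A slightly cleaner repackaging is to introduce the discrepancies $G_{\alpha\beta} = \partial_t g_{\alpha\beta} - \dot g_{\alpha\beta}$, $L_\alpha^\sigma = \partial_t\lambda_\alpha^\sigma - \dot\lambda_\alpha^\sigma$, $F_\alpha = \partial_t A_\alpha - \dot A_\alpha$, together with the analogous differences in $V$ and $B$, and to show directly that they satisfy a coupled linear homogeneous elliptic system with vanishing source, in the same spirit as the constraint-propagation argument carried out for $C^1,\ldots,C^7$ within the proof of Theorem~\ref{t:ell-fixed-time}(a); uniqueness of the zero solution for this coupled system then yields the conclusion.
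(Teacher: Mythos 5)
Your proposal is correct and, in the reformulation you offer at the end, coincides with the paper's own argument: the paper defines discrepancy tensors $T^1_{\al\be}=\d_t g_{\al\be}-2\Im(\psi\bar\la_{\al\be})-\nab_\al V_\be-\nab_\be V_\al$, $T^{2,\si}_\al=(\d^B_t-V^\ga\nab^A_\ga)\la^\si_\al-i\nab^A_\al\nab^{A,\si}\psi+\cdots$, $T^3_\al=\d_t A_\al-\d_\al B-\cdots$, derives a coupled linear homogeneous div-curl type elliptic system for $(T^1,T^2,T^3)$, and concludes $T^i\equiv 0$ by smallness of the coefficients. Your preferred framing via the chain rule $\d_t\SS=D\SS(\psi)[\d_t\psi]$ and uniqueness of the linearization is a valid alternative packaging, but it carries overhead the paper sidesteps: you must justify the chain rule for a time derivative $\d_t\psi$ that only lives in $L^2H^{s-2}$ (this does work, since Theorem~\ref{t:ell-fixed-time}(b) extends $D\SS$ down to $\sigma=s-2>d/2-3$), and you must introduce and solve for candidates $\dot V,\dot B$ even though the lemma's conclusion is silent about $\d_t V$ and $\d_t B$. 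In the paper's direct $T$-system formulation the $V$- and $B$-equations \eqref{Ellp-X}, \eqref{Ellip-B} are consumed as known identities inside the derivations of the $T^2$- and $T^3$-equations, so no auxiliary unknowns appear. The genuinely hard content — deriving the homogeneous system for the discrepancies using the Bianchi identities, the Codazzi/Gauss/Ricci relations, the Coulomb gauge, the harmonic coordinate condition, and the vanishing constraints $C^1,\ldots,C^7$ — is the same in both formulations, and you have correctly identified it as the crux.
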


\begin{proof}
We recall that, by Theorem~\ref{t:ell-fixed-time},  the solution $\SS= (\la,h,V,A,B)$ in $\mathcal H^s$ for the system \eqref{ell-syst} satisfies the fixed time constraints \eqref{csmc}, \eqref{R-la},  \eqref{la-commu}, \eqref{cpt-AiAj-2}, \eqref{hm-coord} and \eqref{Coulomb}. On the other hand, in terms of the time evolution, at this point we only have the  equation \eqref{mdf-Shr-sys-2} for the mean curvature $\psi$.  We will show that this implies
\eqref{g_dt}, \eqref{main-eq-abst} and \eqref{Cpt-A&B}.

To shorten the notations, we define the tensors
\begin{align*}
    &T^1_{\al\be}=\d_t g_{\al\be}-2\Im(\psi\bar{\la}_{\al\be})-\nab_{\al}V_{\be}-\nab_{\be}V_{\al},\\
    &T^{2,\si}_{\al}=(\d^{B}_t-V^\ga \nab^A_\ga )\la^{\si}_{\al}-i\nab^A_{\al}\nab^{A,\si} \psi+\la^{\ga}_{\al}\Im(\psi\bar{\la}^{\si}_{\ga}) +\la_{\al\ga}\nab^\ga V^\si-\la_\ga^\si \nab_\al V^\ga,\\
    &T^3_\al=\d_t A_{\al}-\d_{\al} B-\Re(\la_{\al}^{\ga}\bar{\d}^A_{\ga}\bar{\psi})+\Im (\la^\ga_\al \bar{\la}_{\ga\si})V^\si.
\end{align*}
We need to show that $T^1=0$, $T^2 = 0$, $T^3 = 0$. To do this, we will show that 
$(T^1,T^2,T^3)$ solve a linear homogeneous coupled elliptic system of the form
\begin{equation*}
\left\{
\begin{aligned}
& \Delta_g T^1 =  \nabla(\Gamma T^1)+ \lambda^2  T^1 + \lambda T^2 ,
\\
& \nabla^{A,\alpha} T_\al^{2,\si} = \la T^3 + \la \nabla T^1 + T^1 \nabla \la,
\\
&\nabla^{A}_{\alpha} T_\be^{2,\si}- \nabla^{A}_{\be} T_\al^{2,\si} = \la T^3 + \la \nabla T^1 + T^1 \nabla \la,
\\
& \nabla^\al T^3_\al = T^1 \nabla A,
\\
& \nabla_\al T^3_\be -\nabla^\be T^3_\al = \la T^2.
\end{aligned}    
\right.
\end{equation*}
Considering this system for  $(T^1,T^2,T^3) \in \dot H^1 \times L^2 \times L^2$,
the smallness condition on the coefficients $(\la,h,V,A,B ) \in \SS$ insures that this system has the unique solution $(T^1,T^2,T^3)=0$. It remains to derive the system for $(T^1,T^2,T^3)$.

\begin{proof}[The equation for $T^1$] This has the form
\begin{equation}   \label{eqn-T1}
\begin{aligned}
         \De_g T^1_{\al\be}=& \  T^1_{\de\be} {\Ric^\de}_\al+T^1_{\de\al} {\Ric^\de}_\be+2T^{1,\mu\nu}R_{\al\mu\be\nu}-\nab_\be(T^{1,\mu\nu}\Ga_{\mu\nu,\al})-\nab_\al(T^{1,\mu\nu}\Ga_{\mu\nu,\be})\\
         &-2\Re(g_{\si\be}T^{2,\si}_\al\bar{\psi}+T^1_{\si\be}\la^\si_\al \psi+\bar{\la}_{\al\be}T^{2,\si}_\si-g_{\si\mu}T^{2,\si}_\al \bar{\la}^\mu_\be-T^1_{\si\mu}\la^\si_\al\bar{\la}^\mu_\be-\bar{\la}_{\al\si}T^{2,\si}_\be).
\end{aligned}
\end{equation}
We start with the first term in $T^1$, and compute the expression $\De_g \d_t g_{\al\be}$. We have

\begin{align*}
    \De_g \d_t g_{\al\be}=&\  g^{\mu\nu}(\d_\mu\nab_\nu \d_t g_{\al\be}-\Ga^\de_{\nu\al}\nab_\nu \d_t g_{\de\be}-\Ga^\de_{\mu\be}\nab_\nu \d_t g_{\al\de})\\
    =&\ [\d_t (g^{\mu\nu}\d_\mu\d_\nu g_{\al\be})-\d_tg^{\mu\nu}\d_\mu\d_\nu g_{\al\be}] +[-g^{\mu\nu}\Ga^\de_{\nu\al}\d_\mu\d_t g_{\de\be}-g^{\mu\nu}\Ga^\de_{\nu\be}\d_\mu\d_t g_{\de\al}\\
    &\ -g^{\mu\nu}\d_\mu\Ga^\de_{\nu\al}\d_t g_{\de\be}-g^{\mu\nu}\d_\mu\Ga^\de_{\nu\be}\d_t g_{\de\al}-g^{\mu\nu}(\Ga^\de_{\mu\al}\nab_\nu \d_t g_{\de\be}+\Ga^\de_{\mu\be}\nab_\nu \d_t g_{\de\al})]\\
    :=&\ I+II.
\end{align*}
We then use covariant derivatives to write $II$ as 
\begin{align*}
    II=&-g^{\mu\nu}\Ga^\de_{\mu\al}(2\nab_\nu\d_t g_{\de\be}+\Ga^\si_{\nu\de}\d_t g_{\si\be}+\Ga^\si_{\nu\be}\d_t g_{\si\de})\\
    &-g^{\mu\nu}\Ga^\de_{\mu\be}(2\nab_\nu\d_t g_{\de\al}+\Ga^\si_{\nu\de}\d_t g_{\si\al}+\Ga^\si_{\nu\al}\d_t g_{\si\de})\\
    &-g^{\mu\nu}\d_\mu\Ga^\de_{\nu\al}\d_t g_{\de\be}-g^{\mu\nu}\d_\mu\Ga^\de_{\nu\be}\d_t g_{\de\al}\\
    =& -2g^{\mu\nu}\Ga^\de_{\mu\al}\nab_\nu\d_t g_{\de\be}-2g^{\mu\nu}\Ga^\de_{\mu\be}\nab_\nu\d_t g_{\de\al}\\
    &-\d_t g_{\de\be} g^{\mu\nu}(\d_\mu\Ga^\de_{\nu\al}+\Ga^\si_{\mu\al}\Ga^\de_{\nu\si})-\d_t g_{\de\al} g^{\mu\nu}(\d_\mu\Ga^\de_{\nu\be}+\Ga^\si_{\mu\be}\Ga^\de_{\nu\si})\\
    &-2\d_t g_{\si\de}g^{\mu\nu}\Ga^\de_{\mu\al}\Ga^\si_{\nu\be}.
\end{align*}
For $I$, by the $g$ equation \eqref{g-eq-original} we have
\begin{align*}
    I=&\ \d_t[-\d_\al g^{\mu\nu}\d_\mu g_{\nu\be}-\d_\be g^{\mu\nu}\d_\mu g_{\nu\al}+\d_\al g^{\mu\nu}\d_\be g_{\mu\nu}]\\
    &+[2\d_t(g^{\mu\nu}\Ga_{\mu\al,\de}\Ga^\de_{\nu\be})-\d_t g^{\mu\nu}\d_\mu\d_\nu g_{\al\be}]-2\d_t \tRic_{\al\be}\\
    :=&\ I_1+I_2+I_3.
\end{align*}
The expression $I_1$ is written as
\begin{align*}
    I_1=& -\d_\al \d_t g^{\mu\nu}\Ga_{\mu\nu,\be}-\d_\be \d_t g^{\mu\nu}\Ga_{\mu\nu,\al}-\frac{1}{2}\d_\be \d_t g^{\mu\nu}\d_\al g_{\mu\nu}+\frac{1}{2}\d_\al\d_t g^{\mu\nu}\d_\be g_{\mu\nu}\\
    &-\d_\al g^{\mu\nu}\d_\mu\d_t g_{\nu\be}-\d_\be g^{\mu\nu}\d_\mu\d_t g_{\nu\al}+\d_\al g^{\mu\nu}\d_\be\d_t g_{\mu\nu}\\
    =&-(\nab_\al\d_t g^{\mu\nu}-2\Ga^\mu_{\al\de}\d_t g^{\de\nu})\Ga_{\mu\nu,\be}-(\nab_\be\d_t g^{\mu\nu}-2\Ga^\mu_{\be\de}\d_t g^{\de\nu})\Ga_{\mu\nu,\al}\\
    &+\frac{1}{2}[\nab_\al(\d_t g^{\mu\nu}\d_\be g_{\mu\nu}-\nab_\be(\d_t g^{\mu\nu}\d_\al g_{\mu\nu})]\\
    &-\d_\al g^{\mu\nu}(\nab_\mu\d_t g_{\nu\be}+\Ga^\de_{\mu\nu}\d_t g_{\de\be})-\d_\be g^{\mu\nu}(\nab_\mu\d_t g_{\nu\al}+\Ga^\de_{\mu\nu}\d_t g_{\de\al}+\Ga^\de_{\mu\al}\d_t g_{\nu\de})\\
    &+\d_\al g^{\mu\nu}(\nab_\be \d_t g_{\mu\nu}+\Ga^\de_{\be\nu}\d_t g_{\mu\de})\\
    =&\ \nab_\al \d_t g^{\mu\nu} (-\Ga_{\mu\nu,\be}+\Ga_{\mu\be,\nu})+\nab_\be\d_t g^{\mu\nu}(-\Ga_{\mu\nu,\al}+\Ga_{\mu\al,\nu})-\nab_\mu\d_t g_{\nu\be}\d_\al g^{\mu\nu}-\nab_\mu\d_t g_{\nu\al}\d_\be g^{\mu\nu}\\
    &+2\d_t g^{\mu\nu}(\Ga^\de_{\al\mu}\Ga_{\de\nu,\be}+\Ga^\de_{\be\mu}\Ga_{\de\nu,\al})+\d_t g^{\mu\nu}(-\Ga^\de_{\al\mu}\d_\be g_{\de\nu}+\Ga^\de_{\be\mu}\d_\al g_{\de\nu})\\
    &+\d_t g^{\mu\nu}(-\d_\be g_{\mu\si} g^{\si\de}\Ga_{\de\al,\nu}+\d_\al g_{\mu\si} g^{\si\de}\Ga_{\de\be,\nu})\\
    &-\d_t g_{\de\be}\d_\al g^{\mu\nu}\Ga^\de_{\mu\nu}-\d_t g_{\de\al}\d_\be g^{\mu\nu}\Ga^\de_{\mu\nu}
\end{align*}
For $I_2$, we first compute 
\begin{align*}
    2g^{\mu\nu}\d_t(\Ga_{\mu\al,\de}\Ga^\de_{\nu\be})=&g^{\mu\nu}\Ga^\de_{\nu\be}(\nab_\mu \d_t g_{\al\de}+\nab_\al \d_t g_{\mu\de}-\nab_\de \d_t g_{\mu\al})+4g^{\mu\nu}\Ga^\de_{\nu\be}\Ga^\si_{\al\mu}\d_t g_{\si\de}\\
    &+g^{\mu\nu}\Ga^\de_{\nu\al}(\nab_\mu \d_t g_{\be\de}+\nab_\be \d_t g_{\mu\de}-\nab_\de \d_t g_{\mu\be})+2\d_t g^{\si\de}g^{\mu\nu}\Ga_{\mu\al,\de}\Ga_{\nu\be,\si}
\end{align*}

By the above computations, we collect the $\nab\d_t g$ terms from $I_1$, $I_2$ and $II$
\begin{align*}
    &\nab_\al \d_t g^{\mu\nu} (-\Ga_{\mu\nu,\be}+\Ga_{\mu\be,\nu})+\nab_\be\d_t g^{\mu\nu}(-\Ga_{\mu\nu,\al}+\Ga_{\mu\al,\nu})-\nab_\mu\d_t g_{\nu\be}\d_\al g^{\mu\nu}-\nab_\mu\d_t g_{\nu\al}\d_\be g^{\mu\nu}\\
    &+g^{\mu\nu}\Ga^\de_{\nu\be}(\nab_\mu \d_t g_{\al\de}+\nab_\al \d_t g_{\mu\de}-\nab_\de \d_t g_{\mu\al})
    +g^{\mu\nu}\Ga^\de_{\nu\al}(\nab_\mu \d_t g_{\be\de}+\nab_\be \d_t g_{\mu\de}-\nab_\de \d_t g_{\mu\be})\\
    &-2g^{\mu\nu}\Ga^\de_{\mu\al}\nab_\nu\d_t g_{\de\be}-2g^{\mu\nu}\Ga^\de_{\mu\be}\nab_\nu\d_t g_{\de\al},
\end{align*}
where the terms containing $\nab\d_t g_{\nu\al}$ and $\nab\d_t g_{\nu\be}$  vanish, i.e.
\begin{align*}
    \nab_\mu\d_t g_{\nu\be}(-\d_\al g^{\mu\nu}-g^{\nu\de}\Ga^\mu_{\de\al}-g^{\de\mu}\Ga^\nu_{\de\al})+\nab_\mu\d_t g_{\nu\al}(-\d_\be g^{\mu\nu}-g^{\nu\de}\Ga^\mu_{\de\be}-g^{\mu\de}\Ga^\nu_{\de\be})=0,
\end{align*}
and the terms with $\nab\d_t g^{\mu\nu}$ were rewritten as
\begin{equation}        \label{dd_t-g_terms}
\begin{aligned}
    &-\nab_{\al}\d_t g^{\mu\nu}\Ga_{\mu\nu,\be}-\nab_{\be}\d_t g^{\mu\nu}\Ga_{\mu\nu,\al}\\
    =&-\nab_{\al}(\d_t g^{\mu\nu}\Ga_{\mu\nu,\be})-\nab_{\be}(\d_t g^{\mu\nu}\Ga_{\mu\nu,\al})+\d_t g^{\mu\nu}(\nab_{\al}\Ga_{\mu\nu,\be}+\nab_\be\Ga_{\mu\nu,\al})
\end{aligned}
\end{equation}

We collect the $\d_t g$ terms from $I$ and $II$ into
\begin{align*}
    &2\d_t g^{\mu\nu}(\Ga^\de_{\al\mu}\Ga_{\de\nu,\be}+\Ga^\de_{\be\mu}\Ga_{\de\nu,\al})
    -\d_t g_{\de\be}\d_\al g^{\mu\nu}\Ga^\de_{\mu\nu}-\d_t g_{\de\al}\d_\be g^{\mu\nu}\Ga^\de_{\mu\nu}\\
    &+\d_t g^{\mu\nu}(2\Ga_{\mu\al,\de}\Ga^\de_{\nu\be}-\d_\mu\d_\nu g_{\al\be})\\
    &-\d_t g_{\de\be} g^{\mu\nu}(\d_\mu\Ga^\de_{\nu\al}+\Ga^\si_{\mu\al}\Ga^\de_{\nu\si})-\d_t g_{\de\al} g^{\mu\nu}(\d_\mu\Ga^\de_{\nu\be}+\Ga^\si_{\mu\be}\Ga^\de_{\nu\si}).
\end{align*}
Adding the $\d_t g$ terms together with the third term in \eqref{dd_t-g_terms} we obtain
\begin{align*}
    &\d_t g^{\mu\nu}(\nab_{\al}\Ga_{\mu\nu,\be}+\nab_\be\Ga_{\mu\nu,\al}+2\Ga^\de_{\al\mu}\Ga_{\de\nu,\be}+2\Ga^\de_{\be\mu}\Ga_{\de\nu,\al}+2\Ga_{\mu\al,\de}\Ga^\de_{\nu\be}-\d_\mu\d_\nu g_{\al\be})\\
    =&\d_t g^{\mu\nu}(\d_{\al}\Ga_{\mu\nu,\be}+\d_\be\Ga_{\mu\nu,\al}-\d_\nu(\Ga_{\be\mu,\al}+\Ga_{\al\mu,\be})+2\Ga_{\mu\al,\de}\Ga^\de_{\nu\be}-2\Ga_{\al\be}^\de \Ga_{\mu\nu,\de})\\
    =& 2\d_t g^{\mu\nu}R_{\al\mu\be\nu}.
\end{align*}
Finally, using the harmonic coordinate condition  $g^{\mu\nu}\Ga^\de_{\mu\nu}=0$, the terms containing  the $\d_t g_{\de\al}$ expression are written as
\begin{align*}
    &-\d_t g_{\de\be}\d_\al g^{\mu\nu}\Ga^\de_{\mu\nu}-\d_t g_{\de\al}\d_\be g^{\mu\nu}\Ga^\de_{\mu\nu}
    -\d_t g_{\de\be} g^{\mu\nu}(\d_\mu\Ga^\de_{\nu\al}+\Ga^\si_{\mu\al}\Ga^\de_{\nu\si})-\d_t g_{\de\al} g^{\mu\nu}(\d_\mu\Ga^\de_{\nu\be}+\Ga^\si_{\mu\be}\Ga^\de_{\nu\si})\\
    & \qquad \qquad = \d_t g_{\de\be} {\Ric^\de}_\al+\d_t g_{\de\al}{\Ric^\de}_\be.
\end{align*}

Hence, the expression $\De_g\d_t g_{\al\be}$ is written as
\begin{equation}     \label{T^1-d_tg_terms}
    \begin{aligned}
    \De_g \d_t g_{\al\be}=&-\nab_{\al}(\d_t g^{\mu\nu}\Ga_{\mu\nu,\be})-\nab_{\be}(\d_t g^{\mu\nu}\Ga_{\mu\nu,\al})+ \d_t g_{\de\be}{\Ric^\de}_\al +\d_t g_{\de\al}{\Ric^\de}_\be\\
    &+2\d_t g^{\mu\nu}R_{\al\mu\be\nu}-2\d_t \tRic_{\al\be}.
\end{aligned}
\end{equation}
For the last term $-2\d_t \tRic_{\al\be}$, using the expression  $T^2$ we have
\begin{align}     \nonumber
    -2\d_t \tRic_{\al\be}=& -2\Re(g_{\si\be}T^{2,\si}_\al\bar{\psi}+T^1_{\si\be}\la^\si_\al \psi+\bar{\la}_{\al\be}T^{2,\si}_\si-g_{\si\mu}T^{2,\si}_\al \bar{\la}^\mu_\be-T^1_{\si\mu}\la^\si_\al\bar{\la}^\mu_\be-\bar{\la}_{\al\si}T^{2,\si}_\be)\\\tag{$I_{31}$}
    &+2\Im (\nab^A_\al\nab^A_\be \psi\bar{\psi}+\nab^A_\si \nab^{A,\si}\psi\bar{\la}_{\al\be}-\nab^A_\al\nab^A_\si \psi\bar{\la}^\si_\be-\nab^A_\be\nab^A_\si \psi\bar{\la}^\si_\al)\\\tag{$I_{32}$}
    &-2\tRic_{\be\ga}\nab_\al V^\ga-2\tRic_{\al\ga}\nab_\be V^\ga-2\nab_\ga \tRic_{\al\be} V^\ga\\\tag{$I_{33}$}
    &-2\Re(\psi\bar{\la}_{\al\ga})\Im(\psi\bar{\la}^\ga_\be)+2\Re(\la_{\al\be}\bar{\la}^{\si\ga})\Im(\psi\bar{\la}_{\si\ga}).
\end{align}

Next, we compute  
\begin{align*}
    III:=&\ -\De_g(2\Im(\psi\bar{\la}_{\al\be})+\nab_\al V_\be+\nab_\be V_\al)\\
    =&\ -2\nab^\si\nab_\si\Im(\psi\bar{\la}_{\al\be})+[\De_g,\nab_\al] V_\be-[\De_g,\nab_\be] V_\al-\nab_\al\De_g V_\be-\nab_\be \De_g V_\al\\
    =&\  -2\nab^\si\nab_\si\Im(\psi\bar{\la}_{\al\be})-\nab_\be \Ric_{\al\ga}V^\ga-\nab_\al \Ric_{\be\ga}V^\ga-2\nab_\ga \Ric_{\al\be} V^\ga\\
    &\ -\Ric_{\al\ga}\nab^\ga V_\be-\Ric_{\be\ga}\nab^\ga V_\al
    +2R_{\al\si\be\de}(\nab^\si V^\de+\nab^\de V^\si) -\nab_\al\De_g V_\be-\nab_\be \De_g V_\al
\end{align*}
Using the $V$-equation \eqref{Ellp-X} we write the last two terms as
\begin{align*}
    &-\nab_\al\De_g V_\be-\nab_\be \De_g V_\al\\
    =&2\nab_\al\nab_\si \Im(\psi\bar{\la}^\si_\be)+2\nab_\be\nab_\si \Im(\psi\bar{\la}^\si_\al)+\nab_\al \Ric_{\si\be} V^\si+\nab_\be\Ric_{\si\al}V^\si\\
    &+ \Ric_{\si\be}\nab_\al V^\si+\Ric_{\si\al}\nab_\be V^\si+\nab_\al(\widetilde{\d_t g^{\mu\nu}}\Ga_{\mu\nu,\be})+\nab_\be(\widetilde{\d_t g^{\mu\nu}}\Ga_{\mu\nu,\al}),
\end{align*}
where $\widetilde{\d_t g^{\mu\nu}}$ denotes the expression
\begin{equation*}
    \widetilde{\d_t g^{\mu\nu}}:=\d_t g^{\mu\nu}-T^{1,\mu\nu}.
\end{equation*}
We then add $I_{31}$ together with $\nab^2\Im (\psi\la)$ in $III$ to get
\begin{align*}
    &I_{31}-2\nab^\si\nab_\si\Im(\psi\bar{\la}_{\al\be})+2\nab_\al\nab_\si \Im(\psi\bar{\la}^\si_\be)+2\nab_\be\nab_\si \Im(\psi\bar{\la}^\si_\al)\\
    =&-2\Ric_{\be\de}\Im (\psi\bar{\la}^\de_\al)+2R_{\al\si\be\de}\Im(\psi\bar{\la}^{\si\de})+2\Re(\la_\al^\si\bar{\psi})\Im(\la_{\si\mu}\bar{\la}^\mu_\be).
\end{align*}
The last term and $I_{33}$ can be further written as
\begin{align*}
    &2\Re(\la_\al^\si\bar{\psi})\Im(\la_{\si\mu}\bar{\la}^\mu_\be)+I_{33}\\
    =&2R_{\al\si\be\de}\Im(\psi\bar{\la}^{\si\de})-2\Ric_{\al\de}\Im(\psi\bar{\la}^\de_\be).
\end{align*}
Hence, given the expressions of $I_{3}$ and $III$, we obtain
\begin{align*}
    &I_3+III\\
    =& -2\Re(g_{\si\be}T^{2,\si}_\al\bar{\psi}+T^1_{\si\be}\la^\si_\al \psi+\bar{\la}_{\al\be}T^{2,\si}_\si-g_{\si\mu}T^{2,\si}_\al \bar{\la}^\mu_\be-T^1_{\si\mu}\la^\si_\al\bar{\la}^\mu_\be-\bar{\la}_{\al\si}T^{2,\si}_\be)\\
    &-{\Ric^\si}_\be \widetilde{\d_t g_{\al\si}}-{\Ric^\si}_\al \widetilde{\d_t g_{\be\si}}-2R_{\al\si\be\de}\widetilde{\d_t g^{\si\de}}+\nab_\al(\widetilde{\d_t g^{\mu\nu}}\Ga_{\mu\nu,\be})+\nab_\be(\widetilde{\d_t g^{\mu\nu}}\Ga_{\mu\nu,\al}),
\end{align*}
which combined with \eqref{T^1-d_tg_terms} yields the $T^1$-equation \eqref{eqn-T1}.
\end{proof}

\begin{proof}[The equation for $T^2$] This has the form
\begin{equation}       \label{eqn-T2}
    \left\{\begin{aligned}
         &\begin{aligned}
         \nab^{A,\al}T^{2,\si}_\al=& ig^{\si\mu}\psi T^3_\mu-i\la^{\be\si}T^3_\be+g^{\si\de}\la^{\al\be} (-\nab_\al T^1_{\be\de}+\frac{1}{2}\nab_\de T^1_{\al\be})\\
         & -T^{1,\al\be}(\nab^A_\be \la^\si_\al+\Ga^\mu_{\al\be} \la^\si_\mu)+T^{1,\si\mu}\nab^A_\mu\psi,
         \end{aligned}\\
         &\begin{aligned}
         \nab^A_\al T^{2,\si}_\be-\nab^A_\be T^{2,\si}_\al=&\frac{1}{2} g^{\si\ga}[-\la^\mu_\be (\nab_\al T^1_{\mu\ga}+\nab_\mu T^1_{\al\ga}-\nab_\ga T^1_{\al\mu})\\
         &+\la^\mu_\al (\nab_\be T^1_{\mu\ga}+\nab_\mu T^1_{\be\ga}-\nab_\ga T^1_{\be\mu})]-iT^3_\al \la^\si_\be+iT^3_\be \la^\si_\al.
         \end{aligned}
         \end{aligned}\right.
\end{equation}

We compute the divergence of $T^2$ in \eqref{eqn-T2} first. Applying $\nab^{A,\al}$ to $T^{2,\si}_\al$, we have 
\begin{align*}
    \nab^{A,\al}T^{2,\si}_\al
    =& [\nab^{A,\al},\d_t^B-V^\ga \nab^A_\ga ]\la^\si_\al+[\d_t^B-V^\ga \nab^A_\ga,\nab^{A,\si}]\psi+\nab^{A,\si}(\d_t^B-V^\ga \nab^A_\ga) \psi\\
    &+\nab^{A,\al}(\la_\al^\ga \Im(\psi \bar{\la}^\si_\ga))-i\nab^{A,\al}\nab^A_\al \nab^{A,\si}\psi\\
    &+\nab^{A}_\ga \psi \nab^\ga V^\si -\nab^{A,\si}\la_{\al\ga}\nab^\al V^\ga+\la^{\al\ga}\nab_\al\nab_\ga V^\si-\la^\si_\ga \De_g V^\ga.
\end{align*}
Three of the terms  on the right-hand side are written as
\begin{align*}
    &[\nab^{A,\al},\d_t^B-V^\ga\nab^A_\ga]\la^\si_\al-\nab^{A,\si}\la_{\al\ga}\nab^\al V^\ga+\la^{\al\ga}\nab_\al\nab_\ga V^\si\\
    =&g^{\al\be}(\nab_\be\d_t \la^\si_\al-\d_t \nab_\be \la^\si_\al)+ig^{\al\be}(\nab_\be B-\d_t A_\be)\la^\si_\al-\d_t g^{\al\be} \nab^A_{\be} \la^\si_\al\\
    &+\la^{\al\ga}\nab_\al\nab_\ga V^\si-2\nab^{A,\si}\la_{\al\ga}\nab^\al V^\ga-V^\ga [\nab^{\al},\nab_\ga]\la^\si_\al-iV_\ga \bmF^{\al\ga}\la^\si_\al\\
    =&-\d_t g^{\al\be} (\nab^{A,\si} \la_{\al\be}+\Ga^\mu_{\al\be} \la^\si_\mu)-\d_t\Ga^\si_{\be\mu}\la^{\be\mu}+\la^{\al\ga}\nab_\al\nab_\ga V^\si\\
    &-i(\d_t A_\be-\nab_\be B)\la^{\be\si}-iV_\ga \bmF^{\al\ga}\la^\si_\al
    -2\nab^{A,\si}\la_{\al\ga}\nab^\al V^\ga-V^\ga [\nab^{\al},\nab_\ga]\la^\si_\al
\end{align*}
We can further use $T^1$ to rewrite the last two terms on the first line above as
\begin{align*}
    &-\d_t \Ga^\si_{\al\be}\la^{\al\be}+\la^{\al\ga}\nab_\al\nab_\ga V^\si\\
    =&-\d_t g^{\si\de}\Ga_{\al\be,\de}\la^{\al\be}-g^{\si\de}\d_t(\d_\al g_{\be\de}-\frac{1}{2}\d_\de g_{\al\be})\la^{\al\be}+\la^{\al\ga}\nab_\al\nab_\ga V^\si\\
    =& g^{\si\de}\la^{\al\be}(\d_t g_{\mu\de}\Ga_{\al\be}^\mu-\d_{\al}\d_t g_{\be\de}+\frac{1}{2}\d_\de \d_t g_{\al\be})+\la^{\al\ga}\nab_\al\nab_\ga V^\si\\
    =& g^{\si\de}\la^{\al\be} (-\nab_\al \d_t g_{\be\de}+\frac{1}{2}\nab_\de \d_t g_{\al\be})+\la^{\al\ga}\nab_\al\nab_\ga V^\si\\
    =& \la_{\mu\nu} (\nab^\mu T^{1,\nu\si}-\frac{1}{2}\nab^\si T^{1,\mu\nu})\\
    &+\la^{\al\be}[-2\nab_\al\Im(\psi\bar{\la}^{\si}_\be)+\nab^\si \Im(\psi \bar{\la}_{\al\be})-[\nab_\al,\nab^{\si}]V_{\be}]
\end{align*}
and the following term as
\begin{align*}
    -i(\d_t A_\be-\nab_\be B)\la^{\be\si}-iV_\ga \bmF^{\al\ga}\la^\si_\al=& -i\la^{\be\si}T^3_\be-i\la^{\be\si}\Re(\la_\be^\ga \overline{\nab^A_\ga \psi}).
\end{align*}
Similarly, we compute the second commutator by
\begin{align*}
    [\d_t^B-V^\ga\nab^A_\ga,\nab^{A,\si}]\psi+\nab^{A}_\ga \psi \nab^\ga V^\si=&\d_t g^{\si\mu} \nab^A_\mu \psi+ig^{\si\mu}\psi T^3_\mu+i\psi\Re(\la^{\si\ga}\overline{\nab^A_\ga\psi})\\
    &+\nab^A_\ga \psi (\nab^\ga V^\si+\nab^\si V^\ga).
\end{align*}
Hence, using $T^{2,\al}_\al$ and the $V$ equation \eqref{Ellp-X} we reorganize the expression of $\nab^{A,\al}T^{2,\si}_\al$ and obtain
\begin{align*}
    \nab^{A,\al}T^{2,\si}_\al=&ig^{\si\mu}\psi T^3_\mu-i\la^{\be\si}T^3_\be+g^{\si\de}\la^{\al\be} (-\nab_\al T^1_{\be\de}+\frac{1}{2}\nab_\de T^1_{\al\be})\\
    &-\d_t g^{\al\be} (\nab^A_\be \la^\si_\al+\Ga^\mu_{\al\be} \la^\si_\mu)
    +\la^{\al\be}[-2\nab_\al\Im(\psi\bar{\la}^{\si}_\be)+\nab^\si \Im(\psi \bar{\la}_{\al\be})]\\
    &-i\la^{\be\si}\Re(\la_\be^\ga \overline{\nab^A_\ga \psi})
    +\d_t g^{\si\mu} \nab^A_\mu \psi+i\psi\Re(\la^{\si\ga}\overline{\nab^A_\ga\psi})\\
    &-\nab^{A,\si}(\la^\ga_\al\Im(\psi\bar{\la}^\al_\ga))
    +\nab^{A,\al}(\la_\al^\ga \Im(\psi \bar{\la}^\si_\ga))\\
    &-i{\Ric^\si}_\de \nab^{A,\de}\psi-\nab_\al \bmF^{\si\al}\psi-2\bmF^{\si\al}\nab^A_\al\psi\\
    &+\nab^{A}_\ga \psi(\nab^\ga V^\si+\nab^\si V^\ga)  -2\nab^{A,\si}\la_{\al\ga}\nab^\al V^\ga\\
    &-2\la^\si_\ga \nab_\al \Im(\la^{\al\ga}\bar{\psi})+\la^\si_\ga \widetilde{\d_t g^{\al\be}}\Ga_{\al\be}^\ga.
\end{align*}
Using  $T^{2,\al}_\al$ and the $V$-equation \eqref{Ellp-X}, we have 
\begin{align*}
    &\la^{\al\be}[-2\nab_\al\Im(\psi\bar{\la}^{\si}_\be)+\nab^\si \Im(\psi \bar{\la}_{\al\be})]-i\la^{\be\si}\Re(\la_\be^\ga \overline{\nab^A_\ga \psi})\\
    &+i\psi\Re(\la^{\si\ga}\overline{\nab^A_\ga\psi})+\nab^{A,\al}(\la_\al^\ga \Im(\psi \bar{\la}^\si_\ga))-i{\Ric^\si}_\de \nab^{A,\de}\psi-\nab_\al \bmF^{\si\al}\psi-2\bmF^{\si\al}\nab^A_\al\psi\\
    &+2\la^\si_\ga \nab^\al \Im(\psi \bar{\la}^\ga_\al)-\nab^{A,\si}(\la^\ga_\al\Im(\psi\bar{\la}^\al_\ga))\\
    =& 2\nab^A_\ga\psi\Im(\psi\bar{\la}^{\si\ga})-2\nab^{A,\si}\la_{\al\be}\Im(\psi\bar{\la}^{\al\be})
\end{align*}
Combining  these two expressions, we obtain
\begin{align*}
    \nab^{A,\al}T^{2,\si}_\al=& ig^{\si\mu}\psi T^3_\mu-i\la^{\be\si}T^3_\be+g^{\si\de}\la^{\al\be} (-\nab_\al T^1_{\be\de}+\frac{1}{2}\nab_\de T^1_{\al\be})\\
    & -T^{1,\al\be}(\nab^A_\be \la^\si_\al+\Ga^\mu_{\al\be} \la^\si_\mu)+T^{1,\si\mu}\nab^A_\mu\psi
    +\nab^{A,\si} T^{2,\al}_\al.
\end{align*}

\medskip

Next we compute the \emph{curl} of $T^2$ in \eqref{eqn-T2}. By $T^2$ we have
\begin{align*}
&\nab^A_\al T^{2,\si}_\be-\nab^A_\be T^{2,\si}_\al\\
=&[\nab^A_\al,\d_t^B-V^\ga\nab^A_\ga]\la^\si_\be-[\nab^A_\be,\d_t^B-V^\ga\nab^A_\ga]\la^\si_\al+\la^\ga_\be \nab_\al\Im(\psi\bar{\la}^\si_\ga)-\la^\ga_\al \nab_\be\Im(\psi\bar{\la}^\si_\ga)\\
&-i[\nab^A_\al,\nab^A_\be]\nab^{A,\si}\psi+\la^\ga_\be \nab_\al\nab_\ga V^\si-\la^\ga_\al \nab_\be\nab_\ga V^\si\\
&-\la^\si_\ga [\nab_\al,\nab_\be]V^\ga-\nab^A_\ga\la^\si_\al \nab_\be V^\ga+\nab^A_\ga\la^\si_\be \nab_\al V^\ga.
\end{align*}
We use $T^1$ to rewrite six of the terms on the right-hand side as 
\begin{align}\nonumber
   &[\nab^A_\al,\d_t^B-V^\ga\nab^A_\ga]\la^\si_\be-[\nab^A_\be,\d_t^B-V^\ga\nab^A_\ga]\la^\si_\al+\la^\ga_\be \nab_\al\nab_\ga V^\si-\la^\ga_\al \nab_\be\nab_\ga V^\si\\\nonumber
   &-\nab^A_\ga\la^\si_\al \nab_\be V^\ga+\nab^A_\ga\la^\si_\be \nab_\al V^\ga\\\nonumber
   =&\frac{1}{2} g^{\si\ga}[-\la^\mu_\be (\nab_\al T^1_{\mu\ga}+\nab_\mu T^1_{\al\ga}-\nab_\ga T^1_{\al\mu})+\la^\mu_\al (\nab_\be T^1_{\mu\ga}+\nab_\mu T^1_{\be\ga}-\nab_\ga T^1_{\be\mu})]\\\nonumber
   &-iT^3_\al \la^\si_\be+iT^3_\be \la^\si_\al\\   \nonumber
   & -\la^\mu_\be\nab_\al \Im(\psi\bar{\la}^\si_\mu)+\la^\mu_\al\nab_\be \Im(\psi\bar{\la}^\si_\mu)\\   \tag{$I_1$}
   &+[\la^\mu_\be(-\nab_\mu\Im(\psi\bar{\la}^\si_\al)+\nab^\si \Im(\psi\bar{\la}_{\al\mu}))-\la^\mu_\al(-\nab_\mu\Im(\psi\bar{\la}^\si_\be)+\nab^\si \Im(\psi\bar{\la}_{\be\mu}))\\\nonumber
   &-i\Re(\la^\ga_\al\overline{\nab^A_\ga\psi})\la^\si_\be+i\Re(\la^\ga_\be\overline{\nab^A_\ga\psi})\la^\si_\al]\\\tag{$I_2$}
   &+[\frac{1}{2}\la^\mu_\be(R_{\al\mu\si\de}+R^\si_{\al\mu\de}+R^\si_{\mu\al\de})V^\de-\frac{1}{2}\la^\mu_\al(R_{\be\mu\si\de}+R^\si_{\be\mu\de}+R^\si_{\mu\be\de})V^\de\\\nonumber
   &-V^\ga R_{\al\ga\si\de}\la^\de_\be-V^\ga R_{\al\ga\be\de}\la^{\si\de}+V^\ga R_{\be\ga\si\de}\la^\de_\al+V^\ga R_{\be\ga\al\de}\la^{\si\de} ].
\end{align}
Then we use Bianchi identities and compatibility conditions to compute $I_1$ and $I_2$ by
\begin{align*}
I_1=i[\nab^A_\al,\nab^A_\be]\nab^{A,\si}\psi
\end{align*}
and 
\begin{align*}
I_2=V^\ga R_{\be\ga\si\de}\la^\de_\al+V^\ga R_{\be\ga\al\de}\la^{\si\de}=\la^\si_\ga [\nab_\al,\nab_\be]V^\ga.
\end{align*}
Hence, we obtain
\begin{align*}
\nab^A_\al T^{2,\si}_\be-\nab^A_\be T^{2,\si}_\al=&\frac{1}{2} g^{\si\ga}[-\la^\mu_\be (\nab_\al T^1_{\mu\ga}+\nab_\mu T^1_{\al\ga}-\nab_\ga T^1_{\al\mu})\\
&+\la^\mu_\al (\nab_\be T^1_{\mu\ga}+\nab_\mu T^1_{\be\ga}-\nab_\ga T^1_{\be\mu})]-iT^3_\al \la^\si_\be+iT^3_\be \la^\si_\al.
\end{align*}
This completes the derivation of \eqref{eqn-T2}.
\end{proof}

\begin{proof}[The equation for $T^3$] This has the form
\begin{equation*}
    \left\{\begin{aligned}
         &\nab^\al T^3_{\al}=-T^{1,\al\be}\d_\al A_\be,\\
         &\nab_\al T^3_\be-\nab_\be T^3_\al =\Im (T^{2,\si}_{\ \al}\bar{\la}_{\si\be}+\la^\si_\al \overline{T^{2}_{\si\be}}).
     \end{aligned}\right.
\end{equation*}
Applying $\nab^\al$ to $T^3_\al$, we then use the Coulomb condition $\nab^\al A_\al=0$ and the $B$-equation \eqref{Ellip-B} to get
\begin{align*}
    \nab^\al T^3_\al=& \nab^\al \d_t A_\al -\De_g B-\nab^\al \Re(\la^\si_\al\overline{\nab^A_\si \psi}+i\la^\si_\al \bar{\la}_{\si\ga}V^\ga)\\
    =& g^{\al\be}\d_\be \d_t A_\al +\d_t g^{\be\ga}\d_\be A_\ga-T^{1,\be\ga}\d_\be A_\ga=-T^{1,\al\be}\d_\al A_\be.
\end{align*}
The \emph{curl} of $T^3$ is obtained by \eqref{cpt-AiAj} directly.
\end{proof}
\end{proof}

\subsection{ The moving frame}
Here we undertake the task of reconstructing the frame $(F_\alpha, m)$. For this we use the system consisting of \eqref{strsys-cpf} and \eqref{mo-frame}, viewed as a linear ode. We recall these equations here:
\begin{equation}               \label{strsys-cpf-re}
\left\{\begin{aligned}
&\d_{\al}F_{\be}=\Gamma^{\ga}_{\al\be}F_{\ga}+\Re(\lambda_{\al\be}\bar{m}),\\
&\d_{\al}^A m=-\lambda^{\ga}_{\al} F_{\ga},
\end{aligned}\right.
\end{equation}
respectively
\begin{equation}              \label{mo-frame-re}
\left\{\begin{aligned}
&\d_t F_{\al}=-\Im (\d^A_{\al} \psi \bar{m}-i\la_{\al\ga}V^{\ga} \bar{m})+[\Im(\psi\bar{\la}^{\ga}_{\al})+\nab_{\al} V^{\ga}]F_{\ga},\\
&\d^{B}_t m=-i(\d^{A,\al} \psi -i\la^{\al}_{\ga}V^{\ga} )F_{\al}.
\end{aligned}\right.
\end{equation}

We start with the frame at time $t=0$, which 
already is known to solve \eqref{strsys-cpf-re}, and 
has the following properties:
\begin{enumerate}[label=(\roman*)]
    \item \emph{Orthogonality}, $F_\alpha \perp m$, $\<m,m\>=2$, $\<m,\bar{m}\>=0$ and consistency with the metric $g_{\al\be} = \< F_\al,F_\be \>$.  

    \item \emph{Integrability}, $\partial_\beta F_\alpha = \partial_\alpha F_\beta$.
    \item \emph{Consistency}  with the second fundamental form and the connection $A$:
\[
 \d_\al F_\be\cdot m=\la_{\al\be}, \qquad \< \partial_\alpha m,m\>= -2 i A_\alpha.
\]
    
\end{enumerate}

Next we extend this frame to times $t > 0$ by simultaneously solving the pair of equations \eqref{strsys-cpf-re} and \eqref{mo-frame-re}.
To avoid some technical difficulties, we first do this for regular solutions, i.e. $s > d/2 +2$, and then pass to the limit to obtain the frame 
for rough solutions.

\subsubsection{ The frame associated to smooth solutions}
The system consisting of \eqref{strsys-cpf-re} and \eqref{mo-frame-re}
is overdetermined, and the necessary and sufficient condition 
for existence of solutions is provided by Frobenius' theorem.
We now verify these compatibility conditions in two steps:

\bigskip

a) Compatibility conditions for the system \eqref{strsys-cpf-re} at fixed time. Here, by $C^2_{\al\be}=0$, $C^3_{\al\be}=0 $ and $C^7_{\al\be\mu\nu}=0$ we have
\begin{align*}
    &\d_{\al}(\Ga^\si_{\be\ga}F_\si+\Re(\la_{\be\ga}\bar{m}))-\d_{\be}(\Ga^\si_{\al\ga}F_\si+\Re(\la_{\al\ga}\bar{m}))
    = C^7_{\si\ga\al\be}F^\si=0,
\end{align*}
and 
\[
\d_\al(iA_\be m+\la^\si_\be F_\si)-\d_\be(iA_\al m+\la^\si_\al F_\si)=iC^3_{\al\be}m=0,
\]
as needed.

\bigskip

b) Between the system \eqref{strsys-cpf-re} and \eqref{mo-frame-re}.
By \eqref{strsys-cpf-re} and \eqref{mo-frame-re} we have
\begin{align*}
    \d_t(iA_\al m+\la^\si_\al F_\si)-\d_\al (iB m+i(\d^{A,\si}\psi-i\la^\si_\ga V^\ga)F_\si)
    = iT^3_{\al}m
    +T^{2\si}_\al F_\si
\end{align*}
and
\begin{equation}         \label{d_tal F}
\begin{aligned}
    &\d_\be[-\Im (\d^A_{\al} \psi \bar{m}-i\la_{\al\ga}V^{\ga} \bar{m})+[\Im(\psi\bar{\la}^{\ga}_{\al})+\nab_{\al} V^{\ga}]F_{\ga}]-\d_t[\Ga^\ga_{\be\al}F_\ga+\Re(\la_{\be\al}\bar{m})]\\
    =&-\Re[(g_{\si\al}T^{2\si}_\be+\la^\si_\be T^1_{\si\al})\bar{m}]
    -T^{1\ga\si}\Ga_{\be\al,\si}F_\ga-\frac{1}{2}(\d_\be T^1_{\al\si}+\d_\al T^1_{\be\si}-\d_\si T^1_{\be\al})F^\si.
\end{aligned}
\end{equation}
The first equality is obtained directly. For the second equality \eqref{d_tal F}, by \eqref{strsys-cpf} and \eqref{mo-frame} we compute this by 
\begin{align*}
    {\rm LHS}\eqref{d_tal F}
    =&-\Re[(g_{\si\al}T^{2\si}_\be+\la^\si_\be T^1_{\si\al})\bar{m}]
    +\nab_\be(\Im(\psi\bar{\la}_{\si\al})+\nab_\al V_\si) F^\si\\
    &+\Im (\nab^A_\al \psi\bar{\la}_{\si\be}-\nab^A_\si \psi \bar{\la}_{\al\be})F^\si-\tR_{\si\al\be\ga}V^\ga F^\si-\d_t \Ga^\ga_{\be\al} F_\ga.
\end{align*}
By $T^1$ we compute the last term by
\begin{align*}
    -\d_t \Ga^\ga_{\be\al} F_\ga
    =& -(T^{1\ga\si}-2\Im(\psi \bar{\la}^{\ga\si})-\nab^\ga V^\si-\nab^\si V^\ga)\Ga_{\be\al,\si}F_\ga\\
    & -\frac{1}{2}[\d_\be(T^1_{\al\si}+2\Im (\psi\bar{\la}_{\al\si})+\nab_\al V_\si +\nab_\si V_\al)]F^\si\\
    &-\frac{1}{2}[\d_\al(T^1_{\be\si}+2\Im (\psi\bar{\la}_{\be\si})+\nab_\be V_\si +\nab_\si V_\be)]F^\si\\
    &+\frac{1}{2}[\d_\si(T^1_{\be\al}+2\Im (\psi\bar{\la}_{\be\al})+\nab_\be V_\al +\nab_\al V_\be)]F^\si\\
    =& -T^{1\ga\si}\Ga_{\be\al,\si}F_\ga-\frac{1}{2}(\d_\be T^1_{\al\si}+\d_\al T^1_{\be\si}-\d_\si T^1_{\be\al})F^\si\\
    &+[-\nab_\be \Im (\psi \bar{\la}_{\al\si})-\Im (\nab^A_\al \psi \bar{\la}_{\be\si})+\Im (\nab^A_\si \psi\bar{\la}_{\be\al})\\
    &-\frac{1}{2}(\nab_\al \nab_\be+\nab_\be \nab_\al)V_\si -\frac{1}{2}[\nab_\be,\nab_\si]V_\al-\frac{1}{2}[\nab_\al,\nab_\si]V_\be]F^\si.
\end{align*}
Then by Bianchi identities and compatibility condition we have
\begin{align*}
&\frac{1}{2}([\nab_\be,\nab_\al]V_\si-[\nab_\be,\nab_\si]V_\al-[\nab_\al,\nab_\si]V_\be)-\tR_{\si\al\be\ga}V^\ga\\
=& \frac{1}{2}( R_{\be\al\si \ga}-R_{\be\si\al\ga}-R_{\al\si\be\ga}-2R_{\si\al\be\ga})V^\ga=0.
\end{align*}
From the above expressions the equality \eqref{d_tal F} follows.

\medskip

Once the compatibility conditions in Frobenius' theorem 
are verified, we obtain the frame $(F_\alpha,m)$
for $t \in [0,1]$. For this we can easily obtain the regularity
\begin{equation*}
\partial_x(F_\alpha, m) \in L^\infty H^{s+2}, \qquad \partial_t     (F_\alpha, m)
\in L^\infty H^{s+1}.
\end{equation*}
Finally, we show that the properties (i)-(iii) above also extend to all $t \in [0,1]$. 
The properties (ii) and (iii) follow directly from the equations \eqref{strsys-cpf-re} and \eqref{mo-frame-re} once 
the orthogonality conditions in (i) are verified.
For (i) we  denote 
\[
\tg_{00}=\<m,m\>,\quad \tg_{\al 0}=\<F_\al, m\>,\quad \tg_{\al\be}=\<F_\al,F_\be\>.
\]
Then by  \eqref{mo-frame} and $T^1_{\al\be}=0$,
we have
\begin{align*}
&\begin{aligned}
    \d_t \tg_{\al 0}=&-\frac{i}{2}(\overline{\d^A_\al \psi}+i \bar{\la}_{\al\ga}V^\ga)(\tg_{00}-2)-i(\overline{\d^{A,\si} \psi}+i \bar{\la}^\si_{\ga}V^\ga)(g_{\al\si}-\tg_{\al\si})\\
    &+\frac{i}{2}(\d^A_\al \psi+i \la_{\al\ga}V^\ga)\<\bar{m},m\>+(\Im(\psi\bar{\la}_\al^\ga)+\nab_\al V^\ga)\tg_{\ga 0}+iB \tg_{\al 0},
\end{aligned}
    \\
    &\d_t (\tg_{00}-2)=2\Im (\d^{A,\al}\psi-i \la^\al_\ga V^\ga) \tg_{\al 0},\\
    &\d_t \<m,\bar{m}\>= -iB\<m,\bar{m}\>-i (\d^{A,\al}\psi-i \la^\al_\ga V^\ga) \tg_{\al 0},\\
    &\begin{aligned}
    \d_t(g_{\al\be}-\tg_{\al\be})=&(\Im(\psi\bar{\la}_\al^\ga)+\nab_\al V^\ga)(g_{\be\ga}-\tg_{\be\ga})+(\Im(\psi\bar{\la}_\be^\ga)+\nab_\be V^\ga)(g_{\al\ga}-\tg_{\al\ga})\\
    &+\Im (\d^A_\al \psi \tg_{\be 0}-i\la_{\al\ga}V^\ga \bar{\tg}_{\be 0})+\Im (\d^A_\be \psi \tg_{\al 0}-i\la_{\be\ga}V^\ga \bar{\tg}_{\al 0}).
    \end{aligned}
\end{align*}
Viewed as a linear system of ode's in time, these equations allow us to propagate (i) in time.

\subsubsection{ The frame associated to rough solutions}
Here we use our approximation of rough solutions with smooth solutions
for the $\psi$ equation in order to construct the frame in the rough case. Precisely, 
given a small initial data $\psi_0 \in H^s$,
there exists a sequence $\{\psi_{0n}\}\in H^{s+2}$ such that $\|\psi_{0n}-\psi_0\|_{H^s}\rightarrow 0$. By Theorem \ref{LWP-MSS-thm}, the Schr\"odinger system \eqref{mdf-Shr-sys-2} coupled with \eqref{ell-syst} admits solutions $\psi_n$ with $\psi_n(0)=\psi_{0n}$ and 
\begin{align*}
    \|\psi_n\|_{H^{s+2}}\lesssim \|\psi_{0n}\|_{H^{s+2}},\quad \|\psi_n-\psi\|_{H^{s}}\lesssim \|\psi_{0n}-\psi_0\|_{H^{s}}\rightarrow 0.
\end{align*}

A-priori, we do not know whether the initial data $\psi_{0n}$ is associated to a frame at the initial time.
Hence we first use \eqref{strsys-cpf-re} to construct the frame $(F_\al^{(n)},m^{(n)})$ associated with $\psi_{0n}$ at $t=0$. At some point $x_0$, we choose $F^{(n)}_\alpha(x_0)$ and $m^{(n)}(x_0)$ so that they are orthogonal, and $\<m^{(n)},m^{(n)}\>=2$, $\<m^{(n)},\bar{m}^{(n)}\>=0$ and $\<F^{(n)}_\al,F^{(n)}_\be\>=g^{(n)}_{\al\be}$ hold. With this initial data,
we view \eqref{strsys-cpf-re} as a linear ode with continuous coefficients. As above, the necessary and sufficient condition for solvability, as provided by Frobenius' theorem, is a consequence of the relations 
$C^2=0$, $C^3 = 0$ and $C^7 = 0$, which are in turn a consequence of Theorem~\ref{t:ell-fixed-time}.

The above construction determines the frame $(F^{(n)}_\alpha,m^{(n)})$ up to symmetries (rigid rotations and translations). Hence, the frame $(F_\al^{(n)},m^{(n)})$ at $t=0$ is uniquely determined by the condition 
\[
\lim_{x\rightarrow \infty}(F_{\al}^{(n)},m^{(n)})(x,0)=\lim_{x\rightarrow \infty}(F_\al,m)(x,0).
\]
In this construction, the properties (i)-(iii) above also extend to all $x$. The properties (ii) and (iii) follow directly from equation \eqref{strsys-cpf-re} once the orthogonality conditions in (i) are verified. For (i) we use \eqref{strsys-cpf-re} to compute
\begin{align*}
&\d_\al \tg_{\be 0}=\Ga^\ga_{\al\be} \tg_{\ga 0}+\frac{1}{2}\la_{\al\be}\<\bar{m},m\>+\frac{1}{2}\bar{\la}_{\al\be}(\tg_{00}-2)+\bar{\la}^\ga_\al (g_{\be\ga}-\tg_{\be\ga})+iA_\al \tg_{\be 0},\\
&\d_{\al}(\tg_{00}-2)=-2\Re(\la^\ga_\al \tg_{\ga 0}),\\
&\d_\al \<m,\bar{m}\>=-2i A_\al \<m,\bar{m}\>-2\Re\la^\ga_\al \bar{\tg}_{\ga 0},\\
&\d_\al(g_{\be\ga}- \tg_{\be\ga})=\Ga^\si_{\al\be}(g_{\si\ga}-\tg_{\si\ga})+\Ga^\si_{\al\ga}(g_{\si\be}-\tg_{\si\be})+\Re(\bar{\la}_{\be\al}\tg_{\ga 0}+\bar{\la}_{\ga\al}\tg_{\be 0}).
\end{align*}
By ode uniqueness and the choice of the initial data, the desired properties for the frame are propagated spatially.

Once we have the frames $(F_\al^{(n)},m^{(n)})$ at $t=0$, we can invoke the smooth case analysis above,
using \eqref{mo-frame-re} and $\psi_n\in H^{s+2}$ to extend the frame $(F_\al^{(n)},m^{(n)})$ to $t>0$ with initial data $(F_\al^{(n)},m^{(n)})(x,0)$.

In order to obtain a limiting frame $(F_\alpha,m)$ we study the properties of the regular frames $(F_\alpha^{(n)}, m^{(n)})$ in three steps:
\medskip

\emph{a) Uniform bounds.} By \eqref{strsys-cpf-re}, \eqref{decomp-vector}, \eqref{psi-full-reg} and Sobolev embeddings we have
\begin{align*}
    \|\partial_x F^{(n)}_\al\|_{H^s}\lesssim &\| \Ga^{(n)} F^{(n)}_\ga+\la^{(n)} m^{(n)}\|_{H^s}\\
    \lesssim &\|\psi_n\|_{H^s}(|F_\al(\infty)|+| m(\infty)|+\|\partial_x( F^{(n)}_\al, m^{(n)})\|_{H^s})
\end{align*}
and 
\begin{align*}
    \|\partial_x m^{(n)}\|_{H^s}\lesssim &\| A^{(n)} m^{(n)}+\la^{(n)} F_\al^{(n)}\|_{H^s}\\
    \lesssim &\|\psi_n\|_{H^s}(|F_\al(\infty)|+| m(\infty)|+\|\partial_x ( F^{(n)}_\al, m^{(n)})\|_{H^s})
\end{align*}
Then, by the smallness of $\psi_n\in H^s$, we obtain
\[ 
\|\partial_x( F^{(n)}_\al, m^{(n)})\|_{H^s}\lesssim \|\psi_n\|_{H^s}.
\]
\medskip

\emph{b) Sobolev and uniform convergence at $t=0$.} Using an argument similar to that in \emph{a)}, by \eqref{strsys-cpf-re} and Theorem \ref{t:ell-fixed-time} \emph{b)} we have
\begin{align*}
     \|\partial_x (F^{(n)}_\al-F_\al, m^{(n)}-m)\|_{H^s}
     \lesssim &\|\psi_{0n}-\psi_0\|_{H^s}+\|\psi_0\|_{H^s}\|\partial_x (F^{(n)}_\al-F_\al, m^{(n)}-m)\|_{H^s}.
\end{align*}
By the smallness of $\psi_0$, this implies the $H^s$ convergence. The 
uniform convergence at $t=0$ also follows by Sobolev embeddings.

\medskip

\emph{c) a.e. convergence for $t > 0$.}  Here we use \eqref{mo-frame} as an ode in time. The coefficients
converge in $L^2_t$ for a.e. $x$, so the frames $(F^{(n)}_\alpha,m^{(n)})$ will also converge uniformly 
in time for a.e. $x$.  This can be rectified to uniform convergence in view of the uniform Sobolev bounds in (i).
This yields the desired limiting frames $(F_\alpha,m)$.

By \eqref{strsys-cpf-re} we also have
\begin{align*}
    &\|\partial_x (F^{(k)}_\al-F^{(l)}_\al, m^{(k)}-m^{(l)})\|_{L_t^\infty H^s}
     \lesssim \|\psi_{k}-\psi_l\|_{L_t^\infty H^s}\lesssim \|\psi_{0k}-\psi_{0l}\|_{H^s}.
\end{align*}
This shows that the limiting frame satisfies both equations \eqref{mo-frame-re} and \eqref{strsys-cpf-re}, as well the as the uniform bounds in (a).

\subsection{ The moving manifold \texorpdfstring{$\Sigma_t$}{}} Here we propagate the full map $F$ 
by simply integrating \eqref{sys-cpf}, i.e.
\begin{align*}
    F(t)=F(0)+\int_0^t -\Im(\psi \bar{m})+V^\ga F_\ga ds.
\end{align*}
Then by \eqref{strsys-cpf-re}, we have
\begin{align*}
    \d_\al F(t)=\d_\al F(0)+\int_0^t -\Im (\d^A_{\al} \psi \bar{m}-i\la_{\al\ga}V^{\ga} \bar{m})+[\Im(\psi\bar{\la}^{\ga}_{\al})+\nab_{\al} V^{\ga}]F_{\ga} ds,
\end{align*}
which is consistent with above 
definition of $F_\alpha$.

\subsection{ The (SMCF) equation for \texorpdfstring{$F$}{}}
Here we establish that $F$ solves \eqref{Main-Sys}. Using the relation  
$\la_{\al\be}=\d_\al \d_\be F \cdot m$
we have
\begin{align*} 
-\Im (\psi\bar{m})=&-\Im (g^{\al\be}\d_\al\d_\be F\cdot (\nu_1+i\nu_2)\ (\nu_1-i\nu_2))\\
=& (\De_g F\cdot \nu_1) \nu_2-(\De_g F\cdot \nu_2) \nu_1\\
=& J (\De_g F)^{\perp}=J\mathbf{H}(F).
\end{align*}
This implies that the $F$ solves \eqref{Main-Sys}.

\section*{Acknowledgments}
J. Huang would like to thank Prof. Lifeng Zhao for many
inspirations and discussions, and Dr. Ze Li for carefully reading the manuscript, helpful discussions and comments.

\end{document}